\documentclass[aos]{imsart}

\usepackage{amsmath, amsfonts, amssymb, amsthm,  graphicx, enumerate, dsfont}

\usepackage[numbers, square]{natbib}

\usepackage[%
            CJKbookmarks=true,
            bookmarksnumbered=true,
            bookmarksopen=true,
            colorlinks=true,
            citecolor=red,
            linkcolor=blue,
            anchorcolor=red,
            urlcolor=blue,
            ]{hyperref}

\usepackage[usenames,dvipsnames]{color}

\usepackage[ruled, vlined, lined, commentsnumbered]{algorithm2e}

\usepackage{prettyref,soul,xspace}

\usepackage{tikz}
\usepackage{pgfplots,makecell}

\usepackage{bm}

\theoremstyle{plain}

\newtheorem{lemma}{Lemma}[section]
\newtheorem{theorem}{Theorem}[section]
\newtheorem{proposition}{Proposition}[section]
\newtheorem{corollary}{Corollary}[section]

\newcommand{\p}{\mathbb{P}}
\newcommand{\E}{\mathbb{E}}

\newcommand{\iid}{\stackrel{iid}{\sim}}

\newcommand{\br}[1]{\left( #1 \right)}

\newcommand{\cbr}[1]{\left\{ #1 \right\}}
\newcommand{\pbr}[1]{\p\left( #1 \right)}
\newcommand{\ebr}[1]{\exp\left( #1 \right)}
\newcommand{\abs}[1]{\left| #1 \right|}

\newcommand{\tr}[1]{\text{tr}\br{#1}}

\newcommand{\mathr}{\mathbb{R}}
\newcommand{\matho}{\mathbb{O}}
\newcommand{\mathn}{\mathcal{N}}
\newcommand{\mathf}{\mathcal{F}}

\newcommand{\diff}{{\rm d}}

\newcommand{\indic}[1]{{\mathbb{I}\left\{{#1}\right\}}}
\newcommand{\iprod}[2]{\left \langle #1, #2 \right\rangle}
\newcommand{\norm}[1]{\left\|{#1} \right\|}

\newcommand{\fnorm}[1]{\norm{#1}_{\rm F}}
\newcommand{\normf}[1]{\|#1\|_{\rm F}}
\newcommand{\normop}[1]{\|#1\|}
\newcommand{\argmin}{\mathop{\rm argmin}}

\newcommand{\tspan}{\text{span}}
\newcommand{\cd}[1]{{\cdot , #1}}

\newcommand{\tpo}{{\br{1}}}
\newcommand{\tpt}{{\br{2}}}

\newcommand{\one}{\mathds{1}}

\newcommand{\lrt}{\text{LRT}}
\newcommand{\kr}{{\kappa}}
\newcommand{\rr}{{r}}
\newcommand{\fc}{{\mathcal{I}}}

\begin{document}
\begin{frontmatter}
\title{Leave-one-out Singular Subspace Perturbation Analysis for Spectral Clustering}
\runtitle{Singular Subspace Perturbation and Spectral Clustering}

\begin{aug}
\author[A]{\fnms{Anderson Y.} \snm{Zhang}\thanksref{t1}\ead[label=e1]{ayz@wharton.upenn.edu}}
\and
\author[B]{\fnms{Harrison Y.} \snm{Zhou}\thanksref{t2}\ead[label=e2]{huibin.zhou@yale.edu}}
\thankstext{t1}{Research supported in part by NSF grant DMS-2112988.}
\thankstext{t2}{Research supported in part by NSF grant DMS-2112918.}

\address[A]{Department of Statistics and Data Science,
University of Pennsylvania,
\printead{e1}}

\address[B]{Department of Statistics and Data Science,
Yale University,
\printead{e2}}
\end{aug}

\begin{abstract}
The singular subspaces perturbation theory is of fundamental importance in probability and statistics. It has various applications across different fields. We consider two arbitrary matrices where one is a leave-one-column-out submatrix of the other one and establish a novel perturbation upper bound for the distance between the two corresponding singular subspaces. It is well-suited for mixture models and results in a sharper and finer statistical analysis than classical perturbation bounds such as Wedin's Theorem. Empowered by this leave-one-out perturbation theory, we provide a deterministic entrywise analysis for the performance of spectral clustering under mixture models. Our analysis leads to an explicit exponential error rate for spectral clustering of sub-Gaussian mixture models. For the mixture of isotropic Gaussians, the rate is optimal under a weaker signal-to-noise condition than that of L{\"o}ffler et al. (2021).

\end{abstract}

\begin{keyword}[class=MSC]
\kwd[Primary ]{62H30}
\end{keyword}

\begin{keyword}
\kwd{Mixture model}
\kwd{Spectral clustering}
\kwd{Singular subspace}
\kwd{Spectral perturbation}
\kwd{Leave-one-out analysis}
\end{keyword}

\end{frontmatter}

\section{Introduction}

The matrix  perturbation theory \cite{stewart1990matrix, bai2010spectral} is a central topic in  probability and statistics. It plays a fundamental role in
spectral methods \cite{chen2021spectral, kannan2009spectral}, an umbrella term for algorithms involving eigendecomposition or singular value decomposition. It has a wide range of applications including principal component analysis \cite{abbe2020ell_p, cai2021optimal}, covariance matrix estimation \cite{fan2018eigenvector},  clustering \cite{von2008consistency, rohe2011spectral, schiebinger2015geometry, ndaoud2018sharp}, and matrix completion \cite{ma2018implicit, ding2020leave}, 
 throughout different fields such as machine learning \cite{belabbas2009spectral},  network science \cite{newman2013spectral, abbe2020entrywise}, and genomics \cite{kiselev2019challenges}.

Perturbation analysis for eigenspaces and singular subspaces dates back to seminal works of Davis and Kahan \cite{davis1970rotation} and Wedin \cite{wedin1972perturbation}. Davis-Kahan Theorem provides a clean bound for eigenspaces in terms of operator norm and Frobenius norm, and Wedin further  extends it to singular subspaces. In recent years, there has been growing literature in developing fine-grained $\ell_{\infty}$ analysis for singular vectors \cite{abbe2020entrywise, fan2018eigenvector} and $\ell_{2,\infty}$ analysis for singular subspaces \cite{lei2019unified, cape2019two, cai2021subspace, agterberg2021entrywise}, which often lead to sharp upper bounds. For clustering problems, they can be used to establish the exact recovery of spectral methods, but are usually not suitable for low signal-to-noise ratio regimes where only partial recovery is possible.

In this paper, we consider a special matrix perturbation case where one matrix differs from the other one by having one less column and investigate the difference between two corresponding left singular subspaces. Consider two matrices
\begin{align}\label{eqn:Y_hat_Y}
Y=(y_1,\ldots, y_{n-1})\in\mathr^{p\times (n-1)} \text{ and }\hat Y = (y_1,\ldots, y_{n-1}, y_n)\in\mathr^{p\times n},
\end{align}
where $Y$ is a leave-one-column-out submatrix of $\hat Y$ with the last column removed.
Let $U_\rr $ and $\hat U_\rr $
 include the leading $\rr $ left singular vectors of $Y$ and $\hat Y$, respectively. The two corresponding left singular subspaces are $\tspan(U_\rr )$ and $\tspan(\hat U_\rr )$, where  the former one can be interpreted as a leave-one-out counterpart of the latter.

We establish a novel upper bound for the Frobenius norm of  $\hat U_\rr \hat U_\rr ^T-U_\rr U_\rr ^T$ to quantify the distance between the two singular subspaces   $\tspan(U_\rr )$ and $\tspan(\hat U_\rr )$. A direct application of the generic Wedin's Theorem leads to a ratio of the magnitude of perturbation $(I-U_\rr U_\rr ^T)y_n$ to the corresponding spectral gap $\sigma_\rr -\sigma_{\rr +1}$.  We go beyond Wedin's Theorem and  reveal that the  interplay between $U_\rr U_\rr ^T y_n$ and $(I- U_\rr U_\rr ^T )y_n$ plays a crucial role. Our new upper bound is  a product of the aforementioned ratio and a factor determined $U_\rr ^T y_n$. That is, informally (see Theorem \ref{thm:general} for a precise statement),
\begin{align*}
\fnorm{\hat U_\rr \hat U_\rr ^T-U_\rr U_\rr ^T}\lesssim \frac{\norm{(I-U_\rr U_\rr ^T)y_n}}{\sigma_\rr -\sigma_{\rr +1}} \times \text{ a factor from }U_\rr ^T y_n.
\end{align*}
When this factor is smaller than some constant,  it results in a sharper upper bound than Wedin's Theorem. The derived upper bound is particularly suitable for mixture models where the contributions of $U_\rr ^T y_n$ are well-controlled, and consequently provides a key toolkit for the follow-up statistical analysis on  spectral clustering.

Spectral clustering is one of the most popular  approaches to group high-dimensional data. It first reduces the dimensionality of data by only using a few of its singular components
and then applies a classical clustering method, such as $k$-means, 
to the data of reduced dimension. It is computationally appealing and often delivers  remarkably good performance, and has been widely used in various problems. In recent years there has been growing interest in theoretical properties of spectral clustering, noticeably in community detection \cite{abbe2020entrywise, lei2015consistency, Jin15, qin2013regularized, rohe2011spectral, zhou2019analysis, han2020exact, ndaoud2021improved, lei2020bias}. In spite of various polynomial-form upper bounds in terms of signal-to-noise ratios for the performance of spectral clustering, sharper exponential error rates are established in literature only for a few special scenarios, such as  Stochastic Block Models with two equal-size communities \cite{abbe2020entrywise}. Spectral clustering is also investigated  in mixture models \cite{ndaoud2018sharp, loffler2019optimality, abbe2020ell_p, davis2021clustering, wang2020efficient, srivastava2019robust, blum2007separating}.
For isotropic Gaussian mixture models, 
\cite{loffler2019optimality} 
shows spectral clustering achieves the optimal minimax rate. However, the proof technique used in \cite{loffler2019optimality} is very limited to the isotropic Gaussian noise and it is unclear whether it is possible to be extended to either sub-Gaussian distributed errors or unknown covariance matrices. Spectral clustering for sub-Gaussian mixture models is studied in \cite{abbe2020ell_p}, but only under  special assumptions on the spectrum and geometry of the centers. It requires  eigenvalues of the Gram matrix of centers to be all of the same order and sufficiently large, which rules out many interesting cases.

We study the theoretical performance of the spectral clustering under general mixture models where each observation $X_i$ is equal to one of $k$ centers plus some noise $\epsilon_i$. The spectral clustering first projects $X_i$ onto $\hat U_{1:r}^TX_i$ where $\hat U_{1:r}$ includes the leading $r$ left singular vectors of the data matrix, and then performs $k$-means on this low-dimensional space. Building upon our leave-one-out perturbation theory, we provide a deterministic entrywise analysis for the spectral clustering.
We demonstrate that the correctness of $X_i$'s clustering is determined by $\hat U_{-i,1:r}^T\epsilon_i$, where $\hat U_{-i,1:r}$ is the leave-one-out counterpart of $\hat U_{1:r}$ that uses all the observations except $X_i$.
The independence between $\hat U_{-i,1:r}$  and $\epsilon_i$ enables us to derive explicit error risks when the noises are randomly generated from certain distributions. Specifically:

\begin{enumerate}
\item For sub-Gaussian mixture models, we establish an exponential error rate for the performance of the spectral clustering, assuming the centers are separated from each other and the smallest non-zero singular value is away from zero.  Compared to \citep{abbe2020ell_p}, our assumptions on the spectrum and geometric distribution of the centers are weaker. In addition, we obtain an explicit constant $1/8$ in the exponent, which is sharp when the noises are further assumed to be isotropic Gaussian.
To  remove the spectral gap condition, we propose a variant of the spectral clustering where the number of singular vectors used is selected adaptively. 
\item For Gaussian mixture models with isotropic covariance matrix, we fully recover the results of \cite{loffler2019optimality}.
Empowered by the leave-one-out perturbation theory, our proof adopts a completely different approach  and is much shorter compared to that of \cite{loffler2019optimality}. In addition, the signal-to-noise ratio condition of \cite{loffler2019optimality} is improved.
\item For a two-cluster symmetric mixture model where coordinates of the noise $\epsilon_i$ are independently and identically distributed, we provide a matching upper and lower bound for the performance of the spectral clustering. This sharp analysis provides an answer to the optimality of the spectral clustering in this setting: it is in general sub-optimal and is optimal only if each coordinate  of $\epsilon_i$ is normally distributed.
\end{enumerate}

~\\
\indent\emph{Organization.} The structure of this paper is as follows. In Section \ref{sec:perturbation}, we first establish a general leave-one-out perturbation theory for singular subspaces, followed by its application in mixture models. In Section \ref{sec:spectral_clustering_mixture_model}, we use our leave-one-out perturbation theory to provide theoretical guarantees for the spectral clustering under mixture models. We discuss extensions and potential caveats of our analysis in Section \ref{sec:discuss}.
The proofs of main results in Section \ref{sec:perturbation} and Section \ref{sec:spectral_clustering_mixture_model}  are given in Section \ref{sec:proof_perturbation_sec_2} and  in Section \ref{sec:proof_main_thm}, respectively. 
All other proofs can be found in the  supplement \cite{supplement}.

~\\
\indent\emph{Notation.} For any positive integer $r$, let $[r]=\{1,2,\ldots,r\}$. For two scalars $a,b\in\mathr$, denote $a\wedge b= \min\{a,b\}$.  For two matrices $A=(A_{i,j})$ and $B=(B_{i,j})$, we denote $\iprod{A}{B}=\sum_{i,j}A_{i,j}B_{i,j}$ to be the trace product, $\norm{A}$ to be its operator norm,  $\fnorm{A}$ to be its Frobenius norm, and $\tspan(A)$ to be the linear space spanned by columns of $A$. If both $A,B$ are symmetric, we write $A\prec B$ if $B-A$ is positive semidefinite. For scalars $x_1,\ldots,x_d$, we denote $\text{diag}(x_1,\ldots,x_d)$ to be a $d\times d$ diagonal matrix with diagonal entries being $x_1,\ldots, x_d$. For any integers $d,p\geq 0$, we denote $0_{d}\in\mathr^d$ to be a vector with all coordinates being 0, $\one_d\in\mathr^d$ to be a vector with all coordinates being 1, and  $O_{d\times p}\in\mathr^{d\times p}$ to be a matrix with all entries being 0. We denote $I_{d\times d}$ and $I_d$ to be the $d\times d$ identity matrix and we use $I$ for short when the dimension of clear according to context.  Let $\matho^{d\times p}=\cbr{V\in\mathr^{d\times p}: V^TV=I}$ be the set of matrices in $\mathr^{d\times p}$ with orthonormal columns.  We denote $\indic{\cdot}$ to be the indicator function.  For two positive sequences $\{a_n\}$ and $\{b_n\}$, $a_n\lesssim b_n$, $a_n=O(b_n)$, $b_n\gtrsim a_n$ all mean $a_n\leq Cb_n$ for some constant $C>0$ independent of $n$. We also write $a_n=o(b_n)$ when $\limsup_{n\rightarrow\infty}\frac{a_n}{b_n}=0$.
For a random variable $X$, we say $X$ is sub-Gaussian with variance proxy $\sigma^2$ (denoted as $X\sim\text{SG}(\sigma^2)$) if $\E e^{tX}\leq \ebr{\sigma^2t^2/2}$ for any $t\in \mathr$. For a random vector $X\in\mathr^d$, we say $X$ is sub-Gaussian with variance proxy $\sigma^2$ (denoted as $X\sim\text{SG}_d(\sigma^2)$) if $u^TX\sim\text{SG}(\sigma^2)$ for any unit vector $u\in\mathr^d$.

\section{Leave-one-out  Singular  Subspace Perturbation Analysis}\label{sec:perturbation}

Classical singular subspace perturbation theory examines the relationship between the singular spaces of two matrices of the same dimension. However, prevailing upper bounds, such as those given by Wedin's Theorem, often achieve tightness only in worst-case scenarios. They can be sub-optimal, especially in situations like the one considered in this paper where one matrix is short of one column relative to the other.

In the domains of statistics and data science, it's common to work with data matrices wherein columns represent independent and identically distributed observations. Intuitively, when the number of observations is large, omitting a single observation should have minimal impact on the singular subspace. This intuition can guide entrywise perturbation analyses for spectral methods. As a case in point, the efficacy of spectral clustering under mixture models can largely be attributed to the perturbation of $\hat U_{1:r}^TX_i$, where $X_i$ represents the $i$th observation and $\hat U_{1:r}$ encompasses the leading $r$ left singular vectors of the data matrix. Directly analyzing $\hat U_{1:r}^TX_i$ is cumbersome due to the inherent dependence between $\hat U_{1:r}$ and $X_i$. To disentangle this dependence, a logical strategy is to substitute $\hat U_{1:r}$ with its leave-one-out counterpart, $\hat U_{-i,1:r}$, which is formed using all observations except $X_i$. The resulting independence between $\hat U_{-i,1:r}$ and $\epsilon_i$ facilitates a more precise characterization of the tail probabilities of $\hat U_{-i,1:r}^T X_i$. This, in turn, yields a more defined bound on spectral clustering's performance. Such an analytical approach presumes that $\hat U_{1:r}$ and its leave-one-out version $\hat U_{-i,1:r}$ are sufficiently similar.

With this foundation laid, in this section, we focus on establishing a comprehensive leave-one-out perturbation theory for singular subspaces.

\subsection{General Results}
Consider two matrices as in (\ref{eqn:Y_hat_Y})
 such that they are equal to each other except that $\hat Y$ has an extra last column. Let the Singular Value Decomposition (SVD) of these two matrices be
\begin{align*}
Y =  \sum_{i\in [p\wedge (n-1)]} \sigma_i u_i v_i^T \text{ and }\hat Y = \sum_{i\in [p\wedge n]} \hat \sigma_i \hat u_i\hat v_i^T,
\end{align*}
where $\sigma_1\geq \ldots \geq \sigma_{p\wedge (n-1)}$ and  $\hat \sigma_1\geq \ldots \geq \hat \sigma_{p\wedge n}$.  Consider any $\rr\in[p\wedge (n-1)]$. Define  
\begin{align*}
U_\rr  := (u_1,\ldots, u_\rr )\in\matho^{p\times \rr } \text{ and }\hat U_\rr  := (\hat u_1,\ldots, \hat u_\rr )\in\matho^{p\times \rr }
\end{align*}
to include the leading $\rr $ left singular vectors of $Y$ and $\hat Y$, respectively.
 Since $Y$ can be viewed as a leave-one-out submatrix of $\hat Y$ without the last column $y_n$, $U_\rr $ can be  interpreted as a leave-one-out counterpart of $\hat U_\rr $.
 
The two matrices $U_\rr ,\hat U_\rr $ correspond to two singular subspaces $\tspan(U_\rr ),\tspan(\hat U_\rr )$, respectively. The difference between these two subspaces 
can be captured by sin $\Theta$ distances,  $\normop{\text{sin}\; \Theta(\hat U_\rr ,U_\rr )}$ or $\normf{\text{sin}\; \Theta(\hat U_\rr ,U_\rr )}$, where $$\Theta(\hat U_\rr ,U_\rr ):=\text{diag}(\cos^{-1}(\alpha_1),\cos^{-1}(\alpha_2),\ldots, \cos^{-1}(\alpha_\rr ))$$ with $\alpha_1\geq \alpha_2\geq \ldots \geq \alpha_\rr \geq 0$ being the $\rr $ singular values of $\hat U_\rr ^T U_\rr $. It is known (see Lemma 1 of \cite{cai2018rate}) that $\normf{\hat U_\rr  \hat U_\rr ^T -   U_\rr U_\rr ^T}  = \sqrt{2}\normf{\text{sin}\; \Theta(\hat U_\rr ,U_\rr )}$. Throughout this section, we will focus on establishing sharp upper bounds for $\normf{\hat U_\rr  \hat U_\rr ^T -   U_\rr U_\rr ^T}$, i.e., the Frobenius norm of the difference between  two corresponding projection matrices  $U_\rr U_\rr ^T$ and $\hat U_\rr  \hat U_\rr ^T$.

Since the augmented matrix $Y':=(Y,U_\rr U_\rr ^T y_n)\in\mathr^{p\times n}$  concatenated by $Y$ and $U_\rr U_\rr ^Ty_n$ has the same leading $\rr$ left singular subspace and projection matrix as $Y$, a natural idea is to relate $\normf{\hat U_\rr  \hat U_\rr ^T -   U_\rr U_\rr ^T}$ with the difference $\hat Y - Y'$. The classical spectral perturbation theory such as  Wedin's Theorem \cite{yu2015useful, cai2018rate}  leads to that if $\sigma_\rr-\sigma_{\rr + 1}>2\norm{(I-U_\rr U_\rr ^T)y_n}$, then
\begin{align}\label{eqn:wedin}
\fnorm{\hat U_\rr  \hat U_\rr ^T -   U_\rr U_\rr ^T} \leq \frac{2\sqrt{2}\norm{(I-U_\rr U_\rr ^T)y_n}}{\sigma_\rr - \sigma_{\rr +1}}.
\end{align}
See Proposition \ref{prop:wedin} in the supplement for its proof.
The upper bound in (\ref{eqn:wedin}) requires the spectral gap $\sigma_\rr - \sigma_{\rr +1}$ is away from zero.
It also indicates the magnitude of the difference $\normop{\hat Y-Y'} = \normop{(I-U_\rr U_\rr ^T)y_n}$ plays a crucial role.
In spite of its simple form,
(\ref{eqn:wedin})  comes from generic spectral perturbation theories not specifically designed for  the setting (\ref{eqn:Y_hat_Y}).

In the following Theroem \ref{thm:general}, we provide a deeper and finer analysis for  $\normf{\hat U_\rr  \hat U_\rr ^T -   U_\rr U_\rr ^T}$, utilizing the fact that $\hat Y$ and $Y$  differ by only one column and exploiting the interplay between $U_\rr U_\rr ^Ty_n$ and $(I-U_\rr U_\rr ^T)y_n$.

\begin{theorem}\label{thm:general}
If
\begin{align}\label{eqn:general_condition3}
\rho:= \frac{\sigma_\rr -\sigma_{\rr +1}}{\normop{(I-U_\rr U_\rr ^T)y_n}} >2,
\end{align}
we have
\begin{align}\label{eqn:general_upper}
\fnorm{\hat U_\rr  \hat U_\rr ^T -   U_\rr  U_\rr ^T} \leq \frac{4\sqrt{2}}{\rho}\sqrt{\sum_{i=1}^\rr\br{ \frac{u_i^T y_n}{\sigma_i}}^2}.
\end{align}
\end{theorem}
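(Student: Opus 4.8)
The plan is to reduce the Frobenius distance to a sum, over the left singular vectors $u_1,\dots,u_\rr$ of $Y$, of the magnitudes $\norm{(I-\hat P)u_i}$ by which each is tilted out of $\tspan(\hat U_\rr)$, and to control each such magnitude through an \emph{exact} resolvent identity anchored at the (unperturbed) spectrum $\{u_i,\sigma_i\}$ of $Y$ — this anchoring being the whole point. Throughout write $P:=U_\rr U_\rr^T$, $\hat P:=\hat U_\rr\hat U_\rr^T$, $M:=YY^T=\sum_i\sigma_i^2u_iu_i^T$, $\hat M:=\hat Y\hat Y^T=M+y_ny_n^T$, and decompose $y_n=w+z$ with $w:=Py_n$, $z:=(I-P)y_n$, so $\norm{z}=\norm{(I-U_\rr U_\rr^T)y_n}$. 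Using the relation $\normf{\hat U_\rr\hat U_\rr^T-U_\rr U_\rr^T}=\sqrt2\,\normf{\sin\Theta(\hat U_\rr,U_\rr)}$ recalled above together with $\normf{\sin\Theta}^2=\normf{(I-\hat P)U_\rr}^2=\sum_{i=1}^\rr\norm{(I-\hat P)u_i}^2$, it suffices to bound $\sum_{i=1}^\rr\norm{(I-\hat P)u_i}^2$.

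The heart of the argument is one identity. Fix $i\le\rr$. Computing $(I-\hat P)\hat M u_i$ two ways — from $\hat M u_i=Mu_i+y_n(y_n^Tu_i)=\sigma_i^2u_i+(u_i^Ty_n)y_n$, and from $(I-\hat P)\hat M\hat P=0$ (so $(I-\hat P)\hat M u_i=(I-\hat P)\hat M(I-\hat P)u_i$) — and writing $\hat M_\perp:=(I-\hat P)\hat M(I-\hat P)$, one gets
\begin{align*}
\bigl(\hat M_\perp-\sigma_i^2 I\bigr)\,(I-\hat P)u_i=(u_i^Ty_n)\,(I-\hat P)y_n ,
\end{align*}
an equation living entirely inside $\tspan(\hat U_\rr)^{\perp}$. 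On that subspace $\hat M_\perp$ has spectrum $\{\hat\sigma_{\rr+1}^2,\hat\sigma_{\rr+2}^2,\dots\}\subseteq[0,\hat\sigma_{\rr+1}^2]$, so once we know $\sigma_i^2>\hat\sigma_{\rr+1}^2$ the operator $\hat M_\perp-\sigma_i^2 I$ is invertible there with norm $(\sigma_i^2-\hat\sigma_{\rr+1}^2)^{-1}$, whence $\norm{(I-\hat P)u_i}\le\frac{\abs{u_i^Ty_n}}{\sigma_i^2-\hat\sigma_{\rr+1}^2}\,\norm{(I-\hat P)y_n}$, and summing,
\begin{align*}
\normf{\hat U_\rr\hat U_\rr^T-U_\rr U_\rr^T}^2\le 2\,\norm{(I-\hat P)y_n}^2\sum_{i=1}^\rr\frac{(u_i^Ty_n)^2}{(\sigma_i^2-\hat\sigma_{\rr+1}^2)^2}.
\end{align*}
The one genuinely new ingredient is to keep $\{u_i,\sigma_i\}$ here: the more obvious routes — expanding $\hat u_j$ via the secular equation of the rank-one update $\hat M=M+y_ny_n^T$, or bounding $\normf{(I-P)\hat P}$ — produce quantities in $\{\hat u_j,\hat\sigma_j\}$ that are "saturated" (e.g.\ $\sum_{j\le\rr}(\hat u_j^Ty_n)^2/\hat\sigma_j^2\le1$, since $\hat u_j^Ty_n=\hat\sigma_j(\hat v_j)_n$) and only reproduce Wedin's bound \eqref{eqn:wedin}.

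Two short inputs then finish it. (i) \emph{The gap.} By Eckart--Young $\hat\sigma_{\rr+1}=\sigma_{\rr+1}(\hat Y)\le\norm{(I-P)\hat Y}$ (as $P\hat Y$ has rank $\le\rr$), and $(I-P)\hat Y=\bigl((I-P)Y,\,z\bigr)$ with $\norm{(I-P)Y}=\sigma_{\rr+1}$, so $\hat\sigma_{\rr+1}^2\le\sigma_{\rr+1}^2+\norm{z}^2$. Since $\norm{z}=(\sigma_\rr-\sigma_{\rr+1})/\rho<(\sigma_\rr-\sigma_{\rr+1})/2$ and $\sigma_i\ge\sigma_\rr$ for $i\le\rr$, using $\sigma_i^2-\sigma_{\rr+1}^2=(\sigma_i-\sigma_{\rr+1})(\sigma_i+\sigma_{\rr+1})\ge(\sigma_\rr-\sigma_{\rr+1})\sigma_i$ and $\norm{z}^2\le\tfrac14(\sigma_\rr-\sigma_{\rr+1})\sigma_i$ gives $\sigma_i^2-\hat\sigma_{\rr+1}^2\ge\tfrac34(\sigma_\rr-\sigma_{\rr+1})\sigma_i$ for every $i\le\rr$. (ii) \emph{$\norm{(I-\hat P)y_n}\le\norm{z}$.} Equivalently $\norm{\hat Py_n}\ge\norm{Py_n}$, which is immediate from Ky Fan's maximum principle: $\sum_{j\le\rr}\hat\sigma_j^2=\tr{\hat P\hat M}=\max_{\Pi}\tr{\Pi\hat M}\ge\tr{P\hat M}=\sum_{i\le\rr}\sigma_i^2+\norm{Py_n}^2$ while $\tr{\hat PM}\le\sum_{i\le\rr}\sigma_i^2$, so $\norm{\hat Py_n}^2=\tr{\hat P\hat M}-\tr{\hat PM}\ge\norm{Py_n}^2$, and then $\norm{(I-\hat P)y_n}^2=\norm{y_n}^2-\norm{\hat Py_n}^2\le\norm{y_n}^2-\norm{Py_n}^2=\norm{z}^2$. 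Plugging (i) and (ii) into the display gives $\normf{\hat U_\rr\hat U_\rr^T-U_\rr U_\rr^T}\le\frac{4\sqrt2}{3}\cdot\frac{\norm{z}}{\sigma_\rr-\sigma_{\rr+1}}\sqrt{\sum_{i=1}^\rr(u_i^Ty_n/\sigma_i)^2}=\frac{4\sqrt2}{3\rho}\sqrt{\sum_{i=1}^\rr(u_i^Ty_n/\sigma_i)^2}$, which is even stronger than \eqref{eqn:general_upper}.

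The main obstacle is squarely the first step — discovering the expansion that retains $\{u_i,\sigma_i\}$ rather than $\{\hat u_j,\hat\sigma_j\}$; once the identity $(\hat M_\perp-\sigma_i^2 I)(I-\hat P)u_i=(u_i^Ty_n)(I-\hat P)y_n$ is in hand the rest is routine, and of the two auxiliary estimates the bound $\norm{(I-\hat P)y_n}\le\norm{z}$ is the one I would verify most carefully, though the Ky Fan trace argument settles it in a line. (Input (i) can alternatively be reached via the auxiliary matrix $Y'=(Y,\,U_\rr U_\rr^Ty_n)$, which shares its $(\rr+1)$-th singular value $\sigma_{\rr+1}$ with $Y$ while $\hat Y=Y'+z e_n^T$ with $e_n$ the $n$-th standard basis vector, so Weyl gives $\hat\sigma_{\rr+1}\le\sigma_{\rr+1}+\norm{z}$ — this is the "interplay between $U_\rr U_\rr^Ty_n$ and $(I-U_\rr U_\rr^T)y_n$" flagged before the statement.)
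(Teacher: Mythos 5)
Your proof is correct, and in fact gives a slightly sharper constant than the paper ($\frac{4\sqrt2}{3\rho}$ versus $\frac{4\sqrt2}{\rho}$), but the route is genuinely different. The paper's proof starts from the variational inequality $\fnorm{(I-\hat U_\rr\hat U_\rr^T)\hat Y}^2\le\fnorm{(I-U_\rr U_\rr^T)\hat Y}^2$ (best rank-$\rr$ approximation of $\hat Y$), splits $\hat Y=(Y,y_n)$, expresses $\hat U_\rr=U\hat B$ in the $Y$-singular-vector coordinates, and after a page of bookkeeping arrives at a scalar quadratic inequality in $\alpha=\fnorm{\hat U_\rr\hat U_\rr^T-U_\rr U_\rr^T}$ that is then solved case-by-case. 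Your argument instead bounds each $\norm{(I-\hat P)u_i}$ separately through the exact Sylvester-type identity $(\hat M_\perp-\sigma_i^2I)(I-\hat P)u_i=(u_i^Ty_n)(I-\hat P)y_n$, a resolvent expansion anchored at the \emph{unperturbed} pair $(u_i,\sigma_i)$, and then aggregates in Frobenius norm; this localizes the estimate vector-by-vector and avoids the quadratic-inequality gymnastics entirely. Both proofs need control of $\hat\sigma_{\rr+1}$ against $\sigma_{\rr+1}$ and $\norm{(I-U_\rr U_\rr^T)y_n}$, and both obtain it from essentially the same observation about the augmented matrix $(Y,\,U_\rr U_\rr^Ty_n)$; the one additional lemma your route requires, $\norm{(I-\hat P)y_n}\le\norm{(I-U_\rr U_\rr^T)y_n}$, you dispatch cleanly by the Ky Fan trace argument, whereas the paper sidesteps it by working directly with $\norm{(I-U_\rr U_\rr^T)y_n}$ throughout. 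Your explanation of why the more obvious secular-equation or $\normf{(I-U_\rr U_\rr^T)\hat P}$ expansions fail to improve on Wedin (because $\hat u_j^Ty_n/\hat\sigma_j=(\hat v_j)_n$ is saturated) is also accurate, and identifies correctly why anchoring in $\{u_i,\sigma_i\}$ rather than $\{\hat u_j,\hat\sigma_j\}$ is the decisive move.
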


Theorem \ref{thm:general} gives an upper bound on $\normf{\hat U_\rr  \hat U_\rr ^T -   U_\rr U_\rr ^T} $ that is essentially a product of $\rho^{-1}$ and some quantity determined by $\{\sigma_i^{-1}u_i^Ty_n\}_{i\in[\rr ]}$. 
Since $(\sigma_i^{-1}u_i^Ty_n)^2\leq \sigma_\rr^{-2}(u_i^Ty_n)^2$ for each $i\in[\rr ]$, (\ref{eqn:general_upper}) leads to a simpler upper bound 
\begin{align}\label{eqn:general_upper_simplified}
\normf{\hat U_\rr  \hat U_\rr ^T -   U_\rr U_\rr ^T}  \leq \frac{4\sqrt{2}}{\rho}\frac{\normop{U_\rr U_\rr ^T y_n}}{\sigma_\rr}.
\end{align}

The condition  (\ref{eqn:general_condition3}) in Theorem \ref{thm:general} can be understood as a spectral gap assumption as it needs the  gap $\sigma_\rr -\sigma_{\rr +1}$ to be larger than twice the magnitude of the perturbation $\normop{(I-U_\rr U_\rr ^T)y_n}$. This condition can be slightly weakened into $\sigma_\rr ^2 - \sigma_{\rr +1}^2 - \normop{(I-U_\rr U_\rr ^T)y_n}^2>0$, though resulting in a more involved upper bound. See Theorem \ref{thm:general_more} in Section \ref{sec:proof_general} for details.

We are ready to have a comparison of our result (\ref{eqn:general_upper}) and (\ref{eqn:wedin}) that is from  Wedin's  Theorem. Under the assumption (\ref{eqn:general_condition3}),  the upper bound  in (\ref{eqn:wedin}) can be written equivalently as $2\sqrt{2}\rho^{-1}$.
As a result, the comparison is about the magnitude of  
$(\sum_{i\in[\rr]}(\sigma_i^{-1} u_i^T y_n)^2)^{1/2}$.
If it is smaller than $1/2$, then  (\ref{eqn:general_upper}) gives a sharper upper bound than (\ref{eqn:wedin}). To further compare these two bounds, consider the following examples.
\begin{itemize}
\item \emph{Example 1. } When $U_\rr^Ty_n=0$ and (\ref{eqn:general_condition3}) is satisfied, (\ref{eqn:general_upper}) gives the correct upper bound 0. That is, $\hat U_\rr  \hat U_\rr ^T =   U_\rr U_\rr ^T$. On the contrary, (\ref{eqn:wedin}) gives a non-zero bound $2\sqrt{2}/\rho^{-1}$. To be more concrete, let $Y=\sigma_1 (p^{-1/2}\one_p)((n-1)^{-1/2}\one_{n-1})^T$ be a rank-one matrix and $y_n$ be some vector that is orthogonal to $\one_p$. Then if $\sigma_1>2\norm{y_n}$, we have $\hat u_1=u_1=p^{-1/2}\one_p$ up to sign. (\ref{eqn:general_upper})  gives the correct answer $\normf{\hat u_1 \hat u_1^T -   u_1u_1^T}=0$ as $u_1^Ty_n=0$, while (\ref{eqn:wedin}) leads to a loose upper bound  $2\sqrt{2}\norm{y_n}/\sigma_1$.
\item\emph{Example 2. } Let $Y$ be a matrix with two unique columns such that $y_j$ is equal to either $\theta$ or $-\theta$ for all $j\in[n-1]$ and for some vector $\theta\in\mathr^p$. Then $Y$ is a rank-one matrix with $\sigma_1 = \norm{\theta}\sqrt{n-1}$. Let $y_n=\theta + \epsilon$. As long as $\norm{\theta}\sqrt{n-1}>2\norm{\epsilon}$, we have $\normf{\hat u_1 \hat u_1^T -   u_1u_1^T}\leq 4\sqrt{2}\rho^{-1}(\norm{\theta} + \norm{\epsilon})/\sigma_1$ from (\ref{eqn:general_upper}). If we further assume $\norm{\theta}=1$ and $\epsilon\sim\mathn(0,I_p)$ with $p\ll n$, we have $\normf{\hat u_1 \hat u_1^T -   u_1u_1^T}\lesssim \sqrt{p/n}\rho^{-1}=o(\rho^{-1})$ with high probability. In contrast,     (\ref{eqn:wedin}) only gives $2\sqrt{2}\rho^{-1}$.
\end{itemize}
In the next section, we consider mixture models where the magnitude of  $(\sum_{i\in[\rr]}(\sigma_i^{-1} u_i^T y_n)^2)^{1/2}$ is well-controlled and (\ref{eqn:general_upper}) leads to a much sharper upper bound compared to (\ref{eqn:wedin}). 

Regarding the sharpness of the bound in Theorem \ref{thm:general}, it's worth noting that in Example 1 above, our theorem accurately derives an upper bound of 0, showcasing its optimality in that specific context. To further demonstrate the optimality of our theorem, consider a more intricate example.
\begin{itemize}
\item \emph{Example 3. } Consider a rank-one matrix $Y= \one_p\one_{n-1}^T$ where $\sigma_1= \sqrt{(n-1)p}$ and $u_1 = p^{-1/2}\one_p$. Now, define $y_n = \one_p + sw$, wherein $s$ represents a scalar and $w$ is a unit vector orthogonal to $\one_p$. This means that $y_n$ matches each column of $Y$ for $s=0$ and introduces an orthogonal perturbation for $s\neq 0$.
Given that $\rho = \sigma_1/s = \sqrt{(n-1)p}/s$ and $u_1^Ty_n=\sqrt{p}$, it follows from Theorem \ref{thm:general} that $\normf{\hat u_1 \hat u_1^T -   u_1u_1^T} \leq 4\sqrt{2}s/((n-1)\sqrt{p})$. Since $\hat Y$ is of rank-two, we can express $\hat u_1$ as $\hat u_1 = \sqrt{1-\alpha^2}u_1 + \alpha w$ where $|\alpha|\leq 1$. Note that $\hat u_1^T \hat Y=(\sqrt{(1-\alpha^2)p} \one_{n-1}^T,\sqrt{(1-\alpha^2)p} + \alpha s)$ and $\|{\hat u_1^T \hat Y}\|^2 =(1-\alpha^2)np + \alpha^2 s^2 + 2\sqrt{(1-\alpha^2)p}\alpha s$. For small $s$, we can approximate $\alpha$ (by maximizing $\|{\hat u_1^T \hat Y}\|^2$ over $\alpha$) as $s/(n\sqrt{p})$. Since $\alpha$ is also small, we have $\normf{\hat u_1 \hat u_1^T -   u_1u_1^T} \approx \alpha\sqrt{1-\alpha^2}\normf{u_1w^T+w^Tu_1}=\sqrt{2}\alpha\sqrt{1-\alpha^2} \approx\sqrt{2}s/(n\sqrt{p})$. A comparison with the upper bound deduced from Theorem \ref{thm:general} underscores that the theorem captures the correct rate $s/(n\sqrt{p})$, albeit with a multiplicative constant.
\end{itemize}
However, the sharpness of   Theorem \ref{thm:general} in diverse settings or under different conditions remains an area needing further investigation.

The leave-one-out singular subspace perturbation analysis established in this paper shares conceptual similarities with the leave-one-out technique grounded in random matrix theory and used in the $\ell_\infty$ or $\ell_{2,\infty}$ perturbation analysis \cite{abbe2020entrywise, chen2021spectral}. On a high level, for a matrix \(X\) with an eigenvector \(u\), the goal of  the $\ell_\infty$ analysis is to derive an upper bound for \(\|u\|_\infty =\max_i |u_i|\), where \(\{u_i\}\) represents the coordinates of \(u\). To aid in this task, the leave-one-out technique introduces an auxiliary matrix, formed by excluding the \(i\)th column, \(X_i\), of \(X\), and the corresponding eigenvectors \(u_{-i}\). It approximates \(u_i\) by a quantity involving both \(X_i\) and \(u_{-i}\), leveraging the independence between them.  Our approach aligns with this principle but subsequent analysis distinctly sets it apart. While both methods involve the difference between \(u\) and \(u_{-i}\), the $\ell_\infty$ analysis predominantly uses it as a stepping stone towards \(\|u\|_\infty\),  dealing with it by a direct application of Wedin's theorem. In contrast, our methodology focuses on establishing a sharp bound for this difference. This distinction enables us to characterize the tail probabilities of \(u_i\) rather than just a general $\ell_\infty$ bound and paves the way for a more fine-grained investigation into the performance of spectral methods.

We conclude this section by mentioning that our current analytical framework might extend to scenarios wherein a matrix has multiple columns left out relative to another. Intuitively, as columns can be removed sequentially, Theorem \ref{thm:general} (or its more concise variant, (\ref{eqn:general_upper_simplified})) can be invoked in a successive manner. This iterative application can provide an upper bound on the discrepancy between the two singular subspaces in question. A more intricate way to consider would be a direct extension of the proof of Theorem \ref{thm:general}. Given that this theorem fundamentally revolves around the dynamics between \( U_\rr U_\rr ^Ty_n \) and \( (I-U_\rr U_\rr ^T)y_n \), its generalization is likely to encompass similar, yet more expansive, interactions.

\subsection{Singular Subspace Perturbation in Mixture Models}\label{sec:perturbation_mixture}
The general perturbation theory presented in Theorem \ref{thm:general} is particularly suitable for analyzing singular subspaces of mixture models.

~\\
\emph{Mixture Models. }We consider a mixture model with $k$ centers $\theta_1^*,\theta_2^*,\ldots,\theta_k^*\in\mathr^p$ and a cluster assignment vector $z^*\in[k]^n$.
The observations $X_1,X_2,\ldots, X_n\in\mathr^p$ are generated from	
\begin{align}\label{eqn:mixture_model}
X_i  = \theta^*_{z^*_i} + \epsilon_i,
\end{align}
where $\epsilon_1,\ldots,\epsilon_n\in\mathr^p$ are noises.  The data matrix $X:=(X_1,\ldots, X_n)\in\mathr^{p\times n}$ can be written equivalently in a matrix form
\begin{align}\label{eqn:matrix}
X = P+E,
\end{align}
where $P:=(\theta^*_{z^*_1},\theta^*_{z^*_2},\ldots, \theta^*_{z^*_n})$ is the signal matrix and $E := (\epsilon_1,\ldots,\epsilon_n)$ is the noise matrix. Define $\beta:=\frac{1}{n/k}\min_{a\in[k]} |\{i:z^*_i = a\}| $ such that $\beta n/k$ is the smallest cluster size.

~\\
\indent We are interested in the left singular subspaces of $X$ and its leave-one-out counterparts. For each $i\in[n]$, define $X_{-i}$ to be a submatrix of  $X$ with its $i$th column removed. That is,
\begin{align}\label{eqn:X_minus_i}
X_{-i} := (X_1,\ldots,X_{i-1},X_{i+1},\ldots,X_n)\in\mathr^{p\times (n-1)}.
\end{align}
Let their SVDs be $X = \sum_{j\in[p\wedge n]} \hat\lambda_j\hat u_j\hat v_j^T$  and $X_{-i}=\sum_{j\in[p\wedge(n-1)]} \hat \lambda_{-i,j}\hat u_{-i,j}\hat v_{-i,j}^T$, where  $\hat \lambda_1\geq \hat \lambda_2\geq \ldots \geq \hat \lambda_{p\wedge n}$ and $\hat \lambda_{-i,1}\geq \hat \lambda_{-i,2} \geq \ldots \geq \hat \lambda_{-i,p\wedge(n-1)}$.
Note that the signal matrix $P$ is at most rank-$k$. Then for any $r\in[k]$, define
\begin{align*}
\hat U_{1:r}:=(\hat u_1,\hat u_2,\ldots,\hat u_r)\in\matho^{p\times r}\text{ and }\hat U_{-i,1:r}=(\hat u_{-i,1},\ldots,\hat u_{-i,r})\in\matho^{p\times r}
\end{align*}
to include the  leading $r$ left singular vectors of $X$ and $X_{-i}$, respectively.
We are interested in  controlling the  quantity $ \normf{\hat U_{1:r}\hat U_{1:r}^T - \hat U_{-i,1:r}\hat U_{-i,1:r}^T}$ for each $i\in[n]$.

In Theorem \ref{thm:perturbation}, we provide upper bounds for $\normf{{\hat U_{1:\kr} \hat U^T_{1:\kr} - \hat U_{-i,1:\kr}\hat U_{-i,1:\kr}^T}}$ for all $i\in[n]$ where $\kr\in[k]$ is the rank of the signal matrix $P$. In order to have such a uniform control across all $i\in[n]$, we consider the spectrum of the signal matrix $P$. Let $\lambda_1\geq \lambda_2\geq \ldots\geq \lambda_{p\wedge n}$ be the singular values of $P$ and $\kr$ be the rank of $P$ such that $\kr\in[k]$, $\lambda_{\kr}>0$, and $\lambda_{\kr+1}=0$.

\begin{theorem}\label{thm:perturbation}
Assume   $\beta n/k^2 \geq 10$. 
Assume
\begin{align}\label{eqn:eigengap}
\rho_0 := \frac{\lambda_{\kr}}{\norm{E}} >16.
\end{align}
For any $i\in[n]$, we have
\begin{align}\label{eqn:perturbation_mixture_bounds}
\fnorm{{\hat U_{1:\kr} \hat U^T_{1:\kr} - \hat U_{-i,1:\kr}\hat U_{-i,1:\kr}^T}} \leq  \frac{128}{\rho_0}\br{\sqrt{\frac{k\kr}{\beta n}} +\frac{\norm{\hat U_{-i,1:\kr}\hat U_{-i,1:\kr}^T\epsilon_i}}{\lambda_{\kr}}}.
\end{align}
\end{theorem}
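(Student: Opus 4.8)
The plan is to reduce Theorem~\ref{thm:perturbation} to the generic leave-one-out inequality of Theorem~\ref{thm:general}. Fix $i\in[n]$ and apply Theorem~\ref{thm:general} with $\hat Y$ equal to $X$ after moving its $i$-th column to the last position (a column permutation, which alters neither the left singular vectors nor the left singular values) and $Y=X_{-i}$; then $\rr=\kr$, $y_n=X_i$, $U_\rr=\hat U_{-i,1:\kr}$, $\sigma_j=\hat\lambda_{-i,j}$, $u_j=\hat u_{-i,j}$, while $\hat U_\rr$ induces the projection $\hat U_{1:\kr}\hat U_{1:\kr}^T$, so that the left-hand side of \eqref{eqn:general_upper} equals the left-hand side of \eqref{eqn:perturbation_mixture_bounds}. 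It then suffices to (a) verify \eqref{eqn:general_condition3} and bound $1/\rho$ by $O(1/\rho_0)$, and (b) show the factor $\big(\sum_{j\le\kr}(u_j^Ty_n/\sigma_j)^2\big)^{1/2}=\big(\sum_{j\le\kr}(\hat u_{-i,j}^TX_i/\hat\lambda_{-i,j})^2\big)^{1/2}$ in \eqref{eqn:general_upper} is $\lesssim\sqrt{k\kr/(\beta n)}+\normop{\hat U_{-i,1:\kr}\hat U_{-i,1:\kr}^T\epsilon_i}/\lambda_\kr$. Here $P_{-i},E_{-i}$ denote $P,E$ with the $i$-th column removed, and $n_a:=|\{\ell:z^*_\ell=a\}|$.

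First I would locate the spectrum of $X_{-i}$. Since $\beta n/k^2\ge10$ forces $n_a\ge\beta n/k\ge2$ for every $a$, the column $\theta^*_{z^*_i}$ is still present among the columns of $P_{-i}$, so $\tspan(P_{-i})=\tspan(P)$ and $\rank{P_{-i}}=\kr$. From $P_{-i}P_{-i}^T=PP^T-\theta^*_{z^*_i}(\theta^*_{z^*_i})^T$ together with $PP^T\succeq n_{z^*_i}\theta^*_{z^*_i}(\theta^*_{z^*_i})^T\succeq\tfrac{\beta n}{k}\theta^*_{z^*_i}(\theta^*_{z^*_i})^T$ one obtains the sandwich $\br{1-\tfrac{k}{\beta n}}PP^T\preceq P_{-i}P_{-i}^T\preceq PP^T$, whence the $\kr$-th singular value of $P_{-i}$ is at least $\sqrt{1-k/(\beta n)}\,\lambda_\kr\ge\sqrt{9/10}\,\lambda_\kr$ while its $(\kr+1)$-th singular value is $0$. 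Combining Weyl's inequality with $\normop{E_{-i}}\le\normop{E}$ and \eqref{eqn:eigengap} then gives $\hat\lambda_{-i,\kr}\gtrsim\lambda_\kr$, $\hat\lambda_{-i,\kr+1}\le\normop{E}$, and hence a spectral gap $\hat\lambda_{-i,\kr}-\hat\lambda_{-i,\kr+1}\gtrsim\lambda_\kr$ uniformly in $i$.

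The crux is bounding the factor from $u_j^Ty_n$, and the key idea is to represent the held-out center cheaply in terms of the columns that remain. Let $w\in\mathr^{n-1}$ put mass $1/(n_{z^*_i}-1)$ on each of the $n_{z^*_i}-1\ge\beta n/k-1$ columns of $P_{-i}$ equal to $\theta^*_{z^*_i}$, so $P_{-i}w=\theta^*_{z^*_i}$ and $\normop{w}^2=1/(n_{z^*_i}-1)\lesssim k/(\beta n)$. Substituting $X_{-i}=P_{-i}+E_{-i}$ and $\hat u_{-i,j}^TX_{-i}=\hat\lambda_{-i,j}\hat v_{-i,j}^T$ into $\hat u_{-i,j}^T\theta^*_{z^*_i}=\hat u_{-i,j}^TX_{-i}w-\hat u_{-i,j}^TE_{-i}w$ yields the identity $\hat u_{-i,j}^T\theta^*_{z^*_i}/\hat\lambda_{-i,j}=\hat v_{-i,j}^Tw-\hat u_{-i,j}^TE_{-i}w/\hat\lambda_{-i,j}$; summing squares over $j\le\kr$ and using $\sum_j(\hat v_{-i,j}^Tw)^2\le\normop{w}^2$, $\sum_j(\hat u_{-i,j}^TE_{-i}w)^2\le\normop{E}^2\normop{w}^2$, and $\hat\lambda_{-i,\kr}\gtrsim\lambda_\kr\gtrsim\normop{E}$ shows the $\theta$-part of the factor is $\lesssim\normop{w}\lesssim\sqrt{k/(\beta n)}\le\sqrt{k\kr/(\beta n)}$. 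The $\epsilon_i$-part is immediate: $\big(\sum_{j\le\kr}(\hat u_{-i,j}^T\epsilon_i/\hat\lambda_{-i,j})^2\big)^{1/2}\le\normop{\hat U_{-i,1:\kr}\hat U_{-i,1:\kr}^T\epsilon_i}/\hat\lambda_{-i,\kr}\lesssim\normop{\hat U_{-i,1:\kr}\hat U_{-i,1:\kr}^T\epsilon_i}/\lambda_\kr$. Adding the two via Minkowski's inequality bounds the whole factor. The same two estimates, together with $\normop{\epsilon_i}\le\normop{E}$ and $\normop{(I-\hat U_{-i,1:\kr}\hat U_{-i,1:\kr}^T)X_{-i}}=\hat\lambda_{-i,\kr+1}\le\normop{E}$, give $\normop{(I-\hat U_{-i,1:\kr}\hat U_{-i,1:\kr}^T)X_i}\le(1+2\normop{w})\normop{E}\le2\normop{E}$, so $\rho\ge c\,\rho_0>2$ and \eqref{eqn:general_condition3} holds. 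Inserting $1/\rho\lesssim1/\rho_0$ and the factor bound into \eqref{eqn:general_upper} and tracking the numerical constants (with $\kr\ge1$ absorbing the remaining slack) yields \eqref{eqn:perturbation_mixture_bounds}.

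I expect the main obstacle to be the step in the previous paragraph. The naive route — bounding the factor in \eqref{eqn:general_upper} by $\normop{U_\rr U_\rr^Ty_n}/\sigma_\rr\le\br{\normop{\theta^*_{z^*_i}}+\normop{\hat U_{-i,1:\kr}\hat U_{-i,1:\kr}^T\epsilon_i}}/\hat\lambda_{-i,\kr}$ — is useless, because $\normop{\theta^*_{z^*_i}}$ need not be comparable to $\lambda_\kr$ when the Gram matrix of the centers is ill-conditioned. Writing $\theta^*_{z^*_i}=P_{-i}w$ with a short $w$ and passing through the SVD of $X_{-i}$ is exactly what converts the dangerous $1/\sigma_j$ weights of Theorem~\ref{thm:general} into harmless inner products $\hat v_{-i,j}^Tw$, and it is this maneuver, rather than any individual perturbation estimate, that makes the final bound free of the condition number of the signal.
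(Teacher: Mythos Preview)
Your proposal is correct and follows essentially the same route as the paper: reduce to Theorem~\ref{thm:general}, control the leave-one-out spectral gap via Weyl, and — the crucial step — represent $\theta^*_{z^*_i}$ as an average of the matching columns of $P_{-i}$ so that $\hat u_{-i,j}^T\theta^*_{z^*_i}/\hat\lambda_{-i,j}$ becomes $\hat v_{-i,j}^Tw$ plus a small noise correction. Your vectorized packaging with the explicit weight $w$ is slightly cleaner than the paper's coordinatewise Cauchy--Schwarz (indeed, bounding $\sum_{j\le\kr}(\hat v_{-i,j}^Tw)^2\le\|w\|^2$ at once gives a $\theta$-contribution of order $\sqrt{k/(\beta n)}$ rather than the paper's $\sqrt{k\kr/(\beta n)}$), and you reuse $w$ to bound $\|(I-\hat U_{-i,1:\kr}\hat U_{-i,1:\kr}^T)\theta^*_{z^*_i}\|$ where the paper instead passes through $\fnorm{(I-\hat U_{-i,1:\kr}\hat U_{-i,1:\kr}^T)P_{-i}}$, but these are cosmetic differences and the constants you obtain are comfortably within~$128$.
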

Theorem \ref{thm:perturbation} leverages the mixture model structure (\ref{eqn:mixture_model}) that the signal matrix $P$ has only $k$ unique columns with each appearing at least $\beta n/k$ times. The assumption $\beta n/k^2\geq 10$ helps ensure that spectrum and singular vectors of $P$ do not change significantly if any column of $P$ is removed. We require the condition (\ref{eqn:eigengap})   so that $\hat \lambda_{-i,\kr}-\hat \lambda_{-i,\kr+1}>2\normop{\hat U_{-i,1:\kr}\hat U_{-i,1:\kr}^TX_i}$ holds for each $i\in[n]$, and hence Theorem \ref{thm:general} can be applied uniformly for all $i\in[n]$.  The upper bound (\ref{eqn:perturbation_mixture_bounds}) is a product of $\rho_0^{-1}$ and a sum of two terms.  The second term $\normop{\hat U_{-i,1:\kr}\hat U_{-i,1:\kr}^T\epsilon_i}/\lambda_{\kr}$ can be trivially upper bounded by $\norm{E}/\lambda_{\kr}\leq\rho_0^{-1}$. The first term $\sqrt{k\kr/(\beta n)}=o(1)$ if $\beta n/k^2\gg 1$, for example, when $\beta$ is a constant  and $k\ll \sqrt{n}$. Then (\ref{eqn:perturbation_mixture_bounds}) leads to $\normf{{\hat U_{1:\kr} \hat U^T_{1:\kr} - \hat U_{-i,1:\kr}\hat U_{-i,1:\kr}^T}}\lesssim o(1)\rho_0^{-1} + \rho_0^{-2}$,  superior to the upper bound (\ref{eqn:wedin}) obtained from the direct application of  Wedin's Theorem that is  of  order  $\rho_0^{-1}$.

Theorem \ref{thm:perturbation} studies the perturbation for the leading $\kr$ singular subspaces where $\kr$ is the rank of $P$. In the following Theorem \ref{thm:perturbation_r}, we consider an extension to $\normf{{\hat U_{1:r} \hat U^T_{1:r} - \hat U_{-i,1:r}\hat U_{-i,1:r}^T}}$ where $r$ is not necessarily $\kr$.

\begin{theorem}\label{thm:perturbation_r}
Assume   $\beta n/k^2 \geq 10$.   Assume there exists some $r\in[k]$ such that
\begin{align}\label{eqn:perturbation_r}
\tilde\rho_0 := \frac{\lambda_r -\lambda_{r+1}}{\max\cbr{\norm{E},\sqrt{\frac{k^2}{\beta n}}\lambda_{r+1}}} >16.
\end{align}
For any $i\in[n]$, we have
\begin{align}\label{eqn:perturbation_r_upper_bound}
\fnorm{{\hat U_{1:r} \hat U_{1:r}^T - \hat U_{-i,1:r}\hat U_{-i,1:r}^T}} \leq   \frac{128}{\tilde\rho_0}\br{\frac{\sqrt{kr}}{\sqrt{\beta n}} +\frac{\norm{\hat U_{-i,1:r}\hat U_{-i,1:r}^T\epsilon_i}}{\lambda_{r}}}.
\end{align}
\end{theorem}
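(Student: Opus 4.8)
The plan is to reduce Theorem \ref{thm:perturbation_r} to a single application of Theorem \ref{thm:general} for each fixed $i\in[n]$, playing the role of the general matrices by setting $\hat Y = X$ (with last column $y_n$ replaced by $X_i$) and $Y = X_{-i}$, and $\rr = r$. The ingredients we must supply are: (a) a lower bound on the spectral gap $\hat\lambda_{-i,r} - \hat\lambda_{-i,r+1}$ of the leave-one-out matrix $X_{-i}$; (b) an upper bound on the perturbation magnitude $\normop{(I - \hat U_{-i,1:r}\hat U_{-i,1:r}^T)X_i}$; and (c) control of the key quantity $(\sum_{j=1}^r (\hat u_{-i,j}^T X_i / \hat\lambda_{-i,j})^2)^{1/2}$ appearing on the right-hand side of \eqref{eqn:general_upper}. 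For (a) and (b), the first step is to compare the spectrum and singular subspaces of $X_{-i}$ with those of $P_{-i}$ (the analogous leave-one-out submatrix of the signal matrix), and then compare $P_{-i}$ with $P$ itself. Because $P$ has only $k$ distinct columns each repeated at least $\beta n/k$ times, removing one column of $P$ changes each nonzero singular value by a multiplicative factor $1 + O(k/(\beta n))$ — this is where the hypothesis $\beta n/k^2 \geq 10$ enters — so $\lambda_{-i,r} - \lambda_{-i,r+1} \geq (1 - O(\sqrt{k/(\beta n)}))(\lambda_r - \lambda_{r+1}) - O(\sqrt{k^2/(\beta n)})\lambda_{r+1}$, and Weyl's inequality transfers this to $\hat\lambda_{-i,r} - \hat\lambda_{-i,r+1}$ at the cost of $2\normop{E_{-i}} \leq 2\normop{E}$. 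The condition \eqref{eqn:perturbation_r}, namely $\tilde\rho_0 > 16$, is calibrated exactly so that the resulting gap dominates $2\normop{(I-\hat U_{-i,1:r}\hat U_{-i,1:r}^T)X_i}$, verifying hypothesis \eqref{eqn:general_condition3} with $\rho \gtrsim \tilde\rho_0$.

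The second main step is to bound the quantity in (c). Write $X_i = \theta^*_{z^*_i} + \epsilon_i$ and split $\hat u_{-i,j}^T X_i = \hat u_{-i,j}^T \theta^*_{z^*_i} + \hat u_{-i,j}^T \epsilon_i$. The $\epsilon_i$ part contributes $(\sum_{j=1}^r (\hat u_{-i,j}^T \epsilon_i)^2)^{1/2}/\lambda_{-i,r} \approx \normop{\hat U_{-i,1:r}\hat U_{-i,1:r}^T \epsilon_i}/\lambda_r$, which is the second term in \eqref{eqn:perturbation_r_upper_bound}. For the $\theta^*_{z^*_i}$ part, the key observation is that the center $\theta^*_{z^*_i}$ lies (approximately) in the column span of $P_{-i}$ — indeed it equals one of the columns of $P_{-i}$ since the cluster $z^*_i$ has at least $\beta n/k \geq 2$ members. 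Projecting onto the top-$r$ singular subspace of $X_{-i}$ and dividing by $\lambda_{-i,j} \geq \lambda_{-i,r}$, one gets a bound of order $\normop{\theta^*_{z^*_i}}/\lambda_{-i,r}$; since $\normop{\theta^*_{z^*_i}}^2 \leq \normf{P}^2/(\beta n/k) \leq k\normop{P}^2/(\beta n) = k\lambda_1^2/(\beta n)$ and $\lambda_1/\lambda_r$ must be handled — in fact a cleaner route is to bound $\sum_j (\hat u_{-i,j}^T \theta^*_{z^*_i})^2/\lambda_{-i,j}^2$ directly by exploiting that $\theta^*_{z^*_i}$ is a genuine column of $P_{-i}$, so its coordinates in the singular basis $\{\hat u_{-i,j}\}$ are controlled by the corresponding singular values, yielding the $\sqrt{kr/(\beta n)}$ term. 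This is the step where one must be most careful about whether to compare against the singular structure of $P_{-i}$ or of $X_{-i}$, and it is the main obstacle: making the "a column of $P_{-i}$ has small norm relative to the singular values" heuristic precise while accounting for the noise perturbation $E_{-i}$ between $X_{-i}$ and $P_{-i}$, using a Davis–Kahan/Wedin-type bound on $\hat U_{-i,1:r}\hat U_{-i,1:r}^T - U_{-i,1:r}U_{-i,1:r}^T$ (available under \eqref{eqn:perturbation_r}) to swap the projections.

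Finally, assembling the pieces: Theorem \ref{thm:general} gives $\normf{\hat U_{1:r}\hat U_{1:r}^T - \hat U_{-i,1:r}\hat U_{-i,1:r}^T} \leq (4\sqrt{2}/\rho)\,(\sum_{j=1}^r(\hat u_{-i,j}^T X_i/\hat\lambda_{-i,j})^2)^{1/2}$ with $\rho \gtrsim \tilde\rho_0$, and substituting the two-term bound from step two — the center contribution $\lesssim \sqrt{kr/(\beta n)}$ and the noise contribution $\lesssim \normop{\hat U_{-i,1:r}\hat U_{-i,1:r}^T\epsilon_i}/\lambda_r$ — produces \eqref{eqn:perturbation_r_upper_bound} after absorbing all absolute constants into the factor $128$. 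The argument is entirely deterministic; no probabilistic input is used, since all randomness in $E$ is summarized through $\normop{E}$ in the hypothesis \eqref{eqn:perturbation_r}. I would expect Theorem \ref{thm:perturbation} to follow as the special case $r = \kr$ (where $\lambda_{r+1} = \lambda_{\kr+1} = 0$, so $\tilde\rho_0$ reduces to $\rho_0$), and in fact the two theorems could share almost all of the proof.
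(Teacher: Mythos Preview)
Your proposal follows the same architecture as the paper's proof: reduce to Theorem~\ref{thm:general} by verifying the gap condition for $X_{-i}$ and bounding $\sum_{j\le r}(\hat u_{-i,j}^T X_i/\hat\lambda_{-i,j})^2$ via the split $X_i=\theta^*_{z^*_i}+\epsilon_i$ and the repeated-column structure of $P_{-i}$. Your step (a) is in fact cleaner than the paper's: you correctly observe that $\lambda_{-i,j}^2\ge(1-k/(\beta n))\lambda_j^2$ and $\lambda_{-i,j}\le\lambda_j$ hold for every $j$ (since $P_{-i}P_{-i}^T=PP^T-\theta^*_{z^*_i}\theta^{*T}_{z^*_i}$ and every center appears at least $\beta n/k$ times in $P$), which gives the leave-one-out spectral gap directly. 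The paper instead routes through an auxiliary matrix $\tilde P_{-i}$ obtained by replacing the $i$th column of $P$ with $U_{-i,1:r}U_{-i,1:r}^T\theta^*_{z^*_i}$, compares its singular values to those of both $P$ and $P_{-i}$, and only then reaches the analogous inequality; this is more elaborate and yields a slightly weaker intermediate constant.

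There is, however, one wrong turn in your step (c). Your suggestion to pass from $\hat U_{-i,1:r}$ to $U_{-i,1:r}$ via a Davis--Kahan/Wedin bound would introduce a term of order $\norm{\theta^*_{z^*_i}}\cdot\norm{E}/(\lambda_r-\lambda_{r+1})$, and since $\norm{\theta^*_{z^*_i}}$ can be as large as $\sqrt{k\kr/(\beta n)}\,\lambda_1$ with no control of $\lambda_1/\lambda_r$ available from \eqref{eqn:perturbation_r}, this route does not close. The paper avoids any such swap: using the SVD identity $\hat u_{-i,j}^T X_{-i}=\hat\lambda_{-i,j}\hat v_{-i,j}^T$ and averaging over the at least $\beta n/k-1$ columns of $X_{-i}$ whose signal part equals $\theta^*_{z^*_i}$, it proves directly that $|\hat u_{-i,j}^T\theta^*_{z^*_i}|\le(\hat\lambda_{-i,j}+\norm{E})/\sqrt{\beta n/k-1}$ (display \eqref{eqn:perturbation_proof_5}). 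This is exactly your ``coordinates controlled by the corresponding singular values'' heuristic made rigorous \emph{without} any subspace perturbation swap. The same device also supplies what you left implicit in step (b): combining the rank-$\kr$ bound \eqref{eqn:perturbation_proof_2} with $\sum_{j=r+1}^{\kr}(\hat u_{-i,j}^T\theta^*_{z^*_i})^2$ bounded via \eqref{eqn:perturbation_proof_5} gives $\norm{(I-\hat U_{-i,1:r}\hat U_{-i,1:r}^T)\theta^*_{z^*_i}}\lesssim\sqrt{k\kr/(\beta n)}(\lambda_{r+1}+\norm{E})$, which is precisely where the $\sqrt{k^2/(\beta n)}\,\lambda_{r+1}$ in the definition of $\tilde\rho_0$ originates.
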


In Theorem \ref{thm:perturbation_r}, $r\in[k]$ is any number such that (\ref{eqn:perturbation_r}) is satisfied. When $r$ is chosen to be $\kr$,  (\ref{eqn:perturbation_r}) is reduced to (\ref{eqn:eigengap}), and (\ref{eqn:perturbation_r_upper_bound}) leads to the same upper bound as (\ref{eqn:perturbation_mixture_bounds}).
When $r<\kr$, $\lambda_{r+1}$ is non-zero and in (\ref{eqn:perturbation_r}) it needs to be smaller than the spectral gap $\lambda_r-\lambda_{r+1}$ after some scaling factor. 
To provide some intuition on the condition  (\ref{eqn:perturbation_r}) when $r<\kr$, let the SVD of the signal matrix $P$ be $P=\sum_{j\in[p\wedge n]}\lambda_j u_jv_j^T$ and define $U_{1:r}:=(u_1,u_2,\ldots, u_r)\in\matho^{p\times r}$ and $U_{(r+1):\kr}:=(u_{r+1},u_{r+2},\ldots, u_{\kr})\in\matho^{p\times (\kr-r)}$. Then the data matrix (\ref{eqn:matrix}) can be written equivalently as 
\begin{align}\label{eqn:mixture_r}
X=P'+E', \text{ where }P':=U_{1:r}U_{1:r}^TP\text{ and }E':=E+U_{(r+1):\kr}U_{(r+1):\kr}^TP.
\end{align}
Since it is still a mixture model, Theorem \ref{thm:perturbation} can be applied. Nevertheless, the condition (\ref{eqn:eigengap}) essentially requires $\lambda_{r}/(\norm{E}+\lambda_{r+1})>16$ as $\norm{E'}\leq \norm{E} + \normop{U_{(r+1):\kr}U_{(r+1):\kr}^TP}=\norm{E} + \lambda_{r+1}$, which is  stronger than the condition (\ref{eqn:perturbation_r}). In order to weaken the requirement on the spectral gap  into (\ref{eqn:perturbation_r}), we  study the contribution of $U_{(r+1):\kr}U_{(r+1):\kr}^TP$ towards to the leading $r$ singular subspaces perturbation of $E$. It turns out that its contribution is roughly $\sqrt{k^2/(\beta n)}\lambda_{r+1}$ instead of $\lambda_{r+1}$, due to the fact that $U_{(r+1):\kr}U_{(r+1):\kr}^TP$ has at most $k$ unique columns with each one appearing at least $\beta n/k$ times.

Theorem \ref{thm:perturbation} and Theorem \ref{thm:perturbation_r} require $\beta n/k^2$ be sufficiently large. Further in the paper, results such as Lemma \ref{lem:thresholding} need an even stronger condition wherein $\beta n/k^4$ should be large. We acknowledge that these dependencies on $k$ appear non-optimal. The current formulations stem from challenges faced during our analysis, resulting in these inherent dependencies. We hope to explore more optimal dependency in future research.

\section{Spectral Clustering for Mixture Models}\label{sec:spectral_clustering_mixture_model}

\subsection{Spectral Clustering and Polynomial Error Rate}

Recall the definition of the mixture model in (\ref{eqn:mixture_model}) and also in (\ref{eqn:matrix}). The goal of clustering is to estimate the cluster assignment vector  $z^*$ from the observations $X_1,X_2,\ldots, X_n$.   Since the signal matrix $P$ is of low rank, a natural idea  is to project the observations $\{X_i\}_{i\in[n]}$ onto a low dimensional space before applying classical clustering methods such as variants of $k$-means. This leads to the spectral clustering presented in Algorithm \ref{alg:main}.

\begin{algorithm}[h]
\SetAlgoLined
\KwIn{Data matrix $X=(X_1,\ldots,X_n)\in\mathr^{p\times n}$, number of clusters $k$, number of singular vectors $r$}
\KwOut{Cluster assignment vector $\hat z\in [k]^n$}
 \nl Perform SVD on $X$ to have $$X = \sum_{i=1}^{p \wedge n}   \hat \lambda_i   \hat u_i   \hat v_i^T,$$ where $  \hat \lambda_1 \geq   \hat \lambda_2 \geq \ldots \geq   \hat \lambda_{p\wedge n} \geq 0$ and $\cbr{  \hat u_i}_{i=1}^{p \wedge n}\in\mathr^p, \cbr{  \hat v_i}_{i=1}^{p \wedge n}\in\mathr^n$. Let $ \hat U_{1:r} := \br{ \hat u_1,\ldots,  \hat u_r}\in\mathr^{p\times r}$.

 \nl Perform $k$-means on the columns of $\hat U_{1:r}^T X$. That is, 
 \begin{align}
 \br{\hat z,\cbr{\hat c_j}_{j\in[k]}} = \argmin_{ z\in [k]^n , \cbr{ c_j}_{j\in[k]} \in \mathr^{r}} \sum_{i\in[n]} \norm{\hat U_{1:r}^T X_i -  c_{z_i}}^2.\label{eqn:spectral_low}
 \end{align}
\caption{Spectral Clustering}\label{alg:main}
\end{algorithm}

 In (\ref{eqn:spectral_low}), the dimensionality of each data point $\hat U_{1:r}^T X_i $ is $r$, reduced from  original dimensionality $p$.
 This is computationally appealing as $r$ can be much smaller than $p$.
 The second step of Algorithm \ref{alg:main} is  the $k$-means on the columns of $\hat U_{1:r}^TX$, which is equivalent to performing $k$-means onto the columns of $\hat U_{1:r}\hat U_{1:r}^TX\in\mathr^{p\times n}$. That is, define $\hat \theta_a = \hat U_{1:r} \hat c_a$ for each $a\in[k]$. It can be shown that (see Lemma 4.1 of \citep{loffler2019optimality})
 \begin{align}
 \br{\hat z,\cbr{\hat \theta_j}_{j\in[k]}} = \argmin_{ z\in [k]^n , \cbr{ \theta_j}_{j\in[k]} \in \mathr^{p}} \sum_{i\in[n]} \norm{\hat U_{1:r}\hat U_{1:r}^T X_i - \theta_{z_i}}^2,\label{eqn:spectral}
\end{align}
due to the fact that $\hat U_{1:r}$ has orthonormal columns. As a result,  in the rest of the paper, we  carry out our analysis on $\hat z$ using (\ref{eqn:spectral}).

Before characterizing the theoretical performance of the spectral clustering $\hat z$, we give the definition of the misclustering error which quantifies the distance between an estimator and the ground truth $z^*$.
For any $z\in[k]^n$, its misclustering error is defined as 
\begin{align*}
\ell(z,z^*) := \min_{\phi \in \Phi} \frac{1}{n} \sum_{i\in[n]}\indic{z_i = \phi(z_i^*)},
\end{align*}
where $\Phi := \{\phi: \phi\text{ is a bijection from }[k]\text{ to }[k]\}$. The minimization of $\Phi$ is due to that the cluster assignment vector $z^*$ is identifiable only up to a permutation of the labels $[k]$. In addition to $\beta$ that controls the smallest cluster size, another important quantity in this clustering task is the separation among the  centers. Define $\Delta$ to be the minimum distance among centers, i.e.,
\begin{align*}
\Delta := \min_{a,b\in[k]:a\neq b}\norm{\theta^*_a - \theta^*_b}.
\end{align*}
As we will see later, $\Delta$ determines the difficulty of the clustering task and plays a pivotal role.

In Proposition \ref{prop:poly}, a rough upper bound is provided on the misclustering error $\ell(\hat z,z^*)$ that takes a polynomial expression (\ref{eqn:l_poly}). Notably, Proposition \ref{prop:poly} is deterministic with no assumption on the distribution or the independence of the noises $\{\epsilon_i\}_{i\in[n]}$. In fact, the noise matrix $E$ can be an arbitrary matrix as long as the data matrix has the decomposition (\ref{eqn:matrix}) and the separation condition $(\ref{eqn:Delta_poly})$ is satisfied. In addition, it requires no spectral gap condition. Proposition \ref{prop:poly} is essentially an extension of  Lemma 4.2 in \cite{loffler2019optimality} which is only for the Gaussian mixture model and needs $r=k$.  We include its proof in Appendix \ref{sec:auxiliary} for completeness. Recall $\kr$ denotes the rank of the signal matrix $P$.

\begin{proposition}\label{prop:poly}
Consider the spectral clustering $\hat z$ of Algorithm \ref{alg:main} with $\kr\leq r\leq k$. Assume
\begin{align}\label{eqn:Delta_poly}
\psi_0:=\frac{\Delta}{\beta^{-0.5}kn^{-0.5}\norm{E}} \geq 16.
\end{align}
Then $\ell(\hat z,z^*)\leq \beta /(2k)$. Furthermore, there exists one $\phi\in \Phi$ such that $\hat z$ satisfies
\begin{align}\label{eqn:l_poly}
\ell(\hat z,z^*)= \frac{1}{n}|\{i\in[n]:\hat z_i \neq \phi(z^*_i)\}| \leq \frac{C_0k\norm{E}^2}{n\Delta^2},
\end{align}
and
\begin{align}\label{eqn:hat_theta_difference}
\max_{a\in[k]} \norm{\hat\theta_{\phi(a)} - \theta_a^*}\leq C_0\beta^{-0.5}kn^{-0.5}\norm{E},
\end{align}
where $C_0= 128$.
\end{proposition}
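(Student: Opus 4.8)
The plan is to run the standard $k$-means-to-clustering reduction, but with the spectral projection $\hat{U}_{1:r}\hat{U}_{1:r}^T$ inserted everywhere, following the template of Lemma 4.2 in \cite{loffler2019optimality} while tracking the dependence on $\beta$, $k$, and $r$. Write $\hat{P} := \hat{U}_{1:r}\hat{U}_{1:r}^T X$ for the projected data matrix and note that since $\kr \le r$, the signal matrix $P$ is unchanged by the projection, so $\hat{P} - P = \hat{U}_{1:r}\hat{U}_{1:r}^T X - P$. The first key estimate is a global Frobenius bound: because $(\hat z, \{\hat\theta_j\})$ is the $k$-means minimizer in (\ref{eqn:spectral}) and $(z^*, \{\theta^*_j\})$ is a feasible competitor, one gets $\sum_i \|\hat{U}_{1:r}\hat{U}_{1:r}^T X_i - \hat\theta_{\hat z_i}\|^2 \le \sum_i \|\hat{U}_{1:r}\hat{U}_{1:r}^T X_i - \theta^*_{z^*_i}\|^2$. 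Expanding both sides and using that $\hat{U}_{1:r}\hat{U}_{1:r}^T$ is a rank-$r$ projection containing the column space of $P$, this yields $\sum_i \|\hat\theta_{\hat z_i} - \theta^*_{z^*_i}\|^2 \lesssim \|\hat{U}_{1:r}\hat{U}_{1:r}^T E\|_F^2 \le r\,\|E\|^2$ (the factor $r$ from the rank of the projection times the operator norm). This is the quantitative engine.

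Next I would convert this Frobenius control into the combinatorial misclustering bound. Define the set of "bad" indices $\mathcal{B} = \{i : \|\hat\theta_{\hat z_i} - \theta^*_{z^*_i}\| \ge \Delta/2\}$; the global bound forces $|\mathcal{B}| \lesssim r\|E\|^2/\Delta^2$. On the complement, each $\hat\theta_{\hat z_i}$ is within $\Delta/2$ of exactly one true center (by the separation $\Delta$), so the map $\hat z_i \mapsto$ (nearest true center) is well-defined on indices outside $\mathcal{B}$. One then argues this induces a consistent labeling: provided each true cluster has enough good points — which holds because the smallest cluster has size $\beta n/k$ and the separation condition (\ref{eqn:Delta_poly}) makes $|\mathcal{B}|$ strictly smaller than $\beta n /(2k)$ — the per-cluster majority vote defines a single bijection $\phi \in \Phi$, and $\{i : \hat z_i \ne \phi(z^*_i)\} \subseteq \mathcal{B}$. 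This simultaneously gives $\ell(\hat z, z^*) \le |\mathcal{B}|/n \le \beta/(2k)$ and the polynomial rate (\ref{eqn:l_poly}) with the constant absorbed; the condition (\ref{eqn:Delta_poly}) is exactly what is needed to close the "no cluster is swallowed" argument and to ensure the bound $r\|E\|^2/\Delta^2 \le C k\|E\|^2/(n\Delta^2)\cdot n$ is in the right regime. For (\ref{eqn:hat_theta_difference}), I would compute, for each $a$, the centroid identity $\hat\theta_{\phi(a)} = \frac{1}{|\{i : \hat z_i = \phi(a)\}|}\sum_{\hat z_i = \phi(a)} \hat{U}_{1:r}\hat{U}_{1:r}^T X_i$, split the sum over good versus bad indices within that cluster, use the cluster-size lower bound $\beta n/k$ minus the $|\mathcal{B}|$ correction in the denominator, and bound the good part by a projected average of noise vectors (of norm $\lesssim \sqrt{r}\|E\|/\sqrt{\beta n/k} \lesssim \beta^{-1/2} k^{1/2} n^{-1/2}\|E\|\cdot\sqrt{r/k}$) and the bad part by $|\mathcal{B}| \cdot (\text{bounded diameter})$ over the cluster size.

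The main obstacle is bookkeeping the factor-of-$r$ (or $\sqrt{r}$) contributions and showing they can be dominated by the stated constant $C_0 = 128$ uniformly for all $\kr \le r \le k$ — in particular making sure the $\sqrt{r}$ in the projected-noise estimate and the $k$ in (\ref{eqn:Delta_poly}) and (\ref{eqn:hat_theta_difference}) combine correctly (e.g. $\sqrt{r}/\sqrt{\beta n/k} = \sqrt{rk/(\beta n)} \le \sqrt{k^2/(\beta n)} = k/\sqrt{\beta n}$, which is where the $\beta^{-1/2} k n^{-1/2}$ scaling comes from). A secondary delicate point is the "consistent bijection" step: one must verify that the separation hypothesis genuinely prevents two distinct estimated labels from both mapping to the same true center and that every true cluster is hit, which is where the interplay between $\beta n/k$ and the $16$ in (\ref{eqn:Delta_poly}) is used; this is routine once the counting inequality $|\mathcal{B}| < \beta n/(2k)$ is in hand, but the constants must be tracked honestly. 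Everything else — the expansion of the $k$-means objective, the triangle inequalities, the centroid computation — is mechanical.
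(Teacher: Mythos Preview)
Your proposal has a genuine gap at the very first step. You claim that ``$\hat{U}_{1:r}\hat{U}_{1:r}^T$ is a rank-$r$ projection containing the column space of $P$,'' which would give $\hat{U}_{1:r}\hat{U}_{1:r}^T P = P$ and hence $\hat P - P = \hat{U}_{1:r}\hat{U}_{1:r}^T E$. This is false: $\hat{U}_{1:r}$ consists of the leading left singular vectors of the \emph{noisy} matrix $X = P + E$, not of $P$, and nothing in the hypotheses forces $\tspan(P) \subset \tspan(\hat U_{1:r})$. Without this, your expansion of the $k$-means objective does not collapse to $\|\hat U_{1:r}\hat U_{1:r}^T E\|_F^2$; the residual $(I - \hat U_{1:r}\hat U_{1:r}^T)P$ enters both the right-hand side and the cross terms, and controlling it would itself require a Davis--Kahan/Wedin step with a spectral-gap assumption that Proposition~\ref{prop:poly} deliberately avoids.

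The paper sidesteps this entirely. Instead of claiming $\hat U_{1:r}\hat U_{1:r}^T P = P$, it bounds $\fnorm{\hat P - P}$ by a rank argument: both $\hat P$ and $P$ have rank at most $k$, so $\hat P - P$ has rank at most $2k$, and therefore
\[
\fnorm{\hat P - P} \le \sqrt{2k}\,\normop{\hat P - P} \le \sqrt{2k}\,\bigl(\normop{\hat P - X} + \normop{X - P}\bigr) \le 2\sqrt{2k}\,\norm{E},
\]
using only that $\hat P$ is the best rank-$r$ approximation of $X$ and $r \ge \kr$. Then $k$-means optimality (with $P$ as competitor) gives $\fnorm{\hat\Theta - \hat P} \le \fnorm{\hat P - P}$, hence $\fnorm{\hat\Theta - P} \le 4\sqrt{2k}\,\norm{E}$. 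From here your bad-set $\mathcal{B}$ and bijection arguments are exactly what the paper does. For (\ref{eqn:hat_theta_difference}) the paper is also simpler than your centroid splitting: since at least $\beta n/k - |S| \ge \beta n/(2k)$ columns of $\hat\Theta - P$ equal exactly $\hat\theta_{\phi(a)} - \theta^*_a$, one reads off $\norm{\hat\theta_{\phi(a)} - \theta^*_a}^2 \le \fnorm{\hat\Theta - P}^2 / (\beta n/(2k))$ directly, with no good/bad decomposition needed.
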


Proposition \ref{prop:poly} provides a starting point for our further theoretical analysis. In the following sections, we are going to provide a sharper analysis for the spectral clustering $\hat z$ that is beyond the polynomial rate stated in (\ref{eqn:l_poly}), with the help of singular subspaces perturbation established in Section \ref{sec:perturbation}.

\subsection{Entrywise Error Decompositions}

In this section, we are going to develop a fine-grained and entrywise analysis on the performance of $\hat z$. Proposition \ref{prop:poly} points out that there exists a permutation $\phi\in\Phi$ such that $n\ell(\hat z,z^*)= |\{i\in[n]:\hat z_i \neq \phi(z^*_i)\}| \leq n\beta/(2k)$. Since the smallest cluster size in $z^*$ is at least $\beta n/k$, such permutation $\phi$ is unique. With $\phi$ identified, $\hat z_i \neq \phi(z_i^*)$ means that the $i$th data point $X_i$ is incorrectly clustered in $\hat z$, for each $i\in[n]$. The following Lemma \ref{lem:decomposition_simple} studies the event $\hat z_i \neq \phi(z_i^*)$ and shows that it is determined by the magnitude of $\normop{\hat U_{1:r}\hat U_{1:r}^T\epsilon_i  }$.

\begin{lemma}\label{lem:decomposition_simple}
Consider the spectral clustering $\hat z$  of Algorithm \ref{alg:main}   with $\kr\leq r\leq k$. Assume  (\ref{eqn:Delta_poly}) holds. Let $\phi\in\Phi$ be the permutation such that $\ell(\hat z,z^*)=\frac{1}{n}|\{i\in[n]:\hat z_i \neq \phi(z^*_i)\}| $. Then there exists a constant $C>0$ such that for any $i\in[n]$,
\begin{align}\label{eqn:decomposition1}
\indic{\hat z_i \neq \phi(z_i^*)}  \leq \indic{\br{1-C\psi_0^{-1}}\Delta\leq 2\norm{\hat U_{1:r}\hat U_{1:r}^T\epsilon_i  } }.
\end{align}
\end{lemma}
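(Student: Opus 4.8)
The plan is to analyze the optimality condition of the $k$-means problem (\ref{eqn:spectral}) at the solution $\hat z$, contrasting the cost of assigning $X_i$ to its estimated cluster $\hat z_i$ versus the ``correct'' cluster $\phi(z_i^*)$. Write $\hat X_i := \hat U_{1:r}\hat U_{1:r}^T X_i$ for the projected data point. Since $(\hat z, \{\hat\theta_j\})$ is a minimizer of (\ref{eqn:spectral}), in particular no single reassignment of $X_i$ can lower the objective, so
\begin{align*}
\norm{\hat X_i - \hat\theta_{\hat z_i}}^2 \leq \norm{\hat X_i - \hat\theta_{\phi(z_i^*)}}^2.
\end{align*}
On the event $\hat z_i \neq \phi(z_i^*)$, I would expand both sides and rearrange into a lower bound on $\norm{\hat\theta_{\hat z_i} - \hat\theta_{\phi(z_i^*)}}$ in terms of how close $\hat X_i$ is to each estimated center. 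The first step, then, is to convert this into a statement about the true centers: using (\ref{eqn:hat_theta_difference}) from Proposition \ref{prop:poly}, each $\hat\theta_{\phi(a)}$ is within $C_0\beta^{-0.5}kn^{-0.5}\norm{E} = C_0\Delta/\psi_0$ of $\theta_a^*$, so distinct estimated centers are separated by at least $(1 - 2C_0/\psi_0)\Delta$, and the triangle inequality lets me replace $\hat\theta$'s by $\theta^*$'s throughout at the cost of $O(\psi_0^{-1}\Delta)$ terms.

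Next I would control $\norm{\hat X_i - \hat\theta_{\phi(z_i^*)}}$, the distance from the projected point to its \emph{correct} estimated center. Decompose $\hat X_i = \hat U_{1:r}\hat U_{1:r}^T(\theta^*_{z_i^*} + \epsilon_i)$ and use that $P$ is rank $\kr \leq r$, so $\hat U_{1:r}\hat U_{1:r}^T\theta^*_{z_i^*}$ differs from $\theta^*_{z_i^*}$ only through the subspace perturbation $\hat U_{1:r}\hat U_{1:r}^T - U U^T$ acting on $\theta^*_{z_i^*}$, where $U$ spans the column space of $P$; this is bounded by $\normf{\hat U_{1:r}\hat U_{1:r}^T - UU^T}\norm{\theta^*_{z_i^*}}$, and combined with (\ref{eqn:hat_theta_difference}) contributes another $O(\psi_0^{-1}\Delta)$. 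After collecting terms, the reassignment inequality forces
\begin{align*}
(1 - C\psi_0^{-1})\Delta \leq 2\norm{\hat U_{1:r}\hat U_{1:r}^T\epsilon_i},
\end{align*}
which is exactly (\ref{eqn:decomposition1}), since the geometry reduces to: if $\hat X_i$ is assigned to the wrong center, the residual $\hat U_{1:r}\hat U_{1:r}^T\epsilon_i$ must be large enough to push $\hat X_i$ roughly halfway across the gap between two centers.

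The main obstacle I anticipate is bookkeeping the constant $C$ cleanly while ensuring the argument is genuinely \emph{deterministic} and uniform in $i$ — in particular, justifying that the optimal $k$-means centers $\{\hat\theta_j\}$ are close to $\{\theta_a^*\}$ for \emph{all} clusters simultaneously (this is where (\ref{eqn:hat_theta_difference}) and the conclusion $\ell(\hat z, z^*) \leq \beta/(2k)$ from Proposition \ref{prop:poly} are essential, as they guarantee the permutation $\phi$ is well-defined and that no estimated center is ``orphaned''). A secondary subtlety is that the single-point reassignment argument requires the target cluster $\phi(z_i^*)$ to remain nonempty after moving $X_i$ out, or alternatively one compares against keeping the current center set fixed; I would phrase it as ``the objective cannot decrease when $X_i$ alone is moved to the centroid $\hat\theta_{\phi(z_i^*)}$,'' which holds verbatim from optimality of (\ref{eqn:spectral}) without needing to re-solve for centroids.
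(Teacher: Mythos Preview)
Your overall strategy---use the $k$-means optimality at $\hat z$, lower-bound the separation between estimated centers via (\ref{eqn:hat_theta_difference}), and reduce to a bound on $\norm{\hat U_{1:r}\hat U_{1:r}^T\epsilon_i}$---is exactly the paper's. But the step where you control $\norm{\hat U_{1:r}\hat U_{1:r}^T\theta^*_{z_i^*} - \theta^*_{z_i^*}}$ by $\normf{\hat U_{1:r}\hat U_{1:r}^T - UU^T}\,\norm{\theta^*_{z_i^*}}$ and claim this is $O(\psi_0^{-1}\Delta)$ does not go through under the lemma's hypotheses. Lemma~\ref{lem:decomposition_simple} assumes only (\ref{eqn:Delta_poly}); there is no spectral gap assumption here, so you have no control on $\normf{\hat U_{1:r}\hat U_{1:r}^T - UU^T}$. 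Worse, when $r>\kr$ the two projections have different ranks and $\normf{\hat U_{1:r}\hat U_{1:r}^T - UU^T}^2\geq r-\kr$ is bounded \emph{below} by a positive constant. And $\norm{\theta^*_{z_i^*}}$ itself is not bounded by anything in the assumptions---the centers may sit arbitrarily far from the origin.

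The paper sidesteps this entirely by exploiting that the estimated centers already live in $\tspan(\hat U_{1:r})$: since $\hat\theta_a=\hat U_{1:r}\hat c_a$, one has $\hat U_{1:r}\hat U_{1:r}^T\hat\theta_{\phi(z^*_i)}=\hat\theta_{\phi(z^*_i)}$, hence
\[
\norm{\hat U_{1:r}\hat U_{1:r}^T\theta^*_{z_i^*}-\hat\theta_{\phi(z_i^*)}}=\norm{\hat U_{1:r}\hat U_{1:r}^T\bigl(\theta^*_{z_i^*}-\hat\theta_{\phi(z_i^*)}\bigr)}\leq \norm{\theta^*_{z_i^*}-\hat\theta_{\phi(z_i^*)}},
\]
and the right-hand side is already $\leq C_0\psi_0^{-1}\Delta$ by (\ref{eqn:hat_theta_difference}). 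No subspace perturbation bound is needed. If you prefer to keep your route, you could instead bound $\norm{(I-\hat U_{1:r}\hat U_{1:r}^T)\theta^*_{z_i^*}}$ directly using the column-repetition structure of $P$ (cf.\ the argument leading to (\ref{eqn:perturbation_proof_2})), which also yields $O(\psi_0^{-1}\Delta)$ without any eigengap---but the crude product bound you wrote cannot.
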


To understand Lemma \ref{lem:decomposition_simple}, recall that  in  (\ref{eqn:spectral}) $\hat z$ is obtained by $k$-means on $\{\hat U_{1:r}\hat U_{1:r}^TX_i\}_{i\in[n]}$. Since we have the decomposition $\hat U_{1:r}\hat U_{1:r}^TX_i = \hat U_{1:r}\hat U_{1:r}^T\theta^*_{z^*_i} + \hat U_{1:r}\hat U_{1:r}^T\epsilon_i$ for each $i\in[n]$, the data points $\{\hat U_{1:r}\hat U_{1:r}^TX_i\}_{i\in[n]}$ follow a mixture model with centers $\{\hat U_{1:r}\hat U_{1:r}^T\theta^*_{a}\}_{a\in[k]}$ and noises $\{\hat U_{1:r}\hat U_{1:r}^T\epsilon_i\}_{i\in[n]}$. In the proof of Lemma \ref{lem:decomposition_simple}, we can show these $k$  centers preserve the geometric structure of $\{\theta^*_a\}_{a\in[k]}$ with minimum distance around $\Delta$. Intuitively, if $\normop{\hat U_{1:r}\hat U_{1:r}^T\epsilon_i  } $ is smaller than half of the minimum distance,  $\hat U_{1:r}\hat U_{1:r}^TX_i$ is closer to $ \hat U_{1:r}\hat U_{1:r}^T\theta^*_{z^*_i} $ than any other centers, and thus $z_i^*$ can be correctly recovered.

While Lemma \ref{lem:decomposition_simple} lays foundational understanding, it alone is not sufficient for deriving explicit expressions
 for the performance of spectral clustering when the noises $\{\epsilon_i\}_{i\in[n]}$ are assumed to be random.  The entrywise upper bound (\ref{eqn:decomposition1}) shows that  the event $\hat z_i \neq \phi(z_i^*)$ is determined by the $\normop{\hat U_{1:r}\hat U_{1:r}^T\epsilon_i  }$, but the fact that $\hat U_{1:r}\hat U_{1:r}^T$ depends on $\epsilon_i$ makes any follow-up probability calculations challenging.  The key to make use of Lemma \ref{lem:decomposition_simple} is our leave-one-out singular subspace perturbation theory, particularly, Theorem \ref{thm:perturbation}. To decouple the dependence between $\hat U_{1:r}\hat U_{1:r}^T$ and $\epsilon_i$, we replace the former quantity by its leave-one-out counterpart $\hat U_{-i,1:r}\hat U_{-i,1:r}^T$. Take $r$ to be $\kr$.  Note that
 \begin{align}\label{eqn:decomposition_100}
 \norm{\hat U_{1:\kr}\hat U_{1:\kr}^T\epsilon_i  }\leq \norm{\hat U_{-i,1:\kr}\hat U_{-i,1:\kr}^T\epsilon_i  } + \normf{\hat U_{1:\kr}\hat U_{1:\kr}^T - \hat U_{-i,1:\kr}\hat U_{-i,1:\kr}^T}\norm{\epsilon_i  }.
 \end{align}
The perturbation $\normf{\hat U_{1:\kr}\hat U_{1:\kr}^T - \hat U_{-i,1:\kr}\hat U_{-i,1:\kr}^T}$ is well-controlled by Theorem \ref{thm:perturbation}, which shows the second term on the RHS of the above display is  essentially $O(\rho_0^{-2}) \normop{\hat U_{-i,1:\kr}\hat U_{-i,1:\kr}^T\epsilon_i  }$. This leads to the following Lemma \ref{lem:decomposition} on the entrywise clustering errors.

\begin{lemma}\label{lem:decomposition}
Consider the spectral clustering $\hat z$  of Algorithm \ref{alg:main}   with $r=\kr$. Assume $\beta n/k^2 \geq10$, (\ref{eqn:eigengap}), and (\ref{eqn:Delta_poly}) hold.  Let $\phi\in\Phi$ be the permutation such that $\ell(\hat z,z^*)=\frac{1}{n}|\{i\in[n]:\hat z_i \neq \phi(z^*_i)\}| $. Then there exists a constant $C$ such that  for any $i\in[n]$,
\begin{align*}
\indic{\hat z_i \neq \phi(z_i^*)}\leq  \indic{\br{1-C\br{\psi_0^{-1}+\rho_0^{-2}}}\Delta \leq 2\norm{\hat U_{-i,1:\kr}\hat U_{-i,1:\kr}^T\epsilon_i}}.
\end{align*}
Consequently, if the noises $\{\epsilon_i\}_{i\in[n]}$ are random, the risk of $\hat z$ satisfies
\begin{align*}
\E\ell(\hat z,z^*)\leq n^{-1}\sum_{i\in[n]} \E\indic{\br{1-C\br{\psi_0^{-1}+\rho_0^{-2}}}\Delta \leq 2\norm{\hat U_{-i,1:r}\hat U_{-i,1:r}^T\epsilon_i}}.
\end{align*}
\end{lemma}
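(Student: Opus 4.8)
The plan is to combine Lemma \ref{lem:decomposition_simple} with the leave-one-out perturbation bound of Theorem \ref{thm:perturbation}, via the triangle inequality \eqref{eqn:decomposition_100}. First I would invoke Lemma \ref{lem:decomposition_simple} with $r=\kr$: under the stated assumptions it gives
\begin{align*}
\indic{\hat z_i \neq \phi(z_i^*)} \leq \indic{(1-C_1\psi_0^{-1})\Delta \leq 2\norm{\hat U_{1:\kr}\hat U_{1:\kr}^T\epsilon_i}}
\end{align*}
for some constant $C_1$. Then I would bound $\norm{\hat U_{1:\kr}\hat U_{1:\kr}^T\epsilon_i}$ from above by $\norm{\hat U_{-i,1:\kr}\hat U_{-i,1:\kr}^T\epsilon_i} + \fnorm{\hat U_{1:\kr}\hat U_{1:\kr}^T - \hat U_{-i,1:\kr}\hat U_{-i,1:\kr}^T}\,\norm{\epsilon_i}$ exactly as in \eqref{eqn:decomposition_100} (using $\normop{A}\leq\fnorm{A}$ on the projection difference).

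Next I would substitute the bound from Theorem \ref{thm:perturbation}: since $\beta n/k^2\geq 10$ and \eqref{eqn:eigengap} hold, we have
\begin{align*}
\fnorm{\hat U_{1:\kr}\hat U_{1:\kr}^T - \hat U_{-i,1:\kr}\hat U_{-i,1:\kr}^T} \leq \frac{128}{\rho_0}\br{\sqrt{\frac{k\kr}{\beta n}} + \frac{\norm{\hat U_{-i,1:\kr}\hat U_{-i,1:\kr}^T\epsilon_i}}{\lambda_{\kr}}}.
\end{align*}
Multiplying by $\norm{\epsilon_i}$ and using $\norm{\epsilon_i}\leq\norm{E}=\lambda_{\kr}/\rho_0$, the first piece contributes at most $\frac{128}{\rho_0^2}\sqrt{\frac{k\kr}{\beta n}}\,\lambda_{\kr}$ and the second piece at most $\frac{128}{\rho_0^2}\norm{\hat U_{-i,1:\kr}\hat U_{-i,1:\kr}^T\epsilon_i}$. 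The term $\sqrt{k\kr/(\beta n)}\,\lambda_{\kr}$ should be absorbed into $\Delta$: indeed, $\kr\leq k$ and the separation condition \eqref{eqn:Delta_poly} together with $\lambda_{\kr}\leq\norm{P}$ and a bound of the type $\norm{P}\lesssim \sqrt{n/\beta}\,\max_a\norm{\theta^*_a}$ — or more directly, the observation $\lambda_{\kr}\le \sqrt{n}\max_a\|\theta_a^*\|$ is not quite what is needed, so I would instead note $\sqrt{k\kr/(\beta n)}\,\lambda_{\kr}\lesssim \beta^{-1/2}k n^{-1/2}\lambda_{\kr}\lesssim \beta^{-1/2}kn^{-1/2}\sqrt n \,\norm{E}\cdot(\lambda_\kr/(\sqrt n\norm E))$, hmm — cleaner is to use that $\lambda_{\kr}\le \|P\|_{F}\le \sqrt{n}\max_a\|\theta_a^*\|$ and relate $\max_a\|\theta_a^*\|$ to $\Delta$ only up to translation, which fails in general; so the right route is to bound $\sqrt{k\kr/(\beta n)}\,\lambda_\kr$ by $\psi_0^{-1}\Delta$ up to a constant using $\lambda_{\kr}/\|E\|=\rho_0$ is not directly comparable to $\psi_0$. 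This is the step that needs care.

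The main obstacle is therefore the bookkeeping that reconciles the two ``signal-to-noise'' quantities $\rho_0=\lambda_{\kr}/\norm{E}$ and $\psi_0=\Delta/(\beta^{-1/2}kn^{-1/2}\norm{E})$ so that the deterministic residual $\frac{128}{\rho_0^2}\sqrt{k\kr/(\beta n)}\,\lambda_{\kr}$ is of the form $C\rho_0^{-2}\Delta$ (or $C(\psi_0^{-1}+\rho_0^{-2})\Delta$). Concretely one uses $\sqrt{k\kr/(\beta n)}\,\lambda_{\kr}\le \sqrt{k\kr/(\beta n)}\,\rho_0\norm{E}\le \sqrt{\kr}\,\rho_0\cdot\beta^{-1/2}kn^{-1/2}\norm E/\sqrt{k}\le \rho_0\psi_0^{-1}\Delta$ (since $\kr\le k$), so that $\frac{128}{\rho_0^2}\sqrt{k\kr/(\beta n)}\,\lambda_{\kr}\le \frac{128}{\rho_0}\psi_0^{-1}\Delta\le \frac{128}{16}\psi_0^{-1}\Delta$. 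Collecting all contributions and moving the coefficient-of-$\norm{\hat U_{-i,1:\kr}\hat U_{-i,1:\kr}^T\epsilon_i}$ term (which is $(1+O(\rho_0^{-2}))$) to the right side of the inequality inside the indicator yields
\begin{align*}
\indic{\hat z_i \neq \phi(z_i^*)}\leq \indic{(1-C(\psi_0^{-1}+\rho_0^{-2}))\Delta \leq 2\norm{\hat U_{-i,1:\kr}\hat U_{-i,1:\kr}^T\epsilon_i}}
\end{align*}
for a suitable absolute constant $C$ (here using $\rho_0>16$ to keep the multiplicative factor $1/(1-O(\rho_0^{-2}))$ bounded and fold it into $C$). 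Finally, taking expectations and using $\ell(\hat z,z^*)=n^{-1}\sum_i \indic{\hat z_i\neq\phi(z_i^*)}$ together with linearity and monotonicity of expectation gives the stated risk bound. The only subtlety to flag in the write-up is that the permutation $\phi$ is the (unique) one from Proposition \ref{prop:poly}, which is fixed before taking expectations, so no union bound over $\Phi$ is incurred.
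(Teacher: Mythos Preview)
Your proposal is correct and follows essentially the same route as the paper: Lemma~\ref{lem:decomposition_simple} $+$ the triangle inequality \eqref{eqn:decomposition_100} $+$ Theorem~\ref{thm:perturbation}, then absorb the residual into the $\psi_0^{-1}$ and $\rho_0^{-2}$ terms. The only cosmetic difference is that the paper avoids your detour through $\lambda_{\kr}$ by leaving $\norm{\epsilon_i}\le\norm{E}$ as is and recognizing $\tfrac{128}{\rho_0}\sqrt{k\kr/(\beta n)}\,\norm{E}\le \tfrac{128}{\rho_0}\cdot \beta^{-1/2}kn^{-1/2}\norm{E}=128\,\rho_0^{-1}\psi_0^{-1}\Delta$ in one step; also, $\phi$ is data-dependent rather than ``fixed before taking expectations,'' but since the right-hand indicator does not involve $\phi$, this has no bearing on the expectation bound.
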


Lemma \ref{lem:decomposition} needs three conditions. The first one $\beta n/k^2 \geq10$ is on the smallest cluster sizes and can be easily satisfied if both $\beta,k$ are constants. The second condition (\ref{eqn:eigengap}) is a spectral gap condition on  the smallest non-zero singular value $\lambda_{\kr}$. The third one is for the separation of the centers $\Delta$.
With all the three conditions satisfied, Lemma \ref{lem:decomposition} shows that the entrywise clustering error for $X_i$ boils down to $\normop{\hat U_{-i,1:\kr}\hat U_{-i,1:\kr}^T\epsilon_i}$. 
When the noises $\{\epsilon_j\}_{j\in[n]}$ are assumed to be random and independent of each other, the projection matrix $\hat U_{-i,1:\kr}\hat U_{-i,1:\kr}^T$ is independent of $\epsilon_i$ for each $i\in[n]$, a desired property crucial to our follow-up investigation on the risk $\E\ell(\hat z,z^*)$. When $\{X_i\}_{i\in[n]}$ are generated randomly, as discussed in subsequent sections, Lemma \ref{lem:decomposition}  leads to explicit expressions for the performance of the spectral clustering.

The key towards establishing Lemma \ref{lem:decomposition}  is 
Theorem \ref{thm:perturbation}.  Without Theorem \ref{thm:perturbation}, if the classical perturbation theory such as Wedin's theorem is used instead, then in order to obtain similar upper bounds in Lemma \ref{lem:decomposition},  the second term on the RHS of (\ref{eqn:decomposition_100}) needs to be much smaller than $\Delta$. This essentially requires $\max_{i\in[n]} \normop{\epsilon_i}^2\lesssim \lambda_\kr\Delta$, in addition to (\ref{eqn:eigengap}) and (\ref{eqn:Delta_poly}). As we will show in the next section, for sub-Gaussian noises, this additional condition requires $p \log n\lesssim \sqrt{n}$ in regimes where Lemma \ref{lem:decomposition} only needs $p\lesssim n$.

\subsection{Sub-Gaussian Mixture Models}\label{sec:sub_gaussian}

In this section, we investigate the performance of the spectral clustering $\hat z$ for mixture models with sub-Gaussian noises. Theorem \ref{thm:subg} assumes that each noise $\epsilon_i$ is an independent sub-Gaussian random vector with zero mean and variance proxy $\sigma^2$  and establishes an exponential rate for the risk  $\E \ell(\hat z,z^*)$.

\begin{theorem}\label{thm:subg}
Consider the spectral clustering $\hat z$  of Algorithm \ref{alg:main}   with $r=\kr$. Assume $\epsilon_i\sim\text{SG}_p(\sigma^2)$ independently  with zero mean for each $i\in[n]$.  Assume   $\beta n/k^2 \geq10$. There exist constants $C,C'>0$ such that under the assumption that
\begin{align}\label{eqn:Delta_subg}
\psi_1 := \frac{\Delta}{\beta^{-0.5}k\br{1+ \sqrt{\frac{p}{n}}}\sigma} >C
\end{align}
and
\begin{align}\label{eqn:gap_subg}
\rho_1 :=\frac{\lambda_\kr}{\br{\sqrt{n} + \sqrt{p}}\sigma} >C,
\end{align}
we have
\begin{align*}
\E \ell(\hat z,z^*)\leq \ebr{-\br{1-C'\br{\psi_1^{-1}+\rho_1^{-2}}}\frac{\Delta^2}{8\sigma^2}}+ \ebr{-\frac{n}{2}}.
\end{align*}
\end{theorem}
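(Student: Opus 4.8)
The plan is to invoke Lemma \ref{lem:decomposition} to reduce the risk bound to a probability estimate on the leave-one-out projected noise, and then exploit the independence between $\hat U_{-i,1:\kr}\hat U_{-i,1:\kr}^T$ and $\epsilon_i$ to get the exponential tail. First I would check that the three hypotheses of Lemma \ref{lem:decomposition} hold under \eqref{eqn:Delta_subg} and \eqref{eqn:gap_subg} with high probability. The condition $\beta n/k^2 \geq 10$ is assumed directly. For \eqref{eqn:eigengap} and \eqref{eqn:Delta_poly}, the only random object is $\norm{E}$; since the columns $\epsilon_i$ are independent $\mathrm{SG}_p(\sigma^2)$ vectors with zero mean, a standard random-matrix bound (e.g.\ a net/Bernstein argument over the sphere) gives $\norm{E}\lesssim (\sqrt{n}+\sqrt{p})\sigma$ on an event of probability at least $1-e^{-n/2}$; call this event $\mathcal{A}$. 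On $\mathcal{A}$, \eqref{eqn:gap_subg} with $C$ large forces $\rho_0 = \lambda_\kr/\norm{E}\gtrsim \rho_1 > 16$, and \eqref{eqn:Delta_subg} with $C$ large forces $\psi_0 = \Delta/(\beta^{-0.5}kn^{-0.5}\norm{E}) \gtrsim \psi_1 > 16$ (here using $\norm{E}/\sqrt{n}\lesssim(1+\sqrt{p/n})\sigma$). So on $\mathcal{A}$ all hypotheses of Lemma \ref{lem:decomposition} are met, and the term $e^{-n/2}$ in the final bound absorbs the complementary event $\mathcal{A}^c$.

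Next, on $\mathcal{A}$, Lemma \ref{lem:decomposition} gives
\begin{align*}
\E\left[\ell(\hat z,z^*)\one_{\mathcal{A}}\right] \leq n^{-1}\sum_{i\in[n]} \E\,\indic{\br{1-C(\psi_0^{-1}+\rho_0^{-2})}\Delta \leq 2\norm{\hat U_{-i,1:\kr}\hat U_{-i,1:\kr}^T\epsilon_i}},
\end{align*}
and since $\psi_0^{-1}\lesssim\psi_1^{-1}$ and $\rho_0^{-2}\lesssim\rho_1^{-2}$ on $\mathcal{A}$, it suffices to bound each summand by $\exp(-(1-C'(\psi_1^{-1}+\rho_1^{-2}))\Delta^2/(8\sigma^2))$. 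Here is the crucial point: $\hat U_{-i,1:\kr}$ is a deterministic function of $\{\epsilon_j\}_{j\neq i}$, hence independent of $\epsilon_i$. Conditioning on $\{\epsilon_j\}_{j\neq i}$, the matrix $\hat U_{-i,1:\kr}\hat U_{-i,1:\kr}^T$ is a fixed rank-$\kr$ orthogonal projection, so $\hat U_{-i,1:\kr}^T\epsilon_i$ is a sub-Gaussian vector in $\mathr^\kr$ with variance proxy $\sigma^2$, and $\norm{\hat U_{-i,1:\kr}\hat U_{-i,1:\kr}^T\epsilon_i}^2 = \norm{\hat U_{-i,1:\kr}^T\epsilon_i}^2$ is a sum of (coordinates of a projection of) at most $\kr\leq k$ squared sub-Gaussian components. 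A standard sub-Gaussian/$\chi^2$-type concentration inequality yields, for any $t>0$,
\begin{align*}
\pbr{\norm{\hat U_{-i,1:\kr}^T\epsilon_i} \geq t} \leq \ebr{-\frac{t^2}{2\sigma^2} + C''\kr}
\end{align*}
for a universal constant $C''$; applying this with $2t = (1-C(\psi_0^{-1}+\rho_0^{-2}))\Delta$ gives a bound of the form $\exp(-(1-C(\psi_0^{-1}+\rho_0^{-2}))^2\Delta^2/(8\sigma^2) + C''\kr)$. Since $(1-x)^2\geq 1-2x$ and the additive $C''\kr$ term can be written as $C''\kr\sigma^2/\Delta^2 \cdot \Delta^2/\sigma^2$, and $\Delta^2/\sigma^2 \gtrsim \psi_1^2 \gtrsim \psi_1$ with $\kr\leq k$ so that $\kr\sigma^2/\Delta^2 \lesssim k\sigma^2/\Delta^2 \lesssim \psi_1^{-1}$ (up to the $\beta^{-1}(1+p/n)$ factor, which only helps), we can fold $C''\kr$ into a term of the form $C'\psi_1^{-1}\Delta^2/(8\sigma^2)$, absorbing it into the $(1-C'(\psi_1^{-1}+\rho_1^{-2}))$ prefactor. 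Averaging over $i$ and adding back $\E[\ell(\hat z,z^*)\one_{\mathcal{A}^c}]\leq \pbr{\mathcal{A}^c}\leq e^{-n/2}$ finishes the proof.

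The main obstacle — and the step deserving the most care — is the bookkeeping in the last paragraph: ensuring that the $\kr$ (or $k$) degrees-of-freedom correction from the $\chi^2$-type concentration of $\norm{\hat U_{-i,1:\kr}^T\epsilon_i}^2$ is genuinely dominated by the allowed slack $C'(\psi_1^{-1}+\rho_1^{-2})\Delta^2/(8\sigma^2)$, rather than contributing an uncontrolled $O(k)$ loss in the exponent. This requires using \eqref{eqn:Delta_subg} carefully: $\Delta^2/\sigma^2$ must be large enough (relative to $k$, $\beta$, and $1+p/n$) for $k\sigma^2/\Delta^2$ to be $\lesssim\psi_1^{-1}$, which is exactly what \eqref{eqn:Delta_subg} provides since $\psi_1^2 \gtrsim \beta^{-1}k^2(1+p/n)\cdot \psi_1 \geq k\cdot\psi_1$ once $\psi_1$ exceeds a constant. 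The other potential subtlety is whether conditioning on $\{\epsilon_j\}_{j\neq i}$ truly leaves $\epsilon_i$ with its unconditional sub-Gaussian law — it does, by independence — so that the projected-noise tail bound holds with the projection treated as a constant; this is precisely the payoff of the leave-one-out construction.
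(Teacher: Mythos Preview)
Your proposal is correct and follows essentially the same route as the paper: work on the high-probability event $\{\norm{E}\lesssim(\sqrt{n}+\sqrt{p})\sigma\}$, verify the hypotheses of Lemma~\ref{lem:decomposition} there, and then use the independence of $\hat U_{-i,1:\kr}$ and $\epsilon_i$ together with a sub-Gaussian quadratic-form concentration to bound each summand. The only discrepancy is in the precise form of the tail bound: the paper uses Lemma~\ref{lem:sub_gaussian_projection}, namely $\pbr{\norm{UU^TX}^2\geq\sigma^2(\kr+2\sqrt{\kr t}+2t)}\leq e^{-t}$, so the exponent carries an extra $O(\sqrt{\kr t})$ cross term that your stated bound $\exp(-t^2/(2\sigma^2)+C''\kr)$ omits --- but this term absorbs into the $C'\psi_1^{-1}$ slack by exactly the same arithmetic you already outline (since $\sqrt{\kr}\sigma/\Delta\leq k\sigma/\Delta\lesssim\psi_1^{-1}$), and the paper's bookkeeping is just a cleaner way to package that absorption.
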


Under this sub-Gaussian setting,  standard concentration theory shows that the noise matrix $E$ has its operator norm $\norm{E}\lesssim \sigma(\sqrt{n}+\sqrt{p})$ with high probability (see Lemma \ref{lem:sub_gaussian_operator}). Under this event,  (\ref{eqn:Delta_subg}) and (\ref{eqn:gap_subg}) are sufficient conditions for (\ref{eqn:eigengap}) and (\ref{eqn:Delta_poly}), respectively. The risk in Theorem \ref{thm:GMM} has two terms, where the first term takes an exponential form of $\Delta^2/(8\sigma^2)$ and  the second term $\exp(-n/2)$ comes from the aforementioned event of $\norm{E}$. The first term is the dominating one, as long as $\Delta^2/\sigma^2$, which can be interpreted as the signal-to-noise ratio, is smaller than $n/2$. In fact,  $\Delta^2/\sigma^2\lesssim \log n$ is the most interesting regime as otherwise $\hat z$ already achieves the exact recovery (i.e., $\hat z=z^*$) with high probability, since $\E \{\ell(\hat z,z^*)=0\}=o(1)$.

Theorem \ref{thm:subg} makes a substantial improvement over Proposition \ref{prop:poly}. Using the aforementioned high-probability event on $\norm{E}$, (\ref{eqn:l_poly}) only leads to $\E \ell(\hat z,z^*)\lesssim (1+\sqrt{p/n})^2\sigma^2/\Delta^2 + \ebr{-n/2}$ which takes a polynomial form of the $\Delta^2/\sigma^2$. On the contrary, Theorem \ref{thm:subg} provides a much sharper exponential rate. 

Our leave-one-out singular subspace perturbation theory and  its consequence Lemma \ref{lem:decomposition} provide the key toolkit towards Theorem \ref{thm:subg}. Since $\hat U_{-i,1:\kr}^T$ is independent of $\epsilon_i$, we have $\hat U_{-i,1:\kr}^T\epsilon_i \sim \text{SG}_\kr(\sigma^2)$ being another sub-Gaussian random vector. This makes it possible to control the tail probabilities of $\normop{\hat U_{-i,1:\kr}\hat U_{-i,1:\kr}^T\epsilon_i }^2 = \normop{\hat U_{-i,1:\kr}^T\epsilon_i }^2 $ which is a  quadratic form of sub-Gaussian random vectors. Without using our perturbation theory, if  the classical  perturbation bounds such as Wedin’s Theorem is used instead, the previous section shows that $\max_{i\in[n]} \normop{\epsilon_i}^2\lesssim \lambda_\kr\Delta$ is additionally needed to obtain results similar to Lemma \ref{lem:decomposition}. This equivalently requires $\lambda_\kr\Delta/(\sigma^2 p\log n)\gtrsim 1$. When $\Delta/\sigma,k,\beta$ are constants, this additional   condition essentially requires $p\log n\lesssim \sqrt{n}$. In contrast, Theorem \ref{thm:subg} only needs $p\lesssim n$.

Theorem \ref{thm:subg} gives a finite-sample result for the performance of spectral clustering in sub-Gaussian mixture models. In the following Corollary \ref{cor:subg}, by slightly strengthening conditions (\ref{eqn:Delta_subg}) and (\ref{eqn:gap_subg}), we immediately obtain an asymptotic error bound with the exponent being $(1-o(1))\Delta^2/(8\sigma^2)$.

\begin{corollary}\label{cor:subg}
Under the same setting as in Theorem \ref{thm:subg}, if $\psi_1,\rho_1\rightarrow\infty$ is further assumed,
we have
\begin{align*}
\E \ell(\hat z,z^*)\leq \ebr{-\br{1-o(1)}\frac{\Delta^2}{8\sigma^2}}+ \ebr{-\frac{n}{2}}.
\end{align*}
If $\Delta /\sigma\geq (1+c)2\sqrt{2\log n}$ is further assumed where $c>0$ is any constant, $\hat z$ achieves the exact recovery, i.e.,  $\E \indic{\ell(\hat z,z^*) \neq 0}=o(1)$.
\end{corollary}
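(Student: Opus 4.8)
The plan is to deduce Corollary \ref{cor:subg} directly from Theorem \ref{thm:subg} by feeding in the strengthened conditions $\psi_1,\rho_1\to\infty$. Theorem \ref{thm:subg} already gives
$\E\ell(\hat z,z^*)\leq \exp\!\big(-(1-C'(\psi_1^{-1}+\rho_1^{-2}))\Delta^2/(8\sigma^2)\big)+\exp(-n/2)$,
and the term $C'(\psi_1^{-1}+\rho_1^{-2})$ is precisely an $o(1)$ quantity once $\psi_1\to\infty$ and $\rho_1\to\infty$. So the first claim of the corollary is immediate: substitute $C'(\psi_1^{-1}+\rho_1^{-2})=o(1)$ into the exponent.

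For the second claim — exact recovery when $\Delta/\sigma\geq (1+c)2\sqrt{2\log n}$ — the strategy is to observe that $\ell(\hat z,z^*)$ takes values in $\{0,1/n,2/n,\dots\}$, so $\indic{\ell(\hat z,z^*)\neq 0}\leq n\,\ell(\hat z,z^*)$, hence $\E\indic{\ell(\hat z,z^*)\neq 0}\leq n\,\E\ell(\hat z,z^*)$. Then I would plug in the bound just obtained: the first term is at most $n\exp(-(1-o(1))\Delta^2/(8\sigma^2))$. Under $\Delta^2/\sigma^2\geq (1+c)^2 8\log n$ this is at most $n\exp(-(1-o(1))(1+c)^2\log n)=n^{1-(1-o(1))(1+c)^2}$, which tends to $0$ since $(1+c)^2>1$ (for $n$ large the $o(1)$ is small enough that $(1-o(1))(1+c)^2>1+c/2$, say). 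The second term contributes $n\exp(-n/2)=o(1)$ trivially. Adding the two gives $\E\indic{\ell(\hat z,z^*)\neq 0}=o(1)$.

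I anticipate no substantive obstacle here; the only minor care needed is bookkeeping the interaction between the $o(1)$ in the exponent and the extra factor $n$ — one must check that the $o(1)$ term, which is $O(\psi_1^{-1}+\rho_1^{-2})$, is small enough relative to the fixed margin $c$ so that $n\cdot n^{-(1-o(1))(1+c)^2}\to 0$. Since $\psi_1,\rho_1\to\infty$ is assumed, for all sufficiently large $n$ we have $(1-C'(\psi_1^{-1}+\rho_1^{-2}))(1+c)^2\geq 1+c$, which makes the exponent of $n$ equal to $-c<0$ and closes the argument. The verification that $\ell\in\{0,1/n,\dots\}$ and the union-bound-type inequality $\indic{\ell\neq 0}\leq n\ell$ are immediate from the definition of the misclustering error.
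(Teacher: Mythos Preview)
Your proposal is correct and matches the paper's treatment: the paper presents Corollary \ref{cor:subg} as an immediate consequence of Theorem \ref{thm:subg} without a separate proof, and your argument—substituting $C'(\psi_1^{-1}+\rho_1^{-2})=o(1)$ for the first part, then using $\indic{\ell(\hat z,z^*)\neq 0}\leq n\,\ell(\hat z,z^*)$ together with $\Delta^2/(8\sigma^2)\geq (1+c)^2\log n$ for the second—is precisely the intended route.
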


In the exponents of Theorem \ref{thm:subg} and Corollary \ref{cor:subg}, we are able to obtain an explicit constant $1/8$. In addition, we obtain an explicit constant $2\sqrt{2}$ for the exact recovery in Corollary \ref{cor:subg}. These constants are sharp when the noises are further assumed to be isotropic Gaussian, as we will show in Section \ref{sec:isotropic}. 

The recent related paper by \citep{abbe2020ell_p} develops a $\ell_p$ perturbation theory and applies it to the spectral clustering for sub-Gaussian mixture models. It  obtains exponential error rates but with unspecified constants in the exponents and  under  special assumptions on the spectrum and geometric distribution of the centers. It first assumes both $\beta$ and $k$ are constants. Let $G\in\mathr^{k\times k}$ be the Gram matrix of the centers such that $G_{i,j}=\theta^{*T}_i\theta^*_j$ for each $i,j\in[k]$. It further requires $\bar \lambda I \prec G \prec c\bar \lambda I$ for some constant $c>1$, i.e., all $k$ eigenvalues of $G$ are of the same order. It implies that the maximum  and minimum distances among centers are comparable. This rules out many interesting cases such as all the centers are on one single line. In addition, \citep{abbe2020ell_p} needs $\bar \lambda /\sigma \rightarrow\infty$. Equivalently it means that the leading $k$ singular values $\lambda_1,\lambda_2,\ldots,\lambda_k$ of the signal matrix $P$  not only are all of the same order, but also $\lambda_k/(\sqrt{n}\sigma)\gg \max\{1,\sqrt{p/n}\}$. As a comparison, we allow collinearity of the centers such that the rank of $G$ (and $P$) can be smaller than $k$. We allow the singular values $\lambda_1,\lambda_2,\ldots,\lambda_\kr$ not of the same order as long as the smallest one satisfies (\ref{eqn:gap_subg}), which can be equivalently written as $\lambda_{\kr}/(\sqrt{n}\sigma) \gtrsim \max\{1,\sqrt{p/n}\}$. The distances among the centers are also not necessarily of the same order as long as the smallest distance satisfies (\ref{eqn:Delta_subg}). Hence, our conditions are more general than those in \citep{abbe2020ell_p}.

The spectral gap condition (\ref{eqn:gap_subg}) ensures that singular vectors corresponding to small non-zero singular values are well-behaved. It is not needed in Section \ref{sec:adaptive} where we propose a variant of spectral clustering with adaptive dimension reduction. It can also be dropped in Section \ref{sec:isotropic} when the noise is isotropic Gaussian. When the mixture model is symmetric with two components (for example, the model considered in Section \ref{sec:lower}), the signal matrix $P$ is rank-one. Hence, (\ref{eqn:gap_subg}) is also no longer needed as it can be directly implied from (\ref{eqn:Delta_subg}).

\subsection{Spectral Clustering with Adaptive Dimension Reduction}\label{sec:adaptive}

The theoretical analysis for the spectral clustering $\hat z$ of Algorithm \ref{alg:main} that is presented in Lemma \ref{lem:decomposition} and Theorem \ref{thm:subg} requires the use of all the $\kr$ singular vectors where $\kr$ is the rank of the signal matrix $P$. Nevertheless, not all singular components are equally useful towards the clustering task and the importance of an individual singular vector can be characterized by its corresponding singular value. This motivates us to propose the following algorithm where the number of singular vectors used is carefully picked.

\begin{algorithm}[h]
\SetAlgoLined
\KwIn{Data matrix $X=(X_1,\ldots,X_n)\in\mathr^{p\times n}$, number of clusters $k$, threshold $T$}
\KwOut{Clustering label vector $\tilde z\in [k]^n$}
 \nl Perform SVD on $X$ same as Step 1 of Algorithm \ref{alg:main}.\\
\nl Let $\hat r$ be the largest index in $[k]$ such that the difference between two neighboring  singular values is greater than $T$, i.e.,
\begin{align}\label{eqn:hat_r_def}
\hat r=\max\{a\in[k]:\hat\lambda_a -\hat\lambda_{a+1}\geq T\}.
\end{align} 
 Let $  \hat U_{1:\hat r} := \br{ \hat u_1,\ldots,  \hat u_{\hat r}}\in\mathr^{p\times \hat r}$.

 \nl Perform $k$-means on the columns of $\hat U_{1:\hat r}^T X$. That is, 
 \begin{align}
 \br{\tilde z,\cbr{\tilde c_j}_{j=1}^k} = \argmin_{ z\in [k]^n , \cbr{ c_j}_{j=1}^{k} \in \mathr^{\hat r}} \sum_{i\in[n]} \norm{\hat U_{1:\hat r}^T X_i -  c_{z_i}}^2.\label{eqn:thresholding_kmeans}
 \end{align}
\caption{Spectral Clustering with Adaptive Dimension Reduction}\label{alg:thresholding}
\end{algorithm}

Algorithm \ref{alg:thresholding} is a  variant of Algorithm \ref{alg:main} with the number of singular vectors selected by (\ref{eqn:hat_r_def}), where $\hat r$ is the largest integer such that the empirical spectral gap $\hat \lambda_{\hat r}-\hat \lambda_{\hat r+1}$ is greater or equal to some threshold $T$. 
The criterion in (\ref{eqn:hat_r_def}) for choosing \(\hat r\) has two purposes. Firstly, it ensures the presence of a desirable spectral gap. More crucially, it is intended to encompass important singular vectors while disregarding those that are noisy or of lesser relevance. This is illuminated by an implication from (\ref{eqn:hat_r_def}) that \(\hat \lambda_{\hat r+1}\leq \hat \lambda_{k+1} + kT\) and that the significance of a singular vector can be characterized by the magnitude of its associated singular value.
To illustrate this further, let us compare our approach with an alternative selection mechanism that simply choose an arbitrary index from \(\{a\in[k]:\hat\lambda_a -\hat\lambda_{a+1}\geq T\}\) instead of the largest one. While such a criterion would indeed ensure a spectral gap, it is  possible that \(\hat \lambda_{\hat r+1}\) and subsequent singular values remain large, suggesting that the corresponding singular vectors are of importance. Omission of these pivotal vectors from the clustering algorithm would result in a decline in its performance.

The choice of the threshold $T$ is crucial. When $T$ is small, $\hat r$ might be even bigger than the rank $\kr$. When $T\gtrsim \norm{E}$, it guarantees that the singular values of the signal matrix $P$ satisfy $\lambda_{\hat r}-\lambda_{\hat r+1}\gtrsim T$ and $\lambda_{\hat r+1}\lesssim T$. When $T$ is too large, the singular subspace $\hat U_{1:\hat r}$ misses singular vectors such as $\hat u_{\hat r+1}$ whose importance scales with $\lambda_{\hat r+1}$ that can not be ignored. This in turn deteriorates the clustering performance of $\tilde z$.
A rule of thumb for the threshold $T$ is that $T/\norm{E}$ is at least of constant order. It is allowed to grow but not faster than $\tilde \phi_0$ defined in (\ref{eqn:thresholding_tilde_psi}). The precise description of the choices of $T$ needed is given below in Lemma \ref{lem:thresholding}, which provides an entrywise analysis of $\tilde z$ that is analogous to Lemma \ref{lem:decomposition}.

\begin{lemma}\label{lem:thresholding}
Consider the estimator $\tilde z$ from Algorithm \ref{alg:thresholding}. Assume $\beta n/k^4\geq 400$. Let $\phi\in\Phi$ be the permutation such that $\ell(\hat z,z^*)=\frac{1}{n}|\{i\in[n]:\hat z_i \neq \phi(z^*_i)\}| $. Define 
\begin{align}\label{eqn:thresholding_tilde_psi}
\tilde \psi_0 := \frac{\Delta}{\beta^{-0.5}k^{2}n^{-0.5}\norm{E}}
\end{align}
and $\tilde \rho := T/\norm{E}$. Assume $256<\tilde \rho <\tilde\psi_0/64 $. There exist constants $C,C'$ such that if $\tilde\psi_0 >C$, then
\begin{align*}
\indic{\hat z_i \neq\phi(z^*_i)}\leq \indic{\br{1-C'\br{\tilde \rho \tilde \psi_0^{-1} + \tilde \rho^{-1}}}\Delta\leq 2 \norm{\hat U_{-i,1:r}\hat U_{-i,1:r}^T\epsilon_i}}.
\end{align*}
Consequently, we have
\begin{align*}
\E \ell(\hat z,z^*)\leq n^{-1} \sum_{i\in[n]} \E \indic{\br{1-C'\br{\tilde \rho \tilde \psi_0^{-1} + \tilde \rho^{-1}}}\Delta\leq 2 \norm{\hat U_{-i,1:r}\hat U_{-i,1:r}^T\epsilon_i}}.
\end{align*}
\end{lemma}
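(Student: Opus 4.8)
The plan is to mirror the argument that yields Lemma \ref{lem:decomposition}, but now with the adaptively chosen rank $\hat r$ in place of the fixed rank $\kr$. The first step is to control $\hat r$ deterministically. Since $\tilde\rho = T/\norm{E} > 256$ and, by Weyl's inequality, $|\hat\lambda_a - \lambda_a| \le \norm{E}$ for every $a$, the threshold rule (\ref{eqn:hat_r_def}) forces any index $a$ with $\hat\lambda_a - \hat\lambda_{a+1}\ge T$ to satisfy $\lambda_a - \lambda_{a+1}\ge T - 2\norm{E} = (\tilde\rho-2)\norm{E} > 0$; in particular $\hat r \le \kr$, and conversely $\hat r$ is the largest population-side index with a gap of order $T$. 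Thus on the deterministic side we obtain $\lambda_{\hat r} - \lambda_{\hat r+1}\gtrsim T$ and $\lambda_{\hat r+1}\lesssim T$. Combined with $\tilde\rho < \tilde\psi_0/64$, i.e. $T \le \tilde\psi_0\norm{E}/64 = \Delta/(64\beta^{-0.5}k^2 n^{-0.5})$, this shows that $\lambda_{\hat r+1}$ is small relative to $\Delta$ after the $\sqrt{k^2/(\beta n)}$ rescaling appearing in Theorem \ref{thm:perturbation_r}; this is exactly where the $\beta n/k^4 \ge 400$ hypothesis is consumed.

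The second step applies the entrywise decomposition of Lemma \ref{lem:decomposition_simple} with $r = \hat r$: since (\ref{eqn:Delta_poly}) is implied by $\tilde\psi_0 > C$ (because $\psi_0 \ge \tilde\psi_0$, as $k^2 \ge k$), we get
\begin{align*}
\indic{\hat z_i \neq \phi(z_i^*)} \le \indic{(1 - C\psi_0^{-1})\Delta \le 2\norm{\hat U_{1:\hat r}\hat U_{1:\hat r}^T\epsilon_i}}.
\end{align*}
The third step is the leave-one-out replacement, exactly as in (\ref{eqn:decomposition_100}):
\begin{align*}
\norm{\hat U_{1:\hat r}\hat U_{1:\hat r}^T\epsilon_i} \le \norm{\hat U_{-i,1:\hat r}\hat U_{-i,1:\hat r}^T\epsilon_i} + \fnorm{\hat U_{1:\hat r}\hat U_{1:\hat r}^T - \hat U_{-i,1:\hat r}\hat U_{-i,1:\hat r}^T}\norm{\epsilon_i}.
\end{align*}
Here I would invoke Theorem \ref{thm:perturbation_r} with $r = \hat r$ (whose hypothesis (\ref{eqn:perturbation_r}) is met thanks to Step 1: the relevant ratio $\tilde\rho_0 = (\lambda_{\hat r}-\lambda_{\hat r+1})/\max\{\norm{E}, \sqrt{k^2/(\beta n)}\lambda_{\hat r+1}\}$ is $\gtrsim \tilde\rho > 256 > 16$, using that the second term in the max is controlled by $\sqrt{k^2/(\beta n)}\lambda_{\hat r+1} \lesssim \sqrt{k^2/(\beta n)}T$, which is dominated by $\lambda_{\hat r}-\lambda_{\hat r+1}\gtrsim T$ once $\beta n\gtrsim k^2$). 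This bounds the Frobenius term by $(128/\tilde\rho_0)(\sqrt{k\hat r/(\beta n)} + \norm{\hat U_{-i,1:\hat r}\hat U_{-i,1:\hat r}^T\epsilon_i}/\lambda_{\hat r})$. Plugging in $\norm{\epsilon_i} \le \norm{E}$ and using $\lambda_{\hat r} \gtrsim T = \tilde\rho\norm{E}$, the cross term contributes a factor $\norm{E}/\lambda_{\hat r} \lesssim \tilde\rho^{-1}$, while $\tilde\rho_0^{-1}\sqrt{k\hat r/(\beta n)}\norm{E}$ is absorbed into the $\tilde\rho\tilde\psi_0^{-1}\Delta$ budget after bounding $\norm{E}$ against $\Delta$ via $\tilde\psi_0$ and noting $\tilde\rho_0^{-1}\lesssim T^{-1}\cdot\text{(something)}$ — this is the bookkeeping that produces the two error terms $\tilde\rho\tilde\psi_0^{-1}$ and $\tilde\rho^{-1}$ in the statement. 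Rearranging gives the claimed entrywise bound, and the consequential risk bound follows by taking expectations and the union/average over $i\in[n]$.

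The main obstacle is Step 1 together with the bookkeeping in Step 3: one must verify that the \emph{random} index $\hat r$ behaves like a fixed population index on the relevant high-probability (here, deterministic given $\norm{E}$) event, and then track how both the size of $\lambda_{\hat r+1}$ (which is only $\lesssim T$, not zero, unlike in Lemma \ref{lem:decomposition} where $r=\kr$ and $\lambda_{\kr+1}=0$) and the inflated separation requirement $\beta n/k^4\ge 400$ interact to keep the perturbation term $\tilde\rho_0^{-1}(\cdots)$ genuinely of order $\tilde\rho\tilde\psi_0^{-1} + \tilde\rho^{-1}$ rather than order one. Everything else is a direct transcription of the proofs of Lemma \ref{lem:decomposition_simple}, Lemma \ref{lem:decomposition}, and Theorem \ref{thm:perturbation_r}.
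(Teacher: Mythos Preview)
Your overall architecture (localize $\hat r$, entrywise indicator bound, leave-one-out replacement via Theorem \ref{thm:perturbation_r}) matches the paper, but Step 2 contains a real gap. You invoke Lemma \ref{lem:decomposition_simple} with $r=\hat r$ after checking only the separation hypothesis (\ref{eqn:Delta_poly}). That lemma, however, is stated and proved only for $\kr\le r\le k$: its backbone, Proposition \ref{prop:poly}, uses that $P$ has rank $\le r$ to get $\norm{\hat P-X}\le\norm{P-X}=\norm{E}$. The entire point of Algorithm \ref{alg:thresholding} is that $\hat r$ may be strictly smaller than $\kr$ (indeed you correctly argue $\hat r\le\kr$), so this invocation is illegitimate. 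When $r<\kr$ one only has $\norm{\hat P-X}=\hat\lambda_{r+1}\le \lambda_{r+1}+\norm{E}$, and since $\lambda_{\hat r+1}$ is only $\le (k\tilde\rho+1)\norm{E}$ (your claim $\lambda_{\hat r+1}\lesssim T$ loses a factor of $k$: you must telescope the sub-threshold gaps from $\hat r+1$ to $k$), this extra term is large.

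The paper repairs this by recasting the mixture model before applying the Proposition \ref{prop:poly}/Lemma \ref{lem:decomposition_simple} machinery: write $X_i=U_{1:r}U_{1:r}^T\theta^*_{z^*_i}+\check\epsilon_i$ with $\check\epsilon_i:=U_{(r+1):k}U_{(r+1):k}^T\theta^*_{z^*_i}+\epsilon_i$, so that the new signal matrix has rank exactly $r$. Proposition \ref{prop:poly} then applies with $\norm{\check E}\le(k\tilde\rho+2)\norm{E}$ and a perturbed separation $\check\Delta\ge\Delta-2k(k\tilde\rho+1)\norm{E}/\sqrt{\beta n}$; both perturbations are of order $\tilde\rho\tilde\psi_0^{-1}\Delta$, and this is the true origin of the $\tilde\rho\tilde\psi_0^{-1}$ term in the statement, not the leave-one-out bookkeeping you sketch in Step 3. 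The entrywise bound one gets is in terms of $\norm{\hat U_{1:r}\hat U_{1:r}^T\check\epsilon_i}$, which must then be split as $\norm{\hat U_{1:r}\hat U_{1:r}^T\epsilon_i}+\norm{U_{(r+1):k}U_{(r+1):k}^T\theta^*_{z^*_i}}$ before the leave-one-out step. Your Step 3 is essentially right once Step 2 is fixed, though note that because $\tilde\rho_0$ need only be bounded below by a constant (not by $\tilde\rho$), the factor $128/\tilde\rho_0$ from Theorem \ref{thm:perturbation_r} is $O(1)$ and the $\tilde\rho^{-1}$ in the final bound comes solely from $\norm{E}/\lambda_{\hat r}\lesssim\tilde\rho^{-1}$.
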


With a proper choice of the threshold $T$, Lemma \ref{lem:thresholding} only poses requirements on the smallest cluster size $\beta n/k$ and the  minimum separation among the centers $\Delta$. Compared to Lemma \ref{lem:decomposition} and Theorem \ref{thm:subg}, it removes any condition on the smallest non-zero singular value such as (\ref{eqn:eigengap}) or (\ref{eqn:gap_subg}). In addition, it requires no knowledge on the rank $\kr$. Note that under the conditions of Lemma \ref{lem:thresholding}, $\hat r$ defined in (\ref{eqn:hat_r_def}) always exists  (See Lemma \ref{lem:hat_r_exist}).

With Lemma \ref{lem:thresholding}, we have the following exponential error bound on the performance of $\tilde z$ on sub-Gaussian mixture models, analogous to Theorem \ref{thm:subg} and Corollary \ref{cor:subg} for $\hat z.$
\begin{theorem}\label{thm:GMM_2}
Consider the estimator $\tilde z$ from Algorithm \ref{alg:thresholding}.  Assume $\epsilon_i\sim\text{SG}_p(\sigma^2)$ independently with zero mean for each $i\in[n]$.  Assume   $\beta n/k^4\geq 400$. There exist constants $C,C',C_1,C_2>0$ such that under the assumption that
\begin{align*}
\psi_2 := \frac{\Delta}{\beta^{-0.5}k^2\br{1+ \sqrt{\frac{p}{n}}}\sigma} >C
\end{align*}
and $\rho_2:=T/(\sigma(\sqrt{n}+\sqrt{p}))$ satisfies $C_1\leq \rho_2\leq \psi_2/C_2$, we have
\begin{align*}
\E \ell(\tilde z,z^*)\leq \ebr{-\br{1-C'\br{\rho_2\psi_2^{-1}+\rho_2^{-1}}}\frac{\Delta^2}{8\sigma^2}}+ \ebr{-\frac{n}{2}}.
\end{align*}
If $\psi_2,\rho_2\rightarrow\infty$ and $\rho_2/\psi_2=o(1)$ are further assumed, we have
\begin{align*}
\E \ell(\tilde z,z^*)\leq \ebr{-\br{1-o(1)}\frac{\Delta^2}{8\sigma^2}}+ \ebr{-\frac{n}{2}}.
\end{align*}
\end{theorem}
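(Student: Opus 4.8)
The plan is to combine the deterministic entrywise bound of Lemma \ref{lem:thresholding} with standard sub-Gaussian concentration, mimicking the proof of Theorem \ref{thm:subg} but now tracking the threshold parameter $\tilde\rho$. First I would invoke the high-probability control on the noise operator norm: by Lemma \ref{lem:sub_gaussian_operator} (the cited sub-Gaussian operator norm bound), $\norm{E}\leq C_3\sigma(\sqrt{n}+\sqrt{p})$ on an event of probability at least $1-\ebr{-n/2}$, which accounts for the $\ebr{-n/2}$ term in the conclusion. On this event, $\tilde\psi_0 = \Delta/(\beta^{-0.5}k^2 n^{-0.5}\norm{E}) \gtrsim \psi_2$ and $\tilde\rho = T/\norm{E}$ is comparable (up to constants) to $\rho_2 = T/(\sigma(\sqrt{n}+\sqrt{p}))$, so the hypotheses $256<\tilde\rho<\tilde\psi_0/64$ and $\tilde\psi_0>C$ needed by Lemma \ref{lem:thresholding} follow from $C_1\le\rho_2\le\psi_2/C_2$ and $\psi_2>C$ after choosing $C,C_1,C_2$ large enough; similarly the separation condition (\ref{eqn:Delta_poly}) feeding into Algorithm \ref{alg:thresholding}'s analysis is implied. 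The condition $\beta n/k^4\ge 400$ is carried over verbatim.

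Next I would condition on this event and apply Lemma \ref{lem:thresholding}. Taking expectations over the independent noises gives
\begin{align*}
\E\ell(\tilde z,z^*)\leq n^{-1}\sum_{i\in[n]}\pbr{\br{1-C'\br{\tilde\rho\tilde\psi_0^{-1}+\tilde\rho^{-1}}}\Delta\leq 2\norm{\hat U_{-i,1:\hat r}\hat U_{-i,1:\hat r}^T\epsilon_i}}+\ebr{-n/2},
\end{align*}
where the key point — exactly as in Theorem \ref{thm:subg} — is that $\hat U_{-i,1:\hat r}$ is a function of $\{X_j\}_{j\neq i}$ only, hence independent of $\epsilon_i$. (One must be slightly careful that $\hat r$ is also data-dependent; but conditioning on $\{X_j\}_{j\neq i}$, and noting $\hat r\le k$ while $\norm{\hat U_{-i,1:\hat r}\hat U_{-i,1:\hat r}^T\epsilon_i}\le\norm{\hat U_{-i,1:k}^T\epsilon_i}$ after enlarging the projection, or more simply noting that on the good event the selected index is stable, one reduces to a fixed-dimensional sub-Gaussian projection.) Therefore $\hat U_{-i,1:\hat r}^T\epsilon_i\sim\text{SG}_{\hat r}(\sigma^2)$ with $\hat r\le k$, and $\norm{\hat U_{-i,1:\hat r}\hat U_{-i,1:\hat r}^T\epsilon_i}^2=\norm{\hat U_{-i,1:\hat r}^T\epsilon_i}^2$ is a chi-square-type quadratic form. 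A standard tail bound for the norm of a sub-Gaussian vector in $\le k$ dimensions gives, for the threshold $t=\frac12(1-C'(\tilde\rho\tilde\psi_0^{-1}+\tilde\rho^{-1}))\Delta$,
\begin{align*}
\pbr{\norm{\hat U_{-i,1:\hat r}^T\epsilon_i}\geq t}\leq \ebr{-\frac{t^2}{2\sigma^2}+Ck}.
\end{align*}
Since $t^2/(2\sigma^2)=\br{1-C'(\tilde\rho\tilde\psi_0^{-1}+\tilde\rho^{-1})}^2\Delta^2/(8\sigma^2)$ and $\Delta^2/\sigma^2\gtrsim\psi_2^2\gg k$ on the relevant regime, the additive $Ck$ is absorbed into the exponent by slightly inflating the constant $C'$, and using $\tilde\rho\asymp\rho_2$, $\tilde\psi_0\gtrsim\psi_2$ to replace $\tilde\rho\tilde\psi_0^{-1}+\tilde\rho^{-1}$ by $\rho_2\psi_2^{-1}+\rho_2^{-1}$ (again at the cost of the constant). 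Summing the $n$ identical bounds and dividing by $n$ gives the finite-sample inequality.

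For the asymptotic statement, under $\psi_2,\rho_2\to\infty$ and $\rho_2/\psi_2=o(1)$ the correction term $C'(\rho_2\psi_2^{-1}+\rho_2^{-1})\to 0$, so the exponent becomes $(1-o(1))\Delta^2/(8\sigma^2)$, yielding the second display. The main obstacle I anticipate is the bookkeeping around the \emph{data-dependent} choice of $\hat r$: unlike Theorem \ref{thm:subg} where $r=\kr$ is fixed, here one must argue that conditioning on $\{X_j\}_{j\neq i}$ still leaves $\epsilon_i$ sub-Gaussian against the resulting projection of (random but $\epsilon_i$-independent) dimension $\hat r\le k$; the clean way is to bound $\norm{\hat U_{-i,1:\hat r}^T\epsilon_i}\le\max_{1\le s\le k}\norm{\hat U_{-i,1:s}^T\epsilon_i}$ and pay a union-bound factor $k$ inside the exponent, which is harmless since $\Delta^2/\sigma^2$ dominates $k$. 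Everything else — translating conditions, the operator-norm event, and the quadratic-form tail — is routine and parallels the proof of Theorem \ref{thm:subg}.
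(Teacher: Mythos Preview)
Your proposal is correct and follows essentially the same route as the paper: invoke the operator-norm event $\mathcal F$ to convert $(\tilde\psi_0,\tilde\rho)$ into $(\psi_2,\rho_2)$, apply Lemma~\ref{lem:thresholding}, and then use the sub-Gaussian quadratic-form tail (Lemma~\ref{lem:sub_gaussian_projection}) exactly as in Theorem~\ref{thm:subg}. The one point you flag as delicate---the data-dependence of $\hat r$---is also the only nontrivial step in the paper's proof, and the paper handles it in the same spirit: it notes (from the proof of Lemma~\ref{lem:thresholding}) that $\hat r$ lies in a deterministic subset $\mathcal R\subset[k]$, so the tail bound established uniformly for every fixed $r\in[k]$ transfers to $\hat r$; your monotonicity bound $\norm{\hat U_{-i,1:\hat r}^T\epsilon_i}\le\norm{\hat U_{-i,1:k}^T\epsilon_i}$ or union bound over $s\le k$ are equally valid and arguably cleaner ways to make this precise.
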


\subsection{Isotropic Gaussian Mixture Models}\label{sec:isotropic}
In this section, we consider the isotropic Gaussian mixture models where the noises are sampled from $\mathn(0,\sigma^2I_p)$ independently. As a special case of the sub-Gaussian mixture models, Theorem \ref{thm:subg} can be directly applied. Nevertheless, the isotropic Gaussian noises make it possible to remove the spectral gap condition (\ref{eqn:gap_subg}). In addition, we study the performance of the spectral clustering $\hat z$ from Algorithm \ref{alg:main} with exactly the leading $k$ singular vectors, regardless of $\kr$, the rank of matrix $P$. As a result, it requires no knowledge on $\kr$ and needs no adaptive dimension reduction such as Algorithm \ref{alg:thresholding}. We have the following theorem on its performance.

\begin{theorem}\label{thm:GMM}
Consider the spectral clustering $\hat z$  of Algorithm \ref{alg:main}   with $r=k$. Assume $\epsilon_i\iid\mathn(0,\sigma^2I_p)$ for each $i\in[n]$. Assume $\beta n/k^4\geq 100$ and
\begin{align}\label{eqn:Delta_GMM}
\frac{\Delta}{k^{3.5}\beta^{-0.5}\br{1+{\frac{p}{n}}}\sigma} \rightarrow\infty.
\end{align}
We have
\begin{align}\label{eqn:GMM_loss}
\E \ell(\hat z,z^*)\leq  \ebr{-\br{1-C\br{\frac{\Delta}{k^{3.5}\beta^{-0.5}\br{1+{\frac{p}{n}}}\sigma}}^{-0.25}}\frac{\Delta^2}{8\sigma^2}} + 2e^{-0.08n},
\end{align}
where $C>0$ is some constant.
\end{theorem}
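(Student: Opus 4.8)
The plan is to run the entrywise machinery of Section~\ref{sec:spectral_clustering_mixture_model} with $r=k$, reduce the risk to a chi-square tail computation, and absorb the dependence between the empirical top singular subspace and $\epsilon_i$ using the leave-one-out perturbation bounds of Section~\ref{sec:perturbation}. First I would fix the high-probability event $\mathcal{A}=\{\norm{E}\le C_1\sigma(\sqrt n+\sqrt p)\}$, which holds with probability $\ge 1-e^{-0.08n}$ by a standard Gaussian operator-norm bound (Lemma~\ref{lem:sub_gaussian_operator}); its complement contributes the $2e^{-0.08n}$ term in (\ref{eqn:GMM_loss}). On $\mathcal{A}$, condition (\ref{eqn:Delta_GMM}) is far stronger than the separation requirement (\ref{eqn:Delta_poly}), so Proposition~\ref{prop:poly} and Lemma~\ref{lem:decomposition_simple} apply with $r=k$, identifying a permutation $\phi$ and giving for every $i$
\[
\indic{\hat z_i\ne\phi(z_i^*)}\le \indic{\br{1-C\psi_0^{-1}}\Delta\le 2\norm{\hat U_{1:k}\hat U_{1:k}^T\epsilon_i}}.
\]
Hence it suffices to prove $\p\!\br{2\norm{\hat U_{1:k}\hat U_{1:k}^T\epsilon_i}\ge (1-o(1))\Delta}\le \exp\!\br{-(1-o(1))\Delta^2/(8\sigma^2)}$ for each $i$, and then union bound over $i\in[n]$: the extra factor $n$ is swallowed into the $o(1)$ in the exponent because (\ref{eqn:Delta_GMM}) forces $\Delta^2/\sigma^2\to\infty$ faster than $\log n$ (since $\Delta^2/\sigma^2\gg\beta n/k$ via $\lambda_1=\norm{P}\gtrsim\Delta\sqrt{\beta n/k}$ and (\ref{eqn:eigengap})-type reasoning).

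The core issue is that $\hat U_{1:k}$ depends on $\epsilon_i$, and when $\kr<k$ there is \emph{no usable spectral gap at position $k$} — the singular values $\hat\lambda_{\kr+1},\dots,\hat\lambda_{k+1}$ are all at the noise scale with tiny consecutive gaps — so neither Theorem~\ref{thm:perturbation} nor Wedin can stabilize $\hat U_{1:k}\hat U_{1:k}^T$ directly. I would handle this by a peeling decomposition of the spectrum of $P$. Using the elementary bound $\lambda_1=\norm{P}\gtrsim\Delta\sqrt{\beta n/k}$, condition (\ref{eqn:Delta_GMM}) yields $\lambda_1/\norm{E}\gtrsim k^{3}$ on $\mathcal{A}$, so a pigeonhole argument on the at most $k$ nonzero singular values of $P$ produces indices $0=r_0<r_1<\dots<r_m=:r^*\le\kr$ at which the gap $\lambda_{r_l}-\lambda_{r_l+1}$ is large relative to $\norm{E}$ (with enough slack to carry the $\le k$ blocks through the union bound), $r^*$ being chosen so that $\lambda_{r^*+1}\lesssim\norm{E}\,\mathrm{poly}(k)$. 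Applying Theorem~\ref{thm:perturbation_r} at each $r_l$ (its hypothesis (\ref{eqn:perturbation_r}) being met by the chosen gaps) and telescoping, one controls $\normf{\hat U_{1:r^*}\hat U_{1:r^*}^T-\hat U_{-i,1:r^*}\hat U_{-i,1:r^*}^T}$. Since $\hat U_{-i,1:r^*}$ is independent of $\epsilon_i$ and the noise is isotropic Gaussian, $\hat U_{-i,1:r^*}^T\epsilon_i\sim\mathcal N(0,\sigma^2I_{r^*})$, so
\[
\norm{\hat U_{1:r^*}\hat U_{1:r^*}^T\epsilon_i}\le\norm{\hat U_{-i,1:r^*}^T\epsilon_i}+\text{(small)}\cdot\norm{\epsilon_i},\qquad \norm{\hat U_{-i,1:r^*}^T\epsilon_i}^2\sim\sigma^2\chi^2_{r^*}.
\]

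For the orthogonal residual $\hat U_{(r^*+1):k}\hat U_{(r^*+1):k}^T\epsilon_i$ — whose subspace is the top $(k-r^*)$ left singular subspace of the deflated matrix $(I-\hat U_{1:r^*}\hat U_{1:r^*}^T)X$, of operator norm $\lesssim\norm{E}\,\mathrm{poly}(k)$ and dimension $k-r^*\le k$ — I would exploit isotropic Gaussianity: passing to the deflated data matrix, replacing $\hat U_{1:r^*}$ by its $\epsilon_i$-independent leave-one-out proxy, and then applying the leave-one-out perturbation again (plus rotational invariance, which makes the projection of $\epsilon_i$ onto any data-independent $(k-r^*)$-dimensional subspace a scaled $\chi^2_{k-r^*}$), one shows this residual contributes at most $\sigma^2\,\mathrm{poly}(k)\,(1+p/n)$ to $\norm{\hat U_{1:k}^T\epsilon_i}^2$ on a high-probability event — negligible against $\Delta^2$ under (\ref{eqn:Delta_GMM}). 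Combining all pieces, on a high-probability event $\norm{\hat U_{1:k}^T\epsilon_i}^2\le(1+o(1))\,\sigma^2\chi^2_{k}+\sigma^2\,\mathrm{poly}(k)(1+p/n)$, and the final tail estimate $\p\!\br{\sigma^2\chi^2_k\ge(1-o(1))\Delta^2/4}\le\exp\!\br{-(1-o(1))\Delta^2/(8\sigma^2)}$ (valid since $k=o(\Delta^2/\sigma^2)$) yields (\ref{eqn:GMM_loss}); tracking the accumulated losses from the $\le k$ peeling steps, the permutation slack $C\psi_0^{-1}$, and the residual bound, and optimizing the peeling thresholds, produces the explicit correction exponent $\big(\Delta/(k^{3.5}\beta^{-0.5}(1+p/n)\sigma)\big)^{-1/4}$.

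I expect the main obstacle to be exactly the residual noise directions: proving that the at most $k$ extra "pure-noise" singular directions added by taking $r=k>\kr$ contribute only $O(\sigma^2\,\mathrm{poly}(k)(1+p/n))$ — rather than $O(\sigma^2 p)$ — to $\norm{\hat U_{1:k}^T\epsilon_i}^2$, despite the absence of any spectral gap there. This is where isotropic Gaussianity (rotational invariance and the resulting $\chi^2$ control of projections onto low-dimensional data-independent subspaces) is indispensable and is what lets Theorem~\ref{thm:GMM} drop the spectral-gap condition (\ref{eqn:gap_subg}) present in Theorem~\ref{thm:subg}.
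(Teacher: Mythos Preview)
Your overall architecture---split the empirical subspace at an index $r^*\le\kr$ with a good spectral gap, handle $\hat U_{1:r^*}$ via leave-one-out, treat $\hat U_{(r^*+1):k}$ as a residual controlled by isotropic Gaussianity---matches the paper's. But the mechanism you sketch for the residual does not work, and this is the heart of the proof.

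You propose to control $\norm{\hat U_{(r^*+1):k}^T\epsilon_i}$ by replacing the residual subspace with its leave-one-out proxy $\hat U_{-i,(r^*+1):k}$ (which is independent of $\epsilon_i$, so the projection is $\sigma^2\chi^2_{k-r^*}$) and bounding the swap error by ``applying the leave-one-out perturbation again.'' This fails: there is no usable gap at position $k$ (when $\kr<k$ we have $\lambda_k=\lambda_{k+1}=0$, and even when $\kr=k$ the gap $\lambda_k$ may be tiny), so neither Theorem~\ref{thm:perturbation} nor Theorem~\ref{thm:perturbation_r} controls $\normf{\hat U_{(r^*+1):k}\hat U_{(r^*+1):k}^T-\hat U_{-i,(r^*+1):k}\hat U_{-i,(r^*+1):k}^T}$; that subspace is genuinely unstable and can swing by $O(1)$. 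A crude deterministic bound via $|\hat u_j^TX_i|=\hat\lambda_j|\hat v_{j,i}|$ only gives $|\hat u_j^T\epsilon_i|\lesssim \hat\lambda_j\lesssim k\tau\sqrt{n+p}\,\sigma$, which is far too large. The paper (following L\"offler et al.) uses a different property of isotropic Gaussian noise: rotational invariance in the \emph{column} direction forces $(I-V_{1:\kr}V_{1:\kr}^T)\hat v_j$ to be Haar distributed on the sphere in $\tspan(V_{1:\kr})^\perp$, which gives $|\hat v_{j,i}|\lesssim n^{-1/2}$ entrywise with high probability. The residual is then handled through the \emph{right} singular vectors, not by stabilizing the left ones. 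Relatedly, the paper splits the $k$-means inequality \emph{before} invoking Lemma~\ref{lem:decomposition_simple}, so the residual contribution is an inner product $\iprod{\hat P^{(2)}_{\cdot,i}}{\hat\theta_a^{(2)}-\hat\theta_{z_i^*}^{(2)}}$ rather than the full norm $\norm{\hat U_{(r^*+1):k}^T\epsilon_i}$; your route through Lemma~\ref{lem:decomposition_simple} first makes the residual estimate harder than necessary.

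Two smaller points. First, the multi-index peeling $r_1<\dots<r_m$ with telescoping is unnecessary: a single pigeonhole gives one index $r$ with $\lambda_r-\lambda_{r+1}\ge\tau\sqrt{n+p}\,\sigma$ and $\lambda_{r+1}\le k\tau\sqrt{n+p}\,\sigma$, and Theorem~\ref{thm:perturbation_r} applied once at $r$ already controls $\normf{\hat U_{1:r}\hat U_{1:r}^T-\hat U_{-i,1:r}\hat U_{-i,1:r}^T}$. Second, no union bound over $i\in[n]$ is needed: $\E\ell(\hat z,z^*)=n^{-1}\sum_i\p(\hat z_i\ne\phi(z_i^*))$ is an average, and your justification ``$\Delta^2/\sigma^2\gg\beta n/k$'' is not implied by (\ref{eqn:Delta_GMM}) anyway.
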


Theorem \ref{thm:GMM} shows that asymptotically $\E \ell(\hat z,z^*)\leq \exp(-(1-o(1))\Delta^2/(8\sigma^2))+  2\ebr{-0.08n}$
where the first term dominates when $\Delta^2/\sigma^2=o(n)$. 
The minmax lower bound for recovering $z^*$ under the given model is established in \cite{lu2016statistical}: $\inf_{\hat z}\sup_{(\theta_1^*,\ldots,\theta_k^*),z^*}\E \ell(\hat z,z^*) \geq \exp(-(1+o(1))\Delta^2/(8\sigma^2))$ as long as $\Delta^2/\sigma^2\gg \log(k\beta^{-1})$. This immediately implies that the considered estimator is minimax optimal. Theorem \ref{thm:GMM} also implies $\hat z$ achieves the exact recovery $\E\{\ell(\hat z,z^*)\neq 0\}=o(1)$ when  $\Delta /\sigma\geq (1+c)2\sqrt{2\log n}$ for any small constant $c>0$. When $\Delta /\sigma\leq  (1-c)2\sqrt{2\log n}$, no algorithm is able to recover $z^*$ exactly with high probability according to the minimax lower bound.

It is worth mentioning that Theorem \ref{thm:GMM}   requires no spectral gap condition such as (\ref{eqn:eigengap}) or (\ref{eqn:gap_subg}). The purpose of such conditions is to ensure that  singular vectors of $X$  are well controlled, especially those corresponding to small non-zero singular values of the signal matrix $P$. When the noises are isotropic Gaussian,  the distribution of each right singular vector $\hat v_j$ is well-behaved for any $j\in[p\wedge n]$. Lemma 4.4 of \cite{loffler2019optimality} shows that each $(I-V_{1:\kr}V_{1:\kr}^T)\hat v_j$ is Haar distributed on the sphere spanned by $(I-V_{1:\kr}V_{1:\kr}^T)$, where $V_{1:\kr}:=(v_1,v_2,\ldots,v_\kr)\in\matho^{n\times \kr}$ is the right singular subspace of the signal matrix $P$. Theorem \ref{thm:GMM} is about the singular subspace $\hat U_{1:k}$. In its proof, we decompose it into $\hat U_{1:r}$ and $\hat U_{(r+1):k}$, for some index $r\in[\kr]$ with sufficiently large spectral gap $\lambda_{r}-\lambda_{r+1}$ so that the contribution of $\hat U_{1:r}$ can be precisely quantified following similar arguments used to establish Lemma \ref{lem:thresholding} and Theorem \ref{thm:subg}. The contribution of each $\hat u_j$ where $j\in \{r+1,\ldots,k\}$ is eventually connected with properties of the corresponding right singular vector $\hat v_j$, particularly, the distribution of $(I-V_{1:\kr}V_{1:\kr}^T)\hat v_j$. These two sources of errors together lead to the upper bound (\ref{eqn:GMM_loss}).

The performance of  Algorithm \ref{alg:main} with $r=k$ under the same isotropic Gaussian mixture model is the main topic of \cite{loffler2019optimality} which derives a similar upper bound for $\E\ell(\hat z,z^*)$ assuming $\Delta/(\beta^{-0.5}k^{10.5}(1+p/n))\rightarrow\infty$. The key technical tool used in \cite{loffler2019optimality} is spectral operator perturbation theory of \cite{koltchinskii2016asymptotics, koltchinskii2016perturbation} on the difference between empirical singular subspaces and population ones, which  works for the Gaussian noise case and  it is not clear whether it is possible to be extended to other distributions including sub-Gaussian distributions. In this paper, 
the proof of Theorem \ref{thm:GMM} is completely different, using Theorem \ref{thm:perturbation_r} on the  difference between empirical singular subspaces and their leave-one-out counterparts. We not only recover the main result of  \cite{loffler2019optimality} with a much shorter proof, but also improve the dependence of $k$. Despite that Theorem \ref{thm:GMM} needs an extra condition  $\beta n/k^4\geq 100$, it only requires $k^{3.5}$ to satisfy (\ref{eqn:Delta_GMM}), while \cite{loffler2019optimality} needs $k^{10.5}$ instead which is a stronger condition.

\subsection{Lower Bounds and Sub-optimality of Spectral Clustering}\label{sec:lower}

In the above sections, we focus on quantifying the performance of  spectral clustering under mixture models. An interesting question is whether the spectral clustering is optimal. When the noise is the isotropic Gaussian, Theorem \ref{thm:GMM} matches with the minimax  rate assuming (\ref{eqn:Delta_GMM}) holds, showing that the spectral clustering is indeed optimal in this case. It remains unclear whether the spectral clustering is optimal or not when the noise is beyond the isotropic Gaussian model.

To answer this question, in this section we consider a  two-cluster symmetric mixture model where the centers are proportional to $\one_p$ and the noises have i.i.d. entries. This setup makes it possible to apply the central limit theorem to characterize the performance of the spectral clustering with sharp upper and lower bounds, as $\one_p^T\epsilon_i$ is asymptotically normal for each $i\in[n]$ when $p$ is large.

~\\
\emph{A Two-cluster Symmetric Mixture Model.} Consider a mixture model (\ref{eqn:mixture_model})  with two clusters such that

\begin{align}\label{eqn:simple}
\theta^*_1= -\theta^*_2 = \delta \one_p, \text{ and }\{\epsilon_{i,j}\}_{i\in[n],j\in[p]}\iid F,
\end{align}
for some $\delta \in\mathr$
and some distribution $F$, where $\{\epsilon_{i,j}\}_{j\in[p]}$ are entries of $\epsilon_i$ for each $i\in[n]$.

~\\
\indent 
Under the above model (\ref{eqn:simple}), we have  $k=2$, $\Delta =2\sqrt{p}\delta$ and the largest singular value $\lambda_1 = \delta \sqrt{np}$. Since the signal matrix $P$ is rank-one (i.e., $\kr=1$) with $u_1= (1/\sqrt{p}) \one_p$, a natural idea is to cluster using the first singular vector only. Define
\begin{align}\label{eqn:check_z}
 \br{\check z,\cbr{\check c_j}_{j=1}^2} = \argmin_{ z\in [2]^n , \cbr{ c_j}_{j=1}^{2} \in \mathr} \sum_{i\in[n]} \br{\hat u_1^T X_i -  c_{z_i}}^2.
\end{align}
The performance of the spectral estimator $\check z$ will be the focus in this section. Note that $\hat u_1^TX =\hat\lambda_1 \hat v_1^T$ where $\hat v_1$ is the leading right singular vector of $X$, so $\check z$ equivalently  performs clustering on  $\{\hat v_{1,i}\}_{i\in[n]}$, the entries of $\hat v_1$.
 This is closely related to the sign estimator $\{\text{sign}(\hat v_{1,i})\}_{i\in[n]}$, which estimates the cluster assignment by the signs of $\{\hat v_{1,i}\}_{i\in[n]}$.

Since $\check z$ is exactly the spectral clustering $\hat z$ of Algorithm \ref{alg:main} with $r=1$, Theorem \ref{thm:subg} can be directly applied when noises are sub-Gaussian and yields the following result. 
Under the model (\ref{eqn:simple}), assume that $F$ is a $\text{SG}(\sigma^2)$ distribution with zero mean and $\beta n>40$. There exist constants $C,C'>0$ such that under the assumption that
\begin{align*}
\psi_3:=\frac{\Delta}{\beta^{-0.5}\br{1+\sqrt{\frac{p}{n}}}\sigma} >C,
\end{align*}
we have $\E \ell(\check z,z^*)\leq \exp(-(1-C'\psi_3^{-1}) \Delta^2/(8\sigma^2)) + \exp(-n/2).$

The special structure of (\ref{eqn:simple}) makes it possible to derive a sharper upper bound than the  one above and a matching lower bound on the performance of $\check z$ with some additional assumption on the distribution $F$. Instead of directly using Lemma \ref{lem:decomposition} (which leads to Theorem \ref{thm:subg} and then the above upper bound),
we can further connect the clustering error with $u_1^T \epsilon_i$
where $u_1^T \epsilon_i = p^{-1/2} \sum_{j=1}^p \epsilon_{i,j}$ is approximately normally distributed when $p$ is large.  On the other hand, the structure of (\ref{eqn:simple}) enables us to have a lower bound for $\indic{\hat z_i \neq \phi(z^*_i)}$ that is in an opposite direction of Lemma \ref{lem:decomposition}. See Lemma \ref{lem:lower_bound_enrywise}  for details. The key technical tool used is Theorem \ref{thm:perturbation} on the perturbation $|\hat u_1\hat u_1^T-\hat u_{-i,1}\hat u_{-i,1}^T|$ for all $i\in[n]$.
These together  give a sharp and matching  lower bound for $\E \ell(\check z,z^*)$ where the clustering error is essentially determined by  $\Delta$ and the variance $\bar\sigma^2$.

\begin{theorem}\label{thm:lower_bound_spectral}
Consider the model (\ref{eqn:simple}). For any $\xi\sim F$, assume $\E \xi=0, \text{Var}(\xi)=\bar \sigma^2$, 
and $\xi\sim \text{SG}(\sigma^2)$ where $\sigma\leq  C\bar\sigma$ for some constant $C>0$. Assume $\beta n>40$. Then there exist constants $C',C'',C'''>0$ such that if $\psi_3\geq C'$,
we have
\begin{align*}
& \E \ell(\check z,z^*)\leq \ebr{-\frac{\br{1-C''\psi_3^{-1}}^2\Delta^2}{8\bar \sigma^2}}+  \ebr{-C''\sqrt{p}}+ \ebr{-\frac{n}{2}},\\
\text{and }\quad& \E \ell(\check z,z^*) \geq\ebr{-\frac{\br{1+C'''\psi_3^{-1}}^2\Delta^2}{8\bar \sigma^2}}-  \ebr{-C'''\sqrt{p}}- \ebr{-\frac{n}{2}}.
\end{align*}
\end{theorem}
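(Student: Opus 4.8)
\textbf{Proof proposal for Theorem \ref{thm:lower_bound_spectral}.}

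The plan is to refine the entrywise analysis behind Lemma \ref{lem:decomposition} in both directions, replacing the crude bound $\normop{\hat U_{-i,1:\kr}\hat U_{-i,1:\kr}^T\epsilon_i}$ by the one-dimensional quantity $|u_1^T\epsilon_i| = p^{-1/2}|\sum_{j=1}^p \epsilon_{i,j}|$, and then invoking the central limit theorem. Since $\kr=1$ here with $u_1=p^{-1/2}\one_p$, the leave-one-out projection $\hat u_{-i,1}\hat u_{-i,1}^T$ is rank one; by Theorem \ref{thm:perturbation} (applied with $\kr=1$), $\normf{\hat u_1\hat u_1^T - \hat u_{-i,1}\hat u_{-i,1}^T}$ and also $\normf{\hat u_{-i,1}\hat u_{-i,1}^T - u_1 u_1^T}$ are $O(\psi_3^{-1})$-small on a high-probability event controlling $\norm{E}$ (here Lemma \ref{lem:sub_gaussian_operator} gives $\norm{E}\lesssim\sigma(\sqrt n+\sqrt p)$, so (\ref{eqn:eigengap}) follows from $\psi_3\geq C'$ because $\lambda_1=\delta\sqrt{np}$ and $\Delta=2\sqrt p\,\delta$). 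Consequently $|\hat u_{-i,1}^T\epsilon_i| = |u_1^T\epsilon_i| + O(\psi_3^{-1}\norm{\epsilon_i})$, and since $\norm{\epsilon_i}\lesssim\sigma\sqrt p$ with high probability while $|u_1^T\epsilon_i|$ is of order $\bar\sigma$, one checks the error term is a genuine $\psi_3^{-1}$-fraction of $\Delta\asymp\sqrt p\,\delta$ once one also tracks that $\Delta/\bar\sigma\asymp\psi_3$ up to the $\sigma$ vs $\bar\sigma$ comparison. For the \emph{upper bound} this yields, via Lemma \ref{lem:decomposition}, $\indic{\check z_i\neq\phi(z_i^*)}\leq\indic{(1-C\psi_3^{-1})\Delta\leq 2|u_1^T\epsilon_i|}$ on the good event; taking expectations, $\E\ell(\check z,z^*)\leq\p((1-C\psi_3^{-1})\Delta\leq 2|u_1^T\epsilon_i|) + \p(\text{bad event})$, and the bad-event probability contributes the $\ebr{-C''\sqrt p}+\ebr{-n/2}$ terms (the $\sqrt p$ rate coming from concentration of $\norm{\epsilon_i}$).

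The \emph{lower bound} is the substantive new ingredient and requires a matching reverse inequality at the entrywise level, which is where the two-cluster symmetric structure is essential: I would show (this is Lemma \ref{lem:lower_bound_enrywise}, which I may cite) that for $i$ in, say, the larger cluster, $\check z_i\neq\phi(z_i^*)$ whenever $u_1^T\epsilon_i$ crosses a threshold that is $\Delta/2$ inflated by only a $(1+C'''\psi_3^{-1})$ factor — i.e. $\indic{\check z_i\neq\phi(z_i^*)}\geq\indic{(1+C'''\psi_3^{-1})\Delta\leq 2|u_1^T\epsilon_i|}$ on the good event, possibly restricted to one sign of $u_1^T\epsilon_i$ (which costs only a factor $1/2$ absorbed into constants, or is handled by symmetry of $F$). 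The mechanism: with $P$ rank one, $\hat u_1\hat u_1^TX_i = (\text{sign recovered from }\hat v_1)$, and if $|u_1^T\epsilon_i|$ exceeds $|u_1^T\theta^*_{z^*_i}| = \sqrt p\,\delta = \Delta/2$ by more than the perturbation slack, then $\hat u_1^TX_i$ lands on the wrong side, so $k$-means assigns it wrongly. Then $\E\ell(\check z,z^*)\geq\p((1+C'''\psi_3^{-1})\Delta\leq 2|u_1^T\epsilon_i|) - \p(\text{bad event})$.

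Finally, in both directions I would convert the tail probability $\p(t\Delta\leq 2|u_1^T\epsilon_i|)$ with $t=1\mp C\psi_3^{-1}$ into the stated Gaussian form. Writing $u_1^T\epsilon_i = p^{-1/2}\sum_{j=1}^p\epsilon_{i,j}$, a sum of $p$ i.i.d. mean-zero, variance-$\bar\sigma^2$, $\text{SG}(\sigma^2)$ variables, I would use a sharp moderate-deviation / Cramér-type estimate: $\p(p^{-1/2}\sum_j\epsilon_{i,j}\geq x)$ is $\ebr{-(1+o(1))x^2/(2\bar\sigma^2)}$ for $x$ up to order $\sqrt p$, with explicit two-sided control of the $o(1)$ by a further $O(\psi_3^{-1})$ term (Berry–Esseen for the upper direction, a reverse/anti-concentration moderate-deviation bound for the lower direction), plus an additive $\ebr{-C\sqrt p}$ slack for $x$ near the edge of validity — this is exactly the source of the $\ebr{-C''\sqrt p}$ terms in the statement. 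Plugging $x = t\Delta/2$ gives $\ebr{-t^2\Delta^2/(8\bar\sigma^2)}$, matching both displays after renaming constants. The main obstacle is the lower-bound side: one needs a reverse moderate-deviation inequality for a sum of merely sub-Gaussian (not exactly Gaussian) i.i.d. summands that is tight in the constant $1/(8\bar\sigma^2)$, which forces the use of the variance $\bar\sigma^2$ (not the proxy $\sigma^2$) and is the precise point where the hypothesis $\sigma\leq C\bar\sigma$ and large $p$ enter — everything else is careful bookkeeping of $\psi_3^{-1}$-order error terms through the leave-one-out perturbation bounds.
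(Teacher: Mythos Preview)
Your overall architecture is right --- upper bound via Lemma \ref{lem:decomposition}, lower bound via Lemma \ref{lem:lower_bound_enrywise}, reduce $|\hat u_{-i,1}^T\epsilon_i|$ to $|u_1^T\epsilon_i|=p^{-1/2}|\sum_j\epsilon_{i,j}|$, then normal approximation --- but two steps as written do not go through.

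\textbf{The cross term $(\hat u_{-i,1}-u_1)^T\epsilon_i$.} You bound it by $\norm{\hat u_{-i,1}-u_1}\,\norm{\epsilon_i}\lesssim\psi_3^{-1}\sigma\sqrt p$ and claim this is $O(\psi_3^{-1}\Delta)$. That would require $\sigma\sqrt p\lesssim\Delta=2\sqrt p\,\delta$, i.e.\ $\sigma\lesssim\delta$, which the hypotheses do \emph{not} imply: from $\psi_3\geq C'$ you only get $\delta\gtrsim\sigma/\sqrt{p}$ (or $\sigma/\sqrt n$ when $p\gg n$), so your bound is off by a factor $\sqrt p$ (or $\sqrt n$). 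The paper instead exploits that $\hat u_{-i,1}$ is \emph{independent} of $\epsilon_i$: conditionally, $(\hat u_{-i,1}-u_1)^T\epsilon_i\sim\text{SG}(\norm{\hat u_{-i,1}-u_1}^2\sigma^2)=\text{SG}(O(\psi_3^{-2})\sigma^2)$, so
\[
\pbr{\,|(\hat u_{-i,1}-u_1)^T\epsilon_i|\geq C_2\psi_3^{-1}\Delta\,}\leq 2\exp\!\br{-\tfrac{C_2^2\Delta^2}{128\sigma^2}},
\]
which (using $\sigma\leq C\bar\sigma$) can be absorbed into the main exponential by choosing $C_2$ large. This is the same leave-one-out independence trick that powers Lemma \ref{lem:decomposition}, and it is essential here; the crude $\norm{\epsilon_i}$ bound is not enough.

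\textbf{The normal approximation and the $\ebr{-C\sqrt p}$ term.} Berry--Esseen gives an \emph{additive} CDF error $O(p^{-1/2})$, which swamps the target $\exp(-\Delta^2/(8\bar\sigma^2))$ whenever $\Delta^2/\bar\sigma^2\gtrsim\log p$; it cannot produce the stated $\ebr{-C\sqrt p}$ correction. The paper uses the KMT quantile coupling: there exist constants $D,\eta$ and a standard normal $Z$ such that, on $\{|Y|\leq\eta\sqrt p\}$ with $Y=\bar\sigma^{-1}u_1^T\epsilon_i$, one has $|Y-Z|\leq D Y^2/\sqrt p + D/\sqrt p$. The $\ebr{-C\sqrt p}$ term then arises from $\pbr{DY^2/\sqrt p\geq c}\leq 2\exp(-c'\sqrt p)$ (sub-Gaussian tail of $Y$), \emph{not} from concentration of $\norm{\epsilon_i}$ as you wrote. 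A Cram\'er-type moderate-deviation expansion could in principle substitute for KMT, but you would need the precise range of validity and two-sided control; the vague ``Berry--Esseen for the upper direction'' does not work.

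With these two fixes (sub-Gaussian control of the cross term via independence, and KMT in place of Berry--Esseen), your outline matches the paper's proof.
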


In Theorem \ref{thm:lower_bound_spectral}, the term $\exp(-C''\sqrt{p})$ is due to the normal approximation of $u_1^T\epsilon_i$ and decays when the dimensionality $p$ increases. The term $\exp(-n/2)$ is due to a high-probability event on $\norm{E}$. If 
additionally $\Delta/\bar \sigma \ll \max\{p^{1/4}, n^{1/2}\}$ is assumed, Theorem \ref{thm:lower_bound_spectral} concludes asymptotically
\begin{align}\label{eqn:spectral_sharp}
\E\ell(\check z,z^*) = \ebr{-\frac{(1+c)\Delta^2}{8\bar \sigma^2}},
\end{align}
for some small constant $c$.

The upper and lower bounds in Theorem \ref{thm:lower_bound_spectral} give a sharp characterization of the performance of $\check z$. To answer the question of whether it is optimal or not, we need to establish the minimax rate for the clustering task under the model (\ref{eqn:simple}). Since the model (\ref{eqn:simple}) is essentially about a testing between two parametric distributions, the optimal procedure is the likelihood ratio test. According to the classical asymptotics theory \cite{van2000asymptotic}, the likelihood ratio behaves like a normal random variable as $p\rightarrow\infty$ under some regularity conditions. This leads to an error rate determined by $\Delta$ and the Fisher information.

\begin{lemma}\label{lem:minimax_spectral}
Consider the model (\ref{eqn:simple}). Assume the distribution $F$ has a positive, continuously differentiable density $f$ with mean zero and finite Fisher information $\fc :=\int \br{f'/f}^2f\diff x$.
Assume $\Delta$ is a constant. 
We have
\begin{align}\label{eqn:minimax}
C_1\ebr{-\frac{\Delta^2}{8\fc ^{-1}}} \leq \lim_{p\rightarrow\infty}\inf_{z}\sup_{z^*\in[2]^n}\E \ell(z,z^*) \leq  C_2\ebr{-\frac{\Delta^2}{8\fc ^{-1}}},
\end{align}
for some constants $C_1,C_2>0$.
\end{lemma}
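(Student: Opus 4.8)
\textbf{Proof proposal for Lemma \ref{lem:minimax_spectral}.}

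The plan is to reduce the minimax clustering problem over $z^*\in[2]^n$ to a collection of pairwise hypothesis tests, one per coordinate $i\in[n]$, and then apply classical LAN (local asymptotic normality) theory to each such test as $p\to\infty$. Concretely, fix the centers $\theta_1^*=\delta\one_p=-\theta_2^*$. For a single observation $X_i\in\mathr^p$ with i.i.d.\ coordinates, deciding whether $z_i^*=1$ or $z_i^*=2$ is exactly the problem of testing $H_1:X_i\sim \prod_{j=1}^p f(x_j-\delta)$ against $H_2:X_i\sim\prod_{j=1}^p f(x_j+\delta)$. Since $\Delta=2\sqrt p\,\delta$ is assumed constant, we have $\delta = \Delta/(2\sqrt p)\to 0$, so this is a \emph{shrinking-separation} testing problem of the kind governed by LAN: under the smoothness and finite-Fisher-information assumptions on $f$, the log-likelihood ratio $\log\frac{d H_2}{d H_1}(X_i)$ converges in distribution (under either hypothesis) to a Gaussian whose mean/variance is $\mp\, 2\delta^2 p\,\fc + o(1) = \mp\,\tfrac{\Delta^2\fc}{2}+o(1)$ — here using $\sum_j \ell'(X_{i,j})\cdot(2\delta) + O(p\delta^2)$ with $\ell=\log f$, a standard Taylor expansion of the log-likelihood in the contiguous regime. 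The optimal (Neyman--Pearson / Bayes) error for this single test is then, by the Gaussian limit, $\Phi(-\sqrt{\Delta^2\fc/4})+o(1) = \exp(-(1+o(1))\Delta^2/(8\fc^{-1}))$ up to polynomial-in-$\Delta$ factors, which supplies the matching constants $C_1,C_2$.

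For the \emph{lower bound} in \eqref{eqn:minimax}, I would lower bound the minimax risk by a Bayes risk: put the uniform prior on $z^*\in[2]^n$ (or, to respect the cluster-size constraint implicit elsewhere in the paper, a product-type prior that is asymptotically uniform). Because the coordinates $X_1,\dots,X_n$ are independent and each $X_i$'s law depends only on $z_i^*$, the Bayes-optimal estimator is coordinatewise the likelihood-ratio test, and the Bayes risk equals $\frac1n\sum_i$ of the per-coordinate Bayes error, which by the LAN computation above is $\ge C_1\exp(-\Delta^2/(8\fc^{-1}))$ for each $i$; averaging preserves the bound. One technical point: the loss $\ell(z,z^*)$ has the $\min_{\phi\in\Phi}$ over the two label permutations, but with $k=2$ and $n$ large the permutation-adjusted loss differs from the coordinatewise Hamming loss only on an event of probability $\exp(-\Omega(n))$ (when more than half the coordinates would be flipped), which is negligible against $\exp(-\Theta(\Delta^2))$ since $\Delta$ is constant; I would dispatch this with a union/Chernoff bound.

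For the \emph{upper bound}, exhibit the estimator $z$ that sets $z_i = 1$ if $\sum_{j=1}^p \log\frac{f(X_{i,j}-\delta)}{f(X_{i,j}+\delta)}\ge 0$ and $z_i=2$ otherwise (the coordinatewise LRT), and bound $\E\ell(z,z^*)\le \frac1n\sum_i \p(z_i\neq z_i^*)$, again invoking the Gaussian limit of the per-coordinate log-likelihood ratio to get each term $\le C_2\exp(-\Delta^2/(8\fc^{-1}))$ for $p$ large. The main obstacle is making the LAN / log-likelihood expansion rigorous \emph{uniformly} enough to control the actual (non-asymptotic in $\Delta$, but $p\to\infty$) tail probability rather than just a CLT statement: one needs that the $o(1)$ in the exponent is genuinely $o(1)$ as $p\to\infty$, which requires either a Berry--Esseen-type bound on the normalized sum $\sum_j \ell'(X_{i,j})$ (needing a third-moment or Lyapunov condition — available since finite Fisher information plus continuous differentiability of $f$ typically gives local boundedness of $\ell'$ after a truncation argument) together with control of the quadratic remainder $\sum_j[\ell(X_{i,j}-\delta)-\ell(X_{i,j}+\delta)+2\delta\ell'(X_{i,j})]$, whose mean is $-2\delta^2 p\fc(1+o(1))$ and whose fluctuations are $o_p(1)$ by a second-moment bound. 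I expect this uniform-contiguity bookkeeping — essentially reproving a quantitative version of the classical result that contiguous simple-vs-simple testing error is driven by the Hellinger/Fisher distance — to be the technically delicate core, while the reduction-to-coordinatewise-tests and the permutation-loss cleanup are routine.
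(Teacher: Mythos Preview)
Your proposal is essentially the same approach as the paper's: both use the coordinatewise likelihood-ratio test for the upper bound, invoke local asymptotic normality to identify the limiting Gaussian for the log-likelihood ratio (with mean $-\fc\Delta^2/2$ and variance $\fc\Delta^2$), and reduce the lower bound to a single-coordinate Neyman--Pearson testing problem.

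Two points of contrast are worth noting. First, you are overcomplicating the LAN step. Because the statement is about $\lim_{p\to\infty}$ with $\Delta$ held \emph{constant}, ordinary convergence in distribution of the log-likelihood ratio (as supplied directly by LAN, cf.\ Chapter~7 of \cite{van2000asymptotic}) already gives $\p(\text{LLR}>0)\to\Phi(-\sqrt{\fc}\,\Delta/2)$, and since $\Delta$ is fixed this equals $C\exp(-\fc\Delta^2/8)$ for some constant $C$ depending only on $\Delta$. No Berry--Esseen, third-moment, or uniform-in-$p$ tail control is needed; the ``$o(1)$ in the exponent'' worry does not arise. Second, for the permutation issue in the lower bound, the paper uses a cleaner device than your Chernoff argument: it restricts the supremum to the subset $\mathcal{Z}=\{z\in[2]^n: z_i=1\text{ for }i\le n/3,\ z_i=2\text{ for }n/3<i\le 2n/3\}$, on which the permutation minimizer is always the identity, so $\ell(z,z')$ coincides with normalized Hamming distance and the reduction to a single free coordinate ($i>2n/3$) is immediate. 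Your Bayes-prior route also works but requires the extra step you flagged.
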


With Lemma \ref{lem:minimax_spectral}, the question of  whether $\check z$ is optimal or not boils down to a comparison of the variance $\bar \sigma^2$ and the inverse of the Fisher information $\fc^{-1}$. Due to the fact that $\fc^{-1}\leq \bar\sigma^2$ and the equation is true if and only if $F$ is a normal distribution, we have the following conclusion.
\begin{theorem}\label{thm:optimal}
Consider the model (\ref{eqn:simple}).  Assume all the assumptions needed in  Theorem \ref{thm:lower_bound_spectral} and Lemma \ref{lem:minimax_spectral} hold. Then the spectral clustering $\check z$ is in general suboptimal, i.e., it fails to achieve  the minimax rate (\ref{eqn:minimax}). It is optimal if and only if the noise distribution $F$ is $N(0,\bar \sigma^2)$.
\end{theorem}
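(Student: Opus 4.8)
The plan is to combine the sharp two-sided characterization of the performance of $\check z$ from Theorem \ref{thm:lower_bound_spectral} with the minimax rate from Lemma \ref{lem:minimax_spectral}, and then reduce the entire question to the elementary comparison between $\bar\sigma^2$ and $\fc^{-1}$. First I would invoke Theorem \ref{thm:lower_bound_spectral}, which under the stated assumptions gives (asymptotically, using $\Delta/\bar\sigma \ll \max\{p^{1/4},n^{1/2}\}$ or simply $\Delta$ constant so that the $\exp(-C''\sqrt{p})$ and $\exp(-n/2)$ terms are negligible compared to the main term) the two-sided bound
\begin{align*}
\ebr{-\frac{(1+o(1))\Delta^2}{8\bar\sigma^2}} \leq \E\ell(\check z,z^*) \leq \ebr{-\frac{(1-o(1))\Delta^2}{8\bar\sigma^2}},
\end{align*}
so that $\E\ell(\check z,z^*)$ is of exponential order $\exp(-(1+o(1))\Delta^2/(8\bar\sigma^2))$. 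Next I would invoke Lemma \ref{lem:minimax_spectral}, which says the minimax risk is of exponential order $\exp(-(1+o(1))\Delta^2/(8\fc^{-1}))$. Matching these two rates on the exponential scale, $\check z$ is minimax optimal if and only if the exponents agree asymptotically, i.e. if and only if $\bar\sigma^2 = \fc^{-1}$.

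The final and essential ingredient is the classical Cram\'er--Rao-type inequality $\fc^{-1} \leq \bar\sigma^2$, with equality if and only if $F = N(0,\bar\sigma^2)$. To see this, note $\E[\xi \cdot (-f'(\xi)/f(\xi))] = -\int x f'(x)\,\diff x = \int f(x)\,\diff x = 1$ (integrating by parts, using that $f$ has mean zero and vanishes at $\pm\infty$), so by Cauchy--Schwarz $1 = \big(\E[\xi \cdot s(\xi)]\big)^2 \leq \E[\xi^2]\,\E[s(\xi)^2] = \bar\sigma^2 \fc$, where $s = -f'/f$ is the score. Hence $\fc^{-1} \leq \bar\sigma^2$. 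Equality in Cauchy--Schwarz forces $s(x) = cx$ almost everywhere for a constant $c$, i.e. $(\log f)'(x) = -cx$, which integrates to $f(x) \propto \exp(-cx^2/2)$; combined with the finite variance and the normalization this is exactly $N(0,\bar\sigma^2)$ (and then $c = 1/\bar\sigma^2$, $\fc = 1/\bar\sigma^2$). Since $F$ is assumed to have a positive continuously differentiable density with finite Fisher information, all steps are justified. Therefore $\bar\sigma^2 = \fc^{-1}$ exactly when $F = N(0,\bar\sigma^2)$, and strictly $\bar\sigma^2 > \fc^{-1}$ otherwise, which makes the exponent in the rate of $\check z$ strictly smaller (slower decay) than the minimax exponent, i.e. $\check z$ is strictly suboptimal.

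I do not anticipate a genuine obstacle here, since both hard inputs (Theorem \ref{thm:lower_bound_spectral} and Lemma \ref{lem:minimax_spectral}) are already established; the main point to be careful about is bookkeeping on the asymptotic regime so that the ``optimal if and only if'' is a statement about the leading exponential constant rather than about the lower-order terms. Concretely, one should note that when $\bar\sigma^2 > \fc^{-1}$ the ratio $\E\ell(\check z,z^*) / (\text{minimax risk})$ tends to infinity (it is at least $\exp((1-o(1))\frac{\Delta^2}{8}(\frac{1}{\fc^{-1}} - \frac{1}{\bar\sigma^2}))$, which blows up since $\Delta^2$ is of the relevant order), so suboptimality is in the strong sense of a diverging ratio, and conversely when $F = N(0,\bar\sigma^2)$ the upper bound of Theorem \ref{thm:lower_bound_spectral} matches the minimax lower bound of Lemma \ref{lem:minimax_spectral} up to the $1 \pm o(1)$ in the exponent and constant prefactors, giving optimality. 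This completes the plan.
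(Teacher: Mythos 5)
Your proposal is correct and takes essentially the same approach as the paper: the paper's own proof of Theorem \ref{thm:optimal} consists precisely of the Cauchy--Schwarz/Cram\'er--Rao computation $\fc\bar\sigma^2 = \bigl(\int (f'/f)^2 f\,\diff x\bigr)\bigl(\int x^2 f\,\diff x\bigr) \geq \bigl(\int x f'\,\diff x\bigr)^2 = 1$, with equality iff $f'/f \propto x$, and then reads off the conclusion by combining with Theorem \ref{thm:lower_bound_spectral} and Lemma \ref{lem:minimax_spectral}. Your write-up is somewhat more explicit about the asymptotic bookkeeping and the Gaussian characterization of the equality case, but the argument is identical in substance.
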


Theorem \ref{thm:optimal} establishes the sub-optimality of the spectral clustering $\check z$ under the model (\ref{eqn:simple}). 
Though $\check z$ achieves an exponential error rate, it has a fundamentally sub-optimal exponent involving $\bar \sigma^2$ instead of $\fc^{-1}$. This is due to the fact $\check z$ clusters data points based on Euclidean distances, whereas the optimal procedure uses the likelihood ratio test.
Only when the noise is normally distributed, the likelihood ratio test is equivalent to a comparison of two Euclidean distances, leading to the optimality of $\check z$ in the Gaussian case.  Even though that Theorem \ref{thm:optimal} is only limited to the model (\ref{eqn:simple}), the above reasoning suggests the spectral clustering is generally sub-optimal under mixture models  beyond (\ref{eqn:simple}) unless the noise follows a Gaussian distribution.

\section{Discussion}\label{sec:discuss}

\subsection{Potential Applications of Leave-One-Out Singular Subspace Perturbation Analysis}

In this paper, we have primarily applied the developed leave-one-out singular subspace perturbation toolkit to study the performance of spectral clustering in the context of mixture models. However, it is important to highlight that this toolkit holds promise for various other applications that exhibit low-rank structures and require entrywise analysis. Examples of such applications include low-rank matrix denoising, matrix completion, factor analysis, biclustering, and more.

To illustrate the versatility of our approach, consider a simple scenario where the data matrix $W$ is approximately rank-one and  can be expressed as $W = \lambda uv^T + E$. Here, $\lambda$ is a scalar, and $u$ and $v$ are unit vectors. Let $\hat\lambda, \hat u, \hat v$ be the leading singular value, left singular vector,  and right singular vector of $W$. Specifically, $\hat v_i$, the $i$-th coordinate of $\hat v$, can be expressed as $\hat v_i = \hat u^T W_i /\hat \lambda =  (\lambda \hat u^T u /\hat \lambda)v_i +  \hat u^T \epsilon_i /\hat \lambda$,
where $X_i$ and $\epsilon_i$ represent the $i$-th column of $X$ and $E$, respectively. Under suitable regularity conditions, we can observe that the first term, $(\lambda \hat u^T u / \hat \lambda) v_i$, is well-controlled, leaving the perturbation of $\hat v_i$ to be predominantly determined by the second term, $\hat u^T \epsilon_i / \hat \lambda$, which can be approximated as $\hat u^T \epsilon_i / \lambda$. 
Since $| \hat u^T \epsilon_i | = \| \hat u \hat u^T \epsilon_i \|$, we can leverage Theorem \ref{thm:general} to establish a connection between $| \hat u^T \epsilon_i |$ and $\|\hat u_{-i}\hat u_{-i}^T \epsilon_i\|=|\hat u_{-i}^T \epsilon_i|$, where $\hat u_{-i}$ represents the leading left singular vector of the data matrix with the $i$-th column removed. Importantly, the independence between $\hat u_{-i}$ and $\epsilon_i$ can be exploited to analyze the magnitude of $|\hat u_{-i}^T \epsilon_i|$, facilitating an entrywise perturbation analysis for $\hat v_i$.
This demonstrates the potential broader applicability of our leave-one-out singular subspace perturbation analysis beyond spectral clustering and mixture models.

\subsection{Extension to Eigenspace Perturbation}
In this paper, we primarily focus on the analysis of singular subspace perturbations. However, it is worth considering the potential extension of our findings to eigenspace perturbation scenarios.
Let us consider two symmetric matrices, $Y\in\mathbb{R}^{(n-1)\times (n-1)}$ and $\hat{Y}\in\mathbb{R}^{n\times n}$. Here, $\hat{Y}$ is obtained from $Y$ by removing the last row and column of $\hat{Y}$. For simplicity, we assume that $\hat{Y}_{n,n}=0$. We introduce a vector $y_n\in\mathbb{R}^{n-1}$ such that the last row and column of $\hat{Y}$ can be represented as $(y_n^T,0)$ and $(y_n^T,0)^T$, respectively. Let the leading eigenspaces of $Y$ and $\hat{Y}$ be denoted as $U_r\in\mathbb{R}^{(n-1)\times r}$ and $\hat{U}_r\in\mathbb{R}^{n\times r}$, respectively.
In contrast to the singular subspace analysis, we note that $Y$ and $\hat{Y}$ have different dimensionalities. To address this, we consider an augmented matrix $\tilde{U}_r=(U_r^T,0)^T\in\mathbb{R}^{n\times r}$. Analyzing $\|\tilde{U}_r\tilde{U}_r^T - U_rU_r^T\|_{\text{F}}$ leads us to follow a similar proof strategy as employed in Theorem \ref{thm:general}. However, extending the proof from Theorem \ref{thm:general} to cover $\|\tilde{U}_r\tilde{U}_r^T - U_rU_r^T\|_{\text{F}}$ appears to be non-trivial and potentially challenging.

The reason for this challenge lies in the perturbation between $\tilde{U}_r\tilde{U}_r^T$ and $U_rU_r^T$ that not only involves the last column but also the last row of $\hat{Y}$. In particular, the contribution of the last row $(y_n^T,0)$ to the upper bound of $\|\tilde{U}_r\tilde{U}_r^T - U_rU_r^T\|_{\text{F}}$ remains unclear, as it is not accounted for in the current analysis presented in Theorem \ref{thm:general}.
Hence, we defer the analysis of eigenspace perturbations to future research endeavors, recognizing the need for a more comprehensive and specialized treatment of this aspect.

\subsection{Approximated Solution to $k$-means}

Solving the $k$-means problem exactly, as detailed in (\ref{eqn:spectral_low}), can be computationally challenging, particularly for large datasets. To enhance practicality, one might opt for an approximate solution to $k$-means, where the solution's objective value remains within a factor of $(1+\varepsilon)$ of the global minimum. It's worth noting, however, that such an approximate solution may lack a property intrinsic to the global minimizer in (\ref{eqn:spectral_low}): $\hat z_i = \argmin_{a\in[k]} \|\hat U_{1:r}^T X_i -  \hat c_a\|^2$ for every $i\in[n]$, which is  critical to our theoretical analysis.
To circumvent this issue, we can use a strategy delineated in Section 2.5 of \cite{loffler2019optimality}. This approach, devised for addressing a similar problem for spectral clustering under Gaussian mixture models, executes an additional step of Lloyd’s algorithm after obtaining the $(1+\varepsilon)$ solution. As evidenced by Theorem 2.2 in \cite{loffler2019optimality}, the theoretical analysis for this augmented method closely mirrors that of the original. The cost of having the approximate solution is the need for a slightly more stronger signal-to-noise condition. In our context, this means Theorem \ref{thm:subg} would remain valid, albeit with $\psi_1$ carrying an extra $\sqrt{1+\varepsilon}$ factor in its denominator.

\subsection{High-dimensional regime \( p \gg n \)}

In the context where \( k, \beta, \sigma \) are constants, Corollary \ref{cor:subg} and Theorem \ref{thm:GMM} demand the conditions $\Delta/(1+\sqrt{p/n})\rightarrow\infty$ and $\Delta/(1+p/n)\rightarrow\infty$ respectively. In the low-dimensional scenario, where \( p \lesssim n \), these conditions can be equivalently expressed as \( \Delta \rightarrow \infty \) that is recognized as optimal. Nevertheless, in the high-dimensional case \( p \gg n \), these conditions are deemed sub-optimal.
For a two-component symmetric isotropic Gaussian mixture model, \cite{cai2018rate} demonstrates that spectral clustering remains consistent as long as $\Delta/(p/n)^{1/4}\rightarrow\infty$. More recently, for sub-Gaussian mixture models, under this condition, exponential misclustering errors are obtained in \cite{giraud2019partial} through semi-definite programming (SDP) and in \cite{abbe2020ell_p, ndaoud2018sharp} through a variant of spectral clustering that employs the leading eigenvectors of a hollowed gram matrix $\mathcal{H}(X^TX)\in\mathr^{n\times n}$, where $\mathcal{H}(\cdot)$ is the hollowing operator that zeros out all diagonal entries of a square matrix.
In addition, it is suggested in \cite{abbe2020ell_p} that hollowing is crucial for spectral clustering in high-dimensional and heteroscedastic scenarios. It provides counterexamples showing that the leading eigenvectors of $X^TX$ can be asymptotically orthogonal to their population counterparts. In contrast, those of the hollowed matrix $\mathcal{H}(X^TX)$ remain consistent. Our more stringent conditions, as compared to $\Delta/(p/n)^{1/4}\rightarrow\infty$, stem from challenges inherent in our analysis, possibly related to our use of the gram matrix, as opposed to $\mathcal{H}(X^TX)$. 

\subsection{Explicit Error Rate of Spectral Clustering under Other Mixture Models}
As our analysis in this paper establishes an explicit error rate under sub-Gaussian mixture models, a natural question is whether our analysis framework can be extended to other mixture models. A key observation is that the clustering error bound in Lemma \ref{lem:decomposition} imposes no specific assumptions on the noise distribution $\{\epsilon_i\}$, allowing for potential applicability to a wide range of mixture models. However, this flexibility comes with challenges. Lemma \ref{lem:decomposition} highlights that the clustering error is intimately tied to the tail probabilities of $\normop{\hat U_{-i,1:\kr}^T\epsilon_i}$. While the independence between $\hat U_{-i,1:\kr}$ and $\epsilon_i$ is advantageous, the lack of explicit expressions for $\hat U_{-i,1:\kr}$ poses difficulties when dealing with other noise distributions.

When $\epsilon_i$ follows a sub-Gaussian distribution, existing concentration inequalities can be applied to analyze the norm of $\hat U_{-i,1:\kr}^T\epsilon_i$, providing a sharp upper bound as in Theorem \ref{thm:subg}. However, in scenarios where $\epsilon_i$ is assumed to follow a specific distribution, such as a centered Bernoulli random vector with success probability $q$ decreasing as $n$ grows (as encountered in community detection tasks), issues arise. Despite modeling $\epsilon_i$ as $\text{SG}_p(1)$, the correct variance is $q$, leading to a loose upper bound for spectral clustering performance.
Directly analyzing $\normop{\hat U_{-i,1:\kr}^T\epsilon_i}$ becomes challenging in such cases due to the lack of explicit expressions for $\hat U_{-i,1:\kr}$ and uncertainties about the behavior of its entries. It is important to acknowledge that our current analysis framework has limitations when confronted with these complexities.
Future research in this direction may involve exploring novel techniques or adapting existing methodologies to handle non-sub-Gaussian noise distributions more effectively, thereby establishing sharp analysis for spectral clustering under diverse mixture models.

\subsection{Unknown $k$ or $\sigma$} In this paper, we assume $k$, the number of clusters, is known. If $k$ is unknown, one can employ existing methodologies, as found in the literature \cite{Wang10, TibshiraniWaltherHastie01, MontiTamayoMesirovGolub03, von2007tutorial}, to estimate its value prior to applying our spectral clustering method. Our theoretical results maintain their validity, given that $k$ is accurately estimated, albeit with an added term accounting for the estimation error of $k$.
However, while such methods have empirically demonstrated decent performance, their theoretical performances are not fully understood, especially  in contexts where both $p,n$ are large. Regarding $\sigma$, the noise level in sub-Gaussian mixture models, both Algorithm \ref{alg:main} and Algorithm \ref{alg:thresholding} require no prior knowledge of $\sigma$.
However, in Theorem \ref{thm:GMM_2}, the threshold $T$ is needed to satisfy a condition involving $\sigma$. More generally, in Lemma \ref{lem:thresholding}, $T/\|E\|$ needs to be bounded away from 0. To endow the algorithm with enhanced adaptability, one possible approach is to consider $\hat\lambda_{k+1}$,  the $(k+1)$th largest singular value of the data matrix, as a surrogate of $\|E\|$.   The intuition is that when entries of the noise matrix $E$ are independent and identically distributed,  asymptotic behavior of its singular values can be characterized using random matrix theory, building a connection between $\|E\|$ and its leading singular values.   Further investigation is beyond the scope of this paper.

\section{Proof of Main Results in Section \ref{sec:perturbation}}\label{sec:proof_perturbation_sec_2}
In this section, we give the proofs of Theorem \ref{thm:general} and Theorem \ref{thm:perturbation}. The proof of Theorem \ref{thm:perturbation_r} is included in the supplement \cite{supplement} due to page limit.

\subsection{Proof of Theorem \ref{thm:general}}\label{sec:proof_general}

Before giving the proof of Theorem \ref{thm:general}, we first present and prove a slightly more general perturbation result, Theorem \ref{thm:general_more}, which  only requires $\sigma_\rr^2 - \sigma_{\rr+1}^2 - \normop{(I-U_\rr U_\rr^T)y_n}^2>0$ instead of assuming $\rho >2$. We defer the proof of Theorem \ref{thm:general} to the end of this section, which is an immediate consequence of  Theorem \ref{thm:general_more}.

\begin{theorem}\label{thm:general_more}
If $\sigma_\rr ^2 - \sigma_{\rr +1}^2 - \normop{(I-U_\rr U_\rr ^T)y_n}^2>0,$
we have
\begin{align*}
\fnorm{\hat U_\rr  \hat U_\rr ^T-   U_\rr U_\rr ^T} \leq  \frac{2\sqrt{2}{  \sigma_\rr  }\norm{(I-U_\rr U_\rr ^T)y_n}}{\sigma_\rr ^2 - \sigma_{\rr +1}^2 -  \norm{(I-U_\rr U_\rr ^T)y_n}^2}\sqrt{\sum_{i=1}^\rr\br{ \frac{u_i^T y_n}{\sigma_i}}^2}.
\end{align*}
\end{theorem}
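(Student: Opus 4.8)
The plan is to reduce everything to a one-column perturbation and then carry out an explicit computation on $2\times 2$–type blocks governed by the angle between $y_n$ and the span of $U_\rr$. First I would recall the observation already made in the text: $Y':=(Y,U_\rr U_\rr^T y_n)$ has the same leading-$\rr$ left singular subspace and projection as $Y$, since appending a column that lies in $\tspan(U_\rr)$ does not change the top-$\rr$ row space structure. More precisely, $Y'(Y')^T = YY^T + U_\rr U_\rr^T y_n y_n^T U_\rr U_\rr^T$, which is a rank-$\leq 1$ perturbation of $YY^T$ supported entirely inside $\tspan(U_\rr)$; hence its eigenvectors either lie in $\tspan(U_\rr)$ or are unchanged, and in particular the top-$\rr$ eigenspace of $Y'(Y')^T$ is still exactly $\tspan(U_\rr)$ with $U_\rr U_\rr^T$ as projector. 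So it suffices to bound $\fnorm{\hat U_\rr \hat U_\rr^T - P'}$ where $P'$ is the top-$\rr$ projector of $\hat Y = Y' + (0,\dots,0,(I-U_\rr U_\rr^T)y_n)$, i.e. the perturbation $\hat Y - Y'$ is a single rank-one column $w e_n^T$ with $w := (I-U_\rr U_\rr^T)y_n$ orthogonal to $\tspan(U_\rr)$.

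Next I would work with the Gram matrices. Write $A := Y'(Y')^T$ with spectral decomposition having top-$\rr$ eigenspace $U_\rr$ and eigenvalues $\sigma_1^2,\dots,\sigma_\rr^2$ on that block (note $\sigma_i$ here are the singular values of $Y$, and the extra appended column, being $U_\rr U_\rr^T y_n$, can only increase these — but for the bound I will only need that the eigenvalues restricted to $\tspan(U_\rr)$ are at least $\sigma_\rr^2$ and the ones on the complement are the $\sigma_j^2$ for $j\geq \rr+1$, which are $\leq \sigma_{\rr+1}^2$). Actually more carefully: on $\tspan(U_\rr)^\perp$ the matrix $A$ agrees with $YY^T$, whose eigenvalues there are $\sigma_{\rr+1}^2,\sigma_{\rr+2}^2,\dots$; on $\tspan(U_\rr)$ the eigenvalues of $A$ are the top-$\rr$ eigenvalues of $YY^T + U_\rr U_\rr^T y_n y_n^T U_\rr U_\rr^T$, each at least the corresponding $\sigma_i^2$. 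Then $\hat Y \hat Y^T = A + w w^T$ (since the cross terms vanish: $Y'$ has zero last column and $\hat Y$'s last column is $y_n$, giving $\hat Y\hat Y^T = Y'(Y')^T + y_n y_n^T$; decompose $y_n y_n^T$ using $y_n = U_\rr U_\rr^T y_n + w$ and absorb the $U_\rr U_\rr^T y_n$ part back into redefining — here I must be a little careful and may instead define $A$ directly as $\hat Y\hat Y^T - ww^T$ so that $A$'s top-$\rr$ eigenspace is verified to be $\tspan(U_\rr)$ via the same rank-one-inside-the-subspace argument). The upshot is a clean setup: $M := A + w w^T$ with $w \perp \mathrm{Ran}(U_\rr)$, top-$\rr$ eigenspace of $A$ equal to $\tspan(U_\rr)$, spectral gap between the $\rr$-th and $(\rr+1)$-th eigenvalues of $A$ at least $\sigma_\rr^2 - \sigma_{\rr+1}^2$, and I want $\fnorm{\hat U_\rr \hat U_\rr^T - U_\rr U_\rr^T}$.

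The core estimate is then a rank-one eigenspace perturbation computation. Writing $P_0 := U_\rr U_\rr^T$ and letting $P$ be the top-$\rr$ projector of $M$, I would use the standard contour-integral / resolvent identity or, more elementarily, the fact that $\hat U_\rr$ spans the top-$\rr$ eigenspace and set up the fixed-point equation for the "tangent" operator $(I-P_0)\hat U_\rr (P_0\hat U_\rr)^{-1}$ in terms of $w$ and the eigenvalue equation. Because $w\perp \mathrm{Ran}(P_0)$, the interaction between the subspace and the perturbation is entirely through the vector $A^{-1}$-weighted component, which is exactly where the factor $\sqrt{\sum_{i=1}^\rr (u_i^Ty_n/\sigma_i)^2}$ enters: $P_0 y_n = \sum_{i\le \rr}(u_i^Ty_n)u_i$, and solving the perturbed eigenvector equation produces a term $A|_{\mathrm{Ran}(P_0)}^{-1} P_0 y_n$ whose norm is at most $\sqrt{\sum_{i=1}^\rr (u_i^Ty_n)^2/\sigma_i^2}$ — using here the lower bound $\sigma_i^2$ on the eigenvalues of $A$ restricted to $\tspan(U_\rr)$. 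The denominator $\sigma_\rr^2 - \sigma_{\rr+1}^2 - \norm{w}^2$ comes from the second-order correction: the effective gap after adding $ww^T$ is reduced by at most $\norm{w}^2$, and the positivity hypothesis $\sigma_\rr^2 - \sigma_{\rr+1}^2 - \norm{w}^2 > 0$ is exactly what makes the fixed-point contraction converge (so the top-$\rr$ eigenspace of $M$ does not "cross" into the complement). Collecting the factor $\sqrt2$ from the identity $\fnorm{PP^T - P_0 P_0^T}=\sqrt2\fnorm{\sin\Theta}$ and an extra $2$ from bounding $\sin\Theta \le \tan\Theta$-type slack and the $\sigma_\rr$ prefactor from converting between the squared and unsquared scales gives the stated constant $2\sqrt2$.

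I expect the main obstacle to be making the rank-one eigenspace perturbation argument quantitatively tight enough to land the denominator $\sigma_\rr^2 - \sigma_{\rr+1}^2 - \norm{w}^2$ rather than a cruder $\sigma_\rr^2 - \sigma_{\rr+1}^2 - c\norm{w}^2$ with a worse constant — i.e. controlling precisely how much the relevant eigenvalue of $M$ inside $\tspan(U_\rr)$ can drift and how much the $(\rr+1)$-th eigenvalue can rise, since $w$ lives in the complement and pushes the complement's top eigenvalue up by at most $\norm{w}^2$. The cleanest route is probably: (i) show $\lambda_\rr(M) \ge \sigma_\rr^2$ (perturbation by a PSD matrix only increases eigenvalues, and on $\tspan(U_\rr)$ the eigenvalues are already $\ge \sigma_\rr^2$ while $ww^T$ is PSD — use Weyl or the min-max characterization restricted to $\tspan(U_\rr)$); (ii) show $\lambda_{\rr+1}(M) \le \sigma_{\rr+1}^2 + \norm{w}^2$ (Weyl's inequality: adding a rank-one matrix of norm $\norm{w}^2$); (iii) feed the resulting gap $\lambda_\rr(M) - \lambda_{\rr+1}(M) \ge \sigma_\rr^2 - \sigma_{\rr+1}^2 - \norm{w}^2 > 0$ into a Davis–Kahan–style bound for the perturbation $ww^T$ acting on $A$, but keeping the $A^{-1}P_0 y_n$ refinement instead of the crude $\norm{ww^T\,\text{(something)}}/\text{gap}$ — this refinement is the whole point and is where I'd have to be careful that $w^T P = w^T(P - P_0) = w^T P$ closes the loop correctly since $w^T P_0 = 0$. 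Everything else — the reduction to $Y'$, the identity $\fnorm{PP^T - P_0P_0^T} = \sqrt2\fnorm{\sin\Theta}$, and deriving Theorem \ref{thm:general} from Theorem \ref{thm:general_more} by plugging in $\rho > 2$ so that $\norm{w}^2 = (\sigma_\rr - \sigma_{\rr+1})^2/\rho^2 \le \tfrac14(\sigma_\rr-\sigma_{\rr+1})^2 \le \tfrac14(\sigma_\rr^2-\sigma_{\rr+1}^2)\cdot\frac{\sigma_\rr-\sigma_{\rr+1}}{\sigma_\rr+\sigma_{\rr+1}}$-type manipulations, then absorbing constants — is routine.
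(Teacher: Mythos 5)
Your reduction to a rank-one, orthogonally-supported perturbation does not hold, and this is the crux of the argument. With $\theta := U_\rr U_\rr^T y_n$ and $w := (I - U_\rr U_\rr^T)y_n$, you correctly note that $Y'(Y')^T = YY^T + \theta\theta^T$ has top-$\rr$ eigenspace $\tspan(U_\rr)$. But then $\hat Y \hat Y^T - Y'(Y')^T = y_n y_n^T - \theta\theta^T = \theta w^T + w\theta^T + w w^T$, \emph{not} $ww^T$: the cross terms $\theta w^T + w\theta^T$ do not vanish (your parenthetical ``$Y'$ has zero last column'' is a slip — its last column is $\theta$, not $0$). This symmetric rank-two perturbation is indefinite, has operator norm on the order of $\|\theta\|\|w\|$ (potentially much larger than $\|w\|^2$), and crucially it does \emph{not} live in $\tspan(U_\rr)^\perp$. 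Your fallback — redefine $A := \hat Y\hat Y^T - ww^T$ — fails for the same reason: that $A = YY^T + \theta\theta^T + \theta w^T + w\theta^T$ does not leave $\tspan(U_\rr)$ invariant (take $u\in\tspan(U_\rr)$ with $\theta^T u\neq0$; then $Au$ has a component $(\theta^T u)w\notin\tspan(U_\rr)$), so it cannot have $\tspan(U_\rr)$ as its top eigenspace. The ``clean setup'' $M = A + ww^T$, $w\perp\mathrm{Ran}(U_\rr)$, $A$ block-diagonal, which everything downstream (the Weyl/interlacing bounds, the resolvent refinement, the fixed-point contraction) is built on, is therefore never achieved, and the argument as stated does not go through.

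Worth noting: the underlying intuition is not entirely off. The off-diagonal block $P_0(\hat Y\hat Y^T - Y'(Y')^T)(I - P_0)$ equals $\theta w^T$, and the matrix elements $u_i^T E u_j = (u_i^T y_n)(w^T u_j)$ for $i\le\rr<j$ together with eigenvalue gaps $\sigma_i^2 - \sigma_j^2 \ge \sigma_i(\sigma_\rr-\sigma_{\rr+1})$ do produce the $\sigma_i^{-1}$ weighting in the Frobenius bound. But turning that into a rigorous, non-asymptotic bound with the denominator $\sigma_\rr^2 - \sigma_{\rr+1}^2 - \|w\|^2$ requires controlling all higher-order terms, which you acknowledge but do not resolve. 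The paper avoids all of this: it never touches the Gram matrices or a resolvent. Instead it uses the variational optimality $\fnorm{(I-\hat U_\rr\hat U_\rr^T)\hat Y}^2 \le \fnorm{(I-U_\rr U_\rr^T)\hat Y}^2$, expands both sides after writing $\hat U_\rr = U\hat B$ in a basis adapted to $Y$, and collapses the resulting inequality into a scalar quadratic in $t=\sqrt{\sum_i\sigma_i^2 b_i^2}$, which it then solves by a two-case elementary analysis. That route sidesteps the cross-term issue entirely, because it never needs the perturbation to be rank one or supported on $\tspan(U_\rr)^\perp$.
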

\begin{proof}
Decompose $y_n$ into $y_n=\theta + \epsilon$ with $\theta := U_\rr U_\rr ^Ty_n$ and $\epsilon:=(I-U_\rr U_\rr ^T)y_n$. Then 
we have $u_i^T\theta = u_i^Ty_n$  for each $i\in[\rr ]$.

Throughout the proof, we denote
\begin{align*}
\alpha^2 = \fnorm{\hat U_\rr  \hat U_\rr ^T-   U_\rr U_\rr ^T}^2.
\end{align*}
Denote $d=p\wedge (n-1)$. If $p\leq n-1$, we have $d=p$ and denote $U:=(u_1,\ldots,u_p)\in\mathr^{p\times p}$ which is an orthogonal matrix. If $p> n-1$, we let $U\in\mathr^{p\times p}$ be an orthogonal matrix with the first $p\wedge (n-1)$ columns being $u_1,\ldots,u_{p\wedge(n-1)}.$ In both cases, we have $U$ being an orthogonal matrix. Then $\hat U_\rr $ can be written as $\hat U_\rr  = U\hat B$ for some $\hat B=(\hat B_{i,j})\in\mathr^{p\times \rr }$. Let $\hat B_{i,\cdot}$ be the $i$th row of $\hat B$ for each $i\in[p]$. Define $b_i^2 = 1-\|\hat B_{i,\cdot}\|^2$ for each $i\in[\rr ]$ and $b_i^2 = \|\hat B_{i,\cdot}\|^2$ for each $i>\rr $. Then we have
\begin{align}
\alpha^2 &= \fnorm{\hat U_\rr  \hat U_\rr ^T}^2 + \fnorm{U_\rr U_\rr ^T}^2 - 2\iprod{\hat U_\rr  \hat U_\rr ^T}{U_\rr U_\rr ^T} \nonumber  \\
&=2k -2 \fnorm{U_\rr ^T\hat U_\rr }^2  = 2k -2 \sum_{i\in[\rr ]}\sum_{j\in[\rr ]}\hat B_{i,j}^2\nonumber \\
 & = 2\sum_{i\in[\rr ]} b_i^2 = 2\sum_{i=\rr +1}^p b_i^2,\label{eqn:b_alpha}
\end{align}
where in the last equation we use the fact that $\|\hat B\|_\text{F}^2=\rr $.

Note that $\hat U_\rr \hat U_\rr^T  \hat Y$ is the best rank-$\rr $ approximation of $\hat Y$. We have
\begin{align*}
\fnorm{\br{I - \hat U_\rr \hat U_\rr ^T} \hat Y}^2 \leq \fnorm{\br{I -  U_\rr   U_\rr ^T} \hat Y}^2.
\end{align*}
Due to the fact $\hat Y = \br{ Y, y_n}$, we have
\begin{align*}%
\fnorm{\br{I - \hat U_\rr \hat U_\rr ^T}  Y}^2 + \norm{\br{I - \hat U_\rr \hat U_\rr ^T} y_n}^2 \leq \fnorm{\br{I -  U_\rr   U_\rr ^T}   Y}^2 + \norm{\br{I -  U_\rr   U_\rr ^T} y_n}^2,
\end{align*}
which implies
\begin{align}\label{eqn:inequality_2}
\fnorm{\br{I - \hat U_\rr \hat U_\rr ^T}  Y}^2 - \fnorm{\br{I -  U_\rr   U_\rr ^T}   Y}^2 \leq \norm{\br{I -  U_\rr   U_\rr ^T} y_n}^2 - \norm{\br{I - \hat U_\rr \hat U_\rr ^T} y_n}^2.
\end{align}
We are going to simplify terms in (\ref{eqn:inequality_2}).

~\\
\emph{(Simplification of the LHS of (\ref{eqn:inequality_2})).} Recall the decomposition $Y = \sum_{i\in[d]}\sigma_iu_iv_i^T$. Since $\br{I -  U_\rr   U_\rr ^T}   Y = \sum_{i>\rr }^d \sigma_i u_i v_i^T$, we have $ \fnorm{\br{I -  U_\rr   U_\rr ^T}   Y}^2 = \sum_{i>\rr }^d \sigma_i^2$. Since
\begin{align*}
U^TY = U^T\br{\sum_{i\in[d]}\sigma_i u_iv_i^T} =\begin{pmatrix}
\sigma_1 v_1^T\\
\ldots\\
\sigma_d v_d^T\\
0_{p-d}
\end{pmatrix}
=
\text{diag}(\sigma_1,\ldots,\sigma_d,0_{p-d})
\begin{pmatrix}
 v_1^T\\
\ldots\\
 v_d^T\\
O_{(p-d)\times n}
\end{pmatrix},
\end{align*}
we have
\begin{align*}
\fnorm{\br{I - \hat U_\rr \hat U_\rr ^T}  Y}^2 &= \fnorm{U\br{I - U^T\hat U_\rr \hat U_\rr ^TU}U^T Y}^2  \\
&= \fnorm{\br{I - \hat B\hat B^T}\text{diag}(\sigma_1,\ldots,\sigma_d,0_{p-d})
\begin{pmatrix}
 v_1^T\\
\ldots\\
 v_d^T\\
O_{(p-d)\times n}
\end{pmatrix}}^2  \\
&= \tr{\text{diag}(\sigma_1,\ldots,\sigma_d,0_{p-d})\br{I-\hat B\hat B^T}
\text{diag}(\sigma_1,\ldots,\sigma_d,0_{p-d})
\begin{pmatrix}
I_{d\times d} &\\
& O_{(p-d)\times (p-d)}
\end{pmatrix}
},
\end{align*}
where in the last equation we use the following facts: (1)  for any two square matrices of the same size $A,D$, we have $\fnorm{AD}^2 = \text{tr}(D^TA^TAD) =  \text{tr}(A^TADD^T) $; (2) $\hat B$ has orthogonal columns such that $(I-\hat B\hat B^T)^2= I-\hat B\hat B^T$; and (3) $\{v_1,\ldots,v_d\}\in\mathr^{n-1}$ are orthogonal vectors. Since the diagonal entries of $\hat B\hat B^T$ are $\{\|\hat B_{i,\cdot}\|^2\}_{i\in[p]}$, we have 
\begin{align*}
\fnorm{\br{I - \hat U_\rr \hat U_\rr ^T}  Y}^2 & = \tr{\text{diag}(\sigma_1,\ldots,\sigma_d,0_{p-d})\br{I-\hat B\hat B^T}
\text{diag}(\sigma_1,\ldots,\sigma_d,0_{p-d})}\\
&= \sum_{i=1}^d\sigma_i^2 \br{1-\fnorm{\hat B_{i,\cdot}}^2}.
\end{align*}
Then we have
\begin{align*}
\text{LHS of (\ref{eqn:inequality_2})} &= \sum_{i=1}^\rr \sigma_i^2\br{1- \fnorm{\hat B_{i,\cdot}}^2 } - \sum_{i>\rr }^d \sigma_i^2\fnorm{\hat B_{i,\cdot}}^2 = \sum_{i=1}^\rr  \sigma_i^2b_i^2- \sum_{i>\rr }^d\sigma_i^2b_i^2\geq \sum_{i=1}^\rr  \sigma_i^2b_i^2 - \sigma_{\rr +1}^2 \frac{\alpha^2}{2},
\end{align*}
where we use $\sum_{i>\rr }^d b_i^2\leq \sum_{i>\rr }^p b_i^2=\alpha^2/2$ from (\ref{eqn:b_alpha}) in the last inequality .

~\\
\emph{(Simplification of the RHS of (\ref{eqn:inequality_2})).} Recall that $\hat U_\rr  = U\hat B$. We decompose it into  $\hat B = (\hat B_1^T, \hat B_2^T)^T$ where $\hat B_1\in\mathr^{\rr \times \rr }$ are the first $\rr $ rows and $\hat B_2 \in\mathr^{(p-\rr )\times \rr }$.
We have
\begin{align*}
\text{RHS of (\ref{eqn:inequality_2})}  &= y_n^T\br{I -  U_\rr   U_\rr ^T} y_n - y_n^T\br{I - \hat U_\rr \hat U_\rr ^T}  y_n \\
 & = y_n^T\br{\hat U_\rr \hat U_\rr ^T -  U_\rr   U_\rr ^T} y_n \\
 &= y_n^T U\begin{pmatrix}
 \hat B_1 \hat B_1^T -I_{\rr\times \rr} &  \hat B_1 \hat B_2^T \\
  \hat B_2 \hat B_1^T &  \hat B_2 \hat B_2^T
 \end{pmatrix} U^Ty_n.
\end{align*}
Define $\hat B^\perp\in\mathr^{p\times (p-\rr )}$ to be the matrix such that $(\hat B, \hat B^\perp )\in\mathr^{p\times p}$ is an orthonormal matrix. We can further decompose it into  $\hat B^\perp = (\hat B_1^{\perp T},\hat B_2^{\perp Y})^T$ where $\hat B_1^\perp \in\mathr^{\rr \times (p-\rr )}$ including the first $\rr $ rows and $\hat B_2^\perp \in\mathr^{(p-\rr )\times (p-\rr )}$. Since $(\hat B, \hat B^\perp )$ has orthogonal columns, we have
\begin{align*}
(\hat B_1,\hat B_1^\perp) (\hat B_1,\hat B_1^\perp)^T =\hat B_1 \hat B_1^T + \hat B_1^\perp \hat B_1^{\perp T} = I_{\rr\times \rr},
\end{align*}
and $(\hat B_1,\hat B_1^\perp)(\hat B_2,\hat B_2^\perp) ^T=O_{r\times (p-r)}$, which implies
\begin{align*}
\hat B_1\hat B_2^T=-\hat B_1^\perp\hat B_2^{\perp T}.
\end{align*}
We also decompose the matrix $U =: (U_\rr , U_\perp)$. Then
\begin{align*}
\text{RHS of (\ref{eqn:inequality_2})}  &= y_n^T (U_\rr ,U_\perp)\begin{pmatrix}
- \hat B_1^\perp \hat B_1^{\perp T} &  -\hat B_1^\perp\hat B_2^{\perp T} \\
 -\hat B_2^\perp\hat B_1^{\perp T}  &  \hat B_2 \hat B_2^T
 \end{pmatrix} (U_\rr ,U_\perp)^Ty_n\\
 & =  - y_n^T U_\rr \hat B_1^\perp \hat B_1^{\perp T}U_\rr ^T y_n -2y_n^T U_\rr  \hat B^\perp_1 \hat B_2^{\perp T}U_\perp ^T y_n + y_n^T U_\perp \hat B_2 \hat B_2^T U_\perp ^T y_n \\
 &\leq   - \norm{ \hat B_1^{\perp T}U_\rr ^T y_n}^2 +  2\norm{ \hat B_1^{\perp T}U_\rr ^T y_n} \norm{\hat B_2^{\perp T}}\norm{U_\perp ^T y_n} +  \norm{ \hat B_2^T}^2  \norm{U_\perp ^T y_n}^2.
\end{align*}
Note that $ \normop{\hat B_2^{\perp T}}\leq 1$ and $\normop{ \hat B_2^T}^2\leq \normf{\hat B_2^T}^2 = \sum_{i>\rr }^p\normop{\hat B_{i,\cdot}}^2= \alpha^2/2$ which is by (\ref{eqn:b_alpha}). We also have 
\begin{align*}
\norm{U_\perp ^T y_n} = \norm{\epsilon}.
\end{align*}
Since $\normf{\hat B_1^{\perp}}^2 = \sum_{i=1}^\rr \br{1-\normop{\hat B_{i,\cdot}}^2} =\alpha^2/2$ according to (\ref{eqn:b_alpha}), we have $\normop{\hat B_1^{\perp}}\leq \alpha/\sqrt{2}$. Thus, using $U_\rr ^T\epsilon=0$, we have
\begin{align*}
\norm{ \hat B_1^{\perp T}U_\rr ^T y_n} &= \norm{ \hat B_1^{\perp T}U_\rr ^T \theta}.
\end{align*}
Then,
\begin{align*}
\text{RHS of (\ref{eqn:inequality_2})} &\leq 2  \norm{ \hat B_1^{\perp T}U_\rr ^T \theta}  \norm{\epsilon}  + \frac{\alpha^2}{2}\norm{\epsilon}^2.
\end{align*}
To simplify $\normop{ \hat B_1^{\perp T}U_\rr ^T \theta}$, denote $w_i = u_i^T\theta$ and $s_i = \abs{w_i}/\sigma_i$ for each $i\in[\rr]$. Recall that $u_i^T\theta = u_i^Ty_n$  for each $i\in[\rr ]$. We have
\begin{align*}
s_i = \abs{\frac{u_i^Ty_n}{\sigma_i}},\forall i\in[\rr].
\end{align*}
We then have
\begin{align*}
\norm{ \hat B_1^{\perp T}U_\rr ^T \theta} = \norm{\sum_{i=1}^\rr  w_i \hat B^\perp_{i,\cdot} } \leq \sum_{i=1}^\rr  \abs{w_i} \norm{\hat B_{i,\cdot}^\perp} = \sum_{i=1}^\rr  s_i\sigma_i \abs{b_i}\leq \norm{s}\sqrt{\sum_{i=1}^\rr \sigma_i^2b_i^2},
\end{align*}
where we denote the $i$th row of $\hat B_1^{\perp }$ as $\hat B_{i,\cdot}^{\perp} $ and we use the fact that $\normop{\hat B_{i,\cdot}^{\perp}}^2 = 1- \normop{\hat B_{i,\cdot}}^2 = b_i^2$ for each $i\in[\rr ]$.
As a result,
\begin{align*}
\text{RHS of (\ref{eqn:inequality_2})} &\leq 2 {\norm{s}\sqrt{\sum_{i=1}^\rr \sigma_i^2 b_i^2}
}\norm{\epsilon}+ \frac{\alpha^2}{2}\norm{\epsilon}^2.
\end{align*}

~\\
\emph{(Combining the above simplifications for  (\ref{eqn:inequality_2})).} From the above simplifications on the LHS and RHS of (\ref{eqn:inequality_2}), we have
\begin{align*}
\sum_{i=1}^\rr  \sigma_i^2b_i^2 - \sigma_{\rr +1}^2 \frac{\alpha^2}{2} &\leq  2\norm{s} \sqrt{ \sum_{i=1}^\rr \sigma_i^2 b_i^2}  \norm{\epsilon} 
 + \frac{\alpha^2}{2}\norm{\epsilon}^2.
\end{align*}
Define $t= \sqrt{ \sum_{i=1}^\rr \sigma_i^2 b_i^2} $. 
Then after arrangement, the above display becomes
\begin{align*}
t^2 - 2 \norm{s}\norm{\epsilon}t &\leq  \sigma_{\rr +1}^2 \frac{\alpha^2}{2}  
+ \frac{\alpha^2}{2}\norm{\epsilon}^2.
\end{align*}
Note that the function $t^2 - 2\norm{s}\norm{\epsilon}t $ is increasing as long as $t\geq t_0$ where we define $t_0 := \norm{s}\norm{\epsilon}$. On the other hand, from (\ref{eqn:b_alpha}), we have the domain $t\geq \alpha \sigma_\rr /\sqrt{2}$. We consider the following two scenarios.

If $\alpha \sigma_\rr /\sqrt{2} \leq t_0$, we have
\begin{align}\label{eqn:alpha_1}
\alpha \leq \frac{\sqrt{2}t_0}{\sigma_\rr } =\frac{\sqrt{2}\norm{s}\norm{\epsilon}}{\sigma_\rr }.
\end{align}

If $\alpha \sigma_\rr /\sqrt{2} > t_0$, we have
\begin{align*}
t^2 - 2 \norm{s}t &\geq \frac{\alpha^2\sigma_\rr ^2}{2} -\sqrt{2}\norm{s}\norm{\epsilon} \alpha \sigma_\rr .
\end{align*}
Hence, we have an inequality of $\alpha$:
\begin{align*}
\frac{\alpha^2\sigma_\rr ^2}{2} -\sqrt{2}\norm{s}\norm{\epsilon} \alpha \sigma_\rr  &\leq  \sigma_{\rr +1}^2 \frac{\alpha^2}{2}  
+ \frac{\alpha^2}{2}\norm{\epsilon}^2,
\end{align*}
which can be arranged into
\begin{align*}
\frac{\alpha}{2}\br{\sigma_\rr ^2 - \sigma_{\rr +1}^2 - \norm{\epsilon}^2} \leq  { \sqrt{2}\norm{s}\sigma_\rr  
 }\norm{\epsilon}.
\end{align*}
Hence, under the assumption $\sigma_\rr ^2 - \sigma_{\rr +1}^2 - \norm{\epsilon}^2>0$, we have
\begin{align}\label{eqn:alpha_2}
\alpha \leq  \frac{2\sqrt{2}{ \sigma_\rr\norm{s}   }\norm{\epsilon}}{\sigma_\rr ^2 - \sigma_{\rr +1}^2 - \norm{\epsilon}^2}.
\end{align}
Since $2\sigma_\rr ^2>\sigma_\rr ^2 - \sigma_{\rr +1}^2 - \norm{\epsilon}^2$, the upper bound in (\ref{eqn:alpha_1}) is strictly below that in (\ref{eqn:alpha_2}). Hence, (\ref{eqn:alpha_2}) holds for both scenarios. The proof is complete.
\end{proof}

\begin{proof}[Proof of Theorem \ref{thm:general}]
Since we assume $\rho>2$, we have 
\begin{align*}
\sigma_\rr ^2 - \sigma_{\rr +1}^2 - \norm{(I-U_\rr U_\rr ^T)\epsilon}^2 &\geq \sigma_\rr (\sigma_\rr  -\sigma_{\rr +1}) - (\sigma_\rr  -\sigma_{\rr +1})^2/4 \\
&\geq  \sigma_\rr (\sigma_\rr  -\sigma_{\rr +1})/2=\rho\sigma_\rr  \norm{(I-U_\rr U_\rr ^T)\epsilon}/2.
\end{align*}
Together with Theorem \ref{thm:general_more}, we obtain the desired bound.
\end{proof}

\subsection{Proof of Theorem \ref{thm:perturbation}}\label{sec:proof_perturbation}

\begin{proof}[Proof of Theorem \ref{thm:perturbation}]
Consider any $i\in[n]$.  In order to apply Theorem \ref{thm:general}, we need to verify that the spectral gap assumption (\ref{eqn:general_condition3}) is satisfied. That is, define
\begin{align*}
\rho_{-i}:=\frac{\hat\lambda_{-i,\kr} - \hat\lambda_{-i,\kr+1}}{\norm{\br{I- \hat U_{-i,1:\kr} \hat U_{-i,1:\kr}^T} X_i}}.
\end{align*}
We need to show $\rho_{-i}>2$. In the following, we provide a lower bound for the numerator $\hat\lambda_{-i,\kr} - \hat\lambda_{-i,\kr+1}$.

Define $\lambda_{-i,1}\geq \lambda_{-i,2}\geq \ldots\geq \lambda_{-i,p\wedge(n-1)}$ to be singular values of  $P_{-i}$, the leave-one-out counterpart of the signal matrix  $P$ where
\begin{align}\label{eqn:P_i_def}
P_{-i}:=(\theta^*_{z^*_1},\ldots,\theta^*_{z^*_{i-1}},\theta^*_{z^*_{i+1}},\ldots, \theta^*_{z^*_n})\in\mathr^{p\times (n-1)}.
\end{align}
We are interested in the value of $\lambda_{-i,\kr}$.
Recall that $\lambda_{\kr}$ is the $\kr$th largest singular value of $P$ which is rank-$\kr$. Since $P$ has $k$ unique columns $\{\theta^*_a\}_{a\in[k]}$, its left singular vectors $u_j\in \Theta$ for each $j\in[k]$ where $\Theta:=\text{span}(\{\theta^*_a\}_{a\in[k]})$.  Note that each $\theta^*_a$ appears  at least $\beta n/k$ times in the columns of $P$. Then $P_{-i}$ also has these $k$ unique columns with each appearing at least $\beta n/k-1$ times. This concludes that $P_{-i}$ has the same leading left singular vector space as $P$. We then have
\begin{align}
\nonumber\lambda_{-i,\kr}^{2} & = \min_{w\in \Theta:\norm{w}=1}\norm{w^T P_{-i}}^2  = \min_{w\in \Theta :\norm{w}=1} \sum_{j\in[n]:j\neq i} (w^T\theta^*_{z^*_j})^2 \\
\nonumber&\geq \frac{\frac{\beta n}{k}-1}{\frac{\beta n}{k}}   \min_{w\in \Theta :\norm{w}=1} \sum_{j\in[n]} (w^T\theta^*_{z^*_j})^2 = \br{1-\frac{k}{\beta n}} \min_{w\in \Theta :\norm{w}=1} \norm{w^T P}^2 \\
\label{eqn:thm_2_proof_1}&\geq  \br{1-\frac{k}{\beta n}} \lambda_{\kr}^2.
\end{align}
We also have $\lambda_{-i,\kr+1}=0$ as $P_{-i}$ is rank-$\kr$.

Next, we are going to analyze $\hat \lambda_{-i,\kr}$ and $\hat \lambda_{-i,\kr+1}$, the $\kr$th and $(\kr+1)$th largest singular values of $X_{-i}$. Recall the SVD of $X_{-i}$ in Section \ref{sec:perturbation_mixture}. Define 
\begin{align}\label{eqn:E_i_def}
E_{-i}:=(\epsilon_1,\ldots,\epsilon_{i-1},\epsilon_{i+1},\ldots,\epsilon_n)\in\mathr^{p\times (n-1)},
\end{align}
 so that $X_{-i}=P_{-i} + E_{-i}$.  By Weyl's inequality, we have $|{\lambda_{-i,\kr} - \hat\lambda_{-i,\kr}}|,|{\lambda_{-i,\kr+1}-\hat\lambda_{-i,\kr+1}}|\leq \norm{E_{-i}}\leq \norm{E}$. Then we have
\begin{align}\label{eqn:perturbation_proof_3}
\hat\lambda_{-i,\kr} \geq \lambda_{-i,\kr}-\norm{E} \geq \sqrt{1-\frac{k}{\beta n}}\lambda_\kr-\norm{E} 
\end{align}
and
\begin{align}\label{eqn:perturbation_proof_1}
\hat\lambda_{-i,\kr}-\hat\lambda_{-i,\kr+1}\geq \lambda_{-i,\kr}-\lambda_{-i,\kr +1}-2\norm{E}\geq \sqrt{1-\frac{k}{\beta n}}\lambda_{\kr} - 2\norm{E}.
\end{align}

Next, we study $\normop{(I- \hat U_{-i,1:\kr} \hat U_{-i,1:\kr}^T)X_i}$.
Since $\hat U_{-i,1:\kr}\hat U_{-i,1:\kr}^T X_{-i}$ is the best rank-$\kr$ approximation of $X_{-i}$, we have
\begin{align*}
\norm{ \hat U_{-i,1:\kr} \hat U_{-i,1:\kr}^TX_{-i} - X_{-i}} \leq \norm{P_{-i} - X_{-i}} = \norm{E_{-i}},
\end{align*}
where we use the fact that $P_{-i}$ is rank-$\kr$. Then by the triangle inequality, we have 
\begin{align*}
&\norm{\br{I- \hat U_{-i,1:\kr} \hat U_{-i,1:\kr}^T} P_{-i} } \\
&= \norm{ \hat U_{-i,1:\kr} \hat U_{-i,1:\kr}^T P_{-i} - P_{-i}} \\
&\leq \norm{ \hat U_{-i,1:\kr} \hat U_{-i,1:\kr}^T (P_{-i} - X_{-i} )} +\norm{ \hat U_{-i,1:\kr} \hat U_{-i,1:\kr}^T X_{-i} - X_{-i}} + \norm{X_{-i} -  P_{-i}}\\
&\leq  3\norm{E_{-i}}.
\end{align*}
Using the fact $P_{-i}$ is rank-$\kr$ again, we have
\begin{align*}%
\fnorm{\br{I- \hat U_{-i,1:\kr} \hat U_{-i,1:\kr}^T} P_{-i} } \leq \sqrt{\kr} \norm{\br{I- \hat U_{-i,1:\kr} \hat U_{-i,1:\kr}^T} P_{-i} }  \leq 3\sqrt{\kr}\norm{E_{-i}}\leq 3\sqrt{\kr}\norm{E}.
\end{align*}
Since $P_{-i}$ has at least $\beta n/k-1$ columns being exactly $\theta^*_{z^*_i}$, we have
\begin{align}\label{eqn:perturbation_proof_2}
\norm{\br{I- \hat U_{-i,1:\kr} \hat U_{-i,1:\kr}^T} \theta^*_{z^*_i}}\leq \frac{\fnorm{\br{I- \hat U_{-i,1:\kr} \hat U_{-i,1:\kr}^T} P_{-i} } }{\sqrt{\frac{\beta n}{k} - 1}} \leq \frac{3\sqrt{\kr}\norm{E}}{\sqrt{\frac{\beta n}{k} - 1}},
\end{align}
and consequently,
\begin{align}
\nonumber\norm{\br{I- \hat U_{-i,1:\kr} \hat U_{-i,1:\kr}^T} X_i}&\leq \norm{\br{I- \hat U_{-i,1:\kr} \hat U_{-i,1:\kr}^T} \theta^*_{z^*_i}} + \norm{\br{I- \hat U_{-i,1:\kr} \hat U_{-i,1:\kr}^T} \epsilon_i}\\
&\leq \frac{3\sqrt{\kr}\norm{E}}{\sqrt{\frac{\beta n}{k} - 1}} + \norm{E}.\label{eqn:perturbation_proof_2_1}
\end{align}

From (\ref{eqn:perturbation_proof_1}) and (\ref{eqn:perturbation_proof_2_1}), we have
\begin{align}\label{eqn:rho_prime_minus_i_lower_bound}
\rho_{-i}\geq \frac{\sqrt{1-\frac{k}{\beta n}}\lambda_{\kr} - 2\norm{E}}{\norm{E}+\frac{3\sqrt{\kr}\norm{E}}{\sqrt{\frac{\beta n}{k} - 1}}} \geq \frac{\rho_0}{8}>2,
\end{align} 
where the last inequality is due to the assumption $\rho_0>16$ and  $\beta n/k^2 \geq 10$. 

The next thing to do is to study $\{\hat u_{-i,a}^T X_i\}_{a\in[\kr]}$.
Denote the columns of $P_{-i}$ and $E_{-i}$ as $\{(P_{-i})_{\cdot,j}\}_{j\in[n-1]}$ and $\{(E_{-i})_{\cdot,j}\}_{j\in[n-1]}$, respectively. Define $S:=\{{j\in[n-1]: (P_{-i})_{\cdot,j} =\theta^*_{z^*_i}}\}$. Then for any $a\in[\kr]$, by the SVD of $X_{-i}$, we have 
\begin{align*}
\hat u_{-i,a}^T\theta^*_{z^*_i} & = \frac{1}{\abs{S}}\sum_{j\in S} \hat u_{-i,a}^T (P_{-i})_{\cdot,j} = \frac{1}{\abs{S}}\sum_{j\in S} \hat u_{-i,a}^T (X_{-i})_{\cdot,j}  +   \frac{1}{\abs{S}}\sum_{j\in S} \hat u_{-i,a}^T (E_{-i})_{\cdot,j}\\
& = \frac{1}{\abs{S}}\sum_{j\in S}\hat\lambda_{-i,a} (v_{-i,a})_j+   \frac{1}{\abs{S}} \hat u_{-i,a}^T \br{\sum_{j\in S}(E_{-i})_{\cdot,j}}.
\end{align*}
Hence, by Cauchy-Schwarz inequality and the fact that $\norm{v_{-i,a}}=1$, we have
\begin{align}\label{eqn:perturbation_proof_5}
\abs{\hat u_{-i,a}^T\theta^*_{z^*_i}}&\leq  \hat\lambda_{-i,a} \frac{\sqrt{\abs{S}}}{\abs{S}} +  \frac{\sqrt{\abs{S}}\norm{E_{-i}}}{\abs{S}} \leq \frac{\hat\lambda_{-i,a}}{\sqrt{\frac{\beta n}{k}-1}} + \frac{\norm{E}}{\sqrt{\frac{\beta n}{k}-1}}.
\end{align}
Since $|\hat u_{-i,a}^TX_i|\leq |\hat u_{-i,a}^T\theta^*_{z^*_i}| + |\hat u_{-i,a}^T\epsilon_i|$,
we have
\begin{align*}
\frac{|\hat u_{-i,a}^TX_i|}{\hat\lambda_{-i,a}} &\leq  \frac{1}{\sqrt{\frac{\beta n}{k}-1}} +\frac{1}{\hat\lambda_{-i,a}} \br{ \frac{\norm{E}}{\sqrt{\frac{\beta n}{k}-1}} + |\hat u_{-i,a}^T\epsilon_i|}\\
&\leq  \frac{1}{\sqrt{\frac{\beta n}{k}-1}} +\frac{1}{\hat\lambda_{-i,\kr}} \frac{\norm{E}}{\sqrt{\frac{\beta n}{k}-1}} +\frac{1}{\hat\lambda_{-i,\kr}} |\hat u_{-i,a}^T\epsilon_i|.
\end{align*}
Consequently,
\begin{align*}
\sqrt{\sum_{a\in\kr} \br{\frac{\hat u_{-i,a}^TX_i}{\hat\lambda_{-i,a}}}^2} &\leq \frac{\sqrt{\kr}}{\sqrt{\frac{\beta n}{k}-1}} +  \frac{1}{\hat\lambda_{-i,\kr}}  \frac{\norm{E}\sqrt{\kr}}{\sqrt{\frac{\beta n}{k}-1}}  +\frac{1}{\hat\lambda_{-i,\kr}}\norm{\hat U_{-i,1:\kr}\hat U_{-i,1:\kr}^T\epsilon_i},
\end{align*}
where we use the fact $\normop{\hat U_{-i,1:\kr}\hat U_{-i,1:\kr}^T\epsilon_i} = \normop{\hat U_{-i,1:\kr}^T\epsilon_i} = (\sum_{i\in[\kr]}(\hat u_{-i,a}^T\epsilon_i)^2)^{1/2}$.

Lastly, by Theorem \ref{thm:general}, we have
\begin{align*}
\fnorm{{\hat U_{1:\kr} \hat U_{1:\kr}^T - \hat U_{-i,1:\kr}\hat U_{-i,1:\kr}^T}} &\leq \frac{4\sqrt{2}}{\rho_{-i}}\br{\frac{\sqrt{\kr}}{\sqrt{\beta n/k-1}} +\frac{1}{\hat\lambda_{-i,\kr}}\br{\frac{\sqrt{\kr}\norm{E}}{\sqrt{\beta n/k-1}}+ \norm{\hat U_{-i,1:\kr}\hat U_{-i,1:\kr}^T\epsilon_i}}}.
\end{align*}
Since $\beta n/k^2\geq 10$ and $\rho_0>16$ are assumed, we have $\hat\lambda_{-i,\kr} \geq \lambda_{\kr}/2$ by (\ref{eqn:perturbation_proof_3}). Then together with (\ref{eqn:rho_prime_minus_i_lower_bound}), the above display can be simplified into
\begin{align*}
\fnorm{{\hat U_{1:\kr} \hat U_{1:\kr}^T - \hat U_{-i,1:\kr}\hat U_{-i,1:\kr}^T}}&\leq \frac{32\sqrt{2}}{\rho_0} \br{\frac{2\sqrt{k\kr}}{\sqrt{\beta n}} +\frac{2\norm{\hat U_{-i,1:\kr}\hat U_{-i,1:\kr}^T\epsilon_i}}{\lambda_{\kr}}}\\
&\leq \frac{128}{\rho_0}\br{\frac{\sqrt{k\kr}}{\sqrt{\beta n}} +\frac{\norm{\hat U_{-i,1:\kr}\hat U_{-i,1:\kr}^T\epsilon_i}}{\lambda_{\kr}}}.
\end{align*}
This concludes the proof of Theorem \ref{thm:perturbation}.
\end{proof}

\section{Proof of Main Results in Section \ref{sec:spectral_clustering_mixture_model}}\label{sec:proof_main_thm}
In this section, we include proofs of Lemma \ref{lem:decomposition_simple}, Lemma \ref{lem:decomposition}, and Theorem \ref{thm:subg}.
The proofs of all other results of Section \ref{sec:spectral_clustering_mixture_model}  are included in the supplement \cite{supplement} due to page limit.

\subsection{Proof of Lemma \ref{lem:decomposition_simple} and Lemma \ref{lem:decomposition}}

\begin{proof}[Proof of Lemma \ref{lem:decomposition_simple}]
For simplicity, we denote $\hat U$ to be short for $\hat U_{1:r}$ throughout the proof.
From (\ref{eqn:spectral}), we know $\hat z_i$ must satisfy
\begin{align*}
\hat z_i = \argmin_{a\in[k]}\norm{\hat U\hat U^TX_i - \hat \theta_{a}},
\end{align*}
where $\{\hat \theta_a\}_{a\in[k]}$ satisfies (\ref{eqn:hat_theta_difference}) according to Proposition \ref{prop:poly}.
Hence, we have
\begin{align*}
\indic{\hat z_i \neq \phi(z_i^*)} &= \indic{\min_{a\in[k]:a\neq \phi(z^*_i)} \norm{\hat U\hat U^TX_i - \hat \theta_{ a }} \leq \norm{\hat U\hat U^TX_i - \hat \theta_{\phi(z^*_i)}}}.
\end{align*}
Consider a fixed $a\in[k]$ such that $a\neq \phi(z^*_i)$. Note that for any vectors $x,y,w$ of same dimension, if $\norm{x-y}\leq \norm{x-w}$, then we must have  $\norm{y-w}/2\leq \norm{x-w}$.
Hence, we have
\begin{align*}
 &\indic{ \norm{\hat U\hat U^TX_i - \hat \theta_{ a }} \leq \norm{\hat U\hat U^TX_i - \hat \theta_{\phi(z^*_i)}}} \\
 & = \indic{ \frac{1}{2}\norm{ \hat \theta_{\phi(z^*_i)}- \hat \theta_{ a }} \leq \norm{\hat U\hat U^TX_i - \hat \theta_{\phi(z^*_i)}}}\\
 &\leq  \indic{ \frac{1}{2}\norm{ \hat \theta_{\phi(z^*_i)}- \hat \theta_{ a }}   \leq \norm{\hat U\hat U^T\epsilon_i - \hat \theta_{\phi(z^*_i)}} +\norm{\hat U\hat U^T\theta^*_{z^*_i} - \hat \theta_{\phi(z^*_i)}}}\\
 &\leq  \indic{\norm{ \hat \theta_{\phi(z^*_i)}- \hat \theta_{ a }} -2\norm{\theta^*_{z^*_i} - \hat \theta_{\phi(z^*_i)}}  \leq 2\norm{\hat U\hat U^T\epsilon_i - \hat \theta_{\phi(z^*_i)}}},
\end{align*}
where we use the fact that $X_i = \theta^*_{z^*_i}+\epsilon_i$ and $\|  \hat U\hat U^T \theta^*_{z^*_i} - \hat \theta_{\phi(z^*_i)}\|\leq \|\theta^*_{z^*_i} - \hat \theta_{\phi(z^*_i)}\|$. 
Since $\hat \theta_{\phi(z^*_i)}  -\hat \theta_{ a } = \hat \theta_{\phi(z^*_i)} -\theta^*_{z^*_i} + \theta^*_{z^*_i} - \theta^*_{\phi^{-1}(a)} + \theta^*_{\phi^{-1}(a)}  -\hat \theta_{ a }$, we have
\begin{align}
\nonumber&\indic{ \norm{\hat U\hat U^TX_i - \hat \theta_{ a }} \leq \norm{\hat U\hat U^TX_i - \hat \theta_{\phi(z^*_i)}}} \\
\nonumber&\leq   \mathbb{I}\Big\{\norm{ \theta^*_{z^*_i} - \theta^*_{\phi^{-1}(a)} } - \norm{\hat \theta_{\phi(z^*_i)} -\theta^*_{z^*_i}} - \norm{ \theta^*_{\phi^{-1}(a)}  -\hat \theta_{ a }} \\
\nonumber &\quad\quad- 2 \norm{\theta^*_{z^*_i} - \hat \theta_{\phi(z^*_i)}} \leq 2\norm{\hat U\hat U^T\epsilon_i  } \Big\}\\
\nonumber&\leq   \indic{\norm{ \theta^*_{z^*_i} - \theta^*_{\phi^{-1}(a)} } -4\max_{b\in[k]} \norm{\theta^*_{b} - \hat \theta_{\phi(b)}} \leq 2\norm{\hat U\hat U^T\epsilon_i  } }\\
\label{eqn:lemma_1_proof_2}&\leq \indic{\br{1-\frac{4C_0\beta^{-0.5}kn^{-0.5}\norm{E}}{\Delta}}\Delta\leq 2\norm{\hat U\hat U^T\epsilon_i  } },
\end{align}
where in the last inequality, we use the fact that $\max_{b\in[k]} \normop{\theta^*_{b} - \hat \theta_{\phi(b)}} \leq C_0\beta^{-0.5}kn^{-0.5}\norm{E}$ from Proposition \ref{prop:poly} and $\min_{b,b'\in[k]:b\neq b'}\norm{\theta^*_b -\theta^*_{b'}}=\Delta$. Since the above display holds for each $a\in[k]$ that is not $\phi(z_i^*)$, we have
\begin{align*}
\indic{\hat z_i \neq \phi(z_i^*)} & \leq \indic{\br{1-\frac{4C_0\beta^{-0.5}kn^{-0.5}\norm{E}}{\Delta}}\Delta\leq 2\norm{\hat U\hat U^T\epsilon_i  } } \\
&= \indic{\br{1-4C_0\psi_0^{-1}}\Delta\leq 2\norm{\hat U\hat U^T\epsilon_i  } },
\end{align*}
where in the last inequality we use the definition of $\psi_0$ in (\ref{eqn:Delta_poly}).
\end{proof}

\begin{proof}[Proof of Lemma \ref{lem:decomposition}]
For simplicity,  throughout the proof we denote $\hat U$ and $\hat U_{-i}$ to be short for $\hat U_{1:\kr}$ and $\hat U_{-i,1:\kr}$, respectively.
We have the following decomposition for $\hat U\hat U^T\epsilon_i$,
\begin{align*}
\norm{\hat U\hat U^T\epsilon_i} 
&\leq  \norm{\hat U_{-i}\hat U_{-i}^T\epsilon_i}  + \fnorm{\hat U\hat U^T - \hat U_{-i}\hat U_{-i}^T} \norm{\epsilon_i}.
\end{align*}
Using the fact that $\norm{\epsilon_i}\leq \norm{E}$ and Theorem \ref{thm:perturbation}, after rearrangement, we have
\begin{align*}
\norm{\hat U\hat U^T\epsilon_i} &\leq\frac{128k\norm{E}}{\sqrt{n\beta}\rho_0} + \br{1+ \frac{128\norm{E}}{\rho_0\lambda_k}}\norm{\hat U_{-i}\hat U_{-i}^T\epsilon_i} \\
&=128\psi_0^{-1}\rho_0^{-1}\Delta +  \br{1+ \frac{128}{\rho_0^2}}\norm{\hat U_{-i}\hat U_{-i}^T\epsilon_i}.
\end{align*}
In Lemma \ref{lem:decomposition_simple} we establish (\ref{eqn:decomposition1}).  From there we have
\begin{align*}
\indic{\hat z_i \neq \phi(z_i^*)}  &\leq \indic{\br{1-C\psi_0^{-1}}\Delta\leq 256\psi_0^{-1}\rho_0^{-1}\Delta +  2\br{1+ \frac{128}{\rho_0^2}}\norm{\hat U_{-i}\hat U_{-i}^T\epsilon_i} }\\
&\leq \indic{\br{1-C'\br{\psi_0^{-1}+\rho_0^{-2}}}\Delta \leq 2\norm{\hat U_{-i}\hat U_{-i}^T\epsilon_i}},
\end{align*}
for some constant $C'>0$, where in the last inequality we use the assumption $\rho_0>16$ from (\ref{eqn:eigengap}). The upper bound on $\E \ell(\hat z,z^*)$ is an immediate consequence as $\E \ell(\hat z,z^*) = n^{-1} \sum_{i\in[n]} \E \indic{\hat z_i \neq \phi(z_i^*)} $.
\end{proof}

\subsection{Proofs of Theorem \ref{thm:subg}}
\begin{proof}[Proof of Theorem \ref{thm:subg}]
For simplicity,   we denote $\hat U_{-i}$ to be short for  $\hat U_{-i,1:\kr}$ throughout the proof.
Define $\psi:=\psi_1^{-1}+\rho_1^{-2}$. Then $\psi <2/C$.

Since $E$ is a random matrix with independent sub-Gaussian columns, we have
\begin{align}\label{eqn:gmm_f}
\pbr{\norm{E}\leq 8\sigma(\sqrt{n}+\sqrt{p})}\geq 1-e^{-n/2},
\end{align}
 by Lemma \ref{lem:sub_gaussian_operator}.
Denote $\mathf$ to be this event.  Under $\mathf$, 
as long as  $\psi_1,\rho_1\geq 128$,
we have both
 (\ref{eqn:Delta_poly}) and (\ref{eqn:eigengap}) hold. Let $\phi\in\Phi$ satisfy $\ell(\hat z,z^*) =n^{-1}\sum_{i\in[n]}\indic{\hat z_i \neq \phi(z^*_i)}$. Consider a fixed $i\in[n]$.
Then from Lemma \ref{lem:decomposition}, we have  
\begin{align*}
\indic{\hat z_i \neq \phi(z_i^*)}\indic{\mathf} &\leq\indic{\br{1-C_1\psi}\Delta \leq 2\norm{\hat U_{-i}\hat U_{-i}^T\epsilon_i}}\indic{\mathf}\\
&\leq  \indic{\br{1-C_1\psi}\Delta \leq 2\norm{\hat U_{-i}\hat U_{-i}^T\epsilon_i}},
\end{align*}
where $C_1>0$ is some constant that does not depend on $C$.
Then,
\begin{align}
\E \ell(\hat z,z^*)&\leq \E \indic{\mathf^\complement} + \E \ell(\hat z,z^*) \indic{\mathf}\nonumber\\
&\leq e^{-n/2} + n^{-1}\sum_{i\in[n]}\E  \indic{\br{1-C_1\psi}\Delta \leq 2\norm{\hat U_{-i}\hat U_{-i}^T\epsilon_i}}.\label{eqn:subg_proof_1}
\end{align}
Since $\epsilon_i\sim\text{SG}_p(\sigma^2)$ and it is independent of $\hat U_{-i}\hat U_{-i}^T$, we can apply concentration inequalities for $\normop{\hat U_{-i}\hat U_{-i}^T\epsilon_i}$ from Lemma \ref{lem:sub_gaussian_projection}.  Define $t=(1-C_2\psi)\Delta^2/(8\sigma^2)$ where $C_2=C_1 + 16$. 
Since $C_2$ does not depend on $C$, we can let $C>\max\{4C_2,128\}$ such that $1-C_2\psi >1/2$.
Then we have $k/t\leq 16k^2\sigma^2/\Delta^2\leq 16\psi_1^2$ where we use the fact that $\frac{\Delta}{k\sigma} >\psi_1^{-1}$  from (\ref{eqn:Delta_subg}) as $\beta \leq 1$. Then we have 
\begin{align*}
\sigma^2(\kr+2\sqrt{\kr t}+2t)&= 2\sigma^2t\br{\frac{1}{2}\frac{\kr}{t}+\sqrt{\frac{\kr}{t}}+1} \leq 2\sigma^2t \br{8\psi_1^2+4\psi_1+1} \leq 2\sigma^2t\br{1+8\psi_1}\\
&\leq (1-C_2\psi)\Delta^2/(8\sigma^2)\br{1+8\psi}\leq (1-C_1\psi)\Delta^2/(8\sigma^2),
\end{align*}
where we use  that $\psi_1<1/128$ and $\psi<1/64$ as we let $C>128$. Then from Lemma \ref{lem:sub_gaussian_projection}, we have
\begin{align*}
\E  \indic{\br{1-C_1\psi}\Delta \leq 2\norm{\hat U_{-i}\hat U_{-i}^T\epsilon_i}} \leq \ebr{-t} = \ebr{-(1-C_2\psi)\frac{\Delta^2}{8\sigma^2}}.
\end{align*}
\end{proof}

\section*{Acknowledgements} The authors are grateful to an anonymous Associate Editor and  anonymous referees for careful reading of
the manuscript and their valuable remarks and suggestions.

\begin{supplement}
\sname{Supplement A}\label{suppA}
\stitle{Supplement to ``Leave-one-out Singular Subspace Perturbation Analysis for Spectral Clustering''}
\slink[url]{url to be specified}
\sdescription{In the supplement \cite{supplement}, we first 
provide the proof of Theorem \ref{thm:perturbation_r} in Appendix \ref{sec:proof_perturbation_r}, followed by the proofs of 
results of Section \ref{sec:adaptive}
in Appendix \ref{sec:appendix_1}. The proof of Theorem \ref{thm:GMM} is given in Appendix \ref{sec:appendix_2}. 
The proofs of results of Section \ref{sec:lower} are given in Appendix \ref{sec:proof_lower}.
Auxiliary lemmas and propositions and their proofs are included in Appendix \ref{sec:auxiliary}.}
\end{supplement}

\bibliographystyle{imsart-number}
\bibliography{spectral}

\begin{thebibliography}{49}
% BibTex style file: imsart-number.bst, 2017-11-03
% Default style options (sort=1,type=number).
% Used options (sort=1,type=number).

\bibitem{abbe2020ell_p}
\begin{barticle}[author]
\bauthor{\bsnm{Abbe},~\bfnm{Emmanuel}\binits{E.}},
  \bauthor{\bsnm{Fan},~\bfnm{Jianqing}\binits{J.}} \AND
  \bauthor{\bsnm{Wang},~\bfnm{Kaizheng}\binits{K.}}
(\byear{2020}).
\btitle{An $\ell_p $ theory of PCA and spectral clustering}.
\bjournal{arXiv preprint arXiv:2006.14062}.
\end{barticle}
\endbibitem

\bibitem{abbe2020entrywise}
\begin{barticle}[author]
\bauthor{\bsnm{Abbe},~\bfnm{Emmanuel}\binits{E.}},
  \bauthor{\bsnm{Fan},~\bfnm{Jianqing}\binits{J.}},
  \bauthor{\bsnm{Wang},~\bfnm{Kaizheng}\binits{K.}} \AND
  \bauthor{\bsnm{Zhong},~\bfnm{Yiqiao}\binits{Y.}}
(\byear{2020}).
\btitle{Entrywise eigenvector analysis of random matrices with low expected
  rank}.
\bjournal{Annals of statistics}
\bvolume{48}
\bpages{1452}.
\end{barticle}
\endbibitem

\bibitem{agterberg2021entrywise}
\begin{barticle}[author]
\bauthor{\bsnm{Agterberg},~\bfnm{Joshua}\binits{J.}},
  \bauthor{\bsnm{Lubberts},~\bfnm{Zachary}\binits{Z.}} \AND
  \bauthor{\bsnm{Priebe},~\bfnm{Carey}\binits{C.}}
(\byear{2021}).
\btitle{Entrywise estimation of singular vectors of low-rank matrices with
  heteroskedasticity and dependence}.
\bjournal{arXiv preprint arXiv:2105.13346}.
\end{barticle}
\endbibitem

\bibitem{bai2010spectral}
\begin{bbook}[author]
\bauthor{\bsnm{Bai},~\bfnm{Zhidong}\binits{Z.}} \AND
  \bauthor{\bsnm{Silverstein},~\bfnm{Jack~W}\binits{J.~W.}}
(\byear{2010}).
\btitle{Spectral analysis of large dimensional random matrices}
\bvolume{20}.
\bpublisher{Springer}.
\end{bbook}
\endbibitem

\bibitem{belabbas2009spectral}
\begin{barticle}[author]
\bauthor{\bsnm{Belabbas},~\bfnm{Mohamed-Ali}\binits{M.-A.}} \AND
  \bauthor{\bsnm{Wolfe},~\bfnm{Patrick~J}\binits{P.~J.}}
(\byear{2009}).
\btitle{Spectral methods in machine learning and new strategies for very large
  datasets}.
\bjournal{Proceedings of the National Academy of Sciences}
\bvolume{106}
\bpages{369--374}.
\end{barticle}
\endbibitem

\bibitem{blum2007separating}
\begin{binproceedings}[author]
\bauthor{\bsnm{Blum},~\bfnm{Avrim}\binits{A.}},
  \bauthor{\bsnm{Coja-Oghlan},~\bfnm{Amin}\binits{A.}},
  \bauthor{\bsnm{Frieze},~\bfnm{Alan}\binits{A.}} \AND
  \bauthor{\bsnm{Zhou},~\bfnm{Shuheng}\binits{S.}}
(\byear{2007}).
\btitle{Separating populations with wide data: A spectral analysis}.
In \bbooktitle{Algorithms and Computation: 18th International Symposium, ISAAC
  2007, Sendai, Japan, December 17-19, 2007. Proceedings 18}
\bpages{439--451}.
\bpublisher{Springer}.
\end{binproceedings}
\endbibitem

\bibitem{cai2021subspace}
\begin{barticle}[author]
\bauthor{\bsnm{Cai},~\bfnm{Changxiao}\binits{C.}},
  \bauthor{\bsnm{Li},~\bfnm{Gen}\binits{G.}},
  \bauthor{\bsnm{Chi},~\bfnm{Yuejie}\binits{Y.}},
  \bauthor{\bsnm{Poor},~\bfnm{H~Vincent}\binits{H.~V.}} \AND
  \bauthor{\bsnm{Chen},~\bfnm{Yuxin}\binits{Y.}}
(\byear{2021}).
\btitle{Subspace estimation from unbalanced and incomplete data matrices:
  $\ell_{2,\infty}$ statistical guarantees}.
\bjournal{The Annals of Statistics}
\bvolume{49}
\bpages{944--967}.
\end{barticle}
\endbibitem

\bibitem{cai2021optimal}
\begin{barticle}[author]
\bauthor{\bsnm{Cai},~\bfnm{T~Tony}\binits{T.~T.}},
  \bauthor{\bsnm{Li},~\bfnm{Hongzhe}\binits{H.}} \AND
  \bauthor{\bsnm{Ma},~\bfnm{Rong}\binits{R.}}
(\byear{2021}).
\btitle{Optimal Structured Principal Subspace Estimation: Metric Entropy and
  Minimax Rates.}
\bjournal{J. Mach. Learn. Res.}
\bvolume{22}
\bpages{46--1}.
\end{barticle}
\endbibitem

\bibitem{cai2018rate}
\begin{barticle}[author]
\bauthor{\bsnm{Cai},~\bfnm{T~Tony}\binits{T.~T.}} \AND
  \bauthor{\bsnm{Zhang},~\bfnm{Anru}\binits{A.}}
(\byear{2018}).
\btitle{Rate-optimal perturbation bounds for singular subspaces with
  applications to high-dimensional statistics}.
\bjournal{The Annals of Statistics}
\bvolume{46}
\bpages{60--89}.
\end{barticle}
\endbibitem

\bibitem{cape2019two}
\begin{barticle}[author]
\bauthor{\bsnm{Cape},~\bfnm{Joshua}\binits{J.}},
  \bauthor{\bsnm{Tang},~\bfnm{Minh}\binits{M.}} \AND
  \bauthor{\bsnm{Priebe},~\bfnm{Carey~E}\binits{C.~E.}}
(\byear{2019}).
\btitle{The two-to-infinity norm and singular subspace geometry with
  applications to high-dimensional statistics}.
\bjournal{The Annals of Statistics}
\bvolume{47}
\bpages{2405--2439}.
\end{barticle}
\endbibitem

\bibitem{chen2021spectral}
\begin{barticle}[author]
\bauthor{\bsnm{Chen},~\bfnm{Yuxin}\binits{Y.}},
  \bauthor{\bsnm{Chi},~\bfnm{Yuejie}\binits{Y.}},
  \bauthor{\bsnm{Fan},~\bfnm{Jianqing}\binits{J.}} \AND
  \bauthor{\bsnm{Ma},~\bfnm{Cong}\binits{C.}}
(\byear{2021}).
\btitle{Spectral methods for data science: A statistical perspective}.
\bjournal{Foundations and Trends{\textregistered} in Machine Learning}
\bvolume{14}
\bpages{566--806}.
\end{barticle}
\endbibitem

\bibitem{davis1970rotation}
\begin{barticle}[author]
\bauthor{\bsnm{Davis},~\bfnm{Chandler}\binits{C.}} \AND
  \bauthor{\bsnm{Kahan},~\bfnm{William~Morton}\binits{W.~M.}}
(\byear{1970}).
\btitle{The rotation of eigenvectors by a perturbation. III}.
\bjournal{SIAM Journal on Numerical Analysis}
\bvolume{7}
\bpages{1--46}.
\end{barticle}
\endbibitem

\bibitem{davis2021clustering}
\begin{barticle}[author]
\bauthor{\bsnm{Davis},~\bfnm{Damek}\binits{D.}},
  \bauthor{\bsnm{Diaz},~\bfnm{Mateo}\binits{M.}} \AND
  \bauthor{\bsnm{Wang},~\bfnm{Kaizheng}\binits{K.}}
(\byear{2021}).
\btitle{Clustering a mixture of gaussians with unknown covariance}.
\bjournal{arXiv preprint arXiv:2110.01602}.
\end{barticle}
\endbibitem

\bibitem{ding2020leave}
\begin{barticle}[author]
\bauthor{\bsnm{Ding},~\bfnm{Lijun}\binits{L.}} \AND
  \bauthor{\bsnm{Chen},~\bfnm{Yudong}\binits{Y.}}
(\byear{2020}).
\btitle{Leave-one-out approach for matrix completion: Primal and dual
  analysis}.
\bjournal{IEEE Transactions on Information Theory}
\bvolume{66}
\bpages{7274--7301}.
\end{barticle}
\endbibitem

\bibitem{fan2018eigenvector}
\begin{barticle}[author]
\bauthor{\bsnm{Fan},~\bfnm{Jianqing}\binits{J.}},
  \bauthor{\bsnm{Wang},~\bfnm{Weichen}\binits{W.}} \AND
  \bauthor{\bsnm{Zhong},~\bfnm{Yiqiao}\binits{Y.}}
(\byear{2018}).
\btitle{An $\ell_\infty$ eigenvector perturbation bound and its application to
  robust covariance estimation}.
\bjournal{Journal of Machine Learning Research}
\bvolume{18}
\bpages{1--42}.
\end{barticle}
\endbibitem

\bibitem{giraud2019partial}
\begin{barticle}[author]
\bauthor{\bsnm{Giraud},~\bfnm{Christophe}\binits{C.}} \AND
  \bauthor{\bsnm{Verzelen},~\bfnm{Nicolas}\binits{N.}}
(\byear{2019}).
\btitle{Partial recovery bounds for clustering with the relaxed $ K $-means}.
\bjournal{Mathematical Statistics and Learning}
\bvolume{1}
\bpages{317--374}.
\end{barticle}
\endbibitem

\bibitem{han2020exact}
\begin{barticle}[author]
\bauthor{\bsnm{Han},~\bfnm{Rungang}\binits{R.}},
  \bauthor{\bsnm{Luo},~\bfnm{Yuetian}\binits{Y.}},
  \bauthor{\bsnm{Wang},~\bfnm{Miaoyan}\binits{M.}} \AND
  \bauthor{\bsnm{Zhang},~\bfnm{Anru~R}\binits{A.~R.}}
(\byear{2020}).
\btitle{Exact clustering in tensor block model: Statistical optimality and
  computational limit}.
\bjournal{arXiv preprint arXiv:2012.09996}.
\end{barticle}
\endbibitem

\bibitem{hsu2012tail}
\begin{barticle}[author]
\bauthor{\bsnm{Hsu},~\bfnm{Daniel}\binits{D.}},
  \bauthor{\bsnm{Kakade},~\bfnm{Sham}\binits{S.}} \AND
  \bauthor{\bsnm{Zhang},~\bfnm{Tong}\binits{T.}}
(\byear{2012}).
\btitle{A tail inequality for quadratic forms of subgaussian random vectors}.
\bjournal{Electronic Communications in Probability}
\bvolume{17}
\bpages{1--6}.
\end{barticle}
\endbibitem

\bibitem{Jin15}
\begin{barticle}[author]
\bauthor{\bsnm{Jin},~\bfnm{J.}\binits{J.}}
(\byear{2015}).
\btitle{Fast community detection by SCORE}.
\bjournal{Ann. Statist.}
\bvolume{43}
\bpages{57-89}.
\end{barticle}
\endbibitem

\bibitem{kannan2009spectral}
\begin{barticle}[author]
\bauthor{\bsnm{Kannan},~\bfnm{Ravindran}\binits{R.}} \AND
  \bauthor{\bsnm{Vempala},~\bfnm{Santosh}\binits{S.}}
(\byear{2009}).
\btitle{Spectral algorithms}.
\bjournal{Foundations and Trends{\textregistered} in Theoretical Computer
  Science}
\bvolume{4}
\bpages{157--288}.
\end{barticle}
\endbibitem

\bibitem{kiselev2019challenges}
\begin{barticle}[author]
\bauthor{\bsnm{Kiselev},~\bfnm{Vladimir~Yu}\binits{V.~Y.}},
  \bauthor{\bsnm{Andrews},~\bfnm{Tallulah~S}\binits{T.~S.}} \AND
  \bauthor{\bsnm{Hemberg},~\bfnm{Martin}\binits{M.}}
(\byear{2019}).
\btitle{Challenges in unsupervised clustering of single-cell RNA-seq data}.
\bjournal{Nature Reviews Genetics}
\bvolume{20}
\bpages{273--282}.
\end{barticle}
\endbibitem

\bibitem{koltchinskii2016asymptotics}
\begin{binproceedings}[author]
\bauthor{\bsnm{Koltchinskii},~\bfnm{Vladimir}\binits{V.}} \AND
  \bauthor{\bsnm{Lounici},~\bfnm{Karim}\binits{K.}}
(\byear{2016}).
\btitle{Asymptotics and concentration bounds for bilinear forms of spectral
  projectors of sample covariance}.
In \bbooktitle{Annales de l'Institut Henri Poincar{\'e}, Probabilit{\'e}s et
  Statistiques}
\bvolume{52}
\bpages{1976--2013}.
\bpublisher{Institut Henri Poincar{\'e}}.
\end{binproceedings}
\endbibitem

\bibitem{koltchinskii2016perturbation}
\begin{bincollection}[author]
\bauthor{\bsnm{Koltchinskii},~\bfnm{Vladimir}\binits{V.}} \AND
  \bauthor{\bsnm{Xia},~\bfnm{Dong}\binits{D.}}
(\byear{2016}).
\btitle{Perturbation of linear forms of singular vectors under Gaussian noise}.
In \bbooktitle{High Dimensional Probability VII}
\bpages{397--423}.
\bpublisher{Springer}.
\end{bincollection}
\endbibitem

\bibitem{lei2020bias}
\begin{barticle}[author]
\bauthor{\bsnm{Lei},~\bfnm{Jing}\binits{J.}} \AND
  \bauthor{\bsnm{Lin},~\bfnm{Kevin~Z}\binits{K.~Z.}}
(\byear{2020}).
\btitle{Bias-adjusted spectral clustering in multi-layer stochastic block
  models}.
\bjournal{arXiv preprint arXiv:2003.08222}.
\end{barticle}
\endbibitem

\bibitem{lei2015consistency}
\begin{barticle}[author]
\bauthor{\bsnm{Lei},~\bfnm{J.}\binits{J.}} \AND
  \bauthor{\bsnm{Rinaldo},~\bfnm{A.}\binits{A.}}
(\byear{2015}).
\btitle{Consistency of spectral clustering in stochastic block models}.
\bjournal{Ann. Statist.}
\bvolume{43}
\bpages{215--237}.
\end{barticle}
\endbibitem

\bibitem{lei2019unified}
\begin{barticle}[author]
\bauthor{\bsnm{Lei},~\bfnm{Lihua}\binits{L.}}
(\byear{2019}).
\btitle{Unified $\ell_{2\rightarrow \infty}$ eigenspace perturbation theory for
  symmetric random matrices}.
\bjournal{arXiv preprint arXiv:1909.04798}.
\end{barticle}
\endbibitem

\bibitem{loffler2019optimality}
\begin{barticle}[author]
\bauthor{\bsnm{L{\"o}ffler},~\bfnm{Matthias}\binits{M.}},
  \bauthor{\bsnm{Zhang},~\bfnm{Anderson~Y}\binits{A.~Y.}} \AND
  \bauthor{\bsnm{Zhou},~\bfnm{Harrison~H}\binits{H.~H.}}
(\byear{2021}).
\btitle{Optimality of Spectral Clustering in the Gaussian Mixture Model}.
\bjournal{Annals of Statistics}
\bvolume{49}
\bpages{2506-2530}.
\bdoi{10.1214/20-AOS2044}
\end{barticle}
\endbibitem

\bibitem{lu2016statistical}
\begin{barticle}[author]
\bauthor{\bsnm{Lu},~\bfnm{Yu}\binits{Y.}} \AND
  \bauthor{\bsnm{Zhou},~\bfnm{Harrison~H}\binits{H.~H.}}
(\byear{2016}).
\btitle{Statistical and computational guarantees of lloyd's algorithm and its
  variants}.
\bjournal{arXiv preprint arXiv:1612.02099}.
\end{barticle}
\endbibitem

\bibitem{ma2018implicit}
\begin{binproceedings}[author]
\bauthor{\bsnm{Ma},~\bfnm{Cong}\binits{C.}},
  \bauthor{\bsnm{Wang},~\bfnm{Kaizheng}\binits{K.}},
  \bauthor{\bsnm{Chi},~\bfnm{Yuejie}\binits{Y.}} \AND
  \bauthor{\bsnm{Chen},~\bfnm{Yuxin}\binits{Y.}}
(\byear{2018}).
\btitle{Implicit regularization in nonconvex statistical estimation: Gradient
  descent converges linearly for phase retrieval and matrix completion}.
In \bbooktitle{International Conference on Machine Learning}
\bpages{3345--3354}.
\bpublisher{PMLR}.
\end{binproceedings}
\endbibitem

\bibitem{mason2012quantile}
\begin{barticle}[author]
\bauthor{\bsnm{Mason},~\bfnm{David~M}\binits{D.~M.}} \AND
  \bauthor{\bsnm{Zhou},~\bfnm{Harrison~H}\binits{H.~H.}}
(\byear{2012}).
\btitle{Quantile coupling inequalities and their applications}.
\bjournal{Probability Surveys}
\bvolume{9}
\bpages{439--479}.
\end{barticle}
\endbibitem

\bibitem{MontiTamayoMesirovGolub03}
\begin{barticle}[author]
\bauthor{\bsnm{Monti},~\bfnm{S.}\binits{S.}},
  \bauthor{\bsnm{Tamayo},~\bfnm{P.}\binits{P.}},
  \bauthor{\bsnm{Mesirov},~\bfnm{J.}\binits{J.}} \AND
  \bauthor{\bsnm{Golub},~\bfnm{T.}\binits{T.}}
(\byear{2003}).
\btitle{{Consensus Clustering: A Resampling-Based Method for Class Discovery
  and Visualization of Gene Expression Microarray Data}}.
\bjournal{Mach. Learn.}
\bvolume{52}
\bpages{91-118}.
\end{barticle}
\endbibitem

\bibitem{ndaoud2018sharp}
\begin{barticle}[author]
\bauthor{\bsnm{Ndaoud},~\bfnm{Mohamed}\binits{M.}}
(\byear{2022}).
\btitle{Sharp optimal recovery in the two component Gaussian mixture model}.
\bjournal{The Annals of Statistics}
\bvolume{50}
\bpages{2096--2126}.
\end{barticle}
\endbibitem

\bibitem{ndaoud2021improved}
\begin{barticle}[author]
\bauthor{\bsnm{Ndaoud},~\bfnm{Mohamed}\binits{M.}},
  \bauthor{\bsnm{Sigalla},~\bfnm{Suzanne}\binits{S.}} \AND
  \bauthor{\bsnm{Tsybakov},~\bfnm{Alexandre~B}\binits{A.~B.}}
(\byear{2021}).
\btitle{Improved clustering algorithms for the bipartite stochastic block
  model}.
\bjournal{IEEE Transactions on Information Theory}.
\end{barticle}
\endbibitem

\bibitem{newman2013spectral}
\begin{barticle}[author]
\bauthor{\bsnm{Newman},~\bfnm{Mark~EJ}\binits{M.~E.}}
(\byear{2013}).
\btitle{Spectral methods for community detection and graph partitioning}.
\bjournal{Physical Review E}
\bvolume{88}
\bpages{042822}.
\end{barticle}
\endbibitem

\bibitem{qin2013regularized}
\begin{binproceedings}[author]
\bauthor{\bsnm{Qin},~\bfnm{T.}\binits{T.}} \AND
  \bauthor{\bsnm{Rohe},~\bfnm{K.}\binits{K.}}
(\byear{2013}).
\btitle{Regularized spectral clustering under the degree-corrected stochastic
  blockmodel}.
In \bbooktitle{Advances in Neural Information Processing Systems}
\bpages{3120--3128}.
\end{binproceedings}
\endbibitem

\bibitem{rohe2011spectral}
\begin{barticle}[author]
\bauthor{\bsnm{Rohe},~\bfnm{Karl}\binits{K.}},
  \bauthor{\bsnm{Chatterjee},~\bfnm{Sourav}\binits{S.}} \AND
  \bauthor{\bsnm{Yu},~\bfnm{Bin}\binits{B.}}
(\byear{2011}).
\btitle{Spectral clustering and the high-dimensional stochastic blockmodel}.
\bjournal{The Annals of Statistics}
\bvolume{39}
\bpages{1878--1915}.
\end{barticle}
\endbibitem

\bibitem{schiebinger2015geometry}
\begin{barticle}[author]
\bauthor{\bsnm{Schiebinger},~\bfnm{Geoffrey}\binits{G.}},
  \bauthor{\bsnm{Wainwright},~\bfnm{Martin~J}\binits{M.~J.}} \AND
  \bauthor{\bsnm{Yu},~\bfnm{Bin}\binits{B.}}
(\byear{2015}).
\btitle{The geometry of kernelized spectral clustering}.
\bjournal{The Annals of Statistics}
\bvolume{43}
\bpages{819--846}.
\end{barticle}
\endbibitem

\bibitem{srivastava2019robust}
\begin{barticle}[author]
\bauthor{\bsnm{Srivastava},~\bfnm{Prateek~R}\binits{P.~R.}},
  \bauthor{\bsnm{Sarkar},~\bfnm{Purnamrita}\binits{P.}} \AND
  \bauthor{\bsnm{Hanasusanto},~\bfnm{Grani~A}\binits{G.~A.}}
(\byear{2019}).
\btitle{A robust spectral clustering algorithm for sub-Gaussian mixture models
  with outliers}.
\bjournal{arXiv preprint arXiv:1912.07546}.
\end{barticle}
\endbibitem

\bibitem{stewart1990matrix}
\begin{barticle}[author]
\bauthor{\bsnm{Stewart},~\bfnm{Gilbert~W}\binits{G.~W.}}
(\byear{1990}).
\btitle{Matrix perturbation theory}.
\end{barticle}
\endbibitem

\bibitem{TibshiraniWaltherHastie01}
\begin{barticle}[author]
\bauthor{\bsnm{Tibshirani},~\bfnm{R.}\binits{R.}},
  \bauthor{\bsnm{Walther},~\bfnm{G.}\binits{G.}} \AND
  \bauthor{\bsnm{Hastie},~\bfnm{T.}\binits{T.}}
(\byear{2001}).
\btitle{Estimating the number of clusters in a data set via the gap statistic}.
\bjournal{J. R. Stat. Soc. B}
\bvolume{63}
\bpages{411-423}.
\end{barticle}
\endbibitem

\bibitem{van2000asymptotic}
\begin{bbook}[author]
\bauthor{\bparticle{Van~der} \bsnm{Vaart},~\bfnm{Aad~W}\binits{A.~W.}}
(\byear{2000}).
\btitle{Asymptotic statistics}
\bvolume{3}.
\bpublisher{Cambridge university press}.
\end{bbook}
\endbibitem

\bibitem{von2007tutorial}
\begin{barticle}[author]
\bauthor{\bparticle{von} \bsnm{Luxburg},~\bfnm{U.}\binits{U.}}
(\byear{2007}).
\btitle{A tutorial on spectral clustering}.
\bjournal{Statist. Comput}
\bvolume{17}
\bpages{395--416}.
\end{barticle}
\endbibitem

\bibitem{von2008consistency}
\begin{barticle}[author]
\bauthor{\bsnm{Von~Luxburg},~\bfnm{Ulrike}\binits{U.}},
  \bauthor{\bsnm{Belkin},~\bfnm{Mikhail}\binits{M.}} \AND
  \bauthor{\bsnm{Bousquet},~\bfnm{Olivier}\binits{O.}}
(\byear{2008}).
\btitle{Consistency of spectral clustering}.
\bjournal{The Annals of Statistics}
\bpages{555--586}.
\end{barticle}
\endbibitem

\bibitem{Wang10}
\begin{barticle}[author]
\bauthor{\bsnm{Wang},~\bfnm{J.}\binits{J.}}
(\byear{2010}).
\btitle{Consistent selection of the number of clusters via cross-validation}.
\bjournal{Biometrika}
\bvolume{97}
\bpages{893-904}.
\end{barticle}
\endbibitem

\bibitem{wang2020efficient}
\begin{barticle}[author]
\bauthor{\bsnm{Wang},~\bfnm{Kaizheng}\binits{K.}},
  \bauthor{\bsnm{Yan},~\bfnm{Yuling}\binits{Y.}} \AND
  \bauthor{\bsnm{Diaz},~\bfnm{Mateo}\binits{M.}}
(\byear{2020}).
\btitle{Efficient clustering for stretched mixtures: Landscape and optimality}.
\bjournal{Advances in Neural Information Processing Systems}
\bvolume{33}
\bpages{21309--21320}.
\end{barticle}
\endbibitem

\bibitem{wedin1972perturbation}
\begin{barticle}[author]
\bauthor{\bsnm{Wedin},~\bfnm{Per-{\AA}ke}\binits{P.-{\AA}.}}
(\byear{1972}).
\btitle{Perturbation bounds in connection with singular value decomposition}.
\bjournal{BIT Numerical Mathematics}
\bvolume{12}
\bpages{99--111}.
\end{barticle}
\endbibitem

\bibitem{yu2015useful}
\begin{barticle}[author]
\bauthor{\bsnm{Yu},~\bfnm{Yi}\binits{Y.}},
  \bauthor{\bsnm{Wang},~\bfnm{Tengyao}\binits{T.}} \AND
  \bauthor{\bsnm{Samworth},~\bfnm{Richard~J}\binits{R.~J.}}
(\byear{2015}).
\btitle{A useful variant of the Davis--Kahan theorem for statisticians}.
\bjournal{Biometrika}
\bvolume{102}
\bpages{315--323}.
\end{barticle}
\endbibitem

\bibitem{supplement}
\begin{barticle}[author]
\bauthor{\bsnm{Zhang},~\bfnm{Anderson~Y}\binits{A.~Y.}} \AND
  \bauthor{\bsnm{Zhou},~\bfnm{Harrison~H}\binits{H.~H.}}
(\byear{2022}).
\btitle{Supplement to ``Leave-one-out Singular Subspace Perturbation Analysis
  for Spectral Clustering''}.
\end{barticle}
\endbibitem

\bibitem{zhou2019analysis}
\begin{barticle}[author]
\bauthor{\bsnm{Zhou},~\bfnm{Z.}\binits{Z.}} \AND
  \bauthor{\bsnm{Amini},~\bfnm{A.~A.}\binits{A.~A.}}
(\byear{2019}).
\btitle{Analysis of spectral clustering algorithms for community detection: the
  general bipartite setting.}
\bjournal{J. Mach. Learn. Res.}
\bvolume{20}
\bpages{1--47}.
\end{barticle}
\endbibitem

\end{thebibliography}

\newpage
\thispagestyle{empty}
\setcounter{page}{1}
\begin{center}
\uppercase{\large Supplement to ``Leave-one-out Singular Subspace Perturbation Analysis for Spectral Clustering''}
\medskip

{BY Anderson Y. Zhang and Harrison H.~Zhou}
\medskip

{University of Pennsylvania and Yale University}
\end{center}

\appendix

\section{Proof of Theorem \ref{thm:perturbation_r}}\label{sec:proof_perturbation_r}
The proof idea is similar to that of Theorem \ref{thm:perturbation} but with  more involved calculation as $r$ is not necessarily $\kr$. Consider any $i\in[n]$. Define
\begin{align*}
\tilde\rho_{-i}:=\frac{\hat\lambda_{-i,r} - \hat\lambda_{-i,r+1}}{\norm{\br{I- \hat U_{-i,1:r} \hat U_{-i,1:r}^T} X_i}}.
\end{align*}
We need to verify $\tilde\rho_{-i}>2$ first in order to apply Theorem \ref{thm:general}. Recall the definition of $P_{-i}$ in (\ref{eqn:P_i_def}) and $E_{-i}$ in (\ref{eqn:E_i_def}). Let the SVD of $P_{-i}$ be
\begin{align*}
P_{-i}  =\sum_{j=1}^{p\wedge(n-1)} \lambda_{-i,j}u_{-i,j}v_{-i,j}^T,
\end{align*}
where
$\lambda_{-i,1}\geq \lambda_{-i,2}\geq \ldots \geq \lambda_{-i,p\wedge (n-1)}$. Denote $U_{-i,1:r}=(u_{-i,1},u_{-i,2},\ldots,u_{-i,r})\in\matho^{p\times r}$.
Then by Weyl's inequality, we have 
\begin{align}\label{eqn:hat_lambda_i_r_r_1}
|\hat\lambda_{-i,r} - \lambda_{-i,r}|, |\hat\lambda_{-i,r+1} - \lambda_{-i,r+1}|\leq \norm{E_{-i}}\leq \norm{E}.
\end{align}
Then the numerator
\begin{align}\label{eqn:hat_lambda_i_r_r_plus_def_1}
\hat\lambda_{-i,r} - \hat\lambda_{-i,r+1} \geq \lambda_{-i,r} - \lambda_{-i,r+1} -2\norm{E}.
\end{align}
In the following, we are going to connect $ \lambda_{-i,r} - \lambda_{-i,r+1}$ with $\lambda_{r}- \lambda_{r+1}$.

To bridge the gap between $\lambda_{-i,r}, \lambda_{-i,r+1}$ and $\lambda_{r}, \lambda_{r+1}$, define
\begin{align*}
\tilde P_{-i}:= (\theta^*_{z^*_1},\ldots,\theta^*_{z^*_{i-1}},U_{-i,1:r}U_{-i,1:r}^T \theta^*_{z^*_i},\theta^*_{z^*_{i+1}},\ldots, \theta^*_{z^*_n})\in\mathr^{p\times n}.
\end{align*}
Let $\tilde \lambda_{-i,1}\geq \tilde \lambda_{-i,2}\geq \ldots \geq \tilde \lambda_{-i,p\wedge n}$ be its singular values. Note that $U_{-i,1:r}U_{-i,1:r}^T\tilde P_{-i}$ is the best rank-$r$ approximation of $\tilde P_{-i}$. This is because for any rank-$r$ projection matrix $M\in\mathr^{p\times p}$ such that $M^2=M$, we have
\begin{align*}
\fnorm{\tilde P_{-i} - MM^T\tilde P_{-i}}^2 &=\fnorm{(I-MM^T) P_{-i}}^2 + \fnorm{(I-MM^T) U_{-i,1:r}U_{-i,1:r}^T\theta^*_{z^*_i}}^2\\
&\geq \fnorm{(I-U_{-i,1:r}U_{-i,1:r}^T) P_{-i}}^2  + 0\\
& = \fnorm{\tilde P_{-i} - U_{-i,1:r}U_{-i,1:r}^T\tilde P_{-i}}^2,
\end{align*} 
where we use the fact $U_{-i,1:r}U_{-i,1:r}^T P_{-i}$ is the best rank-$r$ approximation of $P_{-i}$. Hence, $\tspan(U_{-i,1:r})$ is exactly the leading $r$ left singular space of $\tilde P_{-i}$. It immediately implies:
\begin{itemize}
\item $\tilde \lambda_{-i,j}=\lambda_{-i,j}$ for any $j\geq r+1$, including
\begin{align}\label{eqn:proof_thm7_11}
\tilde \lambda_{-i,r+1}=\lambda_{-i,r+1}.
\end{align}
\item %
Since $U_{-i,1:r}U_{-i,1:r}^T\tilde P_{-i}$ and $U_{-i,1:r}U_{-i,1:r}^T P_{-i}$ only differ by one column where the latter one can be seen as the leave-one-out counterpart of the former one, using the same argument as in (\ref{eqn:thm_2_proof_1}), we have
\begin{align}\label{eqn:hat_lambda_i_r_r_2}
\lambda^2_{-i,r}\geq \br{1-\frac{k}{\beta n}}\tilde\lambda^2_{-i,r}.
\end{align}
\end{itemize}
Then from (\ref{eqn:hat_lambda_i_r_r_plus_def_1}), we have
\begin{align}\label{eqn:hat_lambda_i_r_r_plus_def_2}
\hat\lambda_{-i,r} - \hat\lambda_{-i,r+1} \geq \sqrt{1-\frac{k}{\beta n}}\tilde\lambda_{-i,r} - \tilde \lambda_{-i,r+1} -2\norm{E}.
\end{align}

For the difference between $\tilde\lambda_{-i,r},\tilde\lambda_{-i,r+1}$ and $\lambda_{r},\lambda_{r+1}$, we use the Weyl's inequality again:
\begin{align*}
\max_{j\in[k]}\abs{\tilde \lambda_{-i,j} - \lambda_j}\leq \norm{P- \tilde P_{-i}} = \norm{\theta^*_{z^*_i} - U_{-i,1:r}U_{-i,1:r}^T \theta^*_{z^*_i}}.
\end{align*}
In the proof of Theorem \ref{thm:perturbation}, we show $u_{-i,j}\in\text{span}(\{\theta^*_a\}_{a\in[k]})$ for each $j\in[\kr]$. Then
\begin{align*}
\norm{\theta^*_{z^*_i} - U_{-i,1:r}U_{-i,1:r}^T \theta^*_{z^*_i}} &= \norm{\br{u_{-i,r+1},\ldots,u_{-i,\kr}}\br{u_{-i,r+1},\ldots,u_{-i,\kr}}^T\theta^*_{z^*_i}} \\
&=\sqrt{\sum_{a\in[\kr]: a\geq r+1}\br{u_{-i,a}^T\theta^*_{z^*_i}}^2}.
\end{align*}
For any $a\in[\kr]$ such $a\geq r+1$, we have
\begin{align*}
\br{u_{-i,a}^T\theta^*_{z^*_i}}^2 &\leq \frac{1}{\abs{\cbr{j\in[n]:z^*_j=z^*_i}}-1}\sum_{j\in[n]:j\neq i,z^*_j=z^*_i}\br{u_{-i,a}^T\theta^*_{z^*_{j}}}^2 \leq \frac{1}{\frac{\beta n}{k}-1} (u_{-i,a}^TP_{-i})^2  \\
&\leq \frac{\lambda_{-i,a}^2}{\frac{\beta n}{k}-1} \leq  \frac{\lambda_{-i,r+1}^2}{\frac{\beta n}{k}-1}.
\end{align*}
Hence, we obtain $\normop{\theta^*_{z^*_i} - U_{-i,1:r}U_{-i,1:r}^T \theta^*_{z^*_i}}\leq \sqrt{\kr}\lambda_{-i,a}/\sqrt{\beta n/k-1}$ and consequently,
\begin{align}\label{eqn:proof_thm7_1}
\max_{j\in[k]}\abs{\tilde \lambda_{-i,j} - \lambda_j} \leq \frac{\sqrt{\kr} \lambda_{-i,r+1}}{\sqrt{\frac{\beta n}{k}-1}}.
\end{align}
Then together with (\ref{eqn:proof_thm7_11}), we have $|\lambda_{-i,r+1} -\lambda_{r+1}|\leq \sqrt{\kr}\lambda_{-i,r+1} /\sqrt{\beta n/k-1}$ and hence
\begin{align}\label{eqn:proof_thm7_3}
\lambda_{-i,r+1} \leq \frac{\lambda_{r+1}}{1-\frac{\sqrt{\kr}}{\sqrt{\frac{\beta n}{k}-1}}}.
\end{align}
Denote $d:=\beta n/k$. With (\ref{eqn:hat_lambda_i_r_r_plus_def_2}), we have
\begin{align}
\nonumber\hat\lambda_{-i,r} - \hat\lambda_{-i,r+1} &\geq \sqrt{\frac{d-1}{d}}\br{\lambda_r -  \frac{ \lambda_{-i,r+1}}{\sqrt{d-1}}} -\br{\lambda_{r+1} +  \frac{ \lambda_{-i,r+1}}{\sqrt{d-1}}} - 2\norm{E}\\
\nonumber&\geq \sqrt{\frac{d-1}{d}}\lambda_r - \lambda_{r+1}\br{1+\br{\frac{1}{\sqrt{d}} + \frac{1}{\sqrt{d-1}}}\frac{1}{1-\frac{\sqrt{\kr}}{\sqrt{d-1}}}} -2\norm{E}\\
\nonumber&\geq \sqrt{\frac{d-1}{d}} \br{\lambda_r - \lambda_{r+1}  -  \frac{4}{\sqrt{d}}  \lambda_{r+1}} -2\norm{E}\\
&\geq \frac{3}{4} \br{\lambda_r - \lambda_{r+1}  -  \frac{4}{\sqrt{d}}  \lambda_{r+1}} -2\norm{E},\label{eqn:hat_lambda_i_r_r_plus_def_3}
\end{align}
where in the last two inequalities we use the assumption that $d/k\geq 10$. 
As a consequence, we have
\begin{align*}
\tilde\rho_{-i}\geq \frac{\hat\lambda_{-i,r} - \hat\lambda_{-i,r+1}}{\norm{\br{I- \hat U_{-i,1:r} \hat U_{-i,1:r}^T} X_i}}\geq \frac{\frac{3}{4} \br{\lambda_r - \lambda_{r+1}  -  \frac{4}{\sqrt{d}}  \lambda_{r+1}} -2\norm{E}}{\norm{\br{I- \hat U_{-i,1:r} \hat U_{-i,1:r}^T} X_i}}.
\end{align*}

Next, we are going to simplify the denominator of the above display. Using the orthogonality of the singular vectors, we have
\begin{align*}
&\norm{\br{I - \hat U_{-i,1:r}\hat U_{-i,1:r}^T}\theta^*_{z^*_i}}\\
 & \leq \norm{\br{I - \hat U_{-i,1:\kr}\hat U_{-i,1:\kr}^T}\theta^*_{z^*_i}}  + \norm{\br{\hat u_{-i,r+1},\ldots, \hat u_{-i,\kr}}\br{\hat u_{-i,r+1},\ldots, \hat u_{-i,\kr}}^T\theta^*_{z^*_i}}\\
&=  \norm{\br{I - \hat U_{-i,1:\kr}\hat U_{-i,1:\kr}^T}\theta^*_{z^*_i}} + \sqrt{\sum_{j=r+1}^{\kr}\br{\hat u_{-i,j}^T\theta^*_{z^*_i}}^2}\\
&\leq \frac{3\sqrt{\kr}\norm{E}}{\sqrt{\frac{\beta n}{k}-1}} + \sqrt{\sum_{j=r+1}^{\kr}\br{\frac{\hat\lambda_{-i,j}}{\sqrt{\frac{\beta n}{k}-1}} + \frac{\norm{E}}{\sqrt{\frac{\beta n}{k}-1}}}^2}\\
&\leq \frac{3\sqrt{\kr}\norm{E}}{\sqrt{\frac{\beta n}{k}-1}} + \sqrt{\kr}\br{\frac{\hat\lambda_{-i,r+1}}{\sqrt{\frac{\beta n}{k}-1}} + \frac{\norm{E}}{\sqrt{\frac{\beta n}{k}-1}}},
\end{align*}
where the second to the  inequality is due to (\ref{eqn:perturbation_proof_2}) and (\ref{eqn:perturbation_proof_5}). By (\ref{eqn:proof_thm7_3}) and the Weyl's inequality, we have
\begin{align*}
\hat \lambda_{-i,r+1}\leq  \lambda_{-i,r+1} + \norm{E} \leq  \frac{1}{1-\frac{\sqrt{\kr}}{\sqrt{\frac{\beta n}{k}-1}} }\lambda_{r+1} + \norm{E}.
\end{align*}
Then, with the assumption $\beta n/k^2\geq 10$, we have
\begin{align*}
\norm{\br{I - \hat U_{-i,1:r}\hat U_{-i,1:r}^T}\theta^*_{z^*_i}} &\leq  \frac{3\sqrt{\kr}\norm{E}}{\sqrt{\frac{\beta n}{k}-1}} + \sqrt{\kr}\br{\frac{\lambda_{r+1}}{\sqrt{\frac{\beta n}{k}-1}-\sqrt{\kr}} + \frac{2\norm{E}}{\sqrt{\frac{\beta n}{k}-1}}}\\
&\leq \frac{\sqrt{k\kr}}{\sqrt{\beta n}}(6\norm{E}+2\lambda_{r+1}).
\end{align*}
Hence,
\begin{align*}
\norm{\br{I- \hat U_{-i,1:r} \hat U_{-i,1:r}^T} X_i}&\leq \norm{\br{I- \hat U_{-i,1:r} \hat U_{-i,1:r}^T} \theta^*_{z^*_i}} + \norm{\br{I- \hat U_{-i,1:r} \hat U_{-i,1:r}^T} \epsilon_i}\\
&\leq \frac{\sqrt{k\kr}}{\sqrt{\beta n}}(6\norm{E}+2\lambda_{r+1}) +\norm{E}.
\end{align*}
As a result,
\begin{align*}
\tilde\rho_{-i}\geq  \frac{\frac{3}{4} \br{\lambda_r - \lambda_{r+1}  -  \frac{4}{\sqrt{\beta n/k}}  \lambda_{r+1}} -2\norm{E}}{\frac{\sqrt{k\kr}}{\sqrt{\beta n}}(6\norm{E}+2\lambda_{r+1}) +\norm{E}} \geq \frac{\tilde\rho_0}{8}>2,
\end{align*}
under the assumption that $\beta n/(k^2)\geq 10$ and (\ref{eqn:perturbation_r}).

The remaining part of the proof is to study $\{\hat u_{-i,a}^T X_i\}_{a\in[\rr]}$ and then apply Theorem \ref{thm:general}. Following the exact argument as in the proof of Theorem \ref{thm:perturbation}, we have 
\begin{align*}
\sqrt{\sum_{a\in\rr} \br{\frac{\hat u_{-i,a}^TX_i}{\hat\lambda_{-i,a}}}^2} &\leq \frac{\sqrt{\rr}}{\sqrt{\frac{\beta n}{k}-1}} +  \frac{1}{\hat\lambda_{-i,\rr}}  \frac{\norm{E}\sqrt{\rr}}{\sqrt{\frac{\beta n}{k}-1}}  +\frac{1}{\hat\lambda_{-i,\rr}}\norm{\hat U_{-i,1:\rr}\hat U_{-i,1:\rr}^T\epsilon_i}.
\end{align*}
Under the assumption that $\beta n/(k^2)\geq 10$ and (\ref{eqn:perturbation_r}), (\ref{eqn:hat_lambda_i_r_r_plus_def_3}) is lower bounded by $\lambda_r/2$. This also implies $\hat\lambda_{-i,r}\geq \lambda_r/2$. Then a direct application of Theorem \ref{thm:general} leads to
\begin{align*}
\fnorm{{\hat U_{1:r} \hat U_{1:r}^T - \hat U_{-i,1:r}\hat U_{-i,1:r}^T}} &\leq \frac{4\sqrt{2}}{\tilde\rho_{-i}}\br{\frac{\sqrt{r}}{\sqrt{\beta n/k-1}}+\frac{1}{\hat \lambda_{-i,r}}\br{\frac{\sqrt{r}\norm{E}}{\sqrt{\beta n/k-1}}+\norm{\hat U_{-i,1:r}\hat U_{-i,1:r}^T\epsilon_i}}}\\
&\leq  \frac{128}{\tilde\rho_0}\br{\frac{\sqrt{kr}}{\sqrt{\beta n}} +\frac{\norm{\hat U_{-i,1:r}\hat U_{-i,1:r}^T\epsilon_i}}{\lambda_{r}}}.
\end{align*}

\section{Proofs of Results in Section \ref{sec:adaptive}}
\label{sec:appendix_1}

Before presenting the proof of Lemma \ref{lem:thresholding}, we first show $\hat r$ defined in (\ref{eqn:hat_r_def}) always exists. In addition, since $\hat r\in[k]$ is a random variable, we are going to associate it with some deterministic set in $[k]$.  Recall $\lambda_1\geq \lambda_2\geq \ldots\geq \lambda_{p\wedge n}$ are singular values of the signal matrix $P$ and $\kr$ is the its rank. Let its SVD be $P=\sum_{i\in[p\wedge n]}\lambda_i u_iv_i^T$ with $\{u_j\}_{j\in[p\wedge n]}\in\mathr^p$ being its left singular vectors.
\begin{lemma}\label{lem:hat_r_exist}
Under the same conditions as stated in  Lemma \ref{lem:thresholding}, $\hat r$ always exists. Furthermore, we have $\hat r\in \mathcal{R}$ where
 \begin{align}\label{eqn:proof_thresholding_1}
 \mathcal{R}:=\cbr{a\in[k]: \lambda_a -\lambda_{a+1} \geq  (\tilde \rho -2)\norm{E} \text{ and }\lambda_{a+1}\leq (k\tilde \rho+1)\norm{E}}.
 \end{align}
\end{lemma}
\begin{proof}
The existence of $\hat r$ can be proved by contradiction. If $\hat r$ does not exist, it means that $\{a\in[k]:\hat \lambda_a -\hat \lambda_{a+1}\geq T\}$ is empty, which implies $\hat\lambda_1 < \hat\lambda_{k+1} +k T =  \hat\lambda_{k+1} +k\tilde \rho \|E\|$. By Weyl's inequality, we have $|{\hat \lambda_a - \lambda_a}|\leq \norm{E}$ for all singular values of $X$ and $P$. Then  we have $ \lambda_1< (k\tilde \rho +1)\|E\| $. On the other hand,  we have 
\begin{align*}
\lambda_1^2 &= \max_{w\in\mathr^p:\|w\|=1}\norm{w^T P}^2 \geq \max_{a,b\in[k]:a\neq b}  \max_{w\in\mathr^p:\|w\|=1} \frac{\beta n}{k}\br{ \norm{w^T\theta^*_a}^2 + \norm{w^T\theta^*_b}^2}\\
& \geq  \max_{a,b\in[k]:a\neq b}  \max_{w\in\mathr^p:\|w\|=1}  \frac{\beta n}{2k} \norm{w^T\theta^*_a - w^T\theta^*_b}^2 = \frac{\beta n}{2k}\Delta^2,
\end{align*}
where the first inequality is due to  the mixture model structure in $P$ and the second inequality is due to $2(x_1+x_2)^2 \geq (x_1-x_2)^2$ for any two scalars $x_1,x_2$. Then we have $\lambda_1\geq \sqrt{\beta n/(2k)}\Delta =(\tilde\psi_0/\sqrt{2}) k^{1.5}\norm{E}$ by (\ref{eqn:thresholding_tilde_psi}). Since $\tilde \rho<\tilde \psi_0/64$ is assumed, we have   $(k\tilde \rho +1)\|E\|  < (\tilde\psi_0/\sqrt{2}) k^{1.5}\norm{E}$, which is a contradiction.

To prove the second statement, note that
we have $\hat \lambda_{\hat r} -\hat \lambda_{\hat r+1}\geq \tilde \rho \norm{E}$ and $\hat \lambda_{\hat r+1} \leq k\tilde \rho \norm{E}$.
 Since $|{\hat \lambda_a - \lambda_a}|\leq \norm{E}$ for all singular values of $X$ and $P$,  we have $\lambda_{\hat r} -\lambda_{\hat r + 1}\geq (\tilde \rho -2)\norm{E}$ and $\lambda_{\hat r+1}\leq (k\tilde \rho + 1)\norm{E}$. 
 Hence, $\hat r\in\mathcal{R}$.
\end{proof}

 \begin{proof}[Proof of Lemma \ref{lem:thresholding}]
 From Lemma \ref{lem:hat_r_exist}, we know $\hat r$ exists and $\hat r\in\mathcal{R}$.
Consider an arbitrary $r\in\mathcal{R}$ and define $\hat U_{1:r} := (\hat u_1,\ldots,\hat u_r)\in\mathr^{p\times r}$. %
Perform $k$-means on the columns of $\hat U_{1:r}\hat U_{1:r}^TX$ and let the output be
\begin{align*}
 \br{\check z(r),\cbr{\check \theta_j(r)}_{j=1}^k} = \argmin_{ z\in [k]^n , \cbr{ \theta_j}_{j=1}^{k} \in \mathr^{p}} \sum_{i\in[n]} \norm{\hat U_{1:r}\hat U_{1:r}^TX- \theta_{z_i}}^2.
\end{align*}
In the following, we are going to establish statistical properties for $\check z(r)$ and eventually obtain a desired upper bound for $\ell(\check z(r),z^*)$. Since performing $k$-means on the columns of $\hat U_{1:r}^TX$  is equivalent to $k$-means on the columns of $\hat U_{1:r}\hat U_{1:r}^TX$, and since $\hat r\in \mathcal{R}$, we have $\tilde z = \check z(\hat r)$ and thus  the desired upper bound also holds for $\ell(\tilde z,z^*)$.

In the rest of the proof we are going to analyze $\check z(r)$ for any $r\in\mathcal{R}$. For simplicity, we use the notation $\check z, \{\check \theta_j\}_{j\in[n]}$ instead of $\check z(r), \{\check \theta_j(r)\}_{j\in[n]}$. The remaining proof can be decomposed into several parts.

~\\
\emph{(Preliminary Results for $\check z, \{\check \theta_j\}_{j\in[n]}$).} We are going to use Proposition \ref{prop:poly} to have some preliminary results. Define $U_{1:r}:=(u_1,\ldots,u_r)$ and $U_{(r+1):k}:=(u_{r+1},\ldots, u_k)$. Instead of the decomposition (\ref{eqn:mixture_model}), we can write
\begin{align*}
X_i = U_{1:r}U_{1:r}^T \theta^*_{z^*_i} + U_{(r+1):k}U_{(r+1):k}^T\theta^*_{z^*_i}  + \epsilon_i =U_{1:r}U_{1:r}^T \theta^*_{z^*_i} + \check \epsilon_i,
\end{align*}
where $\check \epsilon_i := U_{(r+1):k}U_{(r+1):k}^T\theta^*_{z^*_i}  + \epsilon_i $. In this way, we have a new mixture model with the centers being $\{U_{1:r}U_{1:r}^T\theta^*_a\}_{a\in[k]}$ and the additive noises being $\{\check\epsilon_i\}$.
Define $\check E:= (\check \epsilon_1,\ldots,\check \epsilon_n )$. Then
\begin{align}
\nonumber\norm{\check E} &\leq \norm{E} + \norm{\br{U_{(r+1):k}U_{(r+1):k}^T\theta^*_{z^*_1}, \ldots, U_{(r+1):k}U_{(r+1):k}^T\theta^*_{z^*_n}}} \\
\nonumber& = \norm{E}  + \norm{U_{(r+1):k}U_{(r+1):k}^T P} = \norm{E} + \lambda_{r+1} \\
&\leq  (k\tilde \rho +2)\norm{E}.\label{eqn:proof_thresholding_4}
\end{align}

The separation among the new centers is no longer $\Delta$. Define $$\check\Delta := \min_{a,b\in[k]:a\neq b}\norm{U_{1:r}U_{1:r}^T\theta^*_a - U_{1:r}U_{1:r}^T\theta^*_b}.$$ For any $a,b\in[k]$, $U_{1:r}U_{1:r}^T\theta^*_a - U_{1:r}U_{1:r}^T\theta^*_b = (\theta^*_a -\theta^*_b) - U_{(r+1):k}U_{(r+1):k}^T \theta^*_a + U_{(r+1):k}U_{(r+1):k}^T \theta^*_b$. Also,
\begin{align}
\nonumber \max_{a\in[k]}\norm{U_{(r+1):k}U_{(r+1):k}^T \theta^*_a} &= \max_{a\in[k]}  \sqrt{\frac{\sum_{i\in[n]:z_i^*=a}\norm{U_{(r+1):k}U_{(r+1):k}^T \theta^*_a}^2}{\abs{\cbr{i\in[n]:z_i^*=a}}}} \leq  \frac{\fnorm{U_{(r+1):k}U_{(r+1):k}^T P}}{\sqrt{\beta n/k}}  \\
&\leq \frac{2\sqrt{k}\lambda_{r+1}}{\sqrt{\beta n/k}}\leq \frac{\sqrt{k}(k\tilde \rho +1)\norm{E}}{\sqrt{\beta n/k}}\label{eqn:proof_thresholding_3}.
\end{align}
Hence, we have 
\begin{align}\label{eqn:thresholding_check_Delta}
\check\Delta &\geq \min_{a,b\in[k]:a\neq b}\norm{\theta^*_a - \theta^*_b} - 2\max_{a\in[k]}\norm{U_{(r+1):k}U_{(r+1):k}^T \theta^*_a} 
 \geq   \Delta - \frac{2\sqrt{k}(k\tilde \rho +1)\norm{E}}{\sqrt{\beta n/k}}.
\end{align}
 Then from Proposition \ref{prop:poly}, as long as (which will be verified later)
 \begin{align}\label{eqn:proof_thresholding_2}
\check \psi_0:= \frac{\check\Delta}{\beta^{-0.5}kn^{-0.5} \norm{\check E}} \geq 16,
 \end{align}
 we have
 \begin{align*}
 \ell(\check z,z^*) = \frac{1}{n}\abs{i\in[n]:\check z_i\neq \phi(z^*_i)} \leq \frac{C_0k\norm{\check E}^2}{n\check\Delta^2} ,
 \end{align*}
 and
 \begin{align*}
 \max_{a\in[k]}\norm{\check \theta_{\phi(z)} - U_{1:r}U_{1:r}^T\theta^*_a}  \leq C_0\beta^{-0.5}kn^{-0.5}\norm{\check E} .
 \end{align*}
 where $C_0=128$.
 
 ~\\
\emph{(Entrywise Decomposition for $\check z$).} Next, we are going to have an entrywise decomposition for $\indic{\hat z_i \neq \phi(z^*_i)}$ that is analogous to that of Lemma \ref{lem:decomposition}. When (\ref{eqn:proof_thresholding_2}) is satisfied, from Lemma \ref{lem:decomposition_simple}, we have
\begin{align*}
\indic{\check z_i \neq \phi(z^*_i)} \leq \indic{\br{1-C_0\check\psi_0^{-1}}\check\Delta \leq 2\norm{\hat U_{1:r}\hat U_{1:r}^T\check\epsilon_i}}.
\end{align*}
By the definition of $\check\epsilon_i$ and (\ref{eqn:proof_thresholding_3}), we have
\begin{align*}
\norm{\hat U_{1:r}\hat U_{1:r}^T\check\epsilon_i} &\leq \norm{\hat U_{1:r}\hat U_{1:r}^T\epsilon_i} + \norm{\hat U_{1:r}\hat U_{1:r}^TU_{(r+1):k}U_{(r+1):k}^T\theta^*_{z^*_i}}  \\
&\leq  \norm{\hat U_{1:r}\hat U_{1:r}^T\epsilon_i} + \norm{U_{(r+1):k}U_{(r+1):k}^T\theta^*_{z^*_i}} \\
&\leq   \norm{\hat U_{1:r}\hat U_{1:r}^T\epsilon_i} + \frac{\sqrt{k}(k\tilde \rho +1)\norm{E}}{\sqrt{\beta n/k}}.
\end{align*}
Then, we have
\begin{align*}
\indic{\check z_i \neq \phi(z^*_i)} &\leq \indic{\br{1-C_0\check\psi_0^{-1}}\check\Delta \leq 2\br{ \norm{\hat U_{1:r}\hat U_{1:r}^T\epsilon_i} + \frac{\sqrt{k}(k\tilde \rho +1)\norm{E}}{\sqrt{\beta n/k}}}}\\
& =  \indic{\br{1-C_0\check\psi_0^{-1} -  \frac{2\sqrt{k}(k\tilde \rho +1)\norm{E}}{\sqrt{\beta n/k}\check\Delta}}\check\Delta \leq 2 \norm{\hat U_{1:r}\hat U_{1:r}^T\epsilon_i}}.
\end{align*}

From (\ref{eqn:proof_thresholding_1}), under the assumption that $\tilde \rho >4$ and $\beta n/k^4>400$, we have $\tilde\rho_0$ defined as in (\ref{eqn:perturbation_r}) to satisfy
\begin{align*}
\tilde\rho_0\geq \frac{(\tilde \rho -1)\norm{E}}{\max\cbr{\norm{E}, \sqrt{\frac{k^2}{\beta n}}(k\tilde \rho +1)\norm{E} }}\geq 2.
\end{align*}
Then  Theorem \ref{thm:perturbation_r} can be applied, with which we have
\begin{align*}
\fnorm{{\hat U_{1:r} \hat U_{1:r}^T - \hat U_{-i,1:r}\hat U_{-i,1:r}^T}}  \leq  \frac{256\sqrt{rk}}{\sqrt{n \beta }} +\frac{256\norm{\hat U_{-i,1:r}\hat U_{-i,1:r}^T\epsilon_i}}{\lambda_{r}}.
\end{align*}
 Then following the proof of Lemma \ref{lem:decomposition}, we have 
\begin{align*}
&\indic{\check z_i \neq \phi(z^*_i)} \\
&\leq \indic{\br{1-C_0\check\psi_0^{-1}-\frac{2\sqrt{k}(k\tilde \rho +1)\norm{E}}{\sqrt{\beta n/k}\check\Delta}}\check\Delta \leq 2\br{\norm{\hat U_{-i,1:r}\hat U_{-i,1:r}^T\epsilon_i} + \fnorm{{\hat U_{1:r} \hat U_{1:r}^T - \hat U_{-i,1:r}\hat U_{-i,1:r}^T}}   \norm{E}} } \\
&\leq \indic{\br{1-C_0\check\psi_0^{-1}-\frac{2\sqrt{k}(k\tilde \rho +1)\norm{E}}{\sqrt{\beta n/k}\check\Delta}}\check\Delta \leq 2\br{ \frac{256\sqrt{rk}\norm{E}}{\sqrt{n \beta }} + \br{1+\frac{256\norm{E}}{\lambda_r}}\norm{\hat U_{-i,1:r}\hat U_{-i,1:r}^T\epsilon_i} }} \\
&\leq  \indic{\br{1-C_0\check\psi_0^{-1}-\frac{2\sqrt{k}(k\tilde \rho +257)\norm{E}}{\sqrt{\beta n/k}\check\Delta}}\check\Delta \leq 2 \br{1+\frac{256\norm{E}}{\lambda_r}}\norm{\hat U_{-i,1:r}\hat U_{-i,1:r}^T\epsilon_i} } \\
&\leq  \indic{\br{1-C_0\check\psi_0^{-1}-\frac{2\sqrt{k}(k\tilde \rho +257)\norm{E}}{\sqrt{\beta n/k}\check\Delta}}\check\Delta \leq 2 \br{1+\frac{256}{\tilde \rho -2}}\norm{\hat U_{-i,1:r}\hat U_{-i,1:r}^T\epsilon_i} },
\end{align*}
where in the last inequality we use $\lambda_r \geq (\tilde \rho -2) \norm{E}>0$ (as long as $\tilde \rho>2$) from (\ref{eqn:proof_thresholding_1}). 

The last step of the proof is to simplify the above display using $\Delta$ instead of $\check\Delta$.
Then, under the assumption that $\tilde \rho > 256$, we have $(1+256/(\tilde\rho-2))^{-1}\leq (1-512/\tilde \rho)$. Recall the definition of $\tilde\psi_0$ in (\ref{eqn:thresholding_tilde_psi}). Under the assumption that %
$\tilde \rho \leq \tilde\psi_0/64$, we have 
\begin{align}\label{eqn:proof_thresholding_5}
\check\Delta\geq \Delta\br{1-\frac{4\beta^{-0.5}k^2n^{-0.5}\tilde \rho\norm{E}}{\Delta}} = \Delta\br{1-\frac{4\tilde \rho}{\tilde\psi_0}} \geq \frac{\Delta}{2},
\end{align}
according to (\ref{eqn:thresholding_check_Delta}). Then together with (\ref{eqn:proof_thresholding_4}), we can verify (\ref{eqn:proof_thresholding_2}) holds due to
\begin{align*}
\check\psi_0 \geq \frac{\Delta/2}{\beta^{-0.5}kn^{-0.5}(k\tilde \rho + 2)\norm{E}} \geq \frac{\Delta}{4\beta^{-0.5}k^2n^{-0.5}\tilde\rho\norm{E}} =\frac{\tilde\psi_0}{4\tilde \rho} \geq16.
\end{align*}
Rearranging all the terms with the help of (\ref{eqn:proof_thresholding_5}), we can simplify $\indic{\check z_i \neq \phi(z^*_i)}$ into
\begin{align*}
&\indic{\check z_i \neq \phi(z^*_i)} \\
&\leq \indic{\br{1-4C_0\tilde \rho \tilde\psi_0 -\frac{4\beta^{-0.5}k^{2}n^{-0.5}\tilde\rho \norm{E}}{\Delta/2}}\br{1-\frac{256}{\tilde\rho}}\br{1-\frac{4\tilde\rho}{\tilde\psi_0}}\Delta\leq 2 \norm{\hat U_{-i,1:r}\hat U_{-i,1:r}^T\epsilon_i}}\\
&\leq \indic{\br{1-5C_0 \tilde\rho\tilde \psi_0^{-1} -256\tilde\rho^{-1}}\Delta\leq 2 \norm{\hat U_{-i,1:r}\hat U_{-i,1:r}^T\epsilon_i}}.
\end{align*}
\end{proof}

\begin{proof}[Proof of Theorem \ref{thm:GMM_2}]
Recall the definition of $\mathcal{F}$ in (\ref{eqn:gmm_f}). Then if $\mathcal{F}$ holds, by appropriate choices of $C_1,C_2$, we can verify the assumptions needed in Lemma \ref{lem:thresholding} hold, which lead to
\begin{align*}
\indic{\tilde z_i\neq \phi(z^*_i)}\indic{\mathcal{F}} \leq \indic{\br{1-C''(\rho_2\psi_2^{-1}+\rho_2^{-1})}\Delta \leq 2 \norm{\hat U_{-i,1:\hat r}\hat U_{-i,1:\hat r}^T\epsilon_i}}\indic{\mathcal{F}},
\end{align*}
for some constant $C''>0$. Though $\hat r$ is random, the proof of Lemma \ref{lem:thresholding} shows that $\hat r\in\mathcal{R}\subset [k]$ where $\mathcal{R}$ is defined in (\ref{eqn:proof_thresholding_1}). Note that for any $r\in[k]$, we can follow the proof of Theorem \ref{thm:subg} to show
\begin{align*}
\E\indic{\br{1-C''(\rho_2\psi_2^{-1}+\rho_2^{-1})}\Delta \leq 2 \norm{\hat U_{-i,1: r}\hat U_{-i,1: r}^T\epsilon_i}}\leq \ebr{-(1-C'''(\rho_2\psi_2^{-1}+\rho_2^{-1}))\frac{\Delta^2}{8\sigma^2}},
\end{align*}
for some constant $C'''>0$. Hence, the same upper bound holds for $\E \mathbb{I}\{(1-C''(\rho_2\psi_2^{-1}+\rho_2^{-1}))\Delta \leq 2 \normop{\hat U_{-i,1:\hat r}\hat U_{-i,1:\hat r}^T\epsilon_i}\}$.
The rest of the proof follows that of Theorem \ref{thm:subg} and is omitted here.
\end{proof}

\section{Proof of Theorem \ref{thm:GMM}}\label{sec:appendix_2}

Define $\mathf = \cbr{\norm{E}\leq \sqrt{2}(\sqrt{n} + \sqrt{p})\sigma}$. Then by Lemma B.1 of \cite{loffler2019optimality}, we have $\pbr{\mathf}\geq 1-e^{-0.08n}$. Then under the event $\mathf$, the assumption (\ref{eqn:Delta_GMM}) implies (\ref{eqn:Delta_poly}) holds, and hence (\ref{eqn:l_poly}) and (\ref{eqn:hat_theta_difference}) hold. For simplicity, and without loss of generality, we can let $\phi$ in (\ref{eqn:l_poly})-(\ref{eqn:hat_theta_difference}) to be the identity, and we get
\begin{align*}
 \ell(\hat z,z^*) =\frac{1}{n}|\{i\in[n]:\hat z_i \neq z^*_i\}| \leq \frac{C_0k\br{1+\sqrt{\frac{p}{n}}}^2\sigma^2}{\Delta^2},
\end{align*}
and
\begin{align*}
\max_{a\in[k]} \norm{\hat\theta_{a} - \theta_a^*}\leq C_0\beta^{-0.5}k\br{1+\sqrt{\frac{p}{n}}}\sigma,
\end{align*}
where $C_0>0$ is some constant.

Denote $\hat P = \hat U_{1:k} \hat U_{1:k}^TX$ and let $\hat P_{\cdot,i}$ be its $i$th column so that $\hat P_{\cdot,i} = \hat U_{1:k}\hat U_{1:k}^TX_i$. We define $r\in[k]$ as (with $\lambda_{k+1}:=0$)
\begin{align}
r = \max\cbr{ j \in [k]: \lambda_j  - \lambda_{j+1} \geq \tau \sqrt{n+p}\sigma},\label{eqn:r_def}
\end{align}
for a sequence  $\tau \rightarrow \infty $ to be determined later.  
We note that if  $\Delta / (k^\frac{3}{2}\tau \beta^\frac{1}{2} \br{1+p/n}^\frac{1}{2}\sigma)\rightarrow\infty$, 
the set $\cbr{ j \in [k]: \lambda_j  - \lambda_{j+1} \geq \tau \sqrt{n+p}\sigma}$ is not empty. Otherwise, this would imply $\lambda_1 \leq k\tau \sqrt{n+p}\sigma$ which would contradict with the fact $\lambda_1\geq \sqrt{\beta n/k} \Delta/(2\sigma)$ (see Proposition A.1 of \cite{loffler2019optimality}). By the definition of $r$ in (\ref{eqn:r_def}), we immediately have
\begin{align}\label{eqn:proof_gmm_4}
&\lambda_r - \lambda_{r+1}\geq \tau\sqrt{n+p}\sigma,\\
\text{and }&\lambda_{r+1}\leq k\tau \sqrt{n+p}\sigma.\label{eqn:proof_gmm_5}
\end{align}

We split $\hat U_{1:k}$ into $(\hat U_{1:r} ,\hat U_{(r+1):k})$ where $\hat U_{1:r}:=\br{\hat u_1,\ldots, \hat u_{r}}$ and $\hat U_{(r+1):k}:=(\hat u_{r+1},\ldots, \hat u_k)$.  We decompose $\hat P_\cd{i} = \hat P_\cd{i}^\tpo + \hat P_\cd{i}^\tpt$, where $\hat P_\cd{i}^\tpo := {\hat U_{1:r}\hat U_{1:r}^T}\hat P_\cd{i} $ and $\hat P_\cd{i}^\tpt  := {\hat U_{\br{r+1}:k}\hat U_{\br{r+1}:k}^T}\hat P_\cd{i}$.  Similarly, for each $a\in[k]$, we decompose $\hat \theta_a = \hat \theta_a^\tpo + \hat \theta_a^\tpt$, where $\hat \theta_a^\tpo :=   {\hat U_{1:r}\hat U_{1:r}^T}\hat \theta_a$ and $\hat \theta_a^\tpt := {\hat U_{\br{r+1}:k}\hat U_{\br{r+1}:k}^T}\hat \theta_a$. Due to the orthogonality of $\cbr{\hat u_l}_{l\in[k]}$, we obtain that for any $i\in[n]$ and any $a\in[k]$ such that $a\neq z^*_i$,
\begin{align*}
\indic{\hat z_i =a}  &\leq  \indic{\norm{\hat P_\cd{i}^\tpo + \hat P_\cd{i}^\tpt - \hat \theta_a^\tpo -  \hat \theta_a^\tpt}^2 \leq \norm{\hat P_\cd{i}^\tpo + \hat P_\cd{i}^\tpt - \hat \theta_{z^*_i}^\tpo - \hat \theta_{z^*_i}^\tpt }^2}  \\
& = \indic{ 2\iprod{\hat P_\cd{i}^\tpo - \hat \theta^\tpo_{z^*_i}}{\hat \theta^\tpo_{z^*_i} - \hat \theta^\tpo_{a}} + \norm{\hat \theta^\tpo_{z^*_i} - \hat \theta^\tpo_{a}}^2 \leq  2\iprod{\hat P_\cd{i}^\tpt }{ \hat \theta_a^\tpt - \hat \theta^\tpt_{z^*_i}}  - \norm{\hat \theta_a^\tpt}^2 + \norm{\hat \theta^\tpt_{z^*_i}}^2}
\end{align*}

We denote  $\tau'' = o(1)$ to be another sequence which we will specify later.  Then the above display can be decomposed and upper bounded  by
\begin{align*}
\indic{\hat z_i =a}   \leq  &   \indic{  \norm{\hat \theta^\tpo_{z^*_i} - \hat \theta^\tpo_{a}} -\frac{\tau''\Delta^2  + \norm{\hat \theta^\tpt_{z^*_i}}^2}{\norm{\hat \theta^\tpo_{z^*_i} - \hat \theta^\tpo_{a}}} \leq  2\norm{\hat P_\cd{i}^\tpo - \hat \theta^\tpo_{z^*_i}} } \\ &  + \indic{\tau''\Delta^2 \leq 2\iprod{\hat P_\cd{i}^\tpt }{ \hat \theta_a^\tpt - \hat \theta^\tpt_{z^*_i}}  }  =:A_{i,a}+B_{i,a}.
\end{align*}
Then
\begin{align}
\nonumber\E \ell(\hat z,z^*) &\leq \frac{1}{n}\sum_{i\in[n]}\sum_{a\in[k]:a\neq z^*_i} \E \indic{\hat z_i =a} \\
&\leq \pbr{\mathf^\complement} +  \frac{1}{n}\sum_{i\in[n]}\sum_{a\in[k]:a\neq z^*_i} \E A_{i,a} \indic{\mathf} +  \frac{1}{n}\sum_{i\in[n]}\sum_{a\in[k]:a\neq z^*_i} \E B_{i,a} \indic{\mathf}.\label{eqn:proof_gmm_6}
\end{align}
We are going to establish upper bounds first for  $n^{-1}\sum_{i\in[n]}\sum_{a\in[k]:a\neq z^*_i} \E B_{i,a} \indic{\mathf} $ and then for $n^{-1}\sum_{i\in[n]}\sum_{a\in[k]:a\neq z^*_i} \E A_{i,a} \indic{\mathf} $.

~\\
\emph{(Analysis on $n^{-1}\sum_{i\in[n]}\sum_{a\neq z^*_i} \E B_{i,a} \indic{\mathf} $).} For  $\sum_{i\in[n]}\sum_{a\neq z^*_i} \E B_{i,a} \indic{\mathf} $, we can directly use upper bounds established in Section 4.4.3 of \cite{loffler2019optimality}\footnote{The model in \cite{loffler2019optimality} assumes $\{\epsilon_j\}\iid\mathn(0,I)$ while in this paper we assume $\{\epsilon_j\}\iid\mathn(0,\sigma^2I)$. To directly use results from \cite{loffler2019optimality}, we can re-scale our data to have $X'_j = X_j/\sigma$ for all $j\in[n]$. Then $\{X'_j\}$ has $\mathn(0,I)$ noise and the separation between their centers becomes $\Delta/\sigma$. Then all the results from \cite{loffler2019optimality} can be used here with $\Delta$ replaced by $\Delta/\sigma$.}. It proves that for any $i\in[n]$, 
\begin{align*}
\sum_{a\in[k]:a\neq z^*_i} B_{i,a}\indic{\mathf\cap \mathcal{T}}\leq 2 \ebr{-\frac{1}{2} \left ( c_4 \frac{\tau''\Delta}{k^\frac{7}{2}\tau^2 \beta^{-\frac{1}{2}} (1+\frac{p}{n})\sigma}  \sqrt{\frac{n-k}{3n}} \right )^2 \frac{\Delta^2}{\sigma^2}} ,
\end{align*}
where $c_4>0$ is some constant, and $\mathcal{T}$ is some high-probability event in the sense that
\begin{align*}
\pbr{\mathcal{T}}\geq 1-nk\exp \left ( - \frac{(n-k)}{9}\right ).
\end{align*}
Hence,
\begin{align*}
\frac{1}{n}\sum_{i\in[n]}\sum_{a\in[k]:a\neq z^*_i} \E B_{i,a} \indic{\mathf} &\leq \frac{1}{n}\sum_{i\in[n]}\sum_{a\in[k]:a\neq z^*_i} \E B_{i,a} \indic{\mathf\cap\mathcal{T}} + \pbr{\mathcal{T}^\complement}\\
 &\leq 2 \ebr{-\frac{1}{2} \left ( c_4 \frac{\tau''\Delta}{k^\frac{7}{2}\tau^2 \beta^{-\frac{1}{2}} (1+\frac{p}{n})\sigma}  \sqrt{\frac{n-k}{3n}} \right )^2 \frac{\Delta^2}{\sigma^2}}  + nk\exp \left ( - \frac{(n-k)}{9}\right ).
\end{align*}

~\\
\emph{(Analysis on $n^{-1}\sum_{i\in[n]}\sum_{a\neq z^*_i} \E A_{i,a} \indic{\mathf} $).} We first follow some algebra as in Section 4.4.2 of \cite{loffler2019optimality} to simplify $ A_{i,a}\indic{\mathcal{F}}$. For any $i\in[n]$ and $a\neq z^*_i$, it proves
\begin{align}\label{eqn:proof_gmm_1}
 &  A_{i,a}\indic{\mathcal{F}}  \leq \indic{\br{1-c_1 \tau'' - \frac{c_1k^2\tau\beta^{-\frac{1}{2}} \sqrt{1+\frac{p}{n}} \sigma}{\Delta}} \Delta \leq 2\norm{\hat P_\cd{i}^\tpo - \hat \theta^\tpo_{z^*_i}} }\indic{\mathcal{F}},
 \end{align} 
 for some constant $c_1>0$. Still working on the event $\mathcal{F}$, it also proves
 \begin{align}\label{eqn:proof_gmm_2}
 \norm{\hat P_\cd{i}^\tpo - \hat \theta^\tpo_{z^*_i}} \leq  \norm{\hat P_\cd{i}^\tpo - \hat U_{1:r}\hat U_{1:r}^T \theta^*_{z^*_i}}  + 8\sqrt{2} \sqrt{\beta^{-1} k^2 \br{1+\frac{p}{n}}}\sigma.
 \end{align}
 
 Our following analysis on $A_{i,a}\indic{\mathcal{F}}$ is different from the rest proof in Section 4.4.2 of \cite{loffler2019optimality}. Note that $\hat P_\cd{i}^\tpo - \hat U_{1:r}\hat U_{1:r}^T \theta^*_{z^*_i} 
 = \hat U_{1:r}\hat U_{1:r}^TX_i -  \hat U_{1:r}\hat U_{1:r}^T \theta^*_{z^*_i} =\hat U_{1:r}\hat U_{1:r}^T \epsilon_i  $. Then (\ref{eqn:proof_gmm_1}) and (\ref{eqn:proof_gmm_2}) give
 \begin{align}\label{eqn:proof_gmm_3}
 A_{i,a}\indic{\mathcal{F}}  \leq \indic{\br{1-c_2 \tau'' - \frac{c_2k^2\tau\beta^{-\frac{1}{2}} \br{1+\sqrt{\frac{p}{n}}} \sigma}{\Delta}} \Delta \leq 2\norm{\hat U_{1:r}\hat U_{1:r}^T \epsilon_i } }\indic{\mathcal{F}},
 \end{align}
 where we use $\tau\rightarrow\infty$ and the fact that $1+\sqrt{p/n},\sqrt{1+p/n}$ are of the same order. 

 Recall the definition of $X_{-i}$ in (\ref{eqn:X_minus_i}) and  $\hat U_{-i,1:r}\hat U_{-i,1:r}^T$ is the leave-one-out counterpart of $\hat U_{1:r}\hat U_{1:r}^T$.
 For (\ref{eqn:proof_gmm_3}), we can decompose $\normop{\hat U_{1:r}\hat U_{1:r}^T \epsilon_i } $ into
\begin{align*}
\norm{\hat U_{1:r}\hat U_{1:r}^T \epsilon_i } \leq \norm{\hat U_{-i,1:r}\hat U_{-i,1:r}^T \epsilon_i }  + \fnorm{\hat U_{1:r}\hat U_{1:r}^T-\hat U_{-i,1:r}\hat U_{-i,1:r}^T}\norm{\epsilon_i}.
\end{align*}
To upper bound $\normf{\hat U_{1:r}\hat U_{1:r}^T-\hat U_{-i,1:r}\hat U_{-i,1:r}^T}$, we are going to use Theorem \ref{thm:perturbation_r}. Since (\ref{eqn:proof_gmm_4})-(\ref{eqn:proof_gmm_5}) hold, under the assumption $\beta n/k^4\geq 100$, we have
\begin{align*}
\frac{\lambda_{r}-\lambda_{r+1}}{\max\cbr{\norm{E},\sqrt{\frac{k^2}{n\beta}}\lambda_{r+1}}}\geq \frac{\tau}{2}.
\end{align*}
Applying Theorem \ref{thm:perturbation_r}, we have
\begin{align*}
\fnorm{\hat U_{1:r}\hat U_{1:r}^T-\hat U_{-i,1:r}\hat U_{-i,1:r}^T} &\leq  \frac{256\sqrt{rk}}{\sqrt{n\beta}} + \frac{256\norm{\hat U_{-i,1:r}\hat U_{-i.1:r}^T\epsilon_i}}{\lambda_r}.
\end{align*}
Hence,
\begin{align*}
\norm{\hat U_{1:r}\hat U_{1:r}^T \epsilon_i } &\leq \norm{\hat U_{-i,1:r}\hat U_{-i,1:r}^T \epsilon_i }  + \br{\frac{256\sqrt{rk}}{\sqrt{n\beta}} + \frac{256\norm{\hat U_{-i,1:r}\hat U_{-i.1:r}^T\epsilon_i}}{\lambda_r}}\norm{E} \\
&= \frac{256 k\norm{E}}{\sqrt{n\beta }} + \br{1+ \frac{256\norm{E}}{\lambda_r}}\norm{\hat U_{-i,1:r}\hat U_{-i.1:r}^T\epsilon_i} \\
&\leq \frac{256\sqrt{2} k(\sqrt{n}+\sqrt{p})\sigma}{\sqrt{n\beta }}+ \br{1+ \frac{256\sqrt{2}(\sqrt{n}+\sqrt{p})\sigma}{\tau \sqrt{n+p}\sigma}}\norm{\hat U_{-i,1:r}\hat U_{-i.1:r}^T\epsilon_i} \\
&\leq 512  k\beta^{-0.5}\br{1+\sqrt{\frac{p}{n}}}\sigma + \br{1+512\tau^{-1}}\norm{\hat U_{-i,1:r}\hat U_{-i.1:r}^T\epsilon_i},
\end{align*}
where in the second to the last inequality, we use (\ref{eqn:proof_gmm_4}) for $\lambda_r$ and the event $\mathf$ for $\norm{E}$. Then (\ref{eqn:proof_gmm_3}) leads to
\begin{align*}
 A_{i,a}\indic{\mathcal{F}}  &\leq \indic{\br{1-c_3 \tau'' - \frac{c_3k^2\tau\beta^{-\frac{1}{2}} \br{1+\sqrt{\frac{p}{n}}} \sigma}{\Delta}} \Delta \leq 2\br{1+512\tau^{-1}}\norm{\hat U_{-i,1:r}\hat U_{-i.1:r}^T\epsilon_i}}\indic{\mathcal{F}}\\
 &\leq \indic{\br{1-c_4\br{\frac{k^2\tau\beta^{-\frac{1}{2}} \br{1+\sqrt{\frac{p}{n}}} \sigma}{\Delta} + \tau^{-1}} }\Delta \leq 2\norm{\hat U_{-i,1:r}\hat U_{-i,1:r}^T\epsilon_i}},
\end{align*}
where $c_3,c_4>0$ are some constants. As long as $1-c_4(k^2\tau\beta^{-0.5}(1+\sqrt{p/n})\sigma/\Delta + \tau^{-1})>1/2$, we can use Lemma \ref{lem:sub_gaussian_projection} to calculate the tail probability of $\normop{\hat U_{-i,1:r}\hat U_{-i,1:r}^T\epsilon_i}$. Following the proof of Theorem \ref{thm:subg}, we have 
\begin{align*}
\E  A_{i,a}\indic{\mathcal{F}} \leq \ebr{-\br{1-c_5\br{\frac{k^2\tau\beta^{-\frac{1}{2}} \br{1+\sqrt{\frac{p}{n}}} \sigma}{\Delta} + \tau^{-1}}}\frac{\Delta^2}{8\sigma^2}},
\end{align*}
for some constant $c_5>0$.
Then we have,
\begin{align*}
n^{-1}\sum_{i\in[n]}\sum_{a\in[k]:a\neq z^*_i} \E A_{i,a} \indic{\mathf} &\leq k \ebr{-\br{1-c_5\br{\frac{k^2\tau\beta^{-\frac{1}{2}} \br{1+\sqrt{\frac{p}{n}}} \sigma}{\Delta} + \tau^{-1}}}\frac{\Delta^2}{8\sigma^2}}.
\end{align*}

~\\
\emph{(Obtaining the Final Result.)} From (\ref{eqn:proof_gmm_6}) and the  above upper bounds  on  $n^{-1}\sum_{i\in[n]}\sum_{a\in[k]:a\neq z^*_i} \E B_{i,a} \indic{\mathf} $  and $n^{-1}\sum_{i\in[n]}\sum_{a\in[k]:a\neq z^*_i} \E A_{i,a} \indic{\mathf} $, we have
\begin{align*}
\E \ell(\hat z,z^*)&\leq e^{-0.08n}+2 \ebr{-\frac{1}{2} \left ( c_4 \frac{\tau''\Delta}{k^\frac{7}{2}\tau^2 \beta^{-\frac{1}{2}} (1+\frac{p}{n})\sigma}  \sqrt{\frac{n-k}{3n}} \right )^2 \frac{\Delta^2}{\sigma^2}}  + nk\exp \left ( - \frac{(n-k)}{9}\right ) \\
&\quad + k \ebr{-\br{1-c_5\br{\frac{k^2\tau\beta^{-\frac{1}{2}} \br{1+\sqrt{\frac{p}{n}}} \sigma}{\Delta} + \tau^{-1}}}\frac{\Delta^2}{8\sigma^2}}.
\end{align*}
Since we assume $\beta n/k^4\geq 100$, we have $(n-k)/n>0.99$. Hence, under the assumption that $\Delta/(k^{3.5}\beta^{-0.5}(1+\frac{p}{n})\sigma) \rightarrow\infty$, we can take $\tau,\tau''$ to be
\begin{align*}
\tau = \tau''^{-1} := \br{\frac{\Delta}{k^{3.5}\beta^{-0.5}\br{1+\frac{p}{n}}\sigma}}^{0.25}
\end{align*}
such that $\tau\rightarrow\infty$ and $\tau''=o(1)$. Then for some constant $c_6>0$, we have
\begin{align*}
\E \ell(\hat z,z^*) &\leq e^{-0.08n} + 2 \ebr{-\frac{c_4^2}{12}  \br{\frac{\Delta}{k^{3.5}\beta^{-0.5}\br{1+\frac{p}{n}}\sigma}}^{0.5}\frac{\Delta^2}{\sigma^2}} + nke^{-0.1n}\\
&\quad + k \ebr{-\br{1-2c_5\br{\frac{\Delta}{k^{3.5}\beta^{-0.5}\br{1+\frac{p}{n}}\sigma}}^{-0.25}}\frac{\Delta^2}{8\sigma^2}}\\
&\leq \ebr{-\br{1-c_6\br{\frac{\Delta}{k^{3.5}\beta^{-0.5}\br{1+\frac{p}{n}}\sigma}}^{-0.25}}\frac{\Delta^2}{8\sigma^2}} + 2e^{-0.08n}.
\end{align*}

\section{Proofs of Results in Section \ref{sec:lower}}\label{sec:proof_lower}
\subsection{Proof of Theorem \ref{thm:lower_bound_spectral}}

The proof of Theorem \ref{thm:lower_bound_spectral} relies on the following entrywise decomposition that is analogous to Lemma \ref{lem:decomposition} but in an opposite direction. Note the the singular vectors $\hat u_1$, and $\{\hat u_{1,-i}\}_{i\in[n]}$ are all identifiable up to sign. Without loss of generality, we assume $\iprod{\hat u_1}{u_1}\geq 0$ and $\iprod{\hat u_{1,-i}}{u_1}\geq 0$ for all $i\in[n]$.

\begin{lemma}\label{lem:lower_bound_enrywise}
Consider the model (\ref{eqn:simple}). 
Let $\phi\in\Phi$ be the permutation such that $\ell(\check z,z^*)=\frac{1}{n}|\{i\in[n]:\check z_i \neq \phi(z^*_i)\}| $. Then there exists a constants $C,C_1>0$ such that if
\begin{align}\label{eqn:lower_bound_entrywise_condition}
\frac{\Delta}{\beta^{-0.5} n^{-0.5}\norm{E}} \geq C,
\end{align}
then for any $i\in[n]$,
\begin{align}\label{eqn:decomposition_lower}
\indic{\check z_i \neq \phi(z_i^*)}  \geq \indic{\br{1+\frac{C_1\beta^{-0.5} n^{-0.5}\norm{E}}{\Delta}}\Delta \leq -2(\hat u_{1,-i}^T\epsilon_i ) \text{sign}( u_1^T  \theta_{ \phi(z^*_i) }  )  }.
\end{align}
\end{lemma}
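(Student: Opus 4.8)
The plan is to mirror the argument of Lemma \ref{lem:decomposition_simple} and Lemma \ref{lem:decomposition} but track \emph{lower} bounds for $\indic{\check z_i \neq \phi(z_i^*)}$ rather than upper bounds, exploiting the very rigid structure of the two-cluster symmetric model (\ref{eqn:simple}): $k=2$, $\kr=1$, $u_1 = p^{-1/2}\one_p$, and the two centers $\theta^*_1 = -\theta^*_2 = \delta\one_p$ are collinear with $u_1$. The key simplification is that since $r=1$ and the signal is rank-one, $\hat U_{1:1}\hat U_{1:1}^T X_i = \hat u_1 \hat u_1^T X_i$, and $k$-means on scalars $\hat u_1^T X_i$ reduces to thresholding at the midpoint of the two estimated centers $\check c_1,\check c_2$. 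So $\check z_i \neq \phi(z_i^*)$ exactly when $\hat u_1^T X_i$ lands on the wrong side of that midpoint. First I would invoke Proposition \ref{prop:poly} (with $r=\kr=1$; condition (\ref{eqn:Delta_poly}) follows from (\ref{eqn:lower_bound_entrywise_condition}) with $C$ large, since $k=2$) to get that the estimated centers $\hat\theta_a = \hat u_1 \check c_a$ satisfy $\max_a \norm{\hat\theta_{\phi(a)} - \theta^*_a} \leq C_0 \beta^{-0.5}\cdot 2\cdot n^{-0.5}\norm{E}$, hence the midpoint of $\check c_1,\check c_2$ is within $O(\beta^{-0.5}n^{-0.5}\norm{E})$ of $0$ along the $\hat u_1$ direction, and the two cluster centers are separated by at least $\Delta - O(\beta^{-0.5}n^{-0.5}\norm{E})$.

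With that geometry in hand, the misclustering event contains the event that $\hat u_1^T X_i$ is pushed \emph{past} the midpoint and a bit beyond, in the direction of the wrong center. Writing $X_i = \theta^*_{z^*_i} + \epsilon_i$ and $\hat u_1^T\theta^*_{z^*_i} = (\hat u_1^T u_1)\cdot(\sqrt{p}\,\delta)\cdot\text{sign}(u_1^T\theta^*_{z^*_i})$ (using collinearity and $\norm{\theta^*_a} = \sqrt{p}|\delta|$, so $\sqrt{p}|\delta| = \Delta/2$), I would show that a sufficient condition for $X_i$ to be misclustered is
\[
-2(\hat u_1^T \epsilon_i)\,\text{sign}(u_1^T\theta^*_{\phi(z^*_i)}) \;\geq\; \Delta + O\!\left(\beta^{-0.5}n^{-0.5}\norm{E}\right),
\]
i.e.\ the noise projection along $\hat u_1$ is large and points toward the opposite center. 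The sign factor $\text{sign}(u_1^T\theta^*_{\phi(z^*_i)})$ appears because whether "toward the wrong center" means large positive or large negative $\hat u_1^T\epsilon_i$ depends on which cluster $i$ truly belongs to. The bookkeeping here — converting "on the wrong side of the midpoint" into a clean one-sided inequality on $-2(\hat u_1^T\epsilon_i)\text{sign}(\cdot)$ with the right constant in front of $\Delta$ — is the main technical content; one must be careful that the $\hat u_1^T u_1$ factor is $\geq 1 - o(1)$ (which follows from Proposition \ref{prop:poly} or Wedin under (\ref{eqn:lower_bound_entrywise_condition})) so that the signal term is not shrunk, and absorb all $O(\beta^{-0.5}n^{-0.5}\norm{E})$ slack into the $(1 + C_1\beta^{-0.5}n^{-0.5}\norm{E}/\Delta)$ multiplicative factor.

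Finally I would replace $\hat u_1$ by its leave-one-out counterpart $\hat u_{1,-i}$: since $-2(\hat u_1^T\epsilon_i)\text{sign}(\cdot) \geq -2(\hat u_{1,-i}^T\epsilon_i)\text{sign}(\cdot) - 2|(\hat u_1 - \hat u_{1,-i})^T\epsilon_i|$ is the wrong direction, I instead bound $|(\hat u_1\hat u_1^T - \hat u_{1,-i}\hat u_{1,-i}^T)\epsilon_i|$ via Theorem \ref{thm:perturbation} applied with $r = \kr = 1$ (which gives a bound of order $\rho_0^{-1}(\sqrt{1/(\beta n)} + \norm{\hat u_{1,-i}\hat u_{1,-i}^T\epsilon_i}/\lambda_1)\norm{\epsilon_i}$, itself $O(\beta^{-0.5}n^{-0.5}\norm{E})$ after noting $\norm{\epsilon_i}\leq\norm{E}$ and $\lambda_1 = \delta\sqrt{np}$), and fold this additional slack into the same multiplicative factor; here one must check that the event on the RHS of (\ref{eqn:decomposition_lower}) being nonempty already forces $\hat u_1^T\epsilon_i$ and $\hat u_{1,-i}^T\epsilon_i$ to have the appropriate sign, so the perturbation correction can only help. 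The hard part will be this sign-chasing: keeping the inequalities pointing the correct way when passing from $\hat u_1$ to $\hat u_{1,-i}$ and when converting the "wrong side of midpoint" geometry into the stated one-sided bound, since unlike the upper-bound lemmas, a lower bound on an indicator requires the event we exhibit to be genuinely \emph{contained} in the misclustering event, leaving no room for sloppy triangle inequalities.
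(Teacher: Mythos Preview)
Your proposal is correct and follows essentially the same route as the paper: invoke Proposition \ref{prop:poly} for the center estimates, reduce to the scalar threshold geometry to obtain a lower bound in terms of $-2(\hat u_1^T\epsilon_i)\,\text{sign}(u_1^T\theta^*_{z^*_i})$, and then pass to $\hat u_{1,-i}$ via Theorem \ref{thm:perturbation}. One small correction to your parenthetical: the leave-one-out perturbation term $\rho_0^{-1}\bigl(\norm{\hat u_{1,-i}^T\epsilon_i}/\lambda_1\bigr)\norm{\epsilon_i}$ is \emph{not} uniformly $O(\beta^{-0.5}n^{-0.5}\norm{E})$ as an additive slack---it is proportional to $|\hat u_{1,-i}^T\epsilon_i|$ and must be absorbed as a small multiplicative correction to the coefficient of $(\hat u_{1,-i}^T\epsilon_i)\,\text{sign}(\cdot)$ (the paper does this via the elementary inequality $\indic{x\le zy-w}\ge\indic{x\le(z-w/|y|)y}$), which is precisely what your ``fold into the same multiplicative factor'' remark anticipates.
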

\begin{proof}
The proof mainly follows the proofs of Lemma \ref{lem:decomposition_simple} and Lemma \ref{lem:decomposition} with some modifications such as adding a negative term instead of a positive term in order to obtain a lower bound. 

We first write $\check z$  equivalently  as
\begin{align*}
 \br{\check z,\cbr{\check \theta_j}_{j=1}^2} = \argmin_{ z\in [2]^n , \cbr{ \theta_j}_{j=1}^{2} \in \mathr^{p}} \sum_{i\in[n]} \norm{\hat u_1\hat u_1^{T} X_i - \theta_{z_i}}^2,
\end{align*}
where $\check\theta_a = \hat u_1 \check c_a$ for each $a\in[2]$. 
Note that $k=2$. From Proposition \ref{prop:poly}, we have
\begin{align*}
\frac{1}{n}\abs{\cbr{i\in[n]:\check z_i \neq \phi(z^*_i)}} \leq \frac{C_0k\norm{E}^2}{n\Delta^2},
\end{align*}
and
\begin{align}\label{eqn:rank_one_proof_1}
\max_{a\in[2]} \norm{\check \theta_{\phi(a)}-\theta^*_a}\leq C_0\beta^{-0.5} kn^{-0.5}\norm{E},
\end{align}
for some permutation $\phi:[2]\rightarrow[2]$ and some constant $C_0>0$. Without loss of generality, assume $\phi=\text{Id}$.

Recall that $\theta_1^* = -\theta^*_2 =\delta \one_p$, $u_1 = 1/\sqrt{p}\one_p$, $\lambda_1 = \delta \sqrt{np}= \frac{\Delta \sqrt{n}}{2}$, and $|u_1^T (\theta^*_{ z^*_i }  - (-\theta^*_{ z^*_i }))| = 2\delta\sqrt{p} =\Delta$.  By  Davis-Kahan Theorem, we have
\begin{align*}
\min_{s\in\pm 1}\norm{\hat u_1 - su_1}\leq \frac{\norm{E}}{\lambda_1}  =\frac{2\norm{E}}{\sqrt{n}\Delta} \leq 1/16,
\end{align*}
where the last inequality is due to the assumption (\ref{eqn:Delta_poly}). 
Since we  assume $\iprod{\hat u_1}{u_1}\geq 0$, we have $\norm{\hat u_1 - su_1} = \min_{s\in\pm 1}\norm{\hat u_1 - su_1}$.

Consider any $i\in[n]$ and any $a\in[2]$ such that $a\neq z^*_i$. Note that for any scalars $x,y,w$, if $\abs{x-y}\leq\abs{x-w}$, we have equivalently $  \text{sign}(w-y) (y+w)/2\geq \text{sign}(w-y)x$. Since $(y+w)/2 =(y-w)/2 + w$, a sufficient condition is $  \abs{w-y}/2 + \abs{w} \leq (-\text{sign}(w-y))x$. Hence, we have
\begin{align*}
&\indic{ \norm{\hat u_1\hat u_1^TX_i - \check \theta_{ a }} \leq \norm{\hat u_1\hat u_1^TX_i - \check \theta_{ z^*_i }}} \\
& =\indic{ \abs{\hat u_1^TX_i - \hat u_1^T\check \theta_{ a }} \leq \abs{\hat u_1^TX_i - \hat u_1^T\check \theta_{ z^*_i }}} \\
&=\indic{ \abs{\hat u_1^T\epsilon_i - \hat u_1^T\br{\check \theta_{ a } - \theta^*_{z^*_i}}} \leq \abs{\hat u_1^T\epsilon_i -  \hat u_1^T\br{\check \theta_{ z^*_i }  - \theta^*_{z^*_i}}}} \\
&\geq \indic{ \frac{1}{2} \abs{ \hat u_1^T(\check \theta_{ z^*_i }  - \check \theta_{ a })} + \abs{\hat u_1^T\br{\check \theta_{ z^*_i } - \theta^*_{z^*_i}}} \leq   -(\hat u_1^T\epsilon_i) \text{sign}( \hat u_1^T(\check \theta_{ z^*_i }  - \check \theta_{ a }))  }\\
&\geq \indic{     \norm{ \check \theta_{ z^*_i }  - \check \theta_{ a }} + 2\norm{\check \theta_{ z^*_i } - \theta^*_{z^*_i}} \leq -2(\hat u_1^T\epsilon_i) \text{sign}( \hat u_1^T(\check \theta_{ z^*_i }  - \check \theta_{ a })) }.
\end{align*}

We are going to show $\text{sign}(\hat u_1^T (\check \theta_{ z^*_i }  -\check \theta_{ a })) = \text{sign}( u_1^T ( \theta^*_{ z^*_i }  - \theta^*_{ a }))$.
By (\ref{eqn:rank_one_proof_1}), we have
\begin{align*}
\iprod{\check \theta_{ z^*_i }  -\check \theta_{ a }}{ \theta^*_{ z^*_i }  - \theta^*_{ a }} &= \norm{ \theta^*_{ z^*_i }  - \theta^*_{ a }}^2 + \iprod{\check \theta_{ z^*_i } - \theta^*_{ z^*_i }}{ \theta^*_{ z^*_i }  - \theta^*_{ a }} + \iprod{\check \theta_a - \theta^*_a}{ \theta^*_{ z^*_i }  - \theta^*_{ a }} \\
&\geq \Delta^2\br{1-\frac{2C_0k\beta^{-0.5} n^{-0.5}\norm{E}}{\Delta}}\\
&>0,
\end{align*}
where the last inequality holds as long as $\Delta >2C_0\beta^{-0.5} kn^{-0.5}\norm{E}$. Due to the fact $\theta^*_{ z^*_i }  - \theta^*_{ a }\in\text{span}(u_1)$, $\check \theta_{ z^*_i }  - \check\theta^*_{ a }\in\text{span}(\hat u_1)$, and $\iprod{\hat u_1}{u_1}\geq 0$, if $u_1,\theta^*_{ z^*_i }  - \theta^*_{ a }$ are in the same direction, then $\hat u_1,\check \theta_{ z^*_i }  - \check\theta^*_{ a }$ must also be in the same direction, and vice versa. Hence, we have $\text{sign}(\hat u_1^T (\check \theta_{ z^*_i }  -\check \theta_{ a })) = \text{sign}( u_1^T ( \theta^*_{ z^*_i }  - \theta^*_{ a }))$. Thus,
\begin{align*}
&\indic{ \norm{\hat u_1\hat u_1^TX_i - \check \theta_{ a }} \leq \norm{\hat u_1\hat u_1^TX_i - \check \theta_{ z^*_i }}} \\
&\geq \indic{     \norm{ \check \theta_{ z^*_i }  - \check \theta_{ a }} + 2\norm{\check \theta_{ z^*_i } - \theta^*_{z^*_i}} \leq -2(\hat u_1^T\epsilon_i) \text{sign}(  u_1^T( \theta^*_{ z^*_i }  -  \theta^*_{ a })) }.
\end{align*}
Following the same analysis as in the proof of Lemma \ref{lem:decomposition_simple}, we can get the following result that is analogous to (\ref{eqn:lemma_1_proof_2}):
\begin{align*}
&\indic{ \norm{\hat u_1\hat u_1^TX_i - \check \theta_{ a }}\leq \norm{\hat u_1\hat u_1^TX_i - \check \theta_{ z^*_i }}} \\
&\geq  \indic{\br{1+\frac{4C_0\beta^{-0.5}kn^{-0.5}\norm{E}}{\Delta}}\Delta \leq -2(\hat u_1^T\epsilon_i ) \text{sign}( u_1^T ( \theta^*_{ z^*_i }  - \theta^*_{ a }))   }.
\end{align*}

Next, we are going to decompose $\hat u_1^T\epsilon_i$ following the proof of Lemma \ref{lem:decomposition}. Denote $\hat u_{1,-i}$ be the leave-one-out counterpart of $\hat u_1$, i.e., $\hat u_{1,-i}$ is the leading left singular vector of $X_{-i}$. 
Since we  assume $\iprod{\hat u_{1,-i}}{u_1}\geq 0$, we have $\norm{\hat u_{1,-i} -u_1}\leq 2\norm{E}/(\sqrt{n-1}\Delta)$. As a result, we have $\norm{\hat u_{1,-i} -\hat u_1}\leq 4\norm{E}/(\sqrt{n-1}\Delta)$ which leads to
\begin{align}\label{eqn:lower_bound_proof_1}
\iprod{\hat u_{1,-i}}{\hat u_1}\geq 1-4\norm{E}/(\sqrt{n-1}\Delta) >0.
\end{align}

We have the following decomposition: 
\begin{align*}
&(\hat u_1^T\epsilon_i) \text{sign}( u_1^T ( \theta^*_{ z^*_i }  - \theta^*_{ a })) \\
  &= \iprod{\hat u_1}{\hat u_1\hat u_1^T\epsilon_i} \text{sign}( u_1^T ( \theta^*_{ z^*_i }  - \theta^*_{ a }))  \\
& = \iprod{\hat u_1}{(\hat u_{1,-i}\hat u_{1,-i}^T)\epsilon_i} \text{sign}( u_1^T ( \theta^*_{ z^*_i }  - \theta^*_{ a }))  + \iprod{\hat u_1}{(\hat u_1 \hat u_1^T-\hat u_{1,-i}\hat u_{1,-i}^T)\epsilon_i}\text{sign}( u_1^T ( \theta^*_{ z^*_i }  - \theta^*_{ a }))  \\
&=\iprod{\hat u_1}{\hat u_{1,-i}} (\hat u_{1,-i}^T\epsilon_i) \text{sign}( u_1^T ( \theta^*_{ z^*_i }  - \theta^*_{ a }))  + \iprod{\hat u_1}{(\hat u_1 \hat u_1^T-\hat u_{1,-i}\hat u_{1,-i}^T)\epsilon_i}\text{sign}( u_1^T ( \theta^*_{ z^*_i }  - \theta^*_{ a }))  \\
&\leq   \iprod{\hat u_1}{\hat u_{1,-i}}(\hat u_{1,-i}^T\epsilon_i) \text{sign}( u_1^T ( \theta^*_{ z^*_i }  - \theta^*_{ a }))   +\norm{\hat u_1 \hat u_1^T-\hat u_{1,-i}\hat u_{1,-i}^T}\norm{\epsilon_i}.
\end{align*}
Note that $\lambda_1/\norm{E}=\Delta \sqrt{n}/(2\norm{E})$ is greater than 16 under the assumption (\ref{eqn:lower_bound_entrywise_condition}) holds for a large constant $C$.  From Theorem \ref{thm:perturbation} we have
\begin{align*}
\norm{\hat u_1 \hat u_1^T-\hat u_{1,-i}\hat u_{1,-i}^T}&\leq \frac{128}{\lambda_1/\norm{E}}\br{\frac{k}{\sqrt{\beta n}} +\frac{\norm{\hat u_{1,-i}\hat u_{1,-i}^T\epsilon_i}}{\lambda_1}}.
\end{align*}
Then,
\begin{align*}
&(\hat u_1^T\epsilon_i) \text{sign}( u_1^T ( \theta^*_{ z^*_i }  - \theta^*_{ a })) \\
&\leq  \iprod{\hat u_1}{\hat u_{1,-i}}(\hat u_{1,-i}^T\epsilon_i )\text{sign}( u_1^T ( \theta^*_{ z^*_i }  - \theta^*_{ a }))  +\br{ \frac{128k}{\sqrt{n\beta} (\lambda_1/\norm{E})} +\frac{128\norm{\hat u_{1,-i}\hat u_{1,-i}^T\epsilon_i}}{\lambda_1^2/\norm{E}}}\norm{E} \\
& = \iprod{\hat u_1}{\hat u_{1,-i}} (\hat u_{1,-i}^T\epsilon_i )\text{sign}( u_1^T ( \theta^*_{ z^*_i }  - \theta^*_{ a }))   + \frac{256 n^{-0.5}k\beta^{-0.5} \norm{E}^2}{\Delta} + \frac{512\abs{\hat u_{1,-i}^T\epsilon_i} n^{-1}\norm{E}^2}{\Delta^2}.
\end{align*}

So far we have obtained
\begin{align*}
&\indic{ \norm{\hat u_1\hat u_1^TX_i - \check \theta_{ a }} \leq \norm{\hat u_1\hat u_1^TX_i - \check \theta_{ z^*_i }}} \\
&\geq   \mathbb{I}\Bigg\{\br{1+\frac{4C_0\beta^{-0.5}kn^{-0.5}\norm{E}}{\Delta}}\Delta \leq -2 \iprod{\hat u_1}{\hat u_{1,-i}} (\hat u_{1,-i}^T\epsilon_i )\text{sign}( u_1^T ( \theta^*_{ z^*_i }  - \theta^*_{ a })) \\
&\quad -\frac{256 n^{-0.5}k\beta^{-0.5} \norm{E}^2}{\Delta}   -\frac{512\abs{\hat u_{1,-i}^T\epsilon_i} n^{-1}\norm{E}^2}{\Delta^2} \Bigg\}\\
&=  \mathbb{I}\Bigg\{\br{1+\frac{4C_0\beta^{-0.5}kn^{-0.5}\norm{E}}{\Delta} + \frac{256 n^{-0.5}k\beta^{-0.5} \norm{E}^2}{\Delta^2} }\Delta \\
&\quad  \leq  -2 \iprod{\hat u_1}{\hat u_{1,-i}} (\hat u_{1,-i}^T\epsilon_i )\text{sign}( u_1^T ( \theta^*_{ z^*_i }  - \theta^*_{ a })) -\frac{512\abs{\hat u_{1,-i}^T\epsilon_i} n^{-1}\norm{E}^2}{\Delta^2}  \Bigg\}.
\end{align*}
From (\ref{eqn:lower_bound_proof_1}) we have
\begin{align*}
\iprod{\hat u_{1,-i}}{\hat u_1} - \frac{512 n^{-1}\norm{E}^2}{\Delta^2} & \geq 1-4\frac{\norm{E}(n-1)^{-0.5}}{\Delta} - \frac{512n^{-1}\norm{E}^2}{\Delta^2}\\
&\geq 1-\frac{16n^{-0.5}\norm{E}}{\Delta}\geq \frac{1}{2},
\end{align*}
assuming $\frac{\Delta}{n^{-0.5}\norm{E}}\geq 64$.
For any $x,y,z,w\in\mathr$ such that $x\geq 0$, $1\geq z\geq 0$, and $z\abs{y}>w\geq 0$, we have $ \indic{x\leq zy-w}\geq \indic{x \leq \br{z-w/|y|}y}$. We then have,
\begin{align*}
&\indic{ \norm{\hat u_1\hat u_1^TX_i - \check \theta_{ a }}\leq \norm{\hat u_1\hat u_1^TX_i - \check \theta_{ z^*_i }}} \\
&\geq  \mathbb{I}\Bigg(\br{1+\frac{4C_0\beta^{-0.5}kn^{-0.5}\norm{E}}{\Delta} + \frac{256 n^{-0.5}k\beta^{-0.5} \norm{E}^2}{\Delta^2} }\Delta \\
&\quad\quad \leq -2\br{1-\frac{16n^{-0.5}\norm{E}}{\Delta}}(\hat u_{1,-i}^T\epsilon_i ) \text{sign}( u_1^T ( \theta^*_{ z^*_i }  - \theta^*_{ a }))   \Bigg)\\
&\geq \indic{\br{1+\frac{C_1\beta^{-0.5} n^{-0.5}\norm{E}}{\Delta}}\Delta \leq -2(\hat u_{1,-i}^T\epsilon_i ) \text{sign}( u_1^T ( \theta^*_{ z^*_i }  - \theta^*_{ a }))  }.
\end{align*}
Since $\theta^*_{ a } =- \theta^*_{ z^*_i }$, we have $\text{sign}( u_1^T ( \theta^*_{ z^*_i }  - \theta^*_{ a }))   = \text{sign}( u_1^T  \theta^*_{ z^*_i }  )  $. The proof is complete.
\end{proof}

\begin{proof}[Proof of Theorem \ref{thm:lower_bound_spectral}]
Recall that $\lambda_1 = \Delta\sqrt{n}/2$. Same as the proof of Theorem \ref{thm:subg}, we work on the high-probability event  (\ref{eqn:gmm_f}).

For the upper bound, from Lemma \ref{lem:decomposition},  there exists some $\phi\in\Phi$ such that for any $i\in[n]$, 
\begin{align*}
\indic{\hat z_i \neq \phi(z_i^*)} &\leq  \indic{\br{1-C_1\psi_3^{-1}}\Delta \leq 2\norm{\hat u_{1,-i}\hat u_{-i}^T\epsilon_i}}  =  \indic{\br{1-C_1\psi_3^{-1}}\Delta \leq 2\abs{\hat u_{1,-i}^T\epsilon_i}},
\end{align*}
for some $C_1>0$, where the last inequality is due to that $\psi_3$ is large. By Davis-Kahan Theorem, we know there exists some $s_i\in\{-1,1\}$ such that $\norm{\hat u_{1,-i}-s_i u_1}\leq 2\norm{E}/(\sqrt{n-1}\Delta) \leq 4\psi_3^{-1}$. Since $\iprod{\hat u_{1,-i}}{u_1}\geq 0$ is assumed, we have $s_i=1$ for all $i\in[n]$. Then
\begin{align*}
\indic{\hat z_i \neq \phi(z_i^*)}  & \leq  \indic{\br{1-C_1\psi_3^{-1}}\Delta \leq 2\abs{ u_{1}^T\epsilon_i}+ 2\abs{\br{\hat u_{1,-i} - s_iu_1}^T\epsilon_i}}\\
&\leq  \indic{\br{1-(C_1+C_2)\psi_3^{-1}}\Delta \leq 2\abs{ u_{1}^T\epsilon_i} } +  \indic{C_2 \psi^{-1}_3\Delta  \leq 2\abs{\br{\hat u_{1,-i} - s_iu_1}^T\epsilon_i}},
\end{align*}
where $C_2>0$ is a constant whose value will be determined later. Due to the independence of $\hat u_{1,-i} - s_iu_1$ and $\epsilon_i$, we have  $\br{\hat u_{1,-i} - s_iu_1}^T\epsilon_i\sim\text{SG}(16\psi_3^{-2}\sigma^2)$ and then
\begin{align*}
\E \indic{C_2\Delta  \leq 2\abs{\br{\hat u_{1,-i} - s_iu_1}^T\epsilon_i}}\leq 2\ebr{-\frac{C_2^2\Delta^2}{128\sigma^2}}.
\end{align*}
On the other hand, $u_{1}^T\epsilon_i = p^{-\frac{1}{2}}\sum_{j=1}^p \epsilon_{i,j}$ where $\{\epsilon_{i,j}\}_{j\in[p]}$ are i.i.d. with variance $\bar \sigma^2$, which can be approximated by a normal distribution. Since the distribution $F$ is sub-Gaussian, its moment generating function exists. Then we can use the following  KMT quantile inequality (see Proposition [KMT] of \cite{mason2012quantile}). Let $Y\stackrel{d}{=}\bar\sigma^{-1}p^{-\frac{1}{2}}\sum_{j=1}^p \epsilon_{i,j}$. There exist some constants $D,\eta>0$ and $Z\sim\mathn(0,1)$, such that whenever $\abs{Y}\leq \eta\sqrt{p}$, we have
\begin{align*}
\abs{Y-Z}\leq \frac{DY^2}{\sqrt{p}} + \frac{D}{\sqrt{p}}.
\end{align*}
Then,
\begin{align*}
&\E \indic{\br{1-(C_1+C_2)\psi_3^{-1}}\Delta \leq 2\abs{ u_{1}^T\epsilon_i} } \\
& = \E \indic{\br{1-(C_1+C_2)\psi_3^{-1}}\frac{\Delta}{\bar\sigma} \leq 2\abs{Y} } \\
&\leq  \E \indic{\br{1-(C_1+C_2)\psi_3^{-1}}\frac{\Delta}{\bar\sigma} \leq 2\abs{Z} +  \frac{2DY^2}{\sqrt{p}} + \frac{2D}{\sqrt{p}}} + \E \indic{\abs{Y}>\eta \sqrt{p}}\\
&\leq  \E \indic{\br{1-(C_1+C_2+C_3 +2D)\psi_3^{-1}}\frac{\Delta}{\bar\sigma} \leq 2\abs{Z}} + \E \indic{ \frac{2DY^2}{\sqrt{p}}\geq C_3}  + \E \indic{\abs{Y}>\eta \sqrt{p}},
\end{align*}
where $C_3>0$ is a constant. Using the fact that $Y\sim\text{SG}(1)$ with zero mean, we have
\begin{align*}
&\E \indic{\br{1-(C_1+C_2)\psi_3^{-1}}\Delta \leq 2\abs{ u_{1}^T\epsilon_i} } \\
&\leq  2\ebr{-\frac{\br{1-(C_1+C_2+C_3+2D)\psi_3^{-1}}^2\Delta^2}{8\bar \sigma^2}} + 2\ebr{-\frac{C_3\sqrt{p}}{4D}} + 2\ebr{-\frac{\eta^2p}{2}}.
\end{align*}
Then we have 
\begin{align*}
&\E \ell(\check z,z^*)\\
&\leq \frac{1}{n}\sum_{i=1}^n \E \indic{\br{1-(C_1+C_2)\psi_3^{-1}}\Delta \leq 2\abs{ u_{1}^T\epsilon_i} } + \frac{1}{n}\sum_{i=1}^n \E\indic{C_2\Delta  \leq 2\abs{\br{\hat u_{1,-i} - s_iu_1}^T\epsilon_i}} + e^{-0.5n}\\
&\leq 2\ebr{-\frac{\br{1-(C_1+C_2+C_3+2D)\psi_3^{-1}}^2\Delta^2}{8\bar \sigma^2}} \\
&\quad + 2\ebr{-\frac{C_2^2\Delta^2}{128\sigma^2}}+ 2\ebr{-\frac{C_3\sqrt{p}}{4D}} + 2\ebr{-\frac{\eta^2p}{2}}+ e^{-0.5n},
\end{align*}
where $e^{-0.5n}$ is the probability that (\ref{eqn:gmm_f}) does not hold.  Since $\sigma\leq C\bar \sigma$, when $C_2$ is chosen to satisfy $C_2^2/(128C^2)\geq 16$, we have
\begin{align*}
\E \ell(\check z,z^*) \leq  2\ebr{-\frac{\br{1-C''\psi_3^{-1}}^2\Delta^2}{8\bar \sigma^2}}+ \ebr{-C''\sqrt{p}} + e^{-0.5n},
\end{align*}
for some constant $C''>0$.

For the lower bound, from (\ref{eqn:decomposition_lower}) we know 
\begin{align*}
\indic{\check z_i \neq \phi(z_i^*)}  \geq \indic{\br{1+C_4\psi_3^{-1}}\Delta \leq -2(\hat u_{1,-i}^T\epsilon_i ) \text{sign}( u_1^T ( \theta_{ \phi(z^*_i) }  - \theta_{ 3-\phi(z^*_i) }))  },
\end{align*}
for some constant $C_4>0$ assuming $\psi_3$ is large. Using the same argument as in the upper bound, we are going to decompose $\hat u_{1,-i}^T\epsilon_i$ into $ u_{1}^T\epsilon_i$ and $(\hat u_{1,-i}-y_1)^T\epsilon_i$. Hence,
\begin{align*}
\indic{\check z_i \neq \phi(z_i^*)}  &\geq \indic{\br{1+C_4\psi_3^{-1}}\Delta \leq -2( u_{1}^T\epsilon_i ) \text{sign}( u_1^T ( \theta_{ \phi(z^*_i) }  - \theta_{ 3-\phi(z^*_i) }))  - 2\abs{(\hat u_{1,-i}-s_iu_1)^T\epsilon_i} }\\
&\geq \indic{\br{1+\br{C_4+C_5}\psi_3^{-1}}\Delta \leq -2( u_{1}^T\epsilon_i ) \text{sign}( u_1^T ( \theta_{ \phi(z^*_i) }  - \theta_{ 3-\phi(z^*_i) }))  } \\
&\quad -  \indic{C_5\psi_3^{-1}\Delta \leq 2\abs{(\hat u_{1,-i}-s_iu_1)^T\epsilon_i}   },
\end{align*}
for some constant $C_5>0$ whose value to be chosen. Let $$Y'\stackrel{d}{=}\bar\sigma^{-1}( u_{1}^T\epsilon_i ) \text{sign}( u_1^T ( \theta_{ \phi(z^*_i) }  - \theta_{ 3-\phi(z^*_i) }))=\text{sign}( u_1^T ( \theta_{ \phi(z^*_i) }  - \theta_{ 3-\phi(z^*_i) }))\bar\sigma^{-1}p^{-\frac{1}{2}}\sum_{j=1}^p \epsilon_{i,j}.$$ Then using the same argument above, there exists some $Z'\sim \mathn(0,1)$ such that whenever $Y'\leq \eta'\sqrt{p}$, we have $\abs{Y'-Z'}\leq \frac{D'Y'^2}{\sqrt{p}} + \frac{D'}{\sqrt{p}}$ where $D',\eta'>0$ are constants. Then
\begin{align*}
&\E  \indic{\br{1+\br{C_4+C_5}\psi_3^{-1}}\Delta \leq -2( u_{1}^T\epsilon_i ) \text{sign}( u_1^T ( \theta_{ \phi(z^*_i) }  - \theta_{ 3-\phi(z^*_i) }))  }\\
&= \E \indic{\br{1+\br{C_4+C_5}\psi_3^{-1}}\frac{\Delta}{\bar \sigma} \leq -2Y' }\\
&\geq  \E \indic{\br{1+\br{C_4+C_5}\psi_3^{-1}}\frac{\Delta}{\bar \sigma} \leq -2Z' - \frac{2DY'^2}{\sqrt{p}} - \frac{2d}{\sqrt{p}} }\indic{Y'\leq \eta' \sqrt{p}} \\
&\geq  \E \indic{\br{1+\br{C_4+C_5+2D+C_6}\psi_3^{-1}}\frac{\Delta}{\bar \sigma} \leq -2Z'} - \E\indic{ \frac{2DY'^2}{\sqrt{p}} \geq C_6 } -\E\indic{Y'> \eta' \sqrt{p}},
\end{align*}
where $C_6>0$ is a constant. Then following the proof of the upper bound, and by a proper choice of $C_5$, we have
\begin{align*}
\E \ell(\check z,z^*) \geq  2\ebr{-\frac{\br{1+C'''\psi_3^{-1}}^2\Delta^2}{8\bar \sigma^2}}- \ebr{-C'''\sqrt{p}}- e^{-0.5n},
\end{align*}
for some constant $C'''>0$.
\end{proof}

\subsection{Proofs of Lemma \ref{lem:minimax_spectral} and Theorem \ref{thm:optimal}}
\begin{proof}[Proof of Lemma \ref{lem:minimax_spectral}]
For the upper bound, we consider the following likelihood ratio test. For any $x\in\mathr^p$, define 
the two log-likelihood functions as
\begin{align*}
l_1(x) &= \sum_{j=1}^{p} \log f(x_j-\delta)  ,\text{ and } l_2(x) = \sum_{j=1}^{p} \log f(x_j+\delta).
\end{align*}
Then for each $i\in[n]$, define the likelihood ratio test as
\begin{align*}
\hat z_i^\lrt = \begin{cases}
1,\text{ if }l_1(X_i)\geq l_2(X_i),\\
2,\text{ otherwise.}
\end{cases}
\end{align*}
Then for any $i\in[n]$ such that $z^*_i=1$, we have
\begin{align*}
\E \indic{\hat z_i^\lrt =2}& = \pbr{l_2(X_i)>l_1(X_i)} = \pbr{\sum_{j=1}^p \log \frac{f(2\delta + \epsilon_{i,j})}{f(\epsilon_{i,j})} >0} = \pbr{\sum_{j=1}^p \log  \frac{f_{\frac{\Delta}{\sqrt{p}}}(\epsilon_{i,j})}{f_0(\epsilon_{i,j})}>0},
\end{align*}
where we use the fact $2\delta = \frac{\Delta}{\sqrt{p}}$.
Since $\Delta$ is a constant, by local asymptotic normality (c.f., Chapter 7, \cite{van2000asymptotic}), we have
\begin{align*}
\sum_{j=1}^p \log  \frac{f_{\frac{\Delta}{\sqrt{p}}}(\epsilon_{i,j})}{f_0(\epsilon_{i,j})}\stackrel{d}{\rightarrow}\mathn\br{-\frac{ \fc \Delta^2}{2},  \fc \Delta^2}.
\end{align*}
Then, $\lim_{p\rightarrow\infty }\E \indic{\hat z_i^\lrt =2} \leq C_1\ebr{- \fc \Delta^2/8}$ for some constant $C_1>0$. We have the same upper bound if $z^*_i=2$ instead. Hence,
\begin{align*}
 \lim_{p\rightarrow\infty}\inf_{z}\sup_{z^*\in[2]^n}\E \ell(z,z^*) \leq  \lim_{p\rightarrow\infty}\sup_{z^*\in[2]^n}\E \ell(\hat z^\lrt,z^*) \leq \ebr{-\frac{ \fc \Delta^2}{8}}.
\end{align*}

For the lower bound, instead of allowing $z^*\in[2]^n$, we consider a slightly smaller parameter space. Define $\mathcal{Z}=\cbr{z\in[2]^n: z_i=1,\forall 1\leq i\leq n/3, z_i=2,\forall n/3+1\leq i\leq 2n/3}$. Then for any $z,z'\in \mathcal{Z}$ we have $\ell(z,z') =n^{-1}\sum_{i=1}^n\indic{z_i\neq z'_i}\leq 1/3$ due to the fact $n^{-1}\sum_{i=1}^n\indic{\phi(z_i)\neq z'_i}\geq 1/3$ if $\phi\neq \text{Id}$. Hence,
\begin{align*}
\inf_{z}\sup_{z^*\in[2]^n}\E \ell(z,z^*)  &\geq \inf_{z}\sup_{z^*\in\mathcal{Z}}\E \ell(z,z^*)  \geq n^{-1} \inf_{z}\sup_{z^*\in\mathcal{Z}}\E \sum_{i\in[n]}\indic{z_i\neq z^*_i} \\
&\geq n^{-1}  \sum_{i> 2n/3} \inf_{z_i}\sup_{z^*_i\in[2]} \E \indic{z_i\neq z^*_i} = \frac{1}{3}\inf_{z_n}\sup_{z^*_n\in[2]} \E \indic{z_n\neq z^*_n},
\end{align*}
where it is reduced into a testing problem on whether $X_n$ has mean $\theta_1^*$ or $\theta_2^*$. According to the Neyman-Pearson Lemma, the optimal procedure is the likelihood ratio test $\hat z_n^\lrt$ defined above. By the same argument, we have
\begin{align*}
\lim_{p\rightarrow} \inf_{z}\sup_{z^*\in[2]^n}\E \ell(z,z^*)   \geq \frac{1}{3} \lim_{p\rightarrow} \inf_{z_n}\sup_{z^*_n\in[2]} \E \indic{z_n\neq z^*_n} \geq C_2 \ebr{-\frac{ \fc \Delta^2}{8}},
\end{align*}
for some constant $C_2>0$.
\end{proof}

\begin{proof}[Proof of Theorem \ref{thm:optimal}]
First, we have the following connection between the Fisher information $\fc$ and the variance $\bar \sigma^2$:
\begin{align*}
\fc\bar\sigma^2 &= \br{\int \br{\frac{f'}{f}}^2f\diff x }\br{\int x^2 f\diff x}\geq \br{\int \frac{f'}{f} xf\diff x}^2 = \br{\int x f'\diff x}^2=1,
\end{align*}
where we use  Cauchy-Schwarz inequality and the integral by part $\int x f'\diff x = \int xf\diff x - \int f\diff x = 0-1=-1$. The equation holds if and only if $f'/f\propto x$, which is equivalent to $F$ being normally distributed. 
\end{proof}

\section{Auxiliary Lemmas and Propositions and Their Proofs}\label{sec:auxiliary}

\begin{proposition}\label{prop:wedin}
For $Y$ and $\hat Y$ defined in (\ref{eqn:Y_hat_Y}), we have (\ref{eqn:wedin}) holds assuming  $\sigma_\rr-\sigma_{\rr + 1}>2\norm{(I-U_\rr U_\rr ^T)y_n}$.
\end{proposition}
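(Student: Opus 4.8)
The plan is to reduce to the classical Wedin $\sin\Theta$ theorem via the augmented matrix $Y':=(Y,\,U_\rr U_\rr^T y_n)\in\mathr^{p\times n}$ already singled out in the excerpt. First I would record the elementary identities $\hat Y\hat Y^T=YY^T+y_n y_n^T$ and $Y'Y'^T=YY^T+(U_\rr U_\rr^T y_n)(U_\rr U_\rr^T y_n)^T$, together with $\hat Y-Y'=\epsilon\,e_n^T$, where $\epsilon:=(I-U_\rr U_\rr^T)y_n$ and $e_n$ is the last standard basis vector in $\mathr^n$. In particular $\normop{\hat Y-Y'}=\normf{\hat Y-Y'}=\norm{\epsilon}$, the matrix $\hat Y-Y'$ has rank one, and $(\hat Y-Y')^TU_\rr=e_n(U_\rr^T\epsilon)^T=0$ because $\epsilon$ is orthogonal to $\tspan(U_\rr)$. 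The case $\epsilon=0$ makes $\hat Y=Y'$ and (\ref{eqn:wedin}) trivial, so I may assume $\norm{\epsilon}>0$, hence $\sigma_\rr>\sigma_{\rr+1}$ by hypothesis.

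Next I would pin down the spectrum of $Y'$. Since $U_\rr U_\rr^T y_n\in\tspan(U_\rr)$, the rank-one positive semidefinite term $(U_\rr U_\rr^T y_n)(U_\rr U_\rr^T y_n)^T$ is supported on $\tspan(U_\rr)$, so $Y'Y'^T$ is block diagonal with respect to the orthogonal decomposition $\mathr^p=\tspan(U_\rr)\oplus\tspan(U_\rr)^\perp$: on $\tspan(U_\rr)^\perp$ it equals $\sum_{i>\rr}\sigma_i^2 u_i u_i^T$, and on $\tspan(U_\rr)$ it equals $\sum_{i\le\rr}\sigma_i^2 u_i u_i^T$ plus a positive semidefinite term, so all of its eigenvalues there are $\ge\sigma_\rr^2$. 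Consequently the leading $\rr$ left singular vectors of $Y'$ span $\tspan(U_\rr)$ — so the rank-$\rr$ left projection of $Y'$ is exactly $U_\rr U_\rr^T$ — while $\sigma_\rr(Y')\ge\sigma_\rr$ and $\sigma_{\rr+1}(Y')=\sigma_{\rr+1}$. From $\hat Y\hat Y^T=YY^T+y_n y_n^T$, so that $\hat Y\hat Y^T-YY^T$ is positive semidefinite, I also get $\sigma_\rr(\hat Y)\ge\sigma_\rr$.

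Then I would invoke a standard residual form of Wedin's $\sin\Theta$ theorem (as in \cite{yu2015useful,cai2018rate}) for the pair $A=Y'$, $\hat A=\hat Y$: writing $V_\rr'$ for the leading $\rr$ right singular vectors of $Y'$, the residuals $R=(\hat Y-Y')V_\rr'$ and $S=(\hat Y-Y')^TU_\rr$ are both of rank at most one, so $\normf{R}\le\normop{\hat Y-Y'}=\norm{\epsilon}$ and $\normf{S}=0$ by the orthogonality noted above. The relevant Wedin gap is $\sigma_\rr(\hat Y)-\sigma_{\rr+1}(Y')\ge\sigma_\rr-\sigma_{\rr+1}$, which is $>2\norm{\epsilon}>0$ by hypothesis, so Wedin applies and gives $\normf{\sin\Theta(\hat U_\rr,U_\rr)}\le \sqrt{\normf{R}^2+\normf{S}^2}\,/\,(\sigma_\rr-\sigma_{\rr+1})\le \norm{\epsilon}/(\sigma_\rr-\sigma_{\rr+1})$. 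Combining with the identity $\normf{\hat U_\rr\hat U_\rr^T-U_\rr U_\rr^T}=\sqrt2\,\normf{\sin\Theta(\hat U_\rr,U_\rr)}$ (Lemma 1 of \cite{cai2018rate}) yields $\normf{\hat U_\rr\hat U_\rr^T-U_\rr U_\rr^T}\le \sqrt2\,\norm{\epsilon}/(\sigma_\rr-\sigma_{\rr+1})\le 2\norm{(I-U_\rr U_\rr^T)y_n}/(\sigma_\rr-\sigma_{\rr+1})$, which is (\ref{eqn:wedin}).

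The main obstacle is purely bookkeeping around the precise hypotheses and constants of the Wedin statement one cites. One must use the ``mixed'' gap $\sigma_\rr(\hat Y)-\sigma_{\rr+1}(Y')$ (rather than, say, $\sigma_\rr(Y')-\sigma_{\rr+1}(\hat Y)$, which would lose an additive $\norm{\epsilon}$ through Weyl's inequality) so that the denominator is genuinely $\sigma_\rr-\sigma_{\rr+1}$ with no slack; and one must use the residual form of the bound — exploiting that $\hat Y-Y'$ has rank one and that $S=(\hat Y-Y')^TU_\rr=0$ — to avoid the spurious factor $\sqrt{\rr}$ that a naive operator-norm-to-Frobenius passage would introduce. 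The assumption $\sigma_\rr-\sigma_{\rr+1}>2\norm{(I-U_\rr U_\rr^T)y_n}$ is comfortably enough to meet the positivity requirement of whichever variant is invoked.
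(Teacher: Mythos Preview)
Your reduction is the same as the paper's: introduce $Y'=(Y,\,U_\rr U_\rr^T y_n)$, argue that its leading $\rr$ left singular subspace is $\tspan(U_\rr)$ with $\sigma_\rr(Y')\ge\sigma_\rr$ and $\sigma_{\rr+1}(Y')=\sigma_{\rr+1}$, and apply Wedin to the pair $(Y',\hat Y)$. The paper then uses the plain form $\normf{\hat U_\rr\hat U_\rr^T-U_\rr U_\rr^T}\le\normf{\hat Y-Y'}/(\sigma_\rr(Y')-\hat\sigma_{\rr+1})$, bounds $\hat\sigma_{\rr+1}\le\sigma_{\rr+1}+\norm{\epsilon}$ by Weyl so that the denominator is at least $\tfrac12(\sigma_\rr-\sigma_{\rr+1})$, and reads off the constant $2$.

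Your extra observations---the rank-one residual structure with $S=(\hat Y-Y')^TU_\rr=0$, and $\sigma_\rr(\hat Y)\ge\sigma_\rr$ from $\hat Y\hat Y^T\succeq YY^T$---are both correct, but you have paired the gap and the residuals the wrong way round. In the residual form of Wedin, building $R=(\hat Y-Y')V_\rr'$ and $S=(\hat Y-Y')^TU_\rr$ from the singular vectors of $Y'$ forces the admissible gap to be $\sigma_\rr(Y')-\sigma_{\rr+1}(\hat Y)$, not $\sigma_\rr(\hat Y)-\sigma_{\rr+1}(Y')$; you then still need Weyl on $\hat\sigma_{\rr+1}$ and gain nothing over the paper's route. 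To access the gap $\sigma_\rr(\hat Y)-\sigma_{\rr+1}(Y')\ge\sigma_\rr-\sigma_{\rr+1}$ you must instead take residuals with respect to $\hat Y$'s singular vectors, whereupon $S=e_n\epsilon^T\hat U_\rr$ is no longer zero; since $\hat Y-Y'$ has rank one you still have $\normf{R},\normf{S}\le\norm{\epsilon}$, and this corrected version does deliver the constant $2$. Either fix lands at (\ref{eqn:wedin}), so this is exactly the bookkeeping hazard you flagged rather than a structural gap.
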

\begin{proof}
Recall the augmented matrix $Y'$ is defined as $(Y,U_\rr U_\rr ^T y_n)$.
 Note that $U_\rr U_\rr^TY$ is the best rank-$\rr$ approximation of $Y$. Since
\begin{align*}
\fnorm{\br{I- U_\rr U_\rr^T}Y'} = \fnorm{\br{\br{I- U_\rr U_\rr^T}Y,0}} =  \fnorm{\br{I- U_\rr U_\rr^T}Y},
\end{align*}
we have $U_\rr U_\rr^TY'$ also being the best rank-$\rr$ approximation of $Y'$. This proves that $\tspan(U_\rr)$ and $U_\rr U_\rr^T$ are also the leading $r$ left singular subspace and projection matrix of $Y'$.
Then $\hat U_\rr  \hat U_\rr ^T -   U_\rr U_\rr ^T$ is about the perturbation between $\hat Y$ and $Y'$.

Let $\sigma'_\rr , \sigma'_{\rr+1}$ be the $\rr $th and $(\rr+1)$th largest singular values of $Y'$, respectively. 
By Wedin's Thereom (see Section 2.3 of \cite{cai2018rate}), if
$\sigma'_\rr -\hat \sigma_{\rr +1}>0$, then we have
\begin{align}\label{eqn:appendix_d_1}
\normf{\text{sin}\; \Theta(\hat U_\rr ,U_\rr )}\leq  \frac{\fnorm{\hat Y-Y'}}{\sigma'_\rr -\hat \sigma_{\rr +1}}= \frac{\norm{(I-U_\rr U_\rr ^T)y_n}}{\sigma'_\rr -\hat \sigma_{\rr +1}}.
\end{align}

Regarding the values of $\sigma'_\rr $ and $\sigma'_{\rr+1}$, first we have
 $\sigma'_\rr  \geq \sigma_\rr$. This is because
\begin{align*}
\sigma'_\rr =\inf_{x\in\tspan(U_\rr)} \norm{x^TY'} = \inf_{x\in\tspan(U_\rr)} \norm{\br{x^TY,x^Ty_n}} \geq \inf_{x\in\tspan(U_\rr)} \norm{x^TY}\geq \sigma_\rr.
\end{align*}
In addition, we have $\sigma'_{\rr + 1}=\sigma_{\rr + 1}$, due to the fact that $(I - U_\rr U_\rr ^T)Y'= ((I - U_\rr U_\rr ^T)Y,0)$. By Weyl's inequality, we have $$|\hat \sigma_{\rr+1}-\sigma'_{\rr+1}|\leq \norm{Y-Y'}=\norm{(I-U_\rr U_\rr ^T)y_n}.$$
Hence, if $\sigma_\rr-\sigma_{\rr + 1}>2\norm{(I-U_\rr U_\rr ^T)y_n}$ is further assumed, we have 
\begin{align}\label{eqn:appendix_d_2}
\sigma'_\rr -\hat \sigma_{\rr +1} &\geq \sigma_\rr - \sigma_{\rr +1} - \norm{(I-U_\rr U_\rr ^T)y_n}\geq \frac{1}{2}\br{ \sigma_\rr - \sigma_{\rr +1}}.
\end{align}
With (\ref{eqn:appendix_d_1}), (\ref{eqn:appendix_d_2}), and the fact $\normf{\hat U_\rr  \hat U_\rr ^T -   U_\rr U_\rr ^T}  = \sqrt{2}\normf{\text{sin}\; \Theta(\hat U_\rr ,U_\rr )}$ (see Lemma 1 of \cite{cai2018rate}), the proof is complete.
\end{proof}

\begin{lemma}\label{lem:sub_gaussian_operator}
Let $E=(\epsilon_1,\ldots,\epsilon_n)\in\mathr^{p\times n}$ be a random matrix with each column $\epsilon_i\sim \text{SG}_p(\sigma^2),\forall i\in[n]$ independently. Then
\begin{align*}
\pbr{\norm{E}\geq 4t\sigma(\sqrt{n} + \sqrt{p})} \leq \ebr{-\frac{(t^2-3)n}{2}},
\end{align*}
for any $t\geq 2$.
\end{lemma}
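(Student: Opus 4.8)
The plan is to run the standard $\varepsilon$-net (discretization) argument for the operator norm of a matrix with independent sub-Gaussian columns. Writing $\norm{E}=\sup_{u\in S^{p-1},\,v\in S^{n-1}}u^{T}Ev$, where $S^{d-1}$ is the unit sphere in $\mathr^{d}$, I would fix a $1/4$-net $\mathcal N_{p}$ of $S^{p-1}$ and a $1/4$-net $\mathcal N_{n}$ of $S^{n-1}$. The usual volumetric bound lets us choose these with $|\mathcal N_{p}|\le 9^{p}$ and $|\mathcal N_{n}|\le 9^{n}$, and the usual net comparison gives $\norm{E}\le 2\max_{u\in\mathcal N_{p},\,v\in\mathcal N_{n}}|u^{T}Ev|$. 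So it suffices to control the tail of a single bilinear form $u^{T}Ev$ and then union-bound over the (finite) net.

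First I would bound one term. For fixed unit vectors $u,v$ we have $u^{T}Ev=\sum_{i=1}^{n}v_{i}\,(u^{T}\epsilon_{i})$. Since the columns $\epsilon_{i}$ are independent and each $\epsilon_{i}\sim\text{SG}_{p}(\sigma^{2})$, each $u^{T}\epsilon_{i}\sim\text{SG}(\sigma^{2})$, hence $\E e^{t\,u^{T}Ev}=\prod_{i=1}^{n}\E e^{tv_{i}u^{T}\epsilon_{i}}\le\prod_{i=1}^{n}e^{\sigma^{2}t^{2}v_{i}^{2}/2}=e^{\sigma^{2}t^{2}/2}$, using $\norm{v}=1$; that is, $u^{T}Ev\sim\text{SG}(\sigma^{2})$. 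A Chernoff bound then gives $\pbr{|u^{T}Ev|\ge s}\le 2e^{-s^{2}/(2\sigma^{2})}$ for every $s>0$.

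Next I would combine these. By the net comparison, the event $\norm{E}\ge 4t\sigma(\sqrt n+\sqrt p)$ forces $\max_{u\in\mathcal N_{p},\,v\in\mathcal N_{n}}|u^{T}Ev|\ge 2t\sigma(\sqrt n+\sqrt p)=:s$, so a union bound over at most $9^{p+n}$ pairs yields
\begin{align*}
\pbr{\norm{E}\ge 4t\sigma(\sqrt n+\sqrt p)}\le 2\cdot 9^{p+n}\exp\!\br{-\frac{s^{2}}{2\sigma^{2}}}=2\cdot 9^{p+n}\exp\!\br{-2t^{2}(\sqrt n+\sqrt p)^{2}}.
\end{align*}
Since $(\sqrt n+\sqrt p)^{2}\ge n+p$ and, for $t\ge2$, one has $2t^{2}\ge 8>\log 9$, the exponent in the bound is at most $(p+n)(\log 9-2t^{2})+\log 2\le n(\log 9-2t^{2})+\log 2$ (because $\log 9-2t^{2}<0$ and $p\ge 0$), and a one-line check shows $n(\log 9-2t^{2})+\log 2\le -(t^{2}-3)n/2$ for all $t\ge 2$, which is exactly the claimed inequality.

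The argument is routine; the only thing needing care is the bookkeeping of the multiplicative constants — the discretization factor (at most $2$ from the $1/4$-net), the two-sidedness factor ($2$ from $|u^{T}Ev|$), and the covering-number base ($9$) — and verifying that they are all comfortably absorbed by the slack $2t^{2}-\log 9\ge 8-\log 9$ that is available once $t\ge 2$. I expect this constant-chasing to be the only mild obstacle; if one wanted to sharpen the leading constant $4$ in the statement, a finer net or a sharper volumetric estimate would do, but this is unnecessary for the bound as stated.
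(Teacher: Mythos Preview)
Your proof is correct and follows essentially the same $\varepsilon$-net argument as the paper: build $1/4$-nets on both spheres, reduce to $\max_{u',v'}|u'^TEv'|$, use that each $u'^TEv'\sim\text{SG}(\sigma^2)$ by independence of the columns, and union-bound over $9^{p+n}$ pairs. The only difference is cosmetic: the paper uses the cruder net comparison $\norm{E}\le 4\max_{u',v'}|u'^TEv'|$ (from the four-term expansion of $(u'+(u-u'))^TE(v'+(v-v'))$), whereas you use the standard two-term decomposition to get the sharper factor $2$, which makes your final constant check cleaner.
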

\begin{proof}
We follow a standard $\epsilon$-net argument. Let $\mathcal{U}$ and $\mathcal{V}$ be a 1/4 covering set of the unit sphere in $\mathr^p$ and  in $\mathr^n$, respectively. That is, for any $u\in \mathr^p$ such that $\norm{u}=1$, there exists a $u'\in\mathcal{U}$ such that $\norm{u'}=1$ and $\norm{u-u'}\leq 1/4$. Similarly, for any $v\in \mathr^n$ such that $\norm{v}=1$, there exists a $v'\in\mathcal{V}$ such that $\norm{v'}=1$ and $\norm{v-v'}\leq 1/4$. Then
\begin{align*}
\abs{u^TEv} &= \abs{u^{'T}Ev' + u^{'T}E(v-v') + (u-u')^{T}Ev' + (u-u')^{T}E(v-v')}\\
&\leq  \abs{u^{'T}Ev'} + \abs{u^{'T}E(v-v')} + \abs{ (u-u')^{T}Ev'} + \abs{(u-u')^{T}E(v-v')}.
\end{align*}
Maximizing over $u,v$ on both sides, we have
\begin{align*}
\norm{E} = \max_{u\in\mathr^p,v\in\mathr^n:\norm{u}=\norm{v}=1}\abs{u^TEv} \leq \max_{u'\in\mathcal{U},v'\in\mathcal{V}} \abs{u^{'T}Ev'}  + \frac{1}{4}\norm{E} + \frac{1}{4}\norm{E} + \frac{1}{16}\norm{E}.
\end{align*}
Hence,
\begin{align*}
\norm{E} \leq 4\max_{u'\in\mathcal{U},v'\in\mathcal{V}} \abs{u^{'T}Ev'}.
\end{align*}
For any $u'\in\mathcal{U},v'\in\mathcal{V}$, we have each $u'^{T}\epsilon_i$ being an independent  $\text{SG}(\sigma^2)$  and then $u'^{T}Ev'\sim \text{SG}(\sigma^2)$. Note $\abs{U}\leq 9^p\leq e^{3p}$ and similarly $\abs{V}\leq e^{3n}$.
Then by the tail probability of sub-Gaussian random variable and by the union bound, we have
\begin{align*}
\pbr{\norm{E}\leq 4t \sigma(\sqrt{n} + \sqrt{p})} &\leq \pbr{\max_{u'\in\mathcal{U},v'\in\mathcal{V}} \abs{u'^{T}Ev'}\leq t \sigma(\sqrt{n} + \sqrt{p})} \\
&\leq \abs{U}\abs{V}\ebr{-\frac{t^2\br{\sqrt{n}+\sqrt{p}}^2}{2}}\\
&\leq \ebr{-\frac{(t^2-3)n}{2}},
\end{align*}
for any $t\geq 2$.
\end{proof}

\begin{lemma}\label{lem:sub_gaussian_projection}
Let $X\sim \text{SG}_d(\sigma^2)$. Consider any $k\leq d$. For any matrix $U=(u_1,\ldots,u_k)\in\mathr^{d\times k}$ that is independent of $X$ and is with orthogonal columns $\{u_i\}_{i\in[k]}$. We have
\begin{align*}
\pbr{\norm{UU^TX}^2 \geq \sigma^2(k+2\sqrt{kt} + 2t)} \leq e^{-t}.
\end{align*}
\end{lemma}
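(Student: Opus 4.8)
## Proof proposal for Lemma \ref{lem:sub_gaussian_projection}

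The plan is to condition on $U$ and exploit independence to reduce the quadratic form $\norm{UU^TX}^2 = \norm{U^TX}^2$ to a sum of squares of sub-Gaussian random variables, then apply a standard Bernstein/Hanson–Wright-type tail bound for quadratic forms. First I would note that since $U$ has orthonormal columns, $\norm{UU^TX}^2 = \norm{U^TX}^2 = \sum_{i=1}^k (u_i^TX)^2$. Conditioning on $U$ (which is legitimate because $U$ is independent of $X$), each $u_i^TX$ is a $\text{SG}(\sigma^2)$ random variable because $X \sim \text{SG}_d(\sigma^2)$ and $u_i$ is a unit vector; moreover the vector $(u_1^TX, \ldots, u_k^TX)^T = U^TX$ is itself a sub-Gaussian random vector in $\mathr^k$ with variance proxy $\sigma^2$, since for any unit $w \in \mathr^k$, $w^TU^TX = (Uw)^TX$ and $Uw$ is a unit vector in $\mathr^d$.

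The key step is then a tail bound for $\norm{Z}^2$ where $Z \sim \text{SG}_k(\sigma^2)$. I would establish this via the MGF: for $\lambda$ in a suitable range, $\E e^{\lambda \norm{Z}^2} \le (1-2\lambda\sigma^2)^{-k/2}$ or a comparable bound, obtained by a standard decoupling/Gaussian-comparison argument (e.g., introducing an independent $g\sim\mathn(0,I_k)$, writing $\E e^{\lambda\norm Z^2}$ in terms of $\E e^{\sqrt{2\lambda}\, g^TZ}$, and using the sub-Gaussian MGF coordinate by coordinate after conditioning). Then a Chernoff bound with the optimal choice of $\lambda$ gives exactly $\pbr{\norm{Z}^2 \ge \sigma^2(k + 2\sqrt{kt} + 2t)} \le e^{-t}$; this is the classical sub-exponential tail for $\chi^2$-type sums, and the constants $k + 2\sqrt{kt} + 2t$ are precisely what the Chernoff optimization produces. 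Finally, since this bound holds conditionally on every realization of $U$ and the right-hand side is a deterministic constant, taking expectation over $U$ (tower property) gives the unconditional statement.

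The main obstacle — really the only nontrivial point — is obtaining the quadratic-form MGF bound for a general sub-Gaussian vector (as opposed to a Gaussian one), since sub-Gaussianity of $Z$ controls linear functionals but not directly $\E e^{\lambda\norm Z^2}$. The clean workaround is the Gaussian-variable trick: write $e^{\frac{1}{2}\norm{s}^2} = \E_g e^{g^Ts}$ for $g\sim\mathn(0,I_k)$ so that $\E e^{\lambda\norm Z^2} = \E_Z\E_g e^{\sqrt{2\lambda}\,g^TZ}$, swap expectations, and then condition on $g$ and use that $g^TZ/\norm g \sim \text{SG}(\sigma^2)$ to bound the inner expectation by $e^{\lambda\sigma^2\norm g^2}$; the remaining $\E_g e^{\lambda\sigma^2\norm g^2} = (1-2\lambda\sigma^2)^{-k/2}$ is an exact Gaussian computation valid for $\lambda < 1/(2\sigma^2)$. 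From there the Chernoff bound and algebra are routine. Everything else (the reduction via orthonormality, the independence/conditioning, the tower property) is bookkeeping.
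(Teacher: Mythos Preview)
Your proposal is correct. The paper's own proof is simply a one-line citation: it notes that $\tr{UU^T}=\tr{(UU^T)^2}=k$ and $\norm{UU^T}=1$, then invokes Theorem~1 of Hsu--Kakade--Zhang (2012) on tail bounds for quadratic forms of sub-Gaussian vectors. Your argument---the Gaussian auxiliary-variable trick to bound $\E e^{\lambda\norm{Z}^2}\le (1-2\lambda\sigma^2)^{-k/2}$ followed by a Chernoff optimization yielding the Laurent--Massart constants $k+2\sqrt{kt}+2t$---is precisely the proof of that cited theorem, specialized to the projection matrix $UU^T$. So the approaches coincide; you have simply unpacked what the paper black-boxes.
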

\begin{proof}
Note that $\text{tr}(UU^T) =\text{tr}((UU^T)^2)=k $ and $\norm{UU^T}=1$. This is a direct consequence of Theorem 1 in \cite{hsu2012tail} for  concentration of quadratic forms of sub-Gaussian random vectors.
\end{proof}

\begin{proof}[Proof of Proposition \ref{prop:poly}]
Define $\hat P= \sum_{i\in[r]}\hat \lambda_i\hat u_i\hat v_i^T$.
Due to the fact that $\hat P$  is the best rank-$r$ approximation of $X$ in spectral norm and $P$ is rank-$\kr$, under the assumption that $\kr\leq r$, we have that 
\begin{align*}
\norm{\hat P - X} \leq  \norm{P-X}=\|E\|.
\end{align*}
Since $r\leq k$ is assumed, the rank of $\hat P - P$ his at most $2k$, and we have
\begin{align}
\fnorm{\hat P - P} &\leq \sqrt{2k}\norm{\hat P-P } \leq \sqrt{2k}\br{\norm{\hat P - X} +  \|P-X\|}
\leq 2\sqrt{2k} \norm{E}\label{eqn:2.1_E_op_norm}
\end{align}
Now, denote  $\hat \Theta:=(\hat \theta_{\hat z_1},\hat \theta_{\hat z_2},\ldots, \hat \theta_{\hat z_n})$. 
Since $\hat \Theta$ is the solution to the $k$-means objective  (\ref{eqn:spectral}), we have that
\begin{align*}
\fnorm{\hat \Theta - \hat P} \leq \fnorm{P-\hat  P}.
\end{align*}
Hence, by the triangle inequality, we obtain that 
\begin{align*}
\fnorm{\hat \Theta - P}\leq 2 \fnorm{\hat  P -P} \leq 4\sqrt{2k} \norm{E}.
\end{align*}
Now, define the set $S$ as
\begin{align*}
S = \cbr{i\in[n]: \norm{\hat \theta_{\hat z_i} - \theta^*_{z^*_i}} > \frac{\Delta }{2}}.
\end{align*}
Since $\cbr{\hat \theta_{\hat z_i} - \theta^*_{z^*_i}}_{i\in[n]}$ are exactly the  columns of $\hat \Theta - P$, we have that
\begin{align*}
\abs{S} \leq \frac{\fnorm{\hat \Theta - P}^2}{\br{\Delta /2}^2} \leq \frac{128k\norm{E}^2}{\Delta^2}. 
\end{align*}
Under the assumption (\ref{eqn:Delta_poly}) we have 
\begin{align*}
\frac{\beta \Delta^2 n}{ k^2\norm{E}^2}\geq 256,
\end{align*}
which implies
\begin{align*}
\abs{S} \leq \frac{\beta n}{2k}.
\end{align*}
We now  show that all the data points in $S^C$ are correctly clustered. We define
\begin{align*}
C_j=\left\{i\in[n]:z^*_i=j,i\in S^C\right\},~ j\in[k].
\end{align*}
The following holds: 
\begin{itemize}
\item For each $j\in[k]$, $C_j$ cannot be empty, as $|C_j|\geq |\{i:z^*_i=j\}| - |S|>0$.
\item For each pair $j,l\in[k],j\neq l$, there cannot exist some $i\in C_j,i'\in C_l$ such that $\hat z_i=\hat z_{i'}$. Otherwise $\hat \theta_{\hat z_i} = \hat \theta_{\hat z_{i'}}$ which would imply
\begin{align*}
\norm{\theta^*_j - \theta^*_l}  & = \norm{\theta^*_{z^*_{i}} - \theta^*_{z^*_{i'}}} \\ & \leq \norm{\theta^*_{z^*_{i}} -\hat \theta_{\hat z_i} } + \norm{\hat \theta_{\hat z_i} -\hat \theta_{\hat z_{i'}}} +  \norm{\hat \theta_{\hat z_{i'}}- \theta^*_{z^*_{i'}}}  <  \Delta,
\end{align*}
contradicting  with the definition of $\Delta$.
\end{itemize}
Since $\hat z_i$ can only take values in $[k]$, we conclude that the sets $\{\hat z_i:i\in C_j\}$ are disjoint for all  $j\in[k]$. That is, there exists a permutation $\phi\in\Phi$, such that
\begin{align*}
\hat z_i = \phi(j),~ i\in C_j,~ j\in[k].
\end{align*}
This implies that $\sum_{i\in S^C}\mathbb{I}\{\hat z_i \neq \phi (z^*_i)\}=0$. Hence, we obtain that 
\begin{align*}
|\{i\in[n]:\hat z_i \neq \phi(z^*_i)\}| \leq \abs{S}\leq \frac{128k\norm{E}^2}{\Delta^2}.
\end{align*}
Since $\abs{S}\leq \frac{\beta n}{2k}$ (which means $\ell(\hat z,z^*)\leq \frac{\beta n}{2k}$ from the above display), for any $\psi\in \Phi$ such that $\psi\neq \phi$, we have $|\{i\in[n]:\hat z_i \neq \psi(z^*_i)\}| \geq 2\beta n/k-\abs{S}\geq \beta n/k$. As a result, we have
\begin{align*}
\ell(\hat z,z^*)= \frac{1}{n}|\{i\in[n]:\hat z_i \neq \phi(z^*_i)\}| \leq   \frac{128k\norm{E}^2}{n\Delta^2}.
\end{align*}
Moreover, for each $a\in[k]$,  we have
\begin{align*}
\norm{\hat \theta_{\phi(a)} - \theta^*_a}^2 \leq \frac{\fnorm{\hat \Theta - P}^2}{\abs{\{i\in[n]:\hat z_i = \phi(a), z^*_i=a\}}} \leq  \frac{\fnorm{\hat \Theta - P}^2}{\frac{\beta n}{k} -\abs{S}} \leq  \frac{64k^2\norm{E}^2}{\beta n} %
\end{align*}
\end{proof}

\end{document}